\documentclass[a4paper,10pt]{amsart}

\usepackage{amsaddr} % Moves affiliations to the first page under the author
\usepackage{amsrefs}
\usepackage[utf8]{inputenc}
\usepackage{amscd,amssymb,amsmath,amsthm}
\usepackage[all]{xy}
\usepackage{array}
\usepackage{etoolbox}
\usepackage{mathtools}
\usepackage{tikz-cd}

\newtheorem{theorem}{Theorem}[section]

\newtheorem{lemma}[theorem]{Lemma}
\newtheorem{corollary}[theorem]{Corollary}

\newtheorem*{con7*}{Conjecture 7*}
\newtheorem*{Remark}{Remark}

\theoremstyle{definition}
\newtheorem *{Theorem A}{Theorem A}
\newtheorem *{Corollary B}{Corollary B}
\newtheorem *{Corollary D}{Corollary D}
\newtheorem *{Theorem B}{Theorem B}
\newtheorem{definition}[theorem]{Definition}
\newtheorem{example}[theorem]{Example}

\newtheorem *{Main Theorem}{Main Theorem}

\theoremstyle{remark}
\newtheorem{remark}[theorem]{Remark}

\newcommand{\Id}{\text {Id}}
\newcommand{\ob}{\text {ob}}
\newcommand{\md}{\text {Mod}}
\newcommand{\p}{\text{Pic}_k}
\newcommand{\Pic}{\text{Pic}}

\newcommand{\au}{\text{Aut}_k(K)}
\newcommand{\m}{\text{-}}

\newcommand{\bigslant}[2]{{\raisebox{.1em}{$#1$}\left/\raisebox{-.1em}{$#2$}\right.}}

\newcommand{\CPK}{{\mathcal{\tilde{C}}{(\phi,K)}}}

\newcommand{\CK}{{\mathcal{\tilde{C}}{(\phi)}}}

\newcommand{\Mod}{{\text{Mod}}}

\newcommand{\ExtP}{{\text{Ext}{(\phi,\text{Pic}(K))}}}

\newcommand{\Brk}{{\text{Br}(k)}}

\newcommand{\Br}{{\text{Br}}}
\newcommand{\Aut}{{\text{Aut}}}
\newcommand{\End}{{\text{end}}}

\newcommand{\Out}{{\text{Out}}}
\newcommand{\Hom}{{\text{Hom}}}
\newcommand{\res}{{\text{res}}}
\newcommand{\MCK}{\mathcal{MC}_K}

\newcommand{\Clf}{{\tilde{\text{Cliff}}(\phi)}}

\newcommand{\ClfK}{{\tilde{\text{Cliff}}(\phi,K)}}
\numberwithin{equation}{section}

\title[seven term exact sequence]{Actions of central Morita monoidal groupoids, and seven term exact sequences of all types}
\author{\small{Ammu Elizabeth Antony}}
\address{Department of Mathematics, University of Haifa, Haifa 31905,
Israel,\\
and  Faculty of Mathematics, Technion - I.I.T., Haifa 32000, Israel} \email{ammueliza@gmail.com}
\author{\small{Yuval Ginosar}}
\address{Department of Mathematics, University of Haifa, Haifa 31905, Israel}
\email{ginosar@math.haifa.ac.il}
\date{}

\newcommand{\ChC}[1]{{\bf \textcolor{violet}{#1}}}
\begin{document}
\begin{abstract}
We introduce a strict monoidal groupoid associated with a given commutative ring $K$.
It acts on the Clifford extension systems of $K$-central algebras which are graded by a group $G$, as well as on the generalized collective characters, both of type $\phi \in$ Hom$(G,$ Aut$(K))$.
These actions partition the two sets above with respect to graded Morita equivalence. By that we establish a seven term exact sequence of type $\phi$, which
boils down to the celebrated Chase-Harrison-Rosenberg sequence when $\phi$ is Galois.
\end{abstract}
\maketitle
%Let $k$ be a commutative ring and $G$ be a finite group.
\section{Introduction}
During the course of this paper, let $G$ be a group, $K$ a commutative ring,
and $\phi \in$ Hom$(G,$ Aut$_k(K))$ an action of $G$ on $K$ {fixing a subring $k\subset K$ elementwise}.

When $\phi$ is a Galois action, the well-known exact sequence
due to S.U. Chase, D.K. Harrison and A. Rosenberg \cite{CHR} is (the terms are explained in the sequel)
\begin{eqnarray}\label{7 term}\begin{array}{rl}
1&\rightarrow H^{1}(G,K^*_{\phi})\rightarrow \Pic(k)\rightarrow \Pic(K)^{\phi} \rightarrow H^{2}(G,K^*_{\phi})\\
&\rightarrow \text{Br}(K/k)\rightarrow H^{1}(G,\Pic(K)_{\phi})\rightarrow H^{3}(G,K^*_{\phi}).\end{array}
\end{eqnarray}
More particularly, when $k\subset K$ is a Galois extension of fields, then since the Picard groups $\Pic(K)$ and $\Pic(k)$ are trivial in this case,
it follows from \eqref{7 term} that $H^{1}(G,K^*_{\phi})=1$ (Hilbert's Theorem 90), as well as $H^{2}(G,K^*_{\phi})\cong$ Br$(K/k)$
(the Crossed Product Theorem).

%In 1968, introduced as a generalization of the concept of crossed product
The sequence \eqref{7 term} was rederived by T. Kanzaki \cite{Kanzaki} using ``generalized crossed products"
(``strongly graded algebras" herein), and their associated homomorphisms from $G$ to  Pic$(R)$ (``Generalized Collective Characters (GCC)" herein),
where $G$ is the grading group and Pic$(R)$ is the Picard group of the corresponding base algebra.
%for a commutative Galois extension $K$ of a ring $k$ with Galois group $G$.
% In 1971, F. DeMeyer and E, Ingraham also provided a proof for the exactness of the first six terms (cf: \cite{FDEI}).
Motivated by the work of Kanzaki, Y. Miyashita managed to relax the Galois
demand by introducing a seven term exact sequence \cite[Theorem 2.12]{miyashita}
for any action $\phi$. Miyashita's sequence reduces to \eqref{7 term} under certain restrictions when $\phi$ happens to be Galois.
Down the line, L. El Kaoutit and J. G{\'o}mez-Torrecillas constructed an analogue of Miyashita's seven terms exact sequence
in the context of rings with local units \cite{KG1,KG2}.  A variant of the four left terms in \eqref{7 term} is presented in \cite[Theorem 2.8]{MBDR}.
Further generalizations were given in a series of papers by M. Dokuchaev et al
\cite{DPP,DPPR,D3,D4}, for a unital ring extension $R\subseteq S$ and a unital partial representation of a group $G$ into a monoid of $R$-subbimodules of $S$.

Our link in this chain is a seven term exact sequence of
monoids, \eqref{main} herein, associated to any $G$-action $\phi \in$
Hom$(G,$ Aut$_k(K))$.
%Let $C(K)$ be any set of isomorphism types of K-central algebras which is closed under tensor product over K (see \S\ref{Sec2}).
In this way, the graded Morita class of every strongly $G$-graded $k$-algebra, whose base
algebra is $K$-central of isomorphism type that belongs to $C(K)$,
appears in such a sequence for an appropriate $\phi$.
Here, $C(K)$ is any set of isomorphism types of K-central algebras, which is closed under tensor product over $K$ and contains $[K]$ itself
(see \S\ref{Sec2}).
More formally, the sequence (see \eqref{main}) captures the graded Morita classes of all strongly $G$-graded $k$-algebras of type $\phi$, given as orbits
\begin{equation}\label{quotcliff}
\bigslant{\Clf}{{\MCK'}(1)}
%=\bigslant{(\bigsqcup_{[R]\in C(K)}\mbox{Cliff}_k(\phi,R))}{\MCK'(1)} ,
\end{equation}
under an action of a monoidal groupoid ${\MCK'(1)}$ (see \S\ref{MCKsec}),
as well as the Morita classes all general collective $G$-characters of type $\phi$, given as orbits
\begin{equation}\label{quotck}
\bigslant{\CK}{{\MCK'}(1)}
%=\bigslant{\{[\Phi]_{\eq}|\ \Phi\in\mbox{Hom}(G, \Pic_k(R))\text{ of type $\phi$},\ [R]\in C(K)\}}{\MCK(1)}
\end{equation}
under an action of the same monoidal groupoid.
%\ChA{Both \eqref{quotcliff} and \eqref{quotck} run over all isomorphism types of $K$-central algebras, which lie in $C(K)$}.
The sets \eqref{quotcliff} and \eqref{quotck} are endowed with  multiplications induced from the skew product (see \eqref{eq:1}),
and the homomorphism between them is essentially the GCC map (see \eqref{chi}). One more term in the sequence \eqref{main}, namely $\ExtP$,
extends the notion of $\Pic(k)$ in \eqref{7 term} and
requires some explanation. This is the group of isomorphism classes of modules over the skew group algebra $K_{\phi}G$, whose restrictions
to $K$ are invertible (see \eqref{extP}). We have
\begin{Theorem A}

Let $\phi \in$ Hom$(G,$ Aut$_k(K))$ be any action of a group $G$ on a commutative ring $K$ fixing a subring $k\subset K$ elementwise,
and and let $C(K)$ be any set of isomorphism types of $K$-central algebras that contains $[K]$ itself and is closed under $K$-tensoring.
Then there exists an exact sequence of monoids
\begin{eqnarray}\label{main}
\begin{array}{rl}
1&\to H^1(G,K^*_{\phi})\xrightarrow{}\ExtP\xrightarrow{}\text{Pic}(K)^{\phi}\xrightarrow{} H^2(G,K^*_{\phi})\\
&\xrightarrow{}\bigslant{\Clf}{{{\MCK'}(1)}}\xrightarrow{} \bigslant{\CK}{{{\MCK'}(1)}}
%\bigcup_{[R]\in C(K)}\mbox{Cliff}_R/_{\mbox{Pic}_R(\mbox{Cliff}_R)}\xrightarrow{\alpha_5'}\ \bigcup_{[R]\in C(K)}C_k(\phi,R)/_{\mbox{Pic}_R(C_k(\phi,R))}
 \xrightarrow{} H^3(G,K^*_{\phi})
\end{array}
\end{eqnarray}
holding, for all $K$-central algebras $R$ whose isomorphism types lie in $C(K)$
\begin{enumerate}
\item the graded Morita classes \eqref{quotcliff} of all strongly $G$-graded $k$-algebras of type $\phi$ over base algebra $R$, as well as
\item the Morita classes \eqref{quotck} of all general collective characters $G\to$Pic$(R)$ of type $\phi$.
\end{enumerate}
%\ChY{Moreover, if $K$ is an integral domain then there exists a $C(K)$ for which the rightmost homomorphism in \eqref{main} is surjective, terminating this sequence.}
\end{Theorem A}
To retrieve \eqref{7 term} from \eqref{main}, let $C(K)$ be the singleton consisting solely of $[K]$.
In this case we have an exact sequence of abelian groups (see \S \ref{Sec6})
\begin{eqnarray}\label{7K}
\begin{array}{rl}
1&\to H^1(G,K^*_{\phi})\xrightarrow{}\ExtP\xrightarrow{}\text{Pic}(K)^{\phi}\xrightarrow{} H^2(G,K^*_{\phi})\\
&\xrightarrow{}\ClfK/_{\mbox{Pic($K$)}}\xrightarrow{}H^{1}(G,\Pic(K)_{\phi})  \xrightarrow{}
H^3(G,K^*_{\phi}).
\end{array}
\end{eqnarray}
%we need an intermediate step, namely
The sequence \eqref{7K} may as well be considered a subsequence of \eqref{main}, isolating $K$ itself as $K$-central algebra in \eqref{quotcliff} and \eqref{quotck}.

Finally, under Galois setting, the sequence \eqref{7K} boils down precisely to the classic sequence \eqref{7 term}. Formally,
\begin{Theorem B}
Let $\phi \in$ Hom$(G,$ Aut$_k(K))$ be a Galois action.
Then there are group isomorphisms
\begin{enumerate}
\item $\ExtP\xrightarrow{\cong}\Pic(k)$, and
\item $\ClfK/_{\mbox{Pic($K$)}}\xrightarrow{\cong}\text{Br}(K/k)$,
%\item $\ \CPK/_{\mbox{Pic($K$)}}  \xrightarrow{\cong} H^{1}(G,\Pic(K))$,
\end{enumerate}
which commute with the corresponding homomorphisms in the exact sequences \eqref{7 term} and \eqref{7K}. Consequently, these sequences are equivalent.
\end{Theorem B}

Here is brief outline of the paper.
%In \ChY{ describe this section only after finalizing it}, we state the terminology, mostly from \cite{YG1},
% and certain equivalence relations that are often used throughout the paper. We recall the skew product, which is crucial in the construction of

After the background \S\ref{Sec2}, we record in \S\ref{mongroupsec} results about monoidal groupoids and their actions on monoids,
and in \S\ref{MCKsec} we introduce the (central) monoidal groupoid ${{\MCK'}(1)}$,
a main player here. This small monoidal category is {\em strict} (see \cite{S01}).

The monoidal groupoid ${{\MCK'}(1)}$ is shown to act on both monoids $\Clf$ (in \S\ref{Sec3}) and $ \CK$ (in \S\ref{Sec4}),
yielding monoid structures on the quotient sets $\bigslant{\Clf}{{{\MCK'}(1)}}$ and $\bigslant{\CK}{{{\MCK'}(1)}}$ respectively.
The first of these ${\mathcal{MC}_K'(1)}$-actions is an adjustment of the Morita contexts action on graded isomorphism classes introduced in \cite{JHDR} to the finer set $\Clf$.
We show that the GCC homomorphism between the monoids $\Clf$ and $ \CK$ respects the corresponding ${\mathcal{MC}_K'(1)}$-actions, giving rise to a homomorphism
between $\bigslant{\Clf}{{{\MCK'}(1)}}$ and $\bigslant{\CK}{{{\MCK'}(1)}}$.

In \S \ref{Sec5}
we interpret the first cohomology group, $H^1(G,K_{\phi}^*)$ as the kernel of a certain restriction map (Lemma \ref{ker1}), and prove Theorem A.
The exact sequence \eqref{7K} is established in \S \ref{Sec6},
and Theorem B is proven in \S\ref{Sec7}, which focuses on the Galois case.

{\bf Acknowledgments.}
The first author was funded by a Postdoctoral Fellowship from the Faculty of Mathematics at the Technion, and thanks Eli Aljadeff for his support.
We are grateful to Oren Ben-Bassat for his suggestions which helped us improve the paper significantly.

\section{Background}\label{Sec2}
All the rings in this paper are unital and associative. The
notation $K$ always stands for a commutative ring. Let
$R^*$ denote the multiplicative group of units of a ring $R$, and let
Aut$_k(R)$ denote the group of automorphisms of $R$ which fix a
central subring $k\subseteq Z(R)$ pointwise. For a group action
$\phi \in$ Hom$(G, \Aut_k(K))$, let $K_{\phi}$ and $K_{\phi}^*$ be
the left $G$-modules on the underlying abelian groups $K$ (with
addition) and its units $K^*$ (with multiplication) respectively.
We denote
%and where its left $G$-action is given by
$$g(y) :=\phi(g)(y)$$
for every $g \in G$ and $y$ either in $K_{\phi}$ or, in particular, in $K^*_{\phi}$.

Denote the $K$-isomorphism class of a $K$-algebra $R$ by $[R]$. Such an algebra $R$ is said to be $K$-central if its center is isomorphic to $K$.
Let $\md(R)$ and $\md(R\m R')$ denote the isomorphism classes of (left) $R$-modules and of $R\m R'$-bimodules respectively.
Left and right actions on bimodules are denoted by $``\star"$ and $``\cdot"$ respectively. If  $[M]\in \md(R)$ and $K$ is central in $R$,
then the left $R$-module $M$ can also be considered as a right $K$-module via its left $K$-structure.

For $[M]\in\md(R\m R')$, the set $\check{M}:=$ Hom$_{R'}(M,R')$ of all right $R'$-module homomorphisms from $M$ to $R'$, admits an $R'$-$R$-bimodule structure given by
\begin{equation}
\label{Hom(P,R)}
(r' \star f) (m) := r'\star f(m)\ \mbox{and}\ (f\cdot r)(m) := f(r\star m),
\end{equation}
for every $r \in R, r' \in R', f \in \check{M}$, and $m \in M$.

The relative Brauer group $\Br(K/k)$ consists of all Brauer similarity classes $[A]_{\Brk}$ of those $k$-Azumaya algebras, i.e. central separable over $k$, which split over $K$.
In other words, $A\otimes_kK$ is isomorphic to a ring of matrices over $K$.

An $R\m S$-bimodule $P$ is {\em invertible} if there is an $S\m R$-bimodule $P'$ such that
$P\otimes_S P'\cong R$ and $P'\otimes_R P\cong S$, as a free $R\m R$-bimodule of rank 1 and $S\m S$-bimodule of rank 1 respectively.
The Picard group Pic$_k(R)$ consists of the isomorphism classes of invertible $R$-bimodules, whose left and right $k$-module structures are the same.
The product in Pic$_k(R)$ is given by $[P_1]\otimes[P_2]:=[P_1\otimes_R P_2]$, and the identity is $[R]$.
In the special case $R=k$, the abelian group Pic$_k(k)$ is conventionally written just as Pic$(k)$.

The following subsections build the foundation for the main results in this paper.
\subsection{}\label{secCK}
Let $C_{\kappa}(K)$ be the set of all $K$-isomorphism classes of $K$-central algebras whose cardinality is bounded by some infinite cardinality $\kappa$.
Next, consider a subset $C(K) \subset C_{\kappa}(K)$, which contains the class $[K]$ and is closed under the tensor product,
forming an abelian monoid under the product $[R_1] \otimes_K [R_2] := [R_1 \otimes_K R_2]$.
%Next, let $C(K) \subset C_{\kappa}(K)$ be a monoid under the product $[R_1] \otimes_K [R_2] := [R_1 \otimes_K R_2]$, i.e., $C(K)$ is an abelian monoid that is closed under the tensor product,
%and whose two-sided identity is the class $[K]$.
The reader may bear in mind one of the following examples that fit the role of $C(K)$.
\begin{itemize}
\item[(i)] %The isomorphism classes of all unital algebras $R$ such that $K$ is isomorphic to the center of the ring $R$, which is free as a $K$-module, i.e.,
$C(K) := \{[R]\in C_{\kappa}(K)\mid R$ is free as a $K$-module$\}$.
%$R \cong \oplus_{I} K$ as $K$-modules for some $I\}$.
\item[(ii)]
%$C(K) := \{[R]\in C_{\kappa}(K)\mid R$ is $K$-separable$\}$.
$C(K) := \{[R]\mid R$ is $K$-Azumaya$\}$.
\end{itemize}
In \S\ref{Sec6} we focus on the extremal case, that is where $C(K)$ is the singleton

(iii) $C(K) := \{[K]\}$.

\subsection{}
% let $\p(R)<$Pic$(R)$ be the subgroup of classes of bimodules that admit the same left and right $k$-module structures.
Given $\eta\in\text{Aut}_k(R)$, endow the free left $R$-module of rank 1 with an additional right $R$-module structure via $\eta$, see \eqref{star}.
The formal structure of this $R$-bimodule, namely $R_{1,\eta}$, is given by
\begin{equation}\label{star}r\star s:=r s,\ \ s\cdot r:=s\eta(r),\ \ r\in R,\ \  s\in R_{1,\eta}.\end{equation}

Then there is a well-defined homomorphism %\begin{eqnarray}
$\begin{array}{ccc}\text{Aut}_k(R)&\rightarrow&\p(R)\\
\eta&\mapsto&[R_{1,\eta}]
\end{array},$\\
%\end{eqnarray}
~whose kernel is the inner automorphisms of $R$ (see \cite[Prop. II.5.3]{Bass}),
yielding an embedding
\begin{equation}\label{out}
\text{Out}_k(R):=\text{Aut}_k(R)/\text{Inn}(R)\hookrightarrow\p(R)
\end{equation}
of the outer $R$-automorphisms as a subgroup of the Picard group of $R$.

An action $\phi:G\to\Aut_k(K)$ of a group $G$ on a $K$-central algebra $R$ fixing $k\subseteq K$ elementwise induces a well-defined $G$-action on $\md (R)$ using the above notation.
\begin{equation}\label{Gactmd}
g\times [M]\mapsto[\phi(g)(M)],\text{ where }\phi(g)(M) := \ K_{1,\phi(g)} \otimes_K M \otimes_K \ K_{1,\phi(g^{-1})},\end{equation}
for every $g\in G$ and $[M]\in\md(R).$
Denote the subset of invariant classes
\begin{equation}\label{PKinv}\md(K)^{\phi} := \{[M] \in \md(K) \mid [\phi(g)(M)] = [M],\ \forall g\in G\}.\end{equation}
As can easily be checked, the action \eqref{Gactmd} stabilizes $\Pic(K)< \md(K).$
The intersection $$\Pic(K)^{\phi}:=\Pic(K)\cap\md(K)^{\phi},$$
is evidently a subgroup of Pic$(K)$.

\subsection{}
\label{alpha3sect}

Following \cite[Chapter IV]{FDEI}, here is a description of the third map in (\ref{7 term}), that is $ \Pic(K)^{\phi}\to H^2(G,K^*_{\phi})$.
The $G$-invariance of a class $[P] \in \Pic(K)^{\phi}$ (see \eqref{Gactmd} and \eqref{PKinv}) is realized by the set of $K$-module isomorphisms
\begin{equation}\label{BP}B_P=\{\psi_g:g(P) \rightarrow \ P\}_{g\in G}.
\end{equation}
For every $g\in G$ let
\begin{eqnarray}\label{psig}\tilde{\psi}_g :\begin{array}{ccc} P &\rightarrow& P\\
p&\mapsto& \psi_g(1 \otimes_K p \otimes_K 1)
\end{array}.\end{eqnarray}
Then a direct computation yields that the maps \eqref{psig} are invertible and satisfy
\begin{equation}\label{tildepsi}
g(x)\star\tilde{\psi}_g(p) = \tilde{\psi}_g(x\star p),\text{ and so } \ g^{-1}(x)\star\tilde{\psi}^{-1}_g(p) = \tilde{\psi}^{-1}_g(x\star p)\end{equation}
for every $g\in G, x\in K$ and $p\in P$.
Consequently, for every $g,h \in G, p\in P$ and $x\in K$,
we have $$\tilde{\psi}_{gh}\circ\tilde{\psi}_h^{-1}\circ\tilde{\psi}_g^{-1}(x\star p)
%= \psi_{gh}((^{h^{-1}g^{-1}}x)\psi_h^{-1}\psi_g^{-1}(p))
= x\star(\tilde{\psi}_{gh}\circ\tilde{\psi}_h^{-1}\circ\tilde{\psi}_g^{-1}(p)).$$
In other words,
\begin{equation}\label{fP}\tilde{\psi}_{gh}\circ\tilde{\psi}_h^{-1}\circ\tilde{\psi}_g^{-1} \in \End_K(P).\end{equation}
Since $\End_K(P) \cong K$ \cite[ Lemma  5.1]{FDEI}, then the $K$-isomorphism \eqref{fP} is just multiplication by an element ${{B^*_P}(g,h)}\in K^*_{\phi}$. Then
\begin{lemma} \cite[IV; 2.6, 2.7, 2.8]{FDEI}
With the above notation
\begin{enumerate}
\item The map $B^*_P:\begin{array}{ccc}
G\times G&\to & K^*\\
(g,h)&\mapsto & B^*_P(g,h)
\end{array}$ is a 2-cocycle.
\item If $\tilde{B}_P$ is another set of $K$-module isomorphisms
then $$[B^*_P]=[{\tilde{B}^*_P}]\in H^2(G,K^*_{\phi}).$$
%The cohomology class of this 2-cocycle is independent of the choice of the set \eqref{BP} of $K$-module isomorphisms.
\item The well-defined map
\begin{eqnarray}\label{alpha3}\alpha_3 :\begin{array}{ccc}
\Pic(K)^{\phi} &\rightarrow & H^2(G,K^*_{\phi})\\
~[P]&\mapsto & [B^*_P]
\end{array}\end{eqnarray} is a group homomorphism.
\end{enumerate}
\end{lemma}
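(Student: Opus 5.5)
The plan is to work throughout with the semilinear maps $\tilde{\psi}_g$ of \eqref{psig} rather than with the $\psi_g$ themselves. Write $\tilde{\psi}_{gh}=B^*_P(g,h)\,\tilde{\psi}_g\circ\tilde{\psi}_h$, where $B^*_P(g,h)$ denotes multiplication by that unit. This follows from \eqref{fP}, because $\tilde{\psi}_{gh}\circ\tilde{\psi}_h^{-1}\circ\tilde{\psi}_g^{-1}$ is multiplication by $B^*_P(g,h)$, which commutes past nothing problematic once we fix the order. The first step is to record the rule for moving a scalar through a $\tilde{\psi}$. From \eqref{tildepsi} we get $\tilde{\psi}_g\circ x=g(x)\circ\tilde{\psi}_g$ as operators on $P$.

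\emph{Part (1).} Compute $\tilde{\psi}_{ghl}$ in two ways. First, $\tilde{\psi}_{(gh)l}=B^*_P(gh,l)\,B^*_P(g,h)\,\tilde{\psi}_g\tilde{\psi}_h\tilde{\psi}_l$. Second,
\[
\tilde{\psi}_{g(hl)}=B^*_P(g,hl)\,\tilde{\psi}_g B^*_P(h,l)\tilde{\psi}_h\tilde{\psi}_l=B^*_P(g,hl)\,g(B^*_P(h,l))\,\tilde{\psi}_g\tilde{\psi}_h\tilde{\psi}_l .
\]
The composite $\tilde{\psi}_g\tilde{\psi}_h\tilde{\psi}_l$ is invertible, and $P$ is faithful, since $\End_K(P)\cong K$. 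Comparing the two expressions therefore gives the $2$-cocycle identity
\[
g(B^*_P(h,l))\,B^*_P(g,hl)=B^*_P(g,h)\,B^*_P(gh,l)
\]
in $K^*_{\phi}$. The only care needed is the convention for the order of composition. This matches the standard inhomogeneous cocycle for the left module $K^*_{\phi}$.

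\emph{Part (2).} Let $\tilde{B}_P=\{\psi'_g\}$ be another choice. Then $\tilde{\psi}'_g\circ\tilde{\psi}_g^{-1}$ is $K$-linear by \eqref{tildepsi}, so it equals multiplication by a unit $c(g)\in K^*$, again because $\End_K(P)\cong K$. Substituting $\tilde{\psi}'_g=c(g)\tilde{\psi}_g$ into the defining relation and moving scalars with the rule above yields
\[
\tilde{B}^*_P(g,h)=B^*_P(g,h)\,c(gh)\,c(g)^{-1}\,g(c(h))^{-1},
\]
so the two cocycles differ by a coboundary. One should also check that the class depends only on $[P]$. If $\theta:P\to P'$ is an isomorphism, transport the family via $\theta\tilde{\psi}_g\theta^{-1}$; this produces the same cocycle, and (2) then covers every other choice for $P'$.

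\emph{Part (3).} Given $[P],[Q]\in\Pic(K)^{\phi}$, define $\tilde{\chi}_g:=\tilde{\psi}_g\otimes\tilde{\varphi}_g$ on $P\otimes_KQ$. This is well defined because both factors are $g$-semilinear: $\tilde{\psi}_g(px)\otimes\tilde{\varphi}_g(q)=\tilde{\psi}_g(p)g(x)\otimes\tilde{\varphi}_g(q)$, which balances with $\tilde{\psi}_g(p)\otimes\tilde{\varphi}_g(xq)$. It is also $g$-semilinear and invertible, so it comes from a valid system of isomorphisms $g(P\otimes Q)\to P\otimes Q$. The associated cocycle is visibly the product $B^*_P\,B^*_Q$, and by (2) it computes $\alpha_3([P\otimes_KQ])$. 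Hence $\alpha_3$ is a homomorphism; for $P=K$ we may take $\tilde{\psi}_g=g$, which gives the trivial cocycle.

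The main obstacle is bookkeeping rather than any conceptual difficulty. One must fix consistently whether $B^*_P$ is read as $\tilde{\psi}_{gh}\tilde{\psi}_h^{-1}\tilde{\psi}_g^{-1}$, as the paper does, and track $g$ versus $g^{-1}$ when passing scalars through the $\tilde{\psi}$'s. Getting this wrong turns the cocycle identity into its inverse or twisted form. I would settle these conventions first and then verify (1) carefully, since (2) and (3) reuse the same manipulation.
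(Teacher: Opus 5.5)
Your proposal is correct and follows the same route the paper relies on. The paper gives no argument of its own beyond the preliminaries \eqref{tildepsi} and \eqref{fP}, deferring to \cite[IV; 2.6, 2.7, 2.8]{FDEI}, and your three computations (expanding $\tilde{\psi}_{ghl}$ in two ways, rescaling $\tilde{\psi}'_g=c(g)\tilde{\psi}_g$, and tensoring the semilinear maps on $P\otimes_K Q$, together with the transport check for independence of the representative of $[P]$) are exactly the standard verifications behind that citation.
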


\subsection{} We record a standard result (see \cite[\S{2}]{Kanzaki} for a special case).
Let $A\rtimes \Gamma$ be a semidirect product of an abelian group $A$ acted on by a group $\Gamma$ and let $\phi_0 : G\to\Gamma$ be a group homomorphism.
Then $\phi_0$ furnishes $A$ with a $G$-module structure $A_{\phi_0}$ via $$g(a):=\phi_0(g)(a),\ \  g\in G, a\in A_{\phi_0}.$$
Note that every group homomorphism
$\varphi\in\Hom(G,A\rtimes \Gamma)$ gives rise to a homomorphism $\varphi^*\in\Hom(G,\Gamma)$,
which is a composition of $\varphi$ with the projection of $A\rtimes \Gamma$ onto $\Gamma$. Let
\begin{equation}\label{homphi}
\Hom_{\phi_0}(G,A\rtimes \Gamma):=\{\varphi\in\Hom(G,A\rtimes \Gamma)|\ \ \varphi^*=\phi_0\}.
\end{equation}
Then $\Hom_{\phi_0}(G,A\rtimes \Gamma)$ is an abelian group with the product
\begin{equation}\label{homphiprod}
\varphi_1\star\varphi_2:g\mapsto \varphi_1(g)\cdot\phi_0(g)^{-1}\cdot\varphi_2(g), \ \ \ \varphi_1,\varphi_2\in \Hom_{\phi_0}(G,A\rtimes \Gamma),
\ \ g\in G.
\end{equation}
Furthermore, it is not hard to check that for every $\varphi\in$ \eqref{homphiprod}

$\varphi\phi_0^{-1}:\begin{array}{ccc}
G&\to& A_{\phi_0}\\
g&\mapsto&\varphi(g)\cdot\phi_0(g)^{-1}\end{array}$ is a 1-cocycle, and that
\begin{lemma}\label{Hphiz}
With the above notation there is a group isomorphism
\begin{eqnarray}\label{lambg}
\lambda_{\phi_0}:\begin{array}{ccc}
\Hom_{\phi_0}(G,A\rtimes \Gamma)&\xrightarrow{\cong} &Z^1(G,A_{\phi_0})\\
\varphi&\mapsto&\varphi\phi_0^{-1}
\end{array}.\end{eqnarray}\end{lemma}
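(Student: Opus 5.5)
We prove Lemma \ref{Hphiz} by checking directly that $\lambda_{\phi_0}$ is well defined, multiplicative and bijective. Throughout we identify $A$ and $\Gamma$ with subgroups of $A\rtimes\Gamma$, so that $\gamma a\gamma^{-1}=\gamma(a)$ for $\gamma\in\Gamma$, $a\in A$. Every element of $A\rtimes\Gamma$ then factors uniquely as $a\gamma$ with $a\in A$ and $\gamma\in\Gamma$.

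First, for $\varphi\in\Hom_{\phi_0}(G,A\rtimes\Gamma)$ the element $\varphi(g)\phi_0(g)^{-1}$ projects trivially to $\Gamma$, so it lies in $A$. Write $f(g):=\varphi(g)\phi_0(g)^{-1}$, so that $\varphi(g)=f(g)\phi_0(g)$. Then
$$\varphi(gh)=f(g)\phi_0(g)f(h)\phi_0(h)=f(g)\,\phi_0(g)(f(h))\,\phi_0(g)\phi_0(h).$$
Comparing with $\varphi(gh)=f(gh)\phi_0(gh)$ and using uniqueness of the factorization gives $f(gh)=f(g)\cdot g(f(h))$. This is the $1$-cocycle identity in $A_{\phi_0}$, written multiplicatively. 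Hence $\lambda_{\phi_0}$ lands in $Z^1(G,A_{\phi_0})$, which settles the remark preceding the lemma.

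Next, bijectivity. The inverse map sends a cocycle $f$ to $\varphi_f(g):=f(g)\phi_0(g)$. Reading the computation above backwards, the cocycle identity is exactly what makes $\varphi_f$ a homomorphism. Its projection to $\Gamma$ is $\phi_0$, so $\varphi_f\in\Hom_{\phi_0}(G,A\rtimes\Gamma)$. The two assignments are clearly mutually inverse.

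Finally, the product \eqref{homphiprod}. For $\varphi_i=f_i\phi_0$ we get
$$(\varphi_1\star\varphi_2)(g)=f_1(g)\phi_0(g)\phi_0(g)^{-1}f_2(g)\phi_0(g)=f_1(g)f_2(g)\phi_0(g).$$
Thus $\lambda_{\phi_0}(\varphi_1\star\varphi_2)=f_1f_2$, the pointwise product in $Z^1(G,A_{\phi_0})$. Consequently $\star$ is transported from the abelian group structure on $Z^1$, which shows at the same time that $\Hom_{\phi_0}(G,A\rtimes\Gamma)$ is an abelian group: the identity is $\phi_0$ and the inverse of $f\phi_0$ is $f^{-1}\phi_0$. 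Therefore $\lambda_{\phi_0}$ is a group isomorphism.

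The only delicate step is keeping the conventions straight, namely that $\phi_0(g)$ placed to the left of $f(h)$ acts by conjugation as $g(\cdot)$. Once that is fixed, everything else is bookkeeping.
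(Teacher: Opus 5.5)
Your verification is correct and complete. The unique factorization $\varphi(g)=f(g)\phi_0(g)$ turns the homomorphism condition into the cocycle identity and the product \eqref{homphiprod} into the pointwise product; the paper records the lemma as a standard result without writing out a proof, and this is the routine check it has in mind (your transport-of-structure step also justifies its unproved claim that $\star$ makes $\Hom_{\phi_0}(G,A\rtimes\Gamma)$ an abelian group).
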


\subsection{}
Here is another standard result.
\begin{lemma}\label{exactgrp}
Let $M_1\xrightarrow{f_1}M_2\xrightarrow{f_2}M_3\xrightarrow{f_3}M_4$ be an exact sequence of monoids. Suppose that $M_1$ and $M_3$ are groups, then so is $M_2$.
\end{lemma}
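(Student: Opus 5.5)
The plan is a direct diagram chase. I will show that every element of $M_2$ has both a right and a left inverse; in a monoid these two then coincide, so $M_2$ is a group. Throughout I read exactness of monoids at $M_i$ as $\operatorname{im} f_{i-1}=\ker f_i=f_i^{-1}(1)$, and I use that monoid homomorphisms preserve the identity. No commutativity is needed.

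\emph{Step 1: produce a candidate inverse.} Fix $m\in M_2$. Since $M_3$ is a group, $f_2(m)$ has an inverse $y\in M_3$. I want to lift $y$ to $M_2$. By exactness at $M_3$ we have $f_2(m)\in\operatorname{im}f_2\subseteq\ker f_3$, so $f_3(f_2(m))=1$. Hence
\[
f_3(y)=f_3(y)\,f_3(f_2(m))=f_3\bigl(y f_2(m)\bigr)=f_3(1)=1 .
\]
So $y\in\ker f_3=\operatorname{im} f_2$, and we may write $y=f_2(m')$ for some $m'\in M_2$.

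\emph{Step 2: correct the candidate using $M_1$.} We have $f_2(mm')=f_2(m)\,y=1$, so $mm'\in\ker f_2=\operatorname{im} f_1$; write $mm'=f_1(a)$ with $a\in M_1$. Since $M_1$ is a group, $a^{-1}$ exists, and
\[
m\bigl(m' f_1(a^{-1})\bigr)=f_1(a)f_1(a^{-1})=f_1(1)=1 ,
\]
so $m' f_1(a^{-1})$ is a right inverse of $m$.

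On the other side, $f_2(m'm)=y\,f_2(m)=1$, so $m'm=f_1(b)$ for some $b\in M_1$. Then $\bigl(f_1(b^{-1})\,m'\bigr)m=1$, so $f_1(b^{-1})\,m'$ is a left inverse of $m$.

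\emph{Step 3: conclude.} If $\ell m=1=mr$ in a monoid, then $\ell=\ell(mr)=(\ell m)r=r$. So every $m\in M_2$ is invertible, and $M_2$ is a group.

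The only delicate point is Step 1: one must check that the inverse of $f_2(m)$ actually lies in the image of $f_2$. This is exactly where $M_4$ and exactness at $M_3$ enter, and it is why the sequence must extend to $f_3$.
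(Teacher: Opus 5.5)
Your proof is correct and follows the paper's argument: lift the inverse of $f_2(m)$ to $M_2$ using $f_3$ and exactness at $M_3$, then correct the lift by $f_1(a^{-1})$ using exactness at $M_2$ and invertibility in $M_1$. The paper gives only the right inverse and says the left inverse is obtained similarly. You construct the left inverse explicitly and add the standard fact that a left and a right inverse in a monoid coincide, which makes the conclusion fully rigorous.
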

\begin{proof}
We need to find an inverse for every $m_2\in M_2$. Indeed, the image $m_3:=f_2(m_2)$ of such given $m_2$ is invertible in $M_3$.
Applying $f_3$ we obtain $$e_4=f_3(m_3\cdot m_3^{-1})=f_3( f_2(m_2))\cdot f_3(m_3^{-1})=f_3(m_3^{-1}).$$Therefore, $m_3^{-1}\in\ker(f_3)$ and hence there is $m'_2\in M_2$ such that $f_2(m'_2)=m_3^{-1}$.
It follows that $f_2(m_2\cdot m'_2)=m_3\cdot m_3^{-1}=e_3$, and so there is $m_1\in M_1$ such that $f_1(m_1)=m_2\cdot m'_2$.
Then $$m_2\cdot m'_2\cdot f_1(m_1^{-1})=f_1(m_1)\cdot f_1(m_1^{-1})=e_2,$$
and hence $m'_2\cdot f_1(m_1^{-1})$ is a right inverse of $m_2$. A left inverse is obtained similarly.
\end{proof}

\subsection{}\label{GalBack}
In this subsection we state a property of certain orthogonal idempotents in the Galois theory of commutative rings.
It plays a pivotal role in \S \ref{Sec7}. We remind
\begin{definition}\label{Galdef}
A commutative ring extension $k\subseteq K$ is a Galois extension with Galois group $G=$Aut$_k(K)$ if
\begin{enumerate}
\item $K^G=k$, and
\item for each maximal ideal $M\subseteq K$
and for every nontrivial $g\in G$ there exists $x\in K$ with $x-g(x)\notin M$.
\end{enumerate}
\end{definition}
Denote the set of functions from a group $G$ to a commutative ring $K$ by $\nabla (K:G)$,
and furnish it with pointwise additive and multiplicative rules.
Then the commutative $K$-algebra $\nabla (K:G)$ is isomorphic to the direct product of $|G|$ copies of $K$ generated by orthogonal idempotents $\{\tilde{\epsilon}_g\}_{g\in G}$.
One of the equivalent criteria \cite[Proposition III.1.2]{FDEI} of an action $\phi  \in$ Hom$(G, \Aut_k(K))$ with $k=K^G$ to be Galois, is that
%A standard result \cite[Proposition III.1.2 ]{FDEI} says that when $\phi  \in$ Hom$(G, \Aut_k(K))$ is a Galois action then
the $K$-algebra homomorphism
\begin{eqnarray}\label{l}\begin{array}{ccc}
K \otimes_k K &\to &\nabla (K:G)\\
a \otimes_k b &\mapsto & \sum\limits_{g \in G}a\cdot{\phi(g^{-1})}(b)\tilde{\epsilon}_g
\end{array},\ \ \ \ a,b\in K\end{eqnarray}
is an isomorphism.
Suppose then that $K$ is a Galois extension of $k$. For every $g\in G$ let $\epsilon_g\in  K \otimes_k K$ be the inverse image of $\tilde{\epsilon}_g\in \nabla (K:G)$ under the isomorphism \eqref{l}.
Certainly $\{\epsilon_g\}_{g\in G}$ are orthogonal idempotents in $K \otimes_k K$, which sum up to $1 \otimes_k 1$.
The following properties of $\{\epsilon_g\}_{g\in G}$ are important in \S \ref{Sec7}.
\begin{lemma}\label{Kidem}
Let $a\in K$ such that $(a\otimes_k1){\epsilon}_g={\epsilon}_g$. Then $a=1$.
\end{lemma}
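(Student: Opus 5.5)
Apply the isomorphism \eqref{l}, call it $\Theta: K\otimes_k K\to\nabla(K:G)$, to both sides of the hypothesis $(a\otimes_k1)\epsilon_g=\epsilon_g$. Since $\Theta$ is a $K$-algebra isomorphism, where $K$ acts on $K\otimes_k K$ through the left factor, the element $a\otimes_k 1$ corresponds to
\[\Theta(a\otimes_k 1)=\sum_{h\in G}a\cdot\phi(h^{-1})(1)\,\tilde{\epsilon}_h=\sum_{h\in G}a\,\tilde{\epsilon}_h ,\]
that is, to the constant function with value $a$. By definition $\Theta(\epsilon_g)=\tilde{\epsilon}_g$. The hypothesis therefore becomes the identity
\[\Big(\sum_{h\in G}a\,\tilde{\epsilon}_h\Big)\tilde{\epsilon}_g=\tilde{\epsilon}_g\]
in $\nabla(K:G)$. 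Because the $\tilde{\epsilon}_h$ are orthogonal idempotents, the left side equals $a\,\tilde{\epsilon}_g$, so $a\,\tilde{\epsilon}_g=\tilde{\epsilon}_g$.

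Next evaluate this equality of functions $G\to K$ at the point $g$, using pointwise multiplication. Here $\tilde{\epsilon}_g$ is the indicator function of $g$, with $\tilde{\epsilon}_g(g)=1$. Evaluation gives $a\cdot 1=1$ in $K$, hence $a=1$.

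No real obstacle is expected. The only point needing care is the identification of $a\otimes_k1$ with the constant function $a$, which uses the explicit form of \eqref{l} and the fact that $\phi(h^{-1})(1)=1$. The Galois hypothesis enters only through the invertibility of \eqref{l}, which is what lets us transport the hypothesis from $K\otimes_k K$ to $\nabla(K:G)$ without losing information.
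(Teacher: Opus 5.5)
Your proof is correct and follows the paper's argument: you apply \eqref{l} to get $a\tilde{\epsilon}_g=\tilde{\epsilon}_g$ in $\nabla(K:G)$ and then read off the $g$-coordinate. One small correction to your closing remark: pushing the hypothesis forward only uses that \eqref{l} is a ring homomorphism, and the Galois bijectivity is needed only to define $\epsilon_g$ as the preimage of $\tilde{\epsilon}_g$.
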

\begin{proof}
Apply the isomorphism \eqref{l} to get $a\tilde{\epsilon}_g=\tilde{\epsilon}_g$, and hence the claim.
\end{proof}
The second claim is less trivial.
\begin{lemma}\label{L:2.2}
Let $\phi  \in$ Hom$(G, \Aut_k(K))$ be a Galois action. Then the orthogonal idempotents $\{\epsilon_g\}_{g\in G}\subset K \otimes_k K$ satisfy
\begin{equation}\label{theorth}
\epsilon_g(x \otimes_k 1) = \epsilon_g(1 \otimes_k \phi(g)(x)),\ \ \forall g \in G, x \in K.
\end{equation}
In particular,
\begin{equation}\label{eq:OI}
\epsilon_g(K\otimes_k K) = \epsilon_{g}(K \otimes_k 1) = \epsilon_{g}(1 \otimes_k K),\ \ \forall g \in G.
\end{equation}
Moreover, if an idempotent $\epsilon_g' \in K \otimes_k K$ satisfies
$\epsilon_g'(x \otimes_k 1) = \epsilon_g'(1 \otimes_k \phi(g)(x))$ for some $g \in G$ and for every $x\in K$,
then there exists an idempotent $a_g\in K$ such that $\epsilon_g' =\epsilon_g(a_g \otimes_k 1)$.
\end{lemma}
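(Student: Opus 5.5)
Everything will be transported through the isomorphism \eqref{l}, $\Theta: K\otimes_k K\to\nabla(K:G)$, which is a $K$-algebra map for the left $K$-structure, since $\Theta((a\otimes_k 1)z)=a\,\Theta(z)$. The right factor is also easy to track: $\Theta(1\otimes_k b)=\sum_h \phi(h^{-1})(b)\tilde{\epsilon}_h$. With these two formulas, \eqref{theorth} becomes a coordinatewise identity in $\nabla(K:G)\cong\prod_G K$.

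\textbf{Step 1: the identity \eqref{theorth}.} Apply $\Theta$ to both sides. The left side maps to $x\tilde{\epsilon}_g$. The right side maps to $\tilde{\epsilon}_g\sum_h \phi(h^{-1})(\phi(g)(x))\tilde{\epsilon}_h$. Orthogonality kills every term except $h=g$, leaving $\phi(g^{-1})\phi(g)(x)\tilde{\epsilon}_g=x\tilde{\epsilon}_g$. Since $\Theta$ is injective, \eqref{theorth} follows.

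\textbf{Step 2: the equalities \eqref{eq:OI}.} The group $K\otimes_k K$ is spanned by the elements $x\otimes_k y=(x\otimes_k 1)(1\otimes_k y)$. By \eqref{theorth},
\[
\epsilon_g(x\otimes_k y)=\epsilon_g(1\otimes_k \phi(g)(x)y)\in\epsilon_g(1\otimes_k K).
\]
Applying \eqref{theorth} with $x$ replaced by $\phi(g^{-1})(y)$ lets us move $y$ to the left factor instead, giving $\epsilon_g(x\otimes_k y)=\epsilon_g(x\phi(g^{-1})(y)\otimes_k 1)\in\epsilon_g(K\otimes_k 1)$. The reverse inclusions are trivial, so all three sets in \eqref{eq:OI} coincide.

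\textbf{Step 3: the uniqueness statement.} Put $e:=\Theta(\epsilon_g')=\sum_h e_h\tilde{\epsilon}_h$ with $e_h\in K$. Since $e$ is idempotent and $\nabla(K:G)$ is a product of copies of $K$, each $e_h$ is an idempotent of $K$. Applying $\Theta$ to the hypothesis and comparing the $h$-coordinates gives
\[
e_h\bigl(x-\phi(h^{-1}g)(x)\bigr)=0\qquad\text{for all }x\in K,\ h\in G.
\]
The conclusion $\epsilon_g'=\epsilon_g(a_g\otimes_k 1)$, with $a_g:=e_g$, is then equivalent to $e_h=0$ for every $h\neq g$. So the task is: if $\sigma:=\phi(h^{-1}g)\neq 1$, then the ideal $I$ of $K$ generated by $\{x-\sigma(x): x\in K\}$ contains no nonzero idempotent annihilating it.

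This is where the Galois hypothesis is used, namely condition (2) of Definition \ref{Galdef}. It says $I$ lies in no maximal ideal, so $I=K$. Hence $1=\sum_i c_i(x_i-\sigma(x_i))$ for some $c_i,x_i\in K$. Multiplying by $e_h$ gives $e_h=\sum_i c_i e_h(x_i-\sigma(x_i))=0$, as required.

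\textbf{Main obstacle.} The one point that is more than bookkeeping is Step 3. Pulling the twisted relation back through $\Theta$ leads to a condition on $\phi(h^{-1}g)$, and we need the automorphism to be nontrivial exactly when $h\neq g$. That requires $\phi$ to be injective, which holds here because $G=\Aut_k(K)$ under the Galois assumption of Definition \ref{Galdef}. Once this is in place, condition (2) forces $I=K$ and the remaining argument is immediate.
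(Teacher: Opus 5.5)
Your proof is correct and follows essentially the same route as the paper: transport everything through the isomorphism \eqref{l}, compare coordinates in $\nabla(K:G)$, and use condition (2) of Definition \ref{Galdef} to kill the coordinates $h\neq g$. The only difference is cosmetic. You show that the ideal generated by the elements $x-\sigma(x)$ is all of $K$, whereas the paper argues by contradiction using a maximal ideal containing $\mathrm{Ann}_K(a_h)$.
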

\begin{proof}
Apply the $K$-algebra homomorphism \eqref{l} for $g \in G$ and $x\in K$
$$\epsilon_g(x \otimes_k 1)-\epsilon_g(1 \otimes_k\ \phi(g^{})(x)) \mapsto
%\tilde{\epsilon}_{g^{-1}} \sum\limits_{\sigma \in G}x\tilde{\epsilon}_{\sigma} -
%\tilde{\epsilon}_{g^{-1}} \sum\limits_{\tau \in G}\ (^{\tau g}x)\tilde{\epsilon}_{\tau} =
\tilde{\epsilon}_{g^{}}x-\tilde{\epsilon}_{g^{}}x =0.$$
As mentioned above, in the Galois case \eqref{l} is an isomorphism, verifying \eqref{theorth} and consequently \eqref{eq:OI}.
%Thus, $e_g(x \otimes 1) = e_g(1\otimes\ ^gx)$. Clearly, \eqref{eq:OI} holds true for this set of idempotents.
For the last claim of the lemma, let $\epsilon_g' \in K \otimes_k K$ be an idempotent for some $g\in G$ such that
\begin{equation}\label{eps'}
\epsilon_g'(x \otimes_k 1) = \epsilon_g'(1 \otimes_k \phi(g^{})(x)),\ \forall x \in K.
\end{equation}
%We claim that $\epsilon_g' = \epsilon_g$.
By \eqref{eq:OI} there exist $a_{h} \in K$ for every $h\in G$ such that
\begin{equation}\label{epsum}
\epsilon_g' = \sum\limits_{h \in G}\epsilon_{h}(a_{h} \otimes_k 1).
\end{equation}
Thus, by the assumption \eqref{eps'} %and
we obtain
$$\sum\limits_{h \in G}\epsilon_{h}(a_{h} \otimes_k 1)((x \otimes_k 1)-(1 \otimes_k \phi(g^{})(x))) =\epsilon_g'((x \otimes_k 1)-(1 \otimes_k \phi(g^{})(x)))= 0,\ \forall x \in K.$$
Applying the isomorphism \eqref{l}, with the orthogonality of $\{\tilde{\epsilon}_g\}_{g\in G}$ in mind, yields
$$\sum\limits_{h\in G}\tilde{\epsilon}_{h^{}}a_{h}(x-\phi(h^{-1}g)(x)) = 0,\ \forall x \in K.$$
Consequently,
\begin{equation}\label{eq2}
a_{h}(x-\phi(h^{-1}g)(x))= 0,\ \forall h \in G, x\in K.\end{equation}

By the Galois assumption (see Definition \ref{Galdef}) it follows
that $a_{h} = 0$ for every $h \neq g$, otherwise there exists
$y\in K$ such that $y -\phi(h^{-1}g)(y) $ does not belong to some
maximal ideal containing Ann$_K(a_{h})$ and, particularly, does
not annihilate $a_h$ contradicting \eqref{eq2} for $h$.
Therefore, % $a_{h} = 0$ for every $h \neq \sigma$ and thus
\eqref{epsum} is just $\epsilon_g' = \epsilon_g(a_g \otimes_k 1)$.
Applying the isomorphism \eqref{l} once again on $\epsilon_g'^2=\epsilon_g'$ shows $a_g$ is an idempotent in $K$, and hence the claim.
\end{proof}
%\end{equation}

\section{actions of monoidal groupoids}\label{mongroupsec}
Recall that a groupoid is a category, all of whose morphisms are invertible.
In order to define a (left) action $(f,a)\mapsto f(a)$ of a groupoid $\mathcal{G}$ on a set $A$, one firstly needs an anchor (or a moment) map $\varsigma:A\to \text{ob}(\mathcal{G})$.
The action is defined on pairs $(f,a)\in \hom(\mathcal{G})\times A$, such that the source of $f$ is equal to $\varsigma(a)$. We call these pairs \textit{admissible}.
Then a (fibred) action should satisfy for
$f,g\in \text{hom}(\mathcal{G})$ and $a\in A$ (see \cite[\S 4]{Gilbert05})
\begin{itemize}\label{actgrp}
\item $\varsigma(f(a))$ is equal to the range of $f$,
\item $f(g(a))=f\circ g(a)$, whenever they are defined, and
\item $a=u_{\mathcal{G}}(\varsigma(a))(a)$, where
$u_{\mathcal{G}}:\begin{array}{ccc}\text{ob}(\mathcal{G})&\hookrightarrow &\text{hom}(\mathcal{G})\\
X&\mapsto& \text{Id}_X
\end{array}$ is the natural embedding
of the objects of $\mathcal{G}$ inside its morphisms.
\end{itemize}

The orbit of an element $a\in A$ under such action is naturally defined as
$$[a]_{\mathcal{G}}:=\{f(a)\mid (f,a)\text{ is admissible}\}\subseteq A.$$
The invertibility property of a groupoid yields that its action on a set induces an equivalence relation on this set determined by the orbits of the action.

To an action of a groupoid $\mathcal{G}$ on a set $A$ one can associate an \textit{action groupoid} $A^\mathcal{G}$, whose objects are just the elements of $A$, and
$$\hom(a,b):=\{f\in\hom(\mathcal{G})\mid f(a)=b\},\ \ a,b\in A.$$

A groupoid $\mathcal{G}$ is monoidal if it admits a bifunctor $\otimes:\mathcal{G}\times\mathcal{G}\to\mathcal{G}$ as well as a unit
$1\in\ob(\mathcal{G})$, both subject to certain coherence conditions (see \cite{CCH}).
In particular,
\begin{itemize}
\item For every two morphisms $f \in \hom(a,b)$, and $f' \in \hom(a',b')$ we have
\begin{equation}\label{tensoring}
f \otimes f' \in \hom(a\otimes a',b \otimes b').
\end{equation}
\item The tensor product is compatible with the composition via
\begin{equation}\label{MonGroup}
(f\otimes f'){\circ}( g\otimes g')= (f\circ g)\otimes(f'\circ g'),
\end{equation}
for every composable pairs $(f,g),(f',g')\in\text{hom}(\mathcal{G})\times\text{hom}(\mathcal{G})$.
\end{itemize}
Consequently,
\begin{lemma}\label{End_1}
Let $\mathcal{G}$ be a strict monoidal groupoid, then
\begin{enumerate}\item $\Id_{1_{\ob(\mathcal{G})}}$ is the unit of the monoid $\hom(\mathcal{G}).$
\item $\End(1_{\ob(\mathcal{G})})$ is closed under tensoring.
%That is, the full subcategory on $1_{\ob(\mathcal{G})}$ is itself a monoidal groupoid.
Moreover, the monoidal structure of $\End(1_{\ob(\mathcal{G})})$ identifies with its group structure by compositions. In particular, $(\End(1_{\ob(\mathcal{G})}),\otimes)$ is a group.
\end{enumerate}
\end{lemma}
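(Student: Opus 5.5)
The plan is to use the interchange law \eqref{MonGroup} together with strictness, which makes the unit object $1:=1_{\ob(\mathcal{G})}$ satisfy $1\otimes a=a=a\otimes 1$ on objects and $\Id_1\otimes f=f=f\otimes\Id_1$ on morphisms. The monoid structure on $\hom(\mathcal{G})$ is the one given by $\otimes$ on morphisms; associativity comes from strictness.

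For (1), strictness means the unitors are identities, so the functors $1\otimes-$ and $-\otimes 1$ are the identity functor. Evaluating them on a morphism $f$ gives $\Id_1\otimes f=f=f\otimes\Id_1$. Hence $\Id_1$ is a two-sided unit for $\otimes$ on $\hom(\mathcal{G})$. If one prefers to use only functoriality: $\Id_1\otimes\Id_a=\Id_{1\otimes a}=\Id_a$, and naturality of the (identity) unitor then forces $\Id_1\otimes f=f$.

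For (2), closure is immediate from \eqref{tensoring}. If $f,f'\in\End(1)$, then $f\otimes f'\in\hom(1\otimes1,1\otimes1)=\End(1)$, since $1\otimes1=1$ by strictness.

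The main step is to identify $\otimes$ with $\circ$ on $\End(1)$, which is the Eckmann--Hilton argument. For $f,g\in\End(1)$ all the pairs below are composable, and \eqref{MonGroup} together with (1) gives
$$f\circ g=(f\otimes\Id_1)\circ(\Id_1\otimes g)=(f\circ\Id_1)\otimes(\Id_1\circ g)=f\otimes g,$$
and similarly
$$g\circ f=(\Id_1\otimes g)\circ(f\otimes\Id_1)=f\otimes g.$$
So $\otimes$ coincides with $\circ$ on $\End(1)$, and as a byproduct both operations are commutative. Since $\mathcal{G}$ is a groupoid, every $f\in\End(1)$ has a compositional inverse $f^{-1}\in\End(1)$, and $\Id_1$ is the identity for both operations. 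Therefore $(\End(1),\otimes)=(\End(1),\circ)$ is a group (indeed an abelian one).

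The only delicate point is justifying $\Id_1\otimes f=f$ from strictness, that is, from the unit constraints being identities. Everything else is a formal consequence of \eqref{MonGroup}.
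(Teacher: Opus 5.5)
Your proof is correct and follows the paper's argument. Part (1) comes from strictness, closure in (2) comes from \eqref{tensoring} with $1\otimes 1=1$, and the identification is the Eckmann--Hilton computation $f\circ g=(f\otimes\Id_1)\circ(\Id_1\otimes g)=f\otimes g$ via \eqref{MonGroup}; your additions (the unitor-naturality justification of $\Id_1\otimes f=f$, commutativity, and inverses from the groupoid structure) only make explicit what the paper leaves implicit.
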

\begin{proof}
\begin{enumerate}
\item This is immediate from the strictness assumption.
 \item The closure under tensoring follows from \eqref{tensoring}. The identification of the two structures is a consequence of \eqref{MonGroup}, Indeed, for every $f,g\in\End(1_{\ob(\mathcal{G})})$
$$f\circ g=(f\otimes 1_{\hom}){\circ}( 1_{\hom}\otimes g)= (f\circ 1_{\hom})\otimes(1_{\hom}\circ g)=f\otimes g.$$
 \end{enumerate}
\end{proof}

In \S\ref{MCKsec} we introduce a strict monoidal groupoid, namely $\mathcal{MC}'_K(1)$, which is key in this work. Here is a notion for an action of strict monoidal groupoids on monoids.
\begin{definition}\label{Defact}
Let a groupoid $\mathcal{G}$ act on a set $A$ as above.
Suppose that both $\mathcal{G}$ and $A$ admit monoidal products $\otimes$ and $\otimes_A$ respectively, where the monoidal structure of $\mathcal{G}$ is strict.
We say that $\mathcal{G}$ acts on $A$ as a monoidal groupoid if  the anchor $\varsigma:A\to{\ob(\mathcal{G})}$ is a monoid homomorphism, and the rules\\
$\begin{array}{ccc}
\ob(A^\mathcal{G})\times\ob(A^\mathcal{G})&\to&\ob(A^\mathcal{G})\\
(a,b)&\mapsto& a\otimes_A b
\end{array}$\ and\
$\begin{array}{ccc}
\hom(A^\mathcal{G})\times\hom(A^\mathcal{G})&\to&\hom(A^\mathcal{G})\\
(f,g)&\mapsto& f\otimes g
\end{array}$\\
determine a bifunctor $A^\mathcal{G}\times A^\mathcal{G}\to A^\mathcal{G}$, enriching the action groupoid $A^\mathcal{G}$ with a monoidal structure.
\end{definition}
In particular, the source and the target are homomorphisms of monoids from $\hom(A^{\mathcal{G}})$ to $A$, that is
%Comparing targets of morphism products we obtain the following equality in $A=\ob(A^\mathcal{G})$
\begin{equation}\label{secondition}
(f\otimes g)(a\otimes_A b)=f(a)\otimes_A g(b),
\end{equation}
for any two admissible pairs $(f,a)$ and $(g,b)$.
Throughout the paper, by an action of a monoidal groupoid $\mathcal{G}$ on a monoid $A$ we mean that equation \eqref{secondition} is satisfied without taking care of the other coherence conditions.
\begin{example}\label{conjex}
A groupoid $\mathcal{G}$ acts on the endomorphisms submonoid
$$A:=\bigsqcup_{a\in\ob(\mathcal{G})}\text{end}(a)(=\bigsqcup_{a\in\ob(\mathcal{G})}\text{aut}(a))\subset\hom(\mathcal{G})$$
by conjugation. The anchor $\varsigma(f)$ of a morphism in $f\in\text{end}(a)\subset A$ is $a\in\ob(\mathcal{G})$, and then
$g(f):= g\circ f\circ g^{-1}\in\text{end}(a')$ for every $a\in\ob(\mathcal{G})$ and $g\in$hom$(a,a')$. Assume, additionally, that the groupoid $\mathcal{G}$ is strictly monoidal.
Then being closed under the tensoring, $A$ is a monoid. Furthermore, condition \eqref{MonGroup}, which is satisfied by $\mathcal{G}$, implies \eqref{secondition} and hence
this is an action of a monidal groupoid on a monoid.
\end{example}
\begin{lemma}\label{orblemma}
Let a strict monoidal groupoid $\mathcal{G}$ with identity $1_{\text{ob}(\mathcal{G})}$ act on a monoid $(A,\otimes_A)$ with identity $1_A$. Then
\begin{enumerate}
\item The orbit $[1_A]_{\mathcal{G}}$ is a submonoid of $A$.
%the sub-orbit $$\{f(1_A)\mid f\in\text{end}(1_{\text{ob}(\mathcal{G})})\}< [1_A]_{\mathcal{G}}$$ is a subgroup which is an image of $\text{end}(1_{\text{ob}(\mathcal{G})})$.
\item The rule $f\mapsto f(1_A)$ defines a group homomorphism from $\text{end}(1_{\text{ob}(\mathcal{G})})$ onto the subgroup
$\{f(1_A)\mid f\in\text{end}(1_{\text{ob}(\mathcal{G})})\}$ of the monoid $[1_A]_{\mathcal{G}}$.
\item The orbits under this action are endowed with a well-defined monoidal structure
$$[a]_{\mathcal{G}}\otimes_{[A]}[a']_{\mathcal{G}}:=[a\otimes_A a']_{\mathcal{G}},\ \ \forall a,a'\in A.$$
\end{enumerate}
\end{lemma}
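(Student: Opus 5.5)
All three parts follow from the compatibility rule \eqref{secondition}, the monoid-homomorphism property of the anchor $\varsigma$, and Lemma \ref{End_1}.

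First I pin down the anchor of the unit. Since $\varsigma$ is a monoid homomorphism, $\varsigma(1_A)=1_{\ob(\mathcal{G})}$. Hence a pair $(f,1_A)$ is admissible exactly when the source of $f$ is $1_{\ob(\mathcal{G})}$.

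For (1), let $f(1_A)$ and $f'(1_A)$ be two elements of the orbit, with both sources equal to $1_{\ob(\mathcal{G})}$. By \eqref{tensoring} the source of $f\otimes f'$ is $1_{\ob(\mathcal{G})}\otimes 1_{\ob(\mathcal{G})}=1_{\ob(\mathcal{G})}$, using strictness. So $(f\otimes f',1_A\otimes_A 1_A)$ is admissible, and \eqref{secondition} gives
$$(f\otimes f')(1_A)=(f\otimes f')(1_A\otimes_A 1_A)=f(1_A)\otimes_A f'(1_A).$$
The unit $1_A=\Id_{1_{\ob(\mathcal{G})}}(1_A)$ lies in the orbit by the third axiom of an action. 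Thus the orbit is a submonoid.

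For (2), take $f,g\in\End(1_{\ob(\mathcal{G})})$. The same computation gives $(f\otimes g)(1_A)=f(1_A)\otimes_A g(1_A)$. By Lemma \ref{End_1}, $\End(1_{\ob(\mathcal{G})})$ is a group under $\otimes$, and this operation coincides with composition. Hence $f\mapsto f(1_A)$ is a monoid homomorphism from a group, sending $\Id_{1_{\ob(\mathcal{G})}}$ to $1_A$. Its image is therefore a subgroup of $[1_A]_{\mathcal{G}}$ with inverses $f^{-1}(1_A)$, and the map is onto that image by definition.

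For (3), suppose $b=f(a)$ and $b'=f'(a')$ with $(f,a)$ and $(f',a')$ admissible. Then $\varsigma(a\otimes_A a')=\varsigma(a)\otimes\varsigma(a')$ is the source of $f\otimes f'$, so $(f\otimes f',a\otimes_A a')$ is admissible. By \eqref{secondition}, $b\otimes_A b'=(f\otimes f')(a\otimes_A a')$ lies in $[a\otimes_A a']_{\mathcal{G}}$. Because orbits are equivalence classes (groupoid invertibility), this shows $[b\otimes_A b']_{\mathcal{G}}=[a\otimes_A a']_{\mathcal{G}}$, so the product is independent of representatives. Associativity and the unit $[1_A]_{\mathcal{G}}$ are inherited from $A$.

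The only delicate point is checking admissibility each time \eqref{secondition} is applied. This is exactly where the anchor being a monoid homomorphism and the strictness of $\mathcal{G}$ (so that $1\otimes 1=1$ on the nose) are used.
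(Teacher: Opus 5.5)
Your proposal is correct and follows the paper's approach. The paper's proof is the single line that all claims follow from \eqref{secondition}; you supply the omitted admissibility checks (the anchor being a monoid homomorphism, and strictness giving $1\otimes 1=1$) and invoke Lemma \ref{End_1} for the group structure in part (2).
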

\begin{proof}
All claims follow from \eqref{secondition}.
\end{proof}
Denote the quotient monoid in Lemma \ref{orblemma}(3) by $A/\mathcal{G}$. The following lemma is straightforward.
\begin{lemma}\label{quotmor}
Let $\mathcal{G},\mathcal{G}'$ be strict monoidal groupoids acting on monoids $A,A'$ respectively.
Then a monoid homomorphism $\varphi:A\to A'$ gives rise to a well-defined monoid homorphism
$\bar{\varphi}:\begin{array}{ccc}
A/\mathcal{G}&\to& A'/\mathcal{G}'\\
~[a]_{\mathcal{G}}&\mapsto&[\varphi(a)]_{\mathcal{G}'}
\end{array}$
% ,\ a\in A $
if and only if for every admissible pair $(f,a)\in \hom(\mathcal{G})\times A$,
%$f\in$hom$(\mathcal{G})$ and $a\in A$ such that $f(a)$ is defined,
there exists $f'\in$hom$(\mathcal{G}')$ with $f'(\varphi(a))=\varphi(f(a))$.
In particular, the quotient map $\begin{array}{ccc}
A&\to& A/\mathcal{G}\\
~a&\mapsto&[a]_{\mathcal{G}}
\end{array}$ is a homomorphism of monoids.
\end{lemma}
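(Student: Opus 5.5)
The statement is Lemma \ref{quotmor}. The plan is to prove the two directions of the equivalence separately, keeping in mind three facts: the orbits $[a]_{\mathcal{G}}$ are equivalence classes, because $\mathcal{G}$ is a groupoid; the monoid structure on $A/\mathcal{G}$ is given by Lemma \ref{orblemma}(3); and every element of an orbit has the form $f(a)$ for some admissible pair $(f,a)$.

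\emph{Sufficiency.} Assume that for every admissible pair $(f,a)$ there is $f'\in\hom(\mathcal{G}')$ with $f'(\varphi(a))=\varphi(f(a))$.
\begin{enumerate}
\item Well-definedness: if $[a]_{\mathcal{G}}=[b]_{\mathcal{G}}$, write $b=f(a)$ for an admissible pair $(f,a)$. The hypothesis gives $\varphi(b)=f'(\varphi(a))\in[\varphi(a)]_{\mathcal{G}'}$, so $\bar{\varphi}([a]_{\mathcal{G}})$ does not depend on the representative.
\item Multiplicativity: by Lemma \ref{orblemma}(3) and the fact that $\varphi$ is a monoid homomorphism,
$$\bar{\varphi}([a]_{\mathcal{G}}\otimes[b]_{\mathcal{G}})=[\varphi(a\otimes_A b)]_{\mathcal{G}'}=[\varphi(a)\otimes_{A'}\varphi(b)]_{\mathcal{G}'}=\bar{\varphi}([a]_{\mathcal{G}})\otimes\bar{\varphi}([b]_{\mathcal{G}}).$$
\item Unit: $\bar{\varphi}([1_A]_{\mathcal{G}})=[\varphi(1_A)]_{\mathcal{G}'}=[1_{A'}]_{\mathcal{G}'}$.
\end{enumerate}

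\emph{Necessity.} Assume $\bar{\varphi}$ is well defined and let $(f,a)$ be admissible. Then $[f(a)]_{\mathcal{G}}=[a]_{\mathcal{G}}$, so well-definedness forces $\varphi(f(a))\in[\varphi(a)]_{\mathcal{G}'}$. By the definition of an orbit, this means $\varphi(f(a))=f'(\varphi(a))$ for some $f'$ with $(f',\varphi(a))$ admissible.

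\emph{The particular case.} Take $\mathcal{G}'=\mathcal{G}$, $A'=A$ and $\varphi=\Id_A$, and choose $f'=f$; the condition then holds trivially. However, the resulting $\bar{\varphi}$ is a map from $A/\mathcal{G}$ to itself, whereas the quotient map goes from $A$ to $A/\mathcal{G}$, so this case is handled directly. The quotient map $a\mapsto[a]_{\mathcal{G}}$ is a homomorphism because $[a\otimes_A a']_{\mathcal{G}}=[a]_{\mathcal{G}}\otimes[a']_{\mathcal{G}}$ is exactly the definition of the product in Lemma \ref{orblemma}(3), and the unit $1_A$ maps to the unit $[1_A]_{\mathcal{G}}$.

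Every step is routine. The only point needing care is that "well defined" is read as "constant on orbits", so that necessity follows; all the monoid-theoretic content is already contained in Lemma \ref{orblemma}(3).
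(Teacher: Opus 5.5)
Your proposal is correct and is the routine verification the paper intends; the paper gives no proof and only calls the lemma straightforward. Your argument in both directions rests on orbits being equivalence classes and on the orbit product of Lemma \ref{orblemma}(3), and your direct treatment of the quotient map is fine. Alternatively, the quotient map is the case $\varphi=\Id_A$ of the main statement, with the trivial one-object groupoid acting on the source copy of $A$, so that its orbits are singletons.
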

\begin{definition}\label{grpmap}
Let a groupoid $\mathcal{G}$ act on two sets $A,B$.
A map $\varphi:A\to B$ is a \textit{$\mathcal{G}$-map} if when $(f,a)\in \hom(\mathcal{G})\times A$ is admissible,
% for $f\in$hom$(\mathcal{G}), a\in A$, whenever $f(a)$ is defined then
then so is $(f,\varphi(a))\in \hom(\mathcal{G})\times B$, and
$f(\varphi(a))=\varphi(f(a)).$
\end{definition}
By Lemma \ref{quotmor}, if $\varphi:A\to B$ is a $\mathcal{G}$-homomorphism of $\mathcal{G}$-monoids, where $\mathcal{G}$ is a strict monoidal groupoid, then
$\bar{\varphi}:A/\mathcal{G}\to B/\mathcal{G}$ is a homomorphism of monoids.

%Suppose now that the monoidal groupoid ${\mathcal{G}}$ has a single object).Evidently, in this case,.

Suppose now that $\mathcal{G}$ has a single object 1. Then by Lemma \ref{End_1}(2) $\hom(\mathcal{G})=\text{end}(1)$ is a group also under tensoring.
%Assume further that the monoidal structure of $\mathcal{G}$ is also a group.
Let $\mathcal{G}$ act on a monoid $(A,\otimes_A)$ as in Definition \ref{Defact}. In this case, the action is global, since the anchor of every $a\in A$ is 1.
%Then the orbit $[a]_{\mathcal{G}}=[a]_{}$.
Due to Lemma \ref{orblemma}(1), $A$ admits $[1_A]_{\mathcal{G}}$ as a submonoid. Apply \eqref{secondition} to obtain that $[1_A]_{\mathcal{G}}$ is a subgroup.
Moreover, \eqref{secondition} yields
\begin{eqnarray}\label{1A}\begin{array}{rl}
f(a)&=(\Id_{1}\otimes f)(a\otimes_A 1_A)\\
&=\Id_{1}(a)\otimes_A f(1_A)=a\otimes_A f(1_A)\end{array},
 \ \forall f\in\text{end}(1), a\in A,
\end{eqnarray}
and similarly $f(a) = f(1_A) \otimes_A a$. Therefore $[1_A]_{\mathcal{G}}$ is a central subgroup of the monoid $A$.
%Hence, the equivalence relation given below is a congruence; $$a_1 \sim a_2\ \mbox{if\ and\ only\ if}\ a_1 = a_2 \otimes_A f(1), \mbox{for\ some}\ f \in \hom(\mathcal{G}).$$
It is easily verified that the quotient monoid
$A/[1_A]_{\mathcal{G}}$ is well-defined.
The following consequence of \eqref{1A} is used in the sequel.
\begin{lemma}\label{isoqot}
With the above assumptions on $\mathcal{G}$, the rule $[a]_{\mathcal{G}}\mapsto a\otimes_A[1_A]_{\mathcal{G}}$ yields an isomorphism $A/{\mathcal{G}}\cong A/[1_A]_{\mathcal{G}}$ of monoids.
\end{lemma}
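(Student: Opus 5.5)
The plan is to show that the rule $[a]_{\mathcal{G}}\mapsto a\otimes_A[1_A]_{\mathcal{G}}$ is well defined and bijective; both directions come down to \eqref{1A}, which says that for a single-object $\mathcal{G}$ the orbit of $a$ is exactly the coset of $a$ by the central subgroup $N:=[1_A]_{\mathcal{G}}$.

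First I will identify the orbits. Since $\mathcal{G}$ has the single object $1$, every pair $(f,a)$ is admissible. Hence
$$[a]_{\mathcal{G}}=\{f(a)\mid f\in\text{end}(1)\}.$$
By \eqref{1A}, $f(a)=a\otimes_A f(1_A)$. Moreover, $N=[1_A]_{\mathcal{G}}$ equals $\{f(1_A)\mid f\in\text{end}(1)\}$. It follows that
$$[a]_{\mathcal{G}}=a\otimes_A N$$
as subsets of $A$.

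Next, recall that $A/N$ is the quotient of $A$ by the congruence $a\sim a'$ if and only if $a'\in a\otimes_A N$. This relation is an equivalence because $N$ is a subgroup, and it is compatible with $\otimes_A$ because $N$ is central; both facts were established just before the lemma. The class of $a$ in $A/N$ is therefore precisely the coset $a\otimes_A N=[a]_{\mathcal{G}}$. So the two quotient sets have literally the same classes, and the map $[a]_{\mathcal{G}}\mapsto a\otimes_A N$ is a well-defined bijection. Well-definedness here means that $[a]_{\mathcal{G}}=[a']_{\mathcal{G}}$ implies $a\otimes_A N=a'\otimes_A N$, which is immediate from the equality of sets above.

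Finally, the map is multiplicative. By Lemma~\ref{orblemma}(3) the product in $A/\mathcal{G}$ is $[a]_{\mathcal{G}}\otimes[a']_{\mathcal{G}}=[a\otimes_A a']_{\mathcal{G}}$. This is sent to $a\otimes_A a'\otimes_A N$, which is the product of $a\otimes_A N$ and $a'\otimes_A N$ in $A/N$. The identity $[1_A]_{\mathcal{G}}$ is sent to $N$, the identity of $A/N$. Hence the map is a monoid isomorphism.

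The only step requiring care is the orbit description, in particular checking that every element of $N$ has the form $f(1_A)$ with $f\in\text{end}(1)$, i.e.\ that there are no other admissible morphisms. This holds because $\hom(\mathcal{G})=\text{end}(1)$. The remaining steps are formal.
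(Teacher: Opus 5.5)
Your proof is correct and follows the route the paper indicates: the paper gives no separate proof, saying only that the lemma is a consequence of \eqref{1A}. You make that explicit by using \eqref{1A} to show that each orbit $[a]_{\mathcal{G}}$ equals the coset $a\otimes_A[1_A]_{\mathcal{G}}$, so the two partitions of $A$ coincide, and multiplicativity then follows from Lemma~\ref{orblemma}(3).
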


\section{
Monoidal groupoids over central $K$-algebras}\label{MCKsec}
Fixing $C(K)$ as in \S\ref{secCK}, let $\mathcal{MC}_K$ be a category, whose objects are the $K$-algebras that belong to an isomorphism type in $C(K)$, in particular these algebras are $K$-central.
For $R,S\in\ob(\mathcal{MC}_K)$, the morphisms hom$(S, R)=\Mod^*(R\m S)$, i.e. the isomorphism classes of invertible $R\m S$-bimodules (see \S\ref{Sec2}). The classes
\begin{equation}\label{twobmods0}[M]\in \Mod^*(R\m S),\ \ [M']\in \Mod^*(R'\m S'),\ \  R,R',S,S'\in\ob(\mathcal{MC}_K)
\end{equation} are composable if $S=R'$ and then
\begin{equation}\label{compmods}[M]\circ[M']=[M\otimes_SM']\in \Mod^*(R\m S').\end{equation}
By the invertibility property, $\mathcal{MC}_K$ is a groupoid.

Notice that the isomorphism classes $[_RP_R]\in\text{end}_{\mathcal{MC}_K}$ of the invertible $R$-bimodules are exactly the elements of the Picard group $\Pic (R)$ (see \S\ref{Picsec})
for every $K$-algebra $R$, yielding a group isomorphism
\begin{equation}\label{Picsub}
\Pic(R)\cong\text{end}_{\mathcal{MC}_K}(R),\ \  R\in\ob(\mathcal{MC}_K).
\end{equation}
We therefore identify Pic$(R)$ with the full subcategory of $\mathcal{MC}_K$ on a single object $R$.
Let us introduce a monidal groupoid $\mathcal{MC}_K(1)$, which is a a wide subcategory of $\mathcal{MC}_K$.

Consider the \textit{Morita monoidal groupoid} $\mathcal{MA}lg_K$ \cite[\S 2.1]{CCH}, whose objects are $K$-algebras.
The morphisms hom$(S, R)$ are isomorphism classes of a Morita equivalences $\text{Mod}(S)\xrightarrow{\sim} \text{Mod}(R)$, or in other words,
isomorphism classes of invertible $R\m S$-bimodules that admit the same left and right actions of $K$, i.e.
$$\hom(S,R):=\{[_RP_S]\mid x\star p=p\cdot x,\ \ \forall x\in K,p\in P\}\ \ \forall R,S\in \ob(\mathcal{MA}lg_K).$$
The tensor product of two bimodule classes
\begin{equation}\label{twobmods}[M]\in \hom(S,R),\ \ [M']\in \hom(S',R'),\ \ R,R',S,S'\in \ob(\mathcal{MA}lg_K)
\end{equation}
is
$$[M]\otimes[M']=[M\otimes_KM']\in \hom((S\otimes_KS'),(R\otimes_KR')).$$
Morphisms \eqref{twobmods} are composed as in \eqref{compmods}.
%The classes \eqref{twobmods} are composable if $S=R'$ and then \begin{equation}\label{compmods}[M]\circ[M']=[M\otimes_SM']\in \Mod^*(R\m S').\end{equation}

%Fixing $C(K)$ as in \S\ref{secCK}, let $\mathcal{MC}_K(1)$ be the full subcategory of $\mathcal{MA}lg_K$, whose objects are the $K$-algebras that belong to an isomorphism type in $C(K)$,
%in particular these algebras are $K$-central.
Let $\mathcal{MC}_K(1)$ be the full subcategory of $\mathcal{MA}lg_K$ on $\ob(\mathcal{MC}_K)$.
By the demand that $C(K)$ is an abelian monoid under $K$-tensoring of isomorphism classes, we have that $\mathcal{MC}_K(1)$ is itself a monoidal groupoid.
In the special case where $C(K) := \{[R]\mid R$ is $K$-Azumaya$\}$ (\S\ref{secCK}(ii)), we have that $\mathcal{MC}_K(1)$ is the \textit{Brauer categorical group} $\mathcal{B}r_K$ of $K$ \cite[\S 2.1]{CCH}.

We ``strictify" the monoidal groupoid $\mathcal{MC}_K(1)$ as follows.
Choose any set $C'$ of representatives of the isomorphism classes in $C(K)\setminus [K]$.
Let $\mathcal{MC'}_K(1)$ be a small category whose objects are associative finite words, including the empty word $\epsilon$, on the (perhaps infinite) alphabet $C'$.
For a word $R\in\ob(\mathcal{MC'}_K(1))$,
let \(\mathcal{P}({R})\) denote the ``underlying" single algebra as follows. Start with $\mathcal{P}(\epsilon) = K$ and $\mathcal{P}(R) = R$ for the words of length 1 (i.e. for $R\in C'$).
Proceed inductively by multiplying out the sequence from left to right.
For example, if $R_1, R_2, R_3\in C'$ then
$$\mathcal{P}(R_1 R_2 R_3) = (R_1 \otimes_K R_2) \otimes_K R_3.$$
A morphism in the category $\mathcal{MC'}_K(1)$
%%\(f: \mathbf{A} \to \mathbf{Y}\)
is defined as a morphism in $\mathcal{MA}\mathit{lg}_{K}$, that is
$$\text{hom}_{\mathcal{MC'}_{K}(1)}\big(R,S\big)=\text{hom}_{\mathcal{MA}\mathit{lg}_{K}}\big(\mathcal{P}({R}),\mathcal{P}({S})\big),\ \ \forall R,S\in\ob(\mathcal{MC'}_K(1) ),$$
under the same compositions as in $\mathcal{MA}\mathit{lg}_{K}$.
%\(f\in \text{hom}_{\mathcal{MA}\mathit{lg}_{K}}\big(\mathcal{P}({A}),\mathcal{P}({B})\big)\)
Also choose the canonical coherent algebra isomorphisms
\begin{equation}\label{convert}
\mathcal{I}_{R,S} : \mathcal{P}(R) \otimes_K \mathcal{P}(S)\xrightarrow{\sim}\mathcal{P}(RS)\end{equation}
built from associators and unitors.
The monoidal structure of $\mathcal{MC'}_K(1)$ is defined by the concatination rules

$$\begin{array}{ccc}\ob(\mathcal{MC'}_K(1))\times\ob(\mathcal{MC'}_K(1))&\to&\ob(\mathcal{MC'}_K(1))\\
(R,R')&\mapsto& RR'
\end{array},$$
and for every $R,R',S,S'\in\ob(\mathcal{MC'}_K(1))$
\begin{eqnarray}\label{concatprod}\ \ \ \ \ \ \begin{array}{ccc}
\hom(S,R)\times\hom(S',R')&\to&\hom(SS',RR')\\
([_{R}P_S],[_{R'}P'_{S'}])&\mapsto& [P\otimes_1 P']
%:\mathcal{P}({R})\otimes_K\mathcal{P}({S})\to\mathcal{P}({R'})\otimes_K\mathcal{P}({S'})
\end{array}.\end{eqnarray}
Here, $P\otimes_1P'$ has the same underlying $K$-bimodule structure as $P\otimes_K P'$. Denote its elements
by $\sum_jp_j\otimes_1p'_j$. The Mod$(RR'\m SS')$ structure of $P\otimes_1 P'$ is determined using the same $(R\otimes_KR'\m S\otimes_KS')$-bimodule notation of $P\otimes_K P'$
$$(r\otimes_Kr')\star(p\otimes_Kp')\cdot (s\otimes_Ks'):=(r\star p\cdot s)\otimes_K(r'\star p'\cdot s') ,\text{  by}$$
\begin{equation}\label{otimes1}r'\star(p\otimes_1p')\cdot ss':=\mathcal{I}^{-1}_{R,R'}(rr')\star(p\otimes_Kq)\cdot \mathcal{I}^{-1}_{S,S'}(ss')\end{equation}
for every $r'\in R',s'\in S', r\in R, s\in S$ and $p\otimes_1p'\in P\otimes_1 P',$ where

Then $\mathcal{MC'}_K(1)$ is a strict monoidal category, admitting a strict unit, namely the empty word.
Since $K$ does not belong to the alphabet $C'$,
%the original $K$-tensoring notation $\otimes_K$ rather than the concatination notation in $\ob(\mathcal{MC'}_K(1))$. Accordingly,
the strict unit will still be denoted by $K$.
We will use the bimodule notation $[_RP_S]$ for morphisms in $\text{hom}_{\mathcal{MC'}_{K}(1)}\big(R,S\big)$.
\begin{remark}\label{Kcase}
With the identification \eqref{Picsub} we have
$\Pic_K(R)\cong\text{end}_{\mathcal{MC'}_K(1)}(R)$ for every  $R\in\ob(\mathcal{MC'}_K(1)).$
In the special case where $C(K) := \{[K]\}$ (\S\ref{secCK}(i)), $C'$ is an empty set and the monoidal groupoids $\mathcal{MC}'_K$ and $\mathcal{MC}'_K(1)$ admit a single object $K$.
They are therefore identified with the groups $\Pic_k(K)$ and $\Pic(K)$ respectively.
\end{remark}

Since $\mathcal{MC}_K(1)$ inherits property \eqref{MonGroup} from $\mathcal{MA}lg_K$ and induces it to $\mathcal{MC'}_K(1)$, we have
\begin{corollary}\label{MCK1strmongrp}
With the above notation, the categories $\mathcal{MC}_K(1)$ and $\mathcal{MC'}_K(1)$ are monoidal groupoids, the latter being strict.
\end{corollary}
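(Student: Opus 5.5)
We have to check three things: every morphism of $\mathcal{MC}_K(1)$ and of $\mathcal{MC'}_K(1)$ is invertible; both categories carry a bifunctor $\otimes$ with a unit satisfying the coherence conditions, in particular \eqref{tensoring} and \eqref{MonGroup}; and for $\mathcal{MC'}_K(1)$ the associativity and unit constraints are identities.

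\emph{Groupoid property.} A morphism of $\mathcal{MC}_K(1)$ is a class $[_RP_S]$ of an invertible bimodule with equal left and right $K$-actions. Its inverse $P'$ can be taken to be $\check{P}=\Hom_S(P,S)$ with the bimodule structure \eqref{Hom(P,R)}. On $\check{P}$ the two $K$-actions again agree, because $K$ is central in both $R$ and $S$ and acts equally on the two sides of $P$. Hence $[P']\in\hom(R,S)$ and $[P]\circ[P']$ and $[P']\circ[P]$ are identities. Since hom-sets of $\mathcal{MC'}_K(1)$ are by definition hom-sets of $\mathcal{MA}lg_K$ between the underlying algebras $\mathcal{P}(\cdot)$, with the same composition, invertibility transfers verbatim.

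\emph{Monoidal structure on $\mathcal{MC}_K(1)$.} This is a full subcategory of the Morita monoidal groupoid $\mathcal{MA}lg_K$ of \cite{CCH}, whose $\otimes_K$ is already a bifunctor with the coherence isomorphisms. It therefore suffices to see that $\ob(\mathcal{MC}_K(1))$ is closed under $\otimes_K$ and contains the unit. This holds because $C(K)$ contains $[K]$ and is closed under $K$-tensoring. Strictly speaking one needs $R\otimes_K R'$ itself to be an object; if objects are required to lie in a fixed set, the closure of isomorphism types lets us replace the tensor product by an isomorphic object, which is harmless up to isomorphism. The interchange law \eqref{MonGroup} is the standard isomorphism
\[
(M\otimes_S N)\otimes_K(M'\otimes_{S'}N')\cong (M\otimes_K M')\otimes_{S\otimes_K S'}(N\otimes_K N'),
\]
given by $(m\otimes n)\otimes(m'\otimes n')\mapsto (m\otimes m')\otimes(n\otimes n')$. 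This map is well defined because the $K$-actions coincide on each side, and it is inherited from $\mathcal{MA}lg_K$.

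\emph{Strict monoidal structure on $\mathcal{MC'}_K(1)$.} On objects, concatenation of words is strictly associative with the empty word as a strict unit. On morphisms, $[P]\otimes[P']=[P\otimes_1 P']$, where the $RR'\m SS'$-structure is transported from $P\otimes_K P'$ along the coherent isomorphisms $\mathcal{I}_{R,R'}$ and $\mathcal{I}_{S,S'}$ of \eqref{convert}. The steps are as follows.
\begin{enumerate}
\item Functoriality: the tensor product respects composition and identities. This follows by transporting the $\mathcal{MC}_K(1)$ interchange isomorphism along the $\mathcal{I}$'s.
\item Strict associativity: $(P\otimes_1 P')\otimes_1 P''$ and $P\otimes_1(P'\otimes_1 P'')$ are isomorphic as $RR'R''\m SS'S''$-bimodules, so they define equal classes.
\item Strict unitality: $K\otimes_1 P\cong P\cong P\otimes_1 K$, again as bimodule classes.
\end{enumerate}
Because morphisms are isomorphism classes, each of these strictness statements only requires producing a bimodule isomorphism, not an equality of modules.

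The main obstacle is step 2. The underlying $K$-module isomorphism is the associator of $\otimes_K$, but we must check that it intertwines the two transported actions. These actions involve $\mathcal{I}_{RR',R''}\circ(\mathcal{I}_{R,R'}\otimes\Id)$ versus $\mathcal{I}_{R,R'R''}\circ(\Id\otimes\mathcal{I}_{R',R''})$, composed with the associator. The two composites agree precisely by Mac Lane coherence, since all the $\mathcal{I}$'s are built canonically from associators and unitors. I would state this coherence identity explicitly and then verify the compatibility on elementary tensors.
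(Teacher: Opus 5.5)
Your proposal is correct and follows the paper's own route. $\mathcal{MC}_K(1)$ is a full subcategory of $\mathcal{MA}lg_K$ that contains $K$ and is closed under $\otimes_K$ (since $C(K)$ is), so it inherits the monoidal groupoid structure and the interchange law \eqref{MonGroup}; $\mathcal{MC}'_K(1)$ is the word strictification, with $\otimes_1$ transported along the coherent isomorphisms $\mathcal{I}_{R,S}$ of \eqref{convert}. The paper only asserts these inheritances, so your invertibility check via $\check{P}$ and your identification of the coherence identity $\mathcal{I}_{RR',R''}\circ(\mathcal{I}_{R,R'}\otimes\Id)=\mathcal{I}_{R,R'R''}\circ(\Id\otimes\mathcal{I}_{R',R''})\circ a$ (with $a$ the algebra associator) as the real content of strict associativity fill in what it leaves implicit.
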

Let $\mathcal{MC}'_K$ be the groupoid whose objects are the same as $\ob(\mathcal{MC}'_K(1))$ and whose morphisms are the classes of all invertible bimodules
(as in $\hom(\mathcal{MC}_K)$).

Under the identification \eqref{Picsub}, the conjugation action of the groupoid $\mathcal{MC}'_K$ on $\bigsqcup_{R\in ob(\mathcal{MC}'_{K})}$Pic$(R)$
is a particular instance of Example \ref{conjex} as follows.
Start with the anchor map (see \S\ref{mongroupsec})
$$\varsigma:[M]\in \Pic(R)\mapsto R\in\ob(\mathcal{MC}'_K).$$
%takes $ $ to the class $[B]\in C(K).$

Then $([Q],[M])$ is an admissible pair for every $[Q]=[_SQ_R]\in \hom(R,S)$.
For any such admissible pair, let
\begin{equation}\label{M^C}
[Q]([M]) = [Q(M)]\in \Pic(S),\text{ where } Q(M):= Q \otimes_{R} M \otimes_R \check{Q}.
\end{equation}
Then we have
\begin{lemma}\label{wdPc}
The rule \eqref{M^C} gives rise to a well-defined action of the groupoid $\mathcal{MC}'_K$,
and hence also of the groupoid $\mathcal{MC}'_K(1)$, on $\bigsqcup_{R\in \ob({\mathcal{MC}'_{K})}}$Pic$(R)$.
\end{lemma}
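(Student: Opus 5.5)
We need to check three things for the rule \eqref{M^C}: that $[Q(M)]$ lies in $\Pic(S)$ and depends only on the classes $[Q]$ and $[M]$; that it is compatible with composition, i.e. $[Q_1]([Q_2]([M]))=([Q_1]\circ[Q_2])([M])$; and that identities act trivially.

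\textbf{Well-definedness and landing in $\Pic(S)$.} Let $[Q]=[_SQ_R]$ be invertible. Because $Q$ is an invertible $S\m R$-bimodule, it is finitely generated projective as a right $R$-module. The standard Morita argument (cf.\ \cite{Bass}) then gives that $\check{Q}=\Hom_R(Q,R)$ is an inverse of $Q$, via canonical bimodule isomorphisms
$$Q\otimes_R\check{Q}\cong S,\qquad \check{Q}\otimes_R Q\cong R.$$
Hence $Q\otimes_R M\otimes_R\check{Q}$ is a composite of invertible bimodules, so it is an invertible $S$-bimodule, and $[Q(M)]\in\Pic(S)$.

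Independence of representatives is functoriality. An isomorphism $M\cong M'$ induces an isomorphism of the triple tensor products. An isomorphism $u:Q\to Q'$ induces an isomorphism $\check{Q'}\to\check{Q}$, $f\mapsto f\circ u$, of $R\m S$-bimodules, and tensoring these gives $Q(M)\cong Q'(M)$. The anchor condition also holds, since $\varsigma([Q]([M]))=S$ is the range of $[Q]$.

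\textbf{Composition.} Take $[_TQ_1{}_S]$ and $[_SQ_2{}_R]$. The key point is a natural bimodule isomorphism
$$\check{Q_2}\otimes_S\check{Q_1}\cong (Q_1\otimes_S Q_2)\check{}\,,\qquad f\otimes g\mapsto \bigl(q_1\otimes q_2\mapsto f(g(q_1)\cdot q_2)\bigr).$$
One way to prove it is directly from finite generation and projectivity. A cleaner way is to note that both sides are inverses of the invertible bimodule $Q_1\otimes_S Q_2$ in the groupoid $\mathcal{MC}'_K$, and inverses are unique up to isomorphism. I would take the second route, since it avoids any explicit checks. Associativity of the tensor product then gives
$$Q_1\otimes_S(Q_2\otimes_R M\otimes_R\check{Q_2})\otimes_S\check{Q_1}\cong (Q_1\otimes_SQ_2)\otimes_R M\otimes_R (Q_1\otimes_SQ_2)\check{}\,,$$
which is the required equality in $\Pic(T)$.

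\textbf{Identity.} For $Q=R$ we have $\check{R}\cong R$, so $R\otimes_RM\otimes_RR\cong M$.

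\textbf{Restriction to $\mathcal{MC}'_K(1)$.} The morphisms of $\mathcal{MC}'_K(1)$ are exactly the invertible bimodules on which the left and right $K$-actions coincide, and these are closed under composition. So $\mathcal{MC}'_K(1)$ is a subgroupoid of $\mathcal{MC}'_K$ with the same objects, and restricting the action gives an action.

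The only step that is not pure bookkeeping is identifying $\check{Q}$ as the inverse of $Q$, together with the resulting dual-of-tensor isomorphism. This follows from invertibility by uniqueness of inverses in $\mathcal{MC}'_K$.
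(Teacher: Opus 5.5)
Your proposal is correct and follows the paper's argument. The paper gives no separate proof; it presents \eqref{M^C} as an instance of the conjugation action $g(f)=g\circ f\circ g^{-1}$ of Example \ref{conjex}, and your two inputs, $[\check{Q}]=[Q]^{-1}$ and (by uniqueness of inverses) $\Hom_R(Q_1\otimes_S Q_2,R)\cong\check{Q_2}\otimes_S\check{Q_1}$, are what identify \eqref{M^C} with $[Q]\circ[M]\circ[Q]^{-1}$ so that the example applies. One small slip: the second canonical isomorphism should read $\check{Q}\otimes_S Q\cong R$, not $\check{Q}\otimes_R Q\cong R$.
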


The set $\bigsqcup_{R\in \ob({\mathcal{MC}'_K)}}\Pic(R)=\bigsqcup_{R\in \ob({\mathcal{MC}'_K(1))}}\Pic(R)$ is sliced in \S\ref{Picsec} into a disjoint union of monoids,
each of which is acted on by $\mathcal{MC}'_K(1)$ as a monoidal groupoid.

\subsection{The type of a morphism}
The homomorphism (see \cite[\S II.5]{Bass})
\begin{eqnarray}\label{hR0}
%h_R:\begin{array}{ccc}\Pic(R)&\to&\Aut(K)\\ ~[P]&\mapsto &\eta_{P}\end{array}
h_R: \Pic(R)\to\Aut(K)\end{eqnarray}
%assigning $K$-automorphisms to classes in the Picard group,
actually comes from a general property of $\mathcal{MC'}_K$ (and even of $\mathcal{MC}_K$) as
follows. For $K$-central algebras $R,S$, let
$$ [_RP_S]\in \Mod^*(R\m S)\subset\hom(\mathcal{MC}_K).$$
%By the bimodule endomorphism rings of both $P$ and $\check{P}$ are isomorphic to $K$.
Any $\varphi\in$End$(_RP_S)$ is given either by right multiplication
$p\mapsto p\cdot x$ or by left multiplication
$p\mapsto y\star p$, where $x,y\in K$ are uniquely
determined by $\varphi$ (see \cite[Theorem II.3.5(5)]{Bass}). Therefore, any invertible class $[_RP_S]$
determines a well-defined bijection $\eta_{P}:x\mapsto y$
of $K$
\begin{equation}\label{commrule}\eta_{P}(x)\star p=p\cdot x,\ \forall x\in K,\ p\in P.\end{equation}
A direct computation yields
\begin{lemma}
With the notation \eqref{commrule}, the bijection $\eta_{P}$ is a $K$-automorphism. Moreover, if $ [_RP_S]=[_R\tilde{P}_S]$ then $\eta_{P}=\eta_{\tilde{P}}$.
\end{lemma}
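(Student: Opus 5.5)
The statement has two parts: first, that $\eta_P$ is a $K$-automorphism, i.e.\ a ring automorphism of $K$ fixing $k$; second, that $\eta_P$ depends only on the isomorphism class of $P$. I would first pin down exactly what \eqref{commrule} provides. Since $K$ is central in $S$, right multiplication $\rho_x:p\mapsto p\cdot x$ is an $R\m S$-bimodule endomorphism of $P$. By \cite[Theorem II.3.5(5)]{Bass}, $\text{End}(_RP_S)$ is identified with $K$ through left multiplication, so there is a unique $y\in K$ with $\rho_x=\lambda_y$, where $\lambda_y:p\mapsto y\star p$. The same theorem with the roles of the two sides exchanged shows that every endomorphism is also $\rho_{x'}$ for a unique $x'$. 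Hence $x\mapsto y$ is a bijection, with inverse $y\mapsto x'$ obtained by writing $\lambda_y=\rho_{x'}$. The faithfulness of these left and right $K$-actions is exactly the uniqueness clause of the cited theorem.

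For the ring homomorphism property, I would compute directly from \eqref{commrule}. For additivity, $p\cdot(x_1+x_2)=p\cdot x_1+p\cdot x_2=(\eta_P(x_1)+\eta_P(x_2))\star p$, and uniqueness then gives $\eta_P(x_1+x_2)=\eta_P(x_1)+\eta_P(x_2)$. For multiplicativity, $p\cdot(x_1x_2)=(p\cdot x_1)\cdot x_2=\eta_P(x_2)\star(p\cdot x_1)$, because $p\cdot x_1$ is again an element of $P$. The right and left actions commute, so this equals $\eta_P(x_2)\star(\eta_P(x_1)\star p)=(\eta_P(x_2)\eta_P(x_1))\star p$. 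Commutativity of $K$ then gives $\eta_P(x_1x_2)=\eta_P(x_1)\eta_P(x_2)$. Unitality is immediate from $p\cdot 1=p=1\star p$. To see that $k$ is fixed, I would use that the morphisms under consideration have the same left and right $k$-structures, as in $\Pic_k$: that hypothesis gives $p\cdot x=x\star p$ for $x\in k$, so $\eta_P(x)=x$. If the ambient setting only guarantees this for some classes, I would simply record which classes it holds for.

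For independence of the representative, let $\theta:P\to\tilde P$ be a bimodule isomorphism. Then $\theta(p)\cdot x=\theta(p\cdot x)=\theta(\eta_P(x)\star p)=\eta_P(x)\star\theta(p)$. Since $\theta$ is surjective, $\eta_P(x)$ satisfies the defining relation of $\eta_{\tilde P}(x)$, and uniqueness gives $\eta_{\tilde P}=\eta_P$.

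The only genuinely nontrivial step is the bijectivity and uniqueness, which comes from the identification of $\text{End}(_RP_S)$ with $K$ on both sides, both via left and via right multiplication. Everything else is routine bookkeeping.
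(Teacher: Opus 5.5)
Your proof is correct and is essentially the ``direct computation'' the paper invokes without further detail. The one adjustment is that the paper's type map lands in $\Aut(K)$, not $\Aut_k(K)$, so fixing $k$ is not part of the claim for arbitrary morphisms of $\mathcal{MC}_K$: $K_{1,\sigma}$ with $\sigma\notin\Aut_k(K)$ has type $\sigma$, which confirms that your hedge (that $k$ is fixed only for classes with matching left and right $k$-structures) is the correct reading.
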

We say that $[_RP_S]$ is {\em of type} $\eta_{P}\in\Aut(K).$
\begin{lemma}
With the notation \eqref{commrule}, the well-defined map
\begin{eqnarray}\label{hR}
h:\begin{array}{ccc}
\text{hom}(\mathcal{MC'}_K)&\to&\Aut(K)\\
 ~ [_RP_S]_{}&\mapsto &\eta_{P}
\end{array}\end{eqnarray}
is a homomorphism of groupoids with $\ker(h)=\text{hom}(\mathcal{MC'}_K(1))$, such that with the notation \eqref{hR0}
\begin{equation}\label{hhR}
h([_RP_R]_{})=h_R([_RP_R]),\ \ \forall [_RP_R]\in\Pic(R), R\in \text{ob}(\mathcal{MC'}_K).\end{equation}
%\ChA{ Shall we keep h from $\hom(\mathcal{MC}_K)$?}
\end{lemma}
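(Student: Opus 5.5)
We have to show three things: $h$ respects composition and identities, $\ker(h)=\hom(\mathcal{MC'}_K(1))$, and \eqref{hhR} holds. All three follow from the defining rule \eqref{commrule} together with the preceding lemma, which says $\eta_P$ is a well-defined $K$-automorphism depending only on the class $[P]$.

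\emph{Functoriality.} Take composable classes $[_RP_S]$ and $[_SQ_T]$, so $[P]\circ[Q]=[P\otimes_SQ]$ by \eqref{compmods}. For $x\in K$ and $p\otimes q\in P\otimes_S Q$, apply \eqref{commrule} for $Q$, move the central element $\eta_Q(x)\in K\subseteq Z(S)$ across the tensor sign over $S$, and then apply \eqref{commrule} for $P$:
$$(p\otimes_S q)\cdot x=p\otimes_S(\eta_Q(x)\star q)=(p\cdot\eta_Q(x))\otimes_S q=\eta_P(\eta_Q(x))\star(p\otimes_S q).$$
The step that needs care is the middle one. It uses that $K$, viewed as the center of $S$, acts on $P$ from the right through $S$ and on $Q$ from the left through $S$, so the balancing relation of $\otimes_S$ applies to it. This gives $\eta_{P\otimes_SQ}=\eta_P\circ\eta_Q$. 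The identity $[_RR_R]$ clearly has $\eta_R=\Id_K$, since $K$ is central in $R$. Hence $h$ is a functor from the groupoid into the one-object groupoid $\Aut(K)$. It automatically sends inverses to inverses, because $\eta_{\check P}\circ\eta_P=\eta_{R}=\Id$.

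\emph{The kernel.} By \eqref{commrule}, $h([P])=\Id_K$ holds exactly when $x\star p=p\cdot x$ for all $x\in K$ and $p\in P$. This is precisely the defining condition for $[P]$ to be a morphism of $\mathcal{MA}lg_K$. The objects of $\mathcal{MC'}_K$ and $\mathcal{MC'}_K(1)$ are the same, and $\hom_{\mathcal{MC'}_K(1)}$ consists of the invertible bimodules whose left and right $K$-actions coincide. Therefore $\ker(h)=\hom(\mathcal{MC'}_K(1))$. Here the kernel of a groupoid homomorphism into a group is understood as the wide subgroupoid of morphisms sent to the identity.

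\emph{Equation \eqref{hhR}.} Restricting $h$ to $\text{end}(R)\cong\Pic(R)$ via \eqref{Picsub} gives, by construction, the map $[P]\mapsto\eta_P$ of \cite[\S II.5]{Bass}, namely $h_R$. One only has to check that the convention in \eqref{commrule} matches the one used to define $h_R$ in \eqref{hR0}. If Bass's convention turned out to be the inverse one, we would replace $h_R$ by $h_R(\cdot)^{-1}$. So the only genuine verification in the whole proof is the centrality argument in the displayed computation; everything else is unwinding definitions.
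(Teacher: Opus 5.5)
Your proof is correct and matches the paper's argument. The key step is the same balancing computation across $\otimes_S$ giving $\eta_{P\otimes_S Q}=\eta_P\circ\eta_Q$, and, as the paper says, the kernel statement and \eqref{hhR} follow directly from the definitions; the paper introduces $h_R$ as exactly this restriction, so your hedge about Bass's convention is unnecessary. One tiny slip: for $[_RP_S]$ one has $\check P\otimes_R P\cong S$, so the identity you want is $\eta_{\check P}\circ\eta_P=\eta_S$, which is still $\Id_K$.
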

\begin{proof}
Let $[_RP_S]_{},[_SQ_{R'}]_{} \in \text{hom}(\mathcal{MC'}_K)$ be a composable pair.
Then for $x \in K, p \in P$ and $q \in Q$, we have
\begin{align*}
(p \otimes_S q)\cdot x = p \otimes_S(q\cdot x)=p \otimes_S \eta_Q(x)\star q =p\cdot\eta_Q(x)\otimes_S q =\eta_P(\eta_Q(x))\star(p \otimes_Sq).\end{align*}
Since the type is well-defined on bimodule isomorphism classes we obtain
$$ \eta_{([P]_{}\circ [Q]_{})}=\eta_{P\otimes_S Q}=\eta_P\circ\eta_{Q} =\eta_{[P]_{}}\circ\eta_{[Q]_{}}$$ proving the
multiplicative property of \eqref{hR}. The two other claims are straightforward.
\end{proof}

\subsection{Skew products}\label{Picsec}

Given an automorphism $\eta\in\Aut_k(K)$ that fixes the subring $k\subseteq K$ elementwise,
we endow $h^{-1}(\eta)< \text{hom}(\mathcal{MC'}_K)$ (see \eqref{hR}) with a monoidal structure as follows.
Fix two invertible $\eta$-classes
%\begin{equation}\label{twoinvbmods}[P]\in \Mod(R\m S),\ \ [P']\in \Mod(R'\m S'),\ \ [P],[P']\in h^{-1}(\eta),
%\end{equation}
$[_RP_S],[_{R'}P'_{S'}]\in h^{-1}(\eta)$,
noticing that the actions of $k$ on both $P$ and $P'$ from the left and from the right are the same.
Define the $\eta$-\textit{skew product}
%[C]\otimes_{\eta}[C']:=[(A\otimes_K A',B\otimes_K B',P\otimes_{\eta} P',Q\otimes_{\eta} Q',\tau\otimes_{\eta}\tau',\mu\otimes_{\eta}\mu')],
\begin{equation}\label{eq:1}
P\otimes_{\eta}P':=\bigslant{P\otimes_{k}P'}{W_{\eta}},\end{equation} where $W_{\eta}\subseteq P\otimes_{k}P'$ is the $k$-submodule generated by the elements
$$
\ p\cdot x\otimes_kp'-p\otimes_k\eta(x)\star p',\ \  \forall p\otimes_kp'\in P\otimes_{k}P',\ \forall x\in K.$$
The elements in $P\otimes_{\eta}P'$, denoted by
$\sum_ip^i\otimes_{\eta}p'^i,$ certainly satisfy
\begin{equation}\label{scalpass}
\ p\cdot x\otimes_{\eta}p'=p\otimes_{\eta}\eta(x)\star p',\ \  \forall p\otimes_{\eta}p'\in P\otimes_{\eta}P',\ \forall x\in K.
\end{equation}

Then it is not hard to verify that $P\otimes_{\eta} P'$ is an
$(R\otimes_KR'\m S\otimes_KS')$-bimodule by the rule
$$(r\otimes_Kr')\star(p\otimes_{\eta}p')\cdot (s\otimes_Ks'):=(r\star p\cdot s)\otimes_{\eta}(r'\star p'\cdot s').$$
In turn, $P \otimes_{\eta} P'$ is an $(R R')\m (S S')$-bimodule utilizing \eqref{convert} and using the above notation as follows
$$\begin{array}{ccc}
(rr')\star(p \otimes_{\eta} p')&:=&\mathcal{I}^{-1}_{R,R'}(rr')\star ( p \otimes_{\eta}  p'), \text{ and}\\
 (p \otimes_{\eta} p')\cdot (ss') &:=& (p \otimes_{\eta} p')\cdot \mathcal{I}^{-1}_{S,S'}(ss')\end{array}$$
for every $r\in R, r'\in R'  ,s \in S, s'\in  S'$ and $p \otimes_{\eta} p'\in P \otimes_{\eta} P'.$

Further, let $[Q]:=[P]^{-1}\in \Mod^*(S\m R),\ \ [Q']:=[P']^{-1}\in\Mod^*(S'\m R').$
Then since \eqref{hR} is a homomorphism we have $h([Q])=h([Q'])=\eta^{-1}$, and obtain an $(S S')\m (R R')$-bimodule $Q \otimes_{\eta^{-1}} Q'$.
Moreover, a direct computation says that $[Q \otimes_{\eta^{-1}} Q']=[P \otimes_{\eta} P']^{-1}$, and so $[P \otimes_{\eta} P']\in \text{hom}(\mathcal{MC}_K).$
It turns out that the class $[P \otimes_{\eta} P']$ does not depend on the choice of the bimodules $P$ and $P'$.
We may hence well-define a product
\begin{eqnarray}\label{eq:1cl}
\begin{array}{ccc}
h^{-1}(\eta)\times h^{-1}(\eta)&\to&h^{-1}(\eta)\\
([P],[Q])&\mapsto& [P]\otimes_{\eta}[Q]:=[P\otimes_{\eta} Q]
\end{array},\end{eqnarray}
which we continue to term a skew product. Note that the notation $\otimes_1$ in \eqref{otimes1} is a special instance of $\otimes_{\eta}$ in \eqref{eq:1cl} for trivial automorphism $\eta$.
Next, with the notation \eqref{star}, distinguish the class
$[K_{1,\eta}]\in h^{-1}(\eta)\cap \text{end}(K).$
Using the strict monoidal structure of $\ob({\mathcal{MC}'_{K})}=\ob({\mathcal{MC}'_{K}(1))}$ one can check
\begin{lemma}\label{skewclosure}
Let $\eta\in\Aut_k(K)$. Then the skew product \eqref{eq:1cl} furnishes $h^{-1}(\eta)$ with a monoid structure,
%\begin{equation}\label{eq:1cl}[P]\otimes_{\eta}[P']:= [P\otimes_{\eta}P'],\ \ \forall [P],[P']\in h^{-1}(\eta),\end{equation}
whose identity is $[K_{1,\eta}]$.
\end{lemma}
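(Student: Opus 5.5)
We must verify three things: the skew product of two $\eta$-classes is again an $\eta$-class (closure), it is associative on classes, and $[K_{1,\eta}]$ is a two-sided unit.

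\textbf{Closure.} The paper already shows that $[P\otimes_\eta P']$ is an invertible class in $\hom(\mathcal{MC}'_K)$ from $SS'$ to $RR'$. It remains to compute its type. For $x\in K$ we have
\[(p\otimes_\eta p')\cdot x=(p\cdot x)\otimes_\eta p' .\]
Here $x$ acts through $\mathcal{I}^{-1}_{S,S'}$, which sends $x$ to $x\otimes_K 1=1\otimes_K x$. Then
\[(p\cdot x)\otimes_\eta p'=\eta(x)\star p\otimes_\eta p' ,\]
so $h([P\otimes_\eta P'])=\eta$. Note that the left action of $K$ on $P\otimes_\eta P'$ is well defined because $K$ is central in $R$.

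\textbf{Associativity.} For three $\eta$-classes I will write down the natural $k$-linear map
\[(P\otimes_\eta P')\otimes_\eta P''\to P\otimes_\eta(P'\otimes_\eta P''),\qquad (p\otimes p')\otimes p''\mapsto p\otimes(p'\otimes p'').\]
This is well defined because both quotients are quotients of $P\otimes_kP'\otimes_kP''$ by the same relations. Those relations let scalars $x$ pass in the form $p\cdot x\leftrightarrow \eta(x)\star p'$, and similarly between $P'$ and $P''$. The only subtle point is that, in the left-hand module, $x$ acting on the right of $p\otimes_\eta p'$ means acting on $p'$, consistent with the closure computation.

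The map is bilinear for the bimodule structures over $(RR')R''$ and $R(R'R'')$. Since the monoidal structure on objects is strict, these algebras coincide as words. The identifications go through $\mathcal{I}$, which is built from the canonical associators, and the coherence of $\mathcal{I}$ makes the two actions agree. I expect this compatibility of the $\mathcal{I}$'s to be the main (bookkeeping) obstacle: one must check that $\mathcal{I}_{RR',R''}\circ(\mathcal{I}_{R,R'}\otimes 1)$ and $\mathcal{I}_{R,R'R''}\circ(1\otimes\mathcal{I}_{R',R''})$ agree after the associator. This is exactly Mac Lane coherence for $\otimes_K$, which is why the $\mathcal{I}$'s were chosen canonical.

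\textbf{Unit.} Here $K$ is the empty word, so $K R=R$ with $\mathcal{I}_{K,R}$ the unitor $K\otimes_KR\cong R$. Define
\[K_{1,\eta}\otimes_\eta P\to P,\qquad a\otimes p\mapsto a\star p .\]
It respects the relation, since $a\cdot x\otimes p=a\eta(x)\otimes p\mapsto a\eta(x)\star p=a\star(\eta(x)\star p)$. It is a bimodule map, and its inverse is $p\mapsto 1\otimes p$; surjectivity is clear, and injectivity holds because $a\otimes p=1\otimes a\star p$.

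On the other side, define
\[P\otimes_\eta K_{1,\eta}\to P,\qquad p\otimes a\mapsto p\cdot\eta^{-1}(a).\]
This is well defined: $p\cdot x\otimes a$ and $p\otimes \eta(x)a$ both map to $p\cdot x\eta^{-1}(a)$. The right $K$-action matches because $(p\otimes a)\cdot y=p\otimes a\eta(y)\mapsto p\cdot\eta^{-1}(a)y$. The inverse is $p\mapsto p\otimes 1$.

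Hence $[K_{1,\eta}]$ is a two-sided identity for the skew product, and together with closure and associativity this makes $h^{-1}(\eta)$ a monoid.
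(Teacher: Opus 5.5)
Your proposal is correct and follows the route the paper intends: the paper only remarks that the lemma can be checked using the strict monoidal structure on $\ob(\mathcal{MC}'_K)$. You carry out exactly that check — the type computation for closure, associativity via the common quotient of $P\otimes_kP'\otimes_kP''$ together with coherence of the $\mathcal{I}$'s, and the explicit unit isomorphisms $K_{1,\eta}\otimes_\eta P\cong P\cong P\otimes_\eta K_{1,\eta}$ — and the details you leave implicit, such as right $S$-linearity of $p\otimes a\mapsto p\cdot\eta^{-1}(a)$, follow at once from $K$ being central in $S$.
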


Consider the union of the Picard elements inside the monoid $h^{-1}(\eta)$, i.e.
\begin{equation}\label{etaPic} A_{\eta}:=\bigsqcup_{R\in \ob({\mathcal{MC}'_{K})}}\Pic(R)\cap h^{-1}(\eta)\subseteq\text{hom}(\mathcal{MC}'_K).\end{equation}
It is not hard to verify that it is a submonoid of $h^{-1}(\eta)$. %In fact, the product \eqref{eq:1cl} extends \cite[\S 2]{YG1}, which was defined for \eqref{etaPic}.
%$$\Pic(R)\cap h^{-1}(\eta)<\text{hom}(\mathcal{MC}_K).$$
With the notation of Lemma \ref{wdPc} we assert
\begin{lemma}
Let $\eta\in\Aut_k(K)$. Then the submonoid \eqref{etaPic} of $h^{-1}(\eta)$ is stable under the conjugation action \eqref{M^C} of $\text{hom}(\mathcal{MC}'_K(1)).$
\end{lemma}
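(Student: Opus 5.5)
We need to show two things: that conjugation by a morphism of $\mathcal{MC}'_K(1)$ keeps the type of a Picard class equal to $\eta$, and that it carries a Picard class of $R$ to a Picard class of the target object. Together these say that \eqref{M^C} maps $A_\eta$ into $A_\eta$.

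Let $[M]\in\Pic(R)\cap h^{-1}(\eta)$ and let $[Q]=[_SQ_R]\in\hom_{\mathcal{MC}'_K(1)}(R,S)$, so that $([Q],[M])$ is admissible. By Lemma \ref{wdPc}, $[Q]([M])=[Q\otimes_R M\otimes_R\check{Q}]$ lies in $\Pic(S)$, and $S$ is an object of $\mathcal{MC}'_K$. It remains to compute its type.

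Since $\check Q$ represents the inverse class $[Q]^{-1}$, we have
$$[Q(M)]=[Q]\circ[M]\circ[Q]^{-1}$$
in the groupoid $\mathcal{MC}'_K$. The map $h$ of \eqref{hR} is a homomorphism of groupoids, so
$$h([Q(M)])=h([Q])\circ h([M])\circ h([Q])^{-1}.$$
Because $\ker(h)=\hom(\mathcal{MC}'_K(1))$ and $[Q]$ is a morphism of $\mathcal{MC}'_K(1)$, we get $h([Q])=\Id_K$. Hence $h([Q(M)])=h([M])=\eta$, and therefore $[Q(M)]\in\Pic(S)\cap h^{-1}(\eta)\subseteq A_\eta$.

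The one point needing care is that $h$ respects inverses, that is, $\check Q$ has type $h([Q])^{-1}$. This follows from multiplicativity of $h$ together with $h(\Id)=\Id_K$, which holds because $_RR_R$ has trivial type.

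If desired, one can also check the type directly. For $x\in K$, $m\in M$ and $q,f$ in $Q,\check Q$, the element $x$ passes across $Q$ unchanged, picks up $\eta$ across $M$, and passes across $\check Q$ unchanged. This is the same computation as in the proof of the homomorphism property of $h$.
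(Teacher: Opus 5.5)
Your proposal is correct and follows essentially the same route as the paper. The paper's one-line proof also rests on $h$ being a groupoid homomorphism whose kernel is $\hom(\mathcal{MC}'_K(1))$, so conjugating by a type-$\Id_K$ morphism preserves the type $\eta$. You have simply spelled out the details: that the conjugate lands in $\Pic(S)$ by Lemma \ref{wdPc}, and that $h$ respects inverses.
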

\begin{proof} The claim follows from the fact that \eqref{hR} is a homomorphism, whose kernel is $\text{hom}(\mathcal{MC'}_K(1))$.
\end{proof}
\begin{lemma}\label{MC1actPic}
Fix $\eta\in\Aut_k(K)$. With the notation \eqref{etaPic}, let
$$([_{S_1}{P_1}_{R_1}]_{},[_{R_1}{P'
_1}_{R_1}]_{}),([_{S_2}{P_2}_{R_2}]_{},[_{R_2}{P_2'}_{R_2}]_{})
%([P_1],[P_1']),([P_2],[P'_2])
\in\hom(\mathcal{MC'}_K(1))\times A_{\eta}$$
%\bigsqcup_{R\in \ob(\mathcal{MC}_{K})}\Pic(R)\cap h^{-1}(\eta)$$
be two admissible pairs. Then so is
$({[P_1]_{}\otimes_1[P_2]_{}},[P'_1]_{}\otimes_{\eta}[P'_2]_{})$. Furthermore
%Furthermore, with the notation \eqref{M^C},
\begin{equation}\label{eqcompat}
([P_1]_{}\otimes_1[P_2]_{})\big{(}[P'_1]_{}\otimes_{\eta}[P'_2]_{}\big{)}=[P_1]_{}([P'_1]_{})\otimes_{\eta}[P_2]_{}([P'_2]_{}).
\end{equation}
Consequently, \eqref{M^C} determines an action of $\mathcal{MC'}_K(1)$ on the monoid \eqref{etaPic} as a monoidal groupoid.
%for any two admissible pairs $$([P_1],[P_1']),([P_2],[P'_2])\in\hom(\mathcal{MC}_K(1))\times \bigsqcup_{R\in \ob(\mathcal{MC}_{K})}\Pic(R)\cap h^{-1}(\eta).$$
%Consequently, \eqref{M^C} determines an action of $\mathcal{MC}_K(1)$on the monoid \eqref{etaPic} as a monoidal groupoid.
\end{lemma}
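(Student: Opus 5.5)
Admissibility first. Since $P_i$ has source $R_i$ and target $S_i$, the tensor product $[P_1]\otimes_1[P_2]$ lies in $\hom(R_1R_2,S_1S_2)$ by \eqref{concatprod}. Likewise $[P'_1]\otimes_\eta[P'_2]$ is an invertible $R_1R_2$-bimodule of type $\eta$: the skew product of $\eta$-classes is again an $\eta$-class by \eqref{eq:1cl}. Hence it lies in $\Pic(R_1R_2)\cap h^{-1}(\eta)=A_\eta$, and its anchor is $R_1R_2$, which is the source of $[P_1]\otimes_1[P_2]$. So the pair is admissible. This also shows that the anchor $\varsigma$ is multiplicative. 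It sends the identity $[K_{1,\eta}]$ to the empty word $K$, so $\varsigma$ is a monoid homomorphism.

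The heart of the proof is \eqref{eqcompat}. Since $P_i$ has type $\mathrm{id}$, $\check P_i$ also has trivial type, and $P_1\otimes_1P_2$ has the same underlying bimodule as $P_1\otimes_KP_2$. The plan is to produce an explicit isomorphism of $S_1S_2$-bimodules
$$\Theta:\ (P_1\otimes_1P_2)\otimes_{R_1R_2}(P'_1\otimes_\eta P'_2)\otimes_{R_1R_2}(P_1\otimes_1P_2)^{\vee}\longrightarrow (P_1\otimes_{R_1}P'_1\otimes_{R_1}\check P_1)\otimes_\eta(P_2\otimes_{R_2}P'_2\otimes_{R_2}\check P_2).$$
To build it, first identify $(P_1\otimes_1 P_2)^\vee$ with $\check P_1\otimes_1\check P_2$ through $f_1\otimes f_2\mapsto\big(p_1\otimes p_2\mapsto \mathcal{I}_{R_1,R_2}(f_1(p_1)\otimes_K f_2(p_2))\big)$. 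This is a map of bimodules, and it is bijective because both sides are inverses of $[P_1\otimes_1P_2]$ in the groupoid. Alternatively, one can check bijectivity locally using that the $P_i$ are finitely generated projective. Next define $\Theta$ by the shuffle
$$(p_1\otimes p_2)\otimes(p'_1\otimes_\eta p'_2)\otimes(f_1\otimes f_2)\mapsto (p_1\otimes p'_1\otimes f_1)\otimes_\eta(p_2\otimes p'_2\otimes f_2).$$

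One must then check that $\Theta$ is well defined, i.e. that it respects each relation.
\begin{itemize}
\item Balancing over $R_1R_2$: via $\mathcal{I}^{-1}$, an element $r_1r_2$ acts factorwise, so the balancing over $R_1R_2$ splits into balancing over $R_1$ and over $R_2$.
\item The relation $p\cdot x\otimes p'=p\otimes x\star p'$ defining $\otimes_1$ (type $\mathrm{id}$).
\item The relation \eqref{scalpass} for $\otimes_\eta$, where $x\in K$ passes from the first to the second factor as $\eta(x)$.
\end{itemize}
The third item is the main obstacle. Moving a scalar $x\in K$ across the middle factor of $P_1\otimes P'_1\otimes\check P_1$ changes it to $\eta(x)$, because $P'_1$ has type $\eta$ and the conjugating bimodules have trivial type. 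Thus the defining relations of the two sides correspond exactly: $K$ is central in each $R_i$, and the outer factors commute with $K$. I would carry this out by tracking where each $x\in K$ sits, using \eqref{commrule}.

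An inverse map is given by the reverse shuffle, and the same bookkeeping shows it is well defined. It is clearly compatible with the $S_1S_2$-bimodule structures. Taking classes then yields \eqref{eqcompat}.

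Finally, \eqref{eqcompat} is precisely condition \eqref{secondition} for the action \eqref{M^C} of Lemma \ref{wdPc}, restricted to $A_\eta$; this restriction is legitimate because $A_\eta$ is stable under the action by the preceding lemma. Following the convention stated after \eqref{secondition}, this shows that $\mathcal{MC}'_K(1)$ acts on the monoid $A_\eta$ as a monoidal groupoid.
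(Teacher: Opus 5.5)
Your proposal is correct and follows the paper's proof. The paper writes down the same shuffle, in the opposite direction $P_1(P'_1)\otimes_\eta P_2(P'_2)\to(P_1\otimes_1P_2)(P'_1\otimes_\eta P'_2)$ and silently identifying $\check P_1\otimes_1\check P_2$ with the dual of $P_1\otimes_1P_2$; it leaves well-definedness and bijectivity to a direct computation, which your relation-by-relation bookkeeping supplies.

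One small caution about that identification. The fact that both sides are inverses of $[P_1\otimes_1P_2]$ does not by itself make your specific map $\check P_1\otimes_1\check P_2\to(P_1\otimes_1P_2)^{\vee}$ bijective; the zero map is also a bimodule map. Rely instead on your second justification: reduce to the free case, using that each $P_i$ is finitely generated projective over $R_i$.
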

\begin{proof}
Putting $R:=R_1 R_2$, a direct computation yields that the rule
$$\begin{array}{ccc}
P_1(P_1')\otimes_{\eta}P_2(P_2')&\to & (P_1\otimes_1 P_2)\big{(}P_1'\otimes_{\eta}P_2'\big{)}\\
(p_1\otimes_{R_1} p_1' \otimes_{R_1} q_1) \otimes_{\eta}(p_2\otimes_{R_2} p_2' \otimes_{R_2} q_2)&\mapsto &(p_1\otimes_1 p_2)\otimes_{R}(p_1' \otimes_{\eta} p_2')
\otimes_{R} (q_1 \otimes_1 q_2)
\end{array}$$
%$$\begin{array}{ccc}
%[P_1']^{[P_1]}\otimes_{\eta}[P_2']^{[P_2]}&\to & ([P_1']\otimes_{\eta}[P_2'])^{[P_1]\otimes [P_2]}\\
%[q_1\otimes_{R_1} p_1' \otimes_{R_1} p_1] \otimes_{\eta}(q_2\otimes_{R_2} p_2' \otimes_{R_2} p_2)&\mapsto &(q_1\otimes_K q_2)\otimes_{R}(p_1' \otimes_{\eta} p_2')
%\otimes_{R} (p_1 \otimes_K p_2)
%\end{array}$$
for every $ p_1\in P_1, p_2 \in P_2, q_1\in \check{P_1}, q_2\in \check{P_2}$, $p_1'\in P_1'$ and $p_2'\in P_2'$, well defines an $S_1 S_2$-bimodule isomorphism,
%Conjugating further by the converter $\mathcal{I}_{S_1,S_2}$ \eqref{convert} yields an $(S_1S_2)$-bimodule isomorphism,
verifying \eqref{eqcompat}. This is just \eqref{secondition} in terms of the action \eqref{M^C}, proving that this action respects the monoidal structure of the groupoid $\mathcal{MC}'_K(1)$.
\end{proof}

\section{An action of $\mathcal{MC}'_K$ on the Generalized Collective Characters}\label{Sec4}

After a short reminder about generalized collective characters we explain in \S\ref{subsec1} how the groupoid $\mathcal{MC}'_K$ is acting on them.
\begin{definition} \cite[\S 2]{CG}
A \textit{Generalized Collective Character} (GCC) is a group homomorphism from a group $G$ to the Picard group $\Pic_k(R)$ of a $k$-algebra $R$.
\end{definition}
Any GCC in Hom$(G,$Pic$_k(R))$ gives rise to a $G$-action on the center of $R$ as follows.
The homomorphism \eqref{hR0} yields a well-defined functorial map
$$\begin{array}{ccc}
\mbox{Hom}(G, \Pic_k(R)) &\rightarrow &\mbox{Hom}(G,\Aut_k(K))\\
\Phi & \mapsto &\ g\mapsto h_R\circ{\Phi(g)}
\end{array},\   g \in G.$$
We say that the GCC $\Phi \in \mbox{Hom}(G, \Pic_k(R))$ is \textit{of type} $h_R\circ{\Phi}$.

%\ChY{where are we using this? Let $$C_k(G,K) := \{[\Phi]_{eq}|\ \Phi\in\mbox{Hom}(G, \Pic_k(R))\ \mbox{and} \ R\in C(K)\}.$$ }
Given a group action $\phi \in $ Hom$(G,\Aut_k(K))$, use \eqref{hR0} to denote
%\begin{equation}\label{CkpR}C_k(\phi,R) := \{[\Phi]_{\eq}|\ \Phi\in\mbox{Hom}(G, \Pic_k(R))\ \mbox{and} \ h_R\circ{\Phi}=\phi\},\end{equation} and let
\begin{equation}\label{Ckp}
{\CK} := \{ \Phi\in\mbox{Hom}(G, \Pic_k(R)) |\ {R \in \ob(\mathcal{MC}'_{K})} \mbox{ and} \ h_R\circ{\Phi}=\phi\}.
\end{equation}
Let $\Phi_1:G\to\Pic_k(R_1),\Phi_2:G\to\Pic_k(R_2)\in \CK$.
%Fix $\Phi_1\in\mbox{Hom}(G, \Pic_k(R_1)),\Phi_2\in\mbox{Hom}(G, \Pic_k(R_2))$ in $ \CK$.
With the notation \eqref{eq:1cl} set
\begin{eqnarray}\label{prodck}\Phi_1\otimes_{\phi}\Phi_2:\begin{array}{ccl}
G&\to&\p (R_1R_2)\\
g&\mapsto &\Phi_1(g)\otimes_{\phi(g)}\Phi_2(g)
\end{array},\ \  g\in G.
\end{eqnarray}
Similarly to \cite[Lemma 3.2(1),(2)]{YG1} (for the external tensor product of central algebras,) one can show that $\Phi_1 \otimes_{\phi} \Phi_2$ is a well-defined group homomorphism of type $\phi$.
Furthermore, it is not hard to check that this product forms a monoid structure on \eqref{Ckp} with $1_{\phi}\in \CK$ as the identity, where
\begin{eqnarray}\label{1_phi}
1_{\phi}:\begin{array}{ccc}
G&\to&\Pic_k(K)\\
g&\mapsto&[K_{1,\phi(g)}]\end{array},\forall g\in G.
\end{eqnarray}

Note that since the singleton $\{K\}$ is closed under multiplication in Ob($\MCK'$), the GCC's of type $\phi$ over $K$ itself
\begin{equation}\label{CkPhiK}
{\CPK}:= \{\Phi\in\mbox{Hom}(G, \Pic_k(K))\ |\ h_K\circ{\Phi}=\phi\}\end{equation}
form a submonoid of the monoid $\CK$.

\subsection{}\label{subsec1}

The conjugation \eqref{M^C} yields an action of  the groupoid $\mathcal{MC}'_K$ on
\begin{equation}\label{GCCset}
\{ \Phi\in\mbox{Hom}(G, \Pic_k(R))|\ {R\in \ob(\mathcal{MC}'_{K})}\}
\end{equation}
of GCC's as follows.
%\ChY{Let us describe this action, which together with \eqref{actrule} on the set $\bigsqcup_{[R]\in C(K)}\mbox{Cliff}_k (G, R)$, turn \eqref{chi} into an $\mathcal{MC}_K$-map
%(see Lemma \ref{compat}).}
First, we define anchor map. For a GCC $\Phi: G \to \Pic_k(R)$, where $R\in\ob(\mathcal{MC}'_K)$, let
$$\varsigma:\Phi \mapsto R.$$
%takes $ $ to the class $[B]\in C(K).$
Then $\Phi$ as above is admissibly paired to any $[P]=[_SP_R]\in \hom(R,S)$, where $S\in\ob(\mathcal{MC}'_K)$.
% Next, for an admissible pair, is for every $[P]=[_RP_S]\in \hom(R,S)$.
For any such pair $([P],\Phi)$, define (see \eqref{M^C})
\begin{eqnarray}\label{GCC^C}
[P](\Phi):\begin{array}{rcc} G &\to &\Pic_k(S)\\
g&\mapsto &[P](\Phi(g))\end{array}.\end{eqnarray}
%\begin{equation}\label{M^C}[M]^{[P]} := [\check{P} \otimes_{R} M \otimes_R P]\in \Pic(S).\end{equation}
We infer from Lemma \ref{wdPc}
\begin{lemma}\label{wdphic}
The rule \eqref{GCC^C} determines a well-defined action of the groupoid $\mathcal{MC}'_K$, and hence of its subgroupoid $\mathcal{MC}'_K(1)$,
on \eqref{GCCset}.
%disjoint union $\bigsqcup_{[R]\in C(K)}\{[\Phi]_{\eq}|\ \Phi\in\mbox{Hom}(G, \Pic_k(R))\}$.
\end{lemma}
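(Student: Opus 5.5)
The plan is to reduce the statement to Lemma \ref{wdPc} applied pointwise in $g\in G$. Three things need checking: that $[P](\Phi)$ is again a group homomorphism $G\to\Pic_k(S)$, so that the rule \eqref{GCC^C} lands in \eqref{GCCset}; that the construction depends only on the class $[P]$; and that the three groupoid-action axioms of \S\ref{mongroupsec} hold.

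\emph{Step 1: the target lies in $\Pic_k(S)$.} For $[P]=[_SP_R]$ and $g\in G$, the class $[P](\Phi(g))=[P\otimes_R\Phi(g)\otimes_R\check P]$ lies in $\Pic(S)$ by Lemma \ref{wdPc}. The $k$-compatibility needs an argument, since $P$ may be an arbitrary invertible bimodule in $\hom(\mathcal{MC}'_K)$. Write $\eta=h(P)$. Because $h$ is a groupoid homomorphism, the type of the conjugate is
$$h\big(P(\Phi(g))\big)=\eta\circ h_R(\Phi(g))\circ\eta^{-1}.$$
Every morphism of $\mathcal{MC}'_K$ fixes $k$, or more precisely its type lies in $\Aut_k(K)$. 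Granting this, the composite above restricts to the identity on $k$, so the left and right $k$-actions on $P(\Phi(g))$ agree, and the class lies in $\Pic_k(S)$.

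\emph{Step 2: $[P](\Phi)$ is a homomorphism.} This is the step I expect to be the only real obstacle. We need
$$P(M)\otimes_S P(N)\cong P(M\otimes_R N).$$
The isomorphism is
$$(p\otimes m\otimes f)\otimes_S(p'\otimes n\otimes f')\mapsto p\otimes m\cdot f(p')\otimes n\otimes f'.$$
It is built from the evaluation isomorphism $\check P\otimes_S P\cong R$, which holds because $P$ is invertible (Morita theory, \cite[II]{Bass}). Taking $M=\Phi(g)$ and $N=\Phi(h)$ gives $[P](\Phi)(gh)=[P](\Phi)(g)\,[P](\Phi)(h)$. In particular, conjugation by $[P]$ is a group isomorphism $\Pic_k(R)\to\Pic_k(S)$ whose inverse is conjugation by $[\check P]$.

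\emph{Step 3: well-definedness and the action axioms.} Independence of the representative follows because an isomorphism $P\cong\tilde P$ induces $\check P\cong\check{\tilde P}$. The three axioms are then checked pointwise in $g$, using Lemma \ref{wdPc}:
\begin{itemize}
\item the anchor is $\varsigma([P](\Phi))=S$, which is the range of $[P]$;
\item compatibility with composition follows from $(Q\otimes_S P)^{\vee}\cong\check P\otimes_S\check Q$, giving $[Q]([P](\Phi))=([Q]\circ[P])(\Phi)$;
\item the identity $[_RR_R]$ acts trivially, since $R\otimes_R M\otimes_R R\cong M$.
\end{itemize}
Finally, $\mathcal{MC}'_K(1)$ is a subgroupoid of $\mathcal{MC}'_K$ with the same objects. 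The restriction of the action to it is therefore again an action, with the same anchor.
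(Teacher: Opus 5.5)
Your proposal is correct and follows the paper's route. The paper infers the lemma from Lemma \ref{wdPc} applied pointwise in $g\in G$, leaving implicit the two checks you make explicit: the homomorphism property via $\check P\otimes_S P\cong R$ (essentially the maps the paper later writes as $\Gamma^P_{g,h}$ in the proof of Lemma \ref{Tphibar}), and the $k$-compatibility of $[P](\Phi(g))$. The fact you grant in Step 1, that morphisms of $\mathcal{MC}'_K$ have type in $\Aut_k(K)$, is the convention the paper intends (cf.\ Remark \ref{Kcase}, which identifies $\mathcal{MC}'_K$ with $\Pic_k(K)$), and it is genuinely needed, since conjugating by a bimodule of type $\eta$ with $\eta(k)\neq k$ can move $\Pic_k(R)$ outside $\Pic_k(S)$.
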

The set \eqref{GCCset} can be sliced as a disjoint union according to the various $k$-actions of $G$ on $K$
(see \eqref{Ckp})
\begin{equation}\label{slices}
\{\Phi\in\mbox{Hom}(G, \Pic_k(R))|\ {R\in \ob(\mathcal{MC}'_{K})}\}=\bigsqcup_{\phi\in \Hom(G,\Aut_k(K))}\CK.
\end{equation}
The right hand side of \eqref{slices} is a decomposition into monoids under the various skew products.
The monoid $\CK$ is stable under the $\mathcal{MC}'_K$-action \eqref{GCC^C} if and only if $\phi$ is central in $\Hom(G,\Aut_k(K))$.
Let us focus on the action of the monoidal subgroupoid $\mathcal{MC}_K'(1)<\mathcal{MC}'_K$, which stabilizes all monoids $\CK$, respecting their skew products as follows.
%Furthermore, the action \eqref{P(Phi)} respects the product in $\CK$ as follows.
\begin{lemma}\label{CPiclosure}
Fix $\phi\in \Hom(G,\Aut_k(K))$, and let
$$([P_1],\Phi_1),([P_2],\Phi_2) \in \hom(\mathcal{MC}'_K(1))\times\CK$$
%(\subset \hom(\mathcal{MC}_K)\times\bigsqcup_{[R]\in C(K)}\{[\Phi]_{\eq}|\ \Phi\in\mbox{Hom}(G, \Pic_k(R))\})
be two admissible pairs.
%With the above notation, let $$([P_1],[P_1']),([P_2],[P'_2])\in\hom(\mathcal{MC}_K(1))\times \bigsqcup_{R\in \ob(\mathcal{MC}_{K})}\Pic(R)\cap h^{-1}(\eta)$$ be two admissible pairs.
Then so is $({[P_1]\otimes_1[P_2]},\Phi_1\otimes_{\phi}\Phi_2)$. Furthermore,
\begin{equation}\label{eqnew}
([P_1]\otimes_1[P_2])\big{(}\Phi_1\otimes_{\phi}\Phi_2\big{)}=[P_1]({\Phi_1})\otimes_{\phi}[P_2]({\Phi_2}).
\end{equation}
\end{lemma}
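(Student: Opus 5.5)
The plan is to reduce the statement pointwise in $g\in G$ to Lemma \ref{MC1actPic}. First I settle admissibility. Write $\Phi_i:G\to\Pic_k(R_i)$, so that $\varsigma(\Phi_i)=R_i$ and $[P_i]=[_{S_i}{P_i}_{R_i}]\in\hom(R_i,S_i)$. By \eqref{concatprod}, $[P_1]\otimes_1[P_2]\in\hom(R_1R_2,S_1S_2)$. By \eqref{prodck}, $\Phi_1\otimes_\phi\Phi_2$ takes values in $\Pic_k(R_1R_2)$, so its anchor is $R_1R_2$. This is exactly the source of $[P_1]\otimes_1[P_2]$, so the pair is admissible. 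Moreover $\Phi_1\otimes_\phi\Phi_2$ is of type $\phi$, as noted after \eqref{prodck}, so it lies in $\CK$.

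Next I evaluate both sides of \eqref{eqnew} at a fixed $g\in G$ and put $\eta:=\phi(g)$. Since $h_{R_i}\circ\Phi_i=\phi$, the classes $[P'_i]:=\Phi_i(g)$ lie in $\Pic(R_i)\cap h^{-1}(\eta)$, which is the monoid $A_\eta$ of \eqref{etaPic}. The pairs $([P_i],\Phi_i(g))$ are admissible for the conjugation action \eqref{M^C}. Using \eqref{GCC^C} and \eqref{prodck}, the left-hand side at $g$ is
$$([P_1]\otimes_1[P_2])\big(\Phi_1(g)\otimes_{\eta}\Phi_2(g)\big),$$
and the right-hand side at $g$ is
$$[P_1](\Phi_1(g))\otimes_{\eta}[P_2](\Phi_2(g)).$$
Here I use that $[P_i](\Phi_i)$ is again of type $\phi$. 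This holds because $h$ is a groupoid homomorphism whose kernel is $\hom(\mathcal{MC}'_K(1))$, so conjugating by $[P_i]$ does not change the type. Equation \eqref{eqcompat} of Lemma \ref{MC1actPic}, applied to $\eta=\phi(g)$, now gives equality of these two classes in $\Pic_k(S_1S_2)$. Since this holds for every $g$, the two homomorphisms agree.

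The only delicate point is that \eqref{eqcompat} is stated for a single fixed $\eta$, while here $\eta=\phi(g)$ varies with $g$. This causes no difficulty, because the comparison is made separately for each $g$. I also need to check that the skew product used in \eqref{prodck} at $g$ is the same $\otimes_{\phi(g)}$ as in \eqref{eq:1cl}; this holds by definition. Finally, Lemma \ref{wdphic} guarantees that the conjugated maps are well-defined elements, so no further verification is needed.
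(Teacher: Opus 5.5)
Your proposal is correct and follows the paper's route. The paper proves the lemma in one line, as a consequence of Lemma~\ref{MC1actPic} and specifically of \eqref{eqcompat}. Your argument applies that equation pointwise with $\eta=\phi(g)$, together with the admissibility and type-preservation checks, so it is simply a careful unpacking of that line.
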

\begin{proof}
The claim is a consequence of Lemma \ref{MC1actPic}, and particularly of \eqref{eqcompat}.
\end{proof}
Equation \eqref{eqnew} is just condition \eqref{secondition} in terms of $\mathcal{MC}'_K(1)$ and $\CK$.
Recall (Corollary \ref{MCK1strmongrp}) that the groupoid $\mathcal{MC}'_K(1)$ is monoidal.
In light of Definition \ref{Defact} and Lemma \ref{orblemma} we conclude
\begin{corollary}\label{coract}
The rule \eqref{GCC^C} determines an action of ${\mathcal{MC}_K'(1)}$ on the monoid $\CK$ as a monoidal groupoid,
and in turn a monoidal structure on $\CK/\MCK'(1)$.
\end{corollary}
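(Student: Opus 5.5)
The plan is to check the hypotheses of Definition \ref{Defact} directly, applied to $A=\CK$, and then quote Lemma \ref{orblemma}(3). Lemma \ref{wdphic} already gives an action of the groupoid $\mathcal{MC}_K'(1)$ on \eqref{GCCset}. So the first step is to restrict this action to the slice $\CK$ of \eqref{slices} and confirm that the slice is stable.

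Let $\Phi\in\CK$ have anchor $R$, and let $[P]=[_SP_R]\in\hom(\mathcal{MC}'_K(1))$, so that $h([P])=\Id_K$. Since $h$ in \eqref{hR} is a homomorphism of groupoids, for every $g\in G$ we get
$$h_S([P](\Phi(g)))=h([P])\circ h(\Phi(g))\circ h([P])^{-1}=\phi(g),$$
using \eqref{hhR}. Hence $[P](\Phi)$ again has type $\phi$. The fact that $[P](\Phi)$ is a homomorphism follows because conjugation $[M]\mapsto[P\otimes_R M\otimes_R\check P]$ is a group homomorphism $\Pic_k(R)\to\Pic_k(S)$, since $\check P\otimes_S P\cong R$. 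The bimodules have equal left and right $k$-actions because $k\subseteq K$ is fixed.

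The second step is to verify the monoid conditions. The anchor $\varsigma$ must be a monoid homomorphism $\CK\to\ob(\mathcal{MC}'_K(1))$. This holds because $\Phi_1\otimes_\phi\Phi_2$ lands in $\Pic_k(R_1R_2)$, which is the concatenation of the anchors, and because $1_\phi$ has anchor $K$, the strict unit. Next, compatibility \eqref{secondition} is exactly \eqref{eqnew} of Lemma \ref{CPiclosure}. That lemma also gives admissibility of $([P_1]\otimes_1[P_2],\Phi_1\otimes_\phi\Phi_2)$. Following the convention stated after \eqref{secondition}, no further coherence needs to be checked. So $\mathcal{MC}_K'(1)$, which is strict monoidal by Corollary \ref{MCK1strmongrp}, acts on $\CK$ as a monoidal groupoid.

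Finally, Lemma \ref{orblemma}(3) gives the well-defined monoid structure $[\Phi_1]\otimes[\Phi_2]:=[\Phi_1\otimes_\phi\Phi_2]$ on $\CK/\MCK'(1)$, with identity $[1_\phi]$. Well-definedness is the only nontrivial point. Suppose $\Phi_i'=[P_i](\Phi_i)$. Then by \eqref{eqnew},
$$\Phi_1'\otimes_\phi\Phi_2'=([P_1]\otimes_1[P_2])(\Phi_1\otimes_\phi\Phi_2),$$
so the product of the new representatives lies in the same orbit as the product of the old ones.

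The main point of care is the stability of the slice $\CK$ under the action. This is where we need $\ker(h)=\hom(\mathcal{MC}'_K(1))$, i.e. that the acting morphisms have trivial type. Everything else reduces to results already stated.
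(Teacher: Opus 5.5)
Your proposal is correct and follows essentially the same route as the paper. The paper also combines the well-defined action of Lemma \ref{wdphic}, the stability of $\CK$ under $\mathcal{MC}'_K(1)$ (which it asserts, and which comes from $\ker(h)=\hom(\mathcal{MC}'_K(1))$), the compatibility \eqref{secondition} in the form \eqref{eqnew} of Lemma \ref{CPiclosure}, strictness from Corollary \ref{MCK1strmongrp}, and Lemma \ref{orblemma}(3); you simply spell out the stability and anchor-homomorphism checks that the paper leaves implicit.
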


\section{An action of $\mathcal{MC}'_K$ on Clifford System Extensions}\label{Sec3}

This section introduces an action of the groupoid $\mathcal{MC}'_K$ on Clifford system extensions. Let us explain what the latter means.
\begin{definition}
Let $G$ be a group with identity $e$ and let $K$ be a commutative ring containing a subring $k.$
A $k$-algebra is \textit{strongly $G$-graded} if it admits an additive decomposition
\begin{equation}\label{eq:D:1}
A_G = \bigoplus_{g \in G} A_g
\end{equation}
such that for every $g,h \in G$
\begin{equation}\label{AgAh}A_g\cdot A_h=A_{gh}.\end{equation}
\end{definition}
Condition \eqref{AgAh} yields that the trivial homogeneous component $A_e$ is in particular a subalgebra of $A_G$
called its {\it base algebra} (or its {\it e-component}).

A strongly G-graded k-algebra \eqref{eq:D:1} is a {\em crossed product} if every homogeneous component $A_g$ contains a unit, say $u_g$.
In this case, $A_G$ is usually written as
\begin{equation}\label{CPnota}
A_e*G:=\bigoplus_{g\in G}A_e\cdot u_g=\bigoplus_{g\in G}u_g\cdot A_e.\end{equation}
\begin{definition}\label{cliffextdef}%\begin{enumerate}\item
(\textit{i}) A $k$-algebra homomorphism
$\psi_G : A_G \rightarrow A'_G$ between two strongly $G$-graded $k$-algebras
\begin{equation}\label{AA'}
A_G = \bigoplus_{g \in G}A_g \text{ and }A'_G =  \bigoplus_{g \in G}A_g'
\end{equation}
%\eqref{eq:D:1} and another $A' = \oplus_{g\in G}A'_g$
is a \textit{graded homomorphism} if $\psi_G(A_g) = A'_g$ for every $g \in G$.
The graded algebras \eqref{AA'} are \textit{graded isomorphic}, and written $A_G\stackrel{G}\cong A'_G$, if there exists a graded isomorphism between them.

%In this paper we are concerned with the setting where the subalgebra $A_e$ is $k$-isomorphic to a given $K$-central algebra $R$.\\
(\textit{ii})(see \cite[Definition 2.1]{CG}) A \textit{$G$-graded $k$-Clifford system
extension} of $R$ is a pair $(A_G,\iota)$ where $A_G$ is a strongly
$G$-graded $k$-algebra \eqref{eq:D:1} and $\iota : R \to A_G$ is a
$k$-algebra embedding with $\iota(R) = A_e$.
%\ChY{{\em (iii)} Two $G$-graded Clifford system extensions $(A_G,\iota)$ and $(A'_G,\iota')$ of $R$ and $R'$ respectively are equivalent if
%there exists a $K$-isomorphism $\psi:R\to R'$ and a graded isomorphism $\psi_G : A_G \xrightarrow{G} A'_G $ such that $\psi_G\circ \iota = \iota'\circ\psi$.}
Two $G$-graded Clifford
system extensions $(A_G,\iota)$ and $(A'_G,\iota')$ of $R$ are equivalent if
there exists a graded isomorphism $\psi_G : A_G \xrightarrow{G} A'_G $ such
that $\psi_G\circ \iota = \iota'$. Denote the equivalence class of $(A_G,\iota)$  by $[(A_G,\iota)]$.
\end{definition}
In fact, it is unnecessary to check invertibility when verifying equivalence of Clifford extension classes as follows.
\begin{theorem}\label{grhom}(see \cite[Corollary 2.10]{Dade80})
Two $G$-graded Clifford system extensions $(A_G,\iota)$ and $(A'_G,\iota')$ of $R$ are equivalent if and only if there exists a graded homomorphism $\psi_G: A_G \to A'_G$ such
that $\psi_G\circ \iota = \iota'$.
\end{theorem}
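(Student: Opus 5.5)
One direction is immediate. If $(A_G,\iota)$ and $(A'_G,\iota')$ are equivalent, the graded isomorphism $\psi_G$ witnessing this is in particular a graded homomorphism with $\psi_G\circ\iota=\iota'$. So the work is in the converse. Suppose $\psi_G:A_G\to A'_G$ is a graded homomorphism with $\psi_G\circ\iota=\iota'$, in the sense of Definition \ref{cliffextdef}(i), so $\psi_G(A_g)=A'_g$ for all $g$. We must show that $\psi_G$ is bijective; then its inverse is automatically a graded $k$-algebra homomorphism and the two extensions are equivalent.

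The plan is to work one homogeneous component at a time. Since both algebras are direct sums of their components and $\psi_G(A_g)=A'_g$, it suffices to show that each restriction $\psi_g:=\psi_G|_{A_g}:A_g\to A'_g$ is bijective. Surjectivity is part of the hypothesis, so only injectivity is at stake.

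\textbf{Key facts from strong grading.} Put $R_0:=A_e=\iota(R)$ and $R'_0:=A'_e=\iota'(R)$. The condition $\psi_G\circ\iota=\iota'$ says that $\psi_e:R_0\to R'_0$ is the isomorphism $\iota'\circ\iota^{-1}$. Strong grading gives $A_gA_{g^{-1}}=A_e\ni 1$. Hence there are finitely many $a_i\in A_g$ and $b_i\in A_{g^{-1}}$ with
$$\sum_i b_ia_i=1.$$

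\textbf{Injectivity.} Let $x\in A_g$ with $\psi_g(x)=0$. For each $i$ we have $b_ix\in A_e$, and
$$\psi_e(b_ix)=\psi_G(b_i)\psi_G(x)=0.$$
Because $\psi_e$ is injective, $b_ix=0$ for every $i$. Therefore
$$x=\Big(\sum_i a_ib_i'\Big)x$$
for suitable elements chosen using $A_gA_{g^{-1}}=A_e$ in the other order. Concretely, write $1=\sum_j c_jd_j$ with $c_j\in A_g$ and $d_j\in A_{g^{-1}}$. Then $x=\sum_j c_j(d_jx)$. Each $d_jx$ lies in $A_e$ and satisfies $\psi_e(d_jx)=\psi_G(d_j)\psi_G(x)=0$, so $d_jx=0$, and hence $x=0$. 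This gives injectivity of each $\psi_g$, so $\psi_G$ is a bijective graded homomorphism and the two extensions are equivalent.

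\textbf{Main obstacle.} The only delicate point is the convention. If "graded homomorphism" were read as only $\psi_G(A_g)\subseteq A'_g$, surjectivity would also have to be proved. The same trick handles it. Using $1=\sum_j c'_jd'_j$ in $A'$ with $c'_j\in A'_g$ and $d'_j\in A'_{g^{-1}}$, any $y\in A'_g$ can be written as
$$y=\sum_j y\,d'_j\,c'_j,$$
where $yd'_j\in A'_e=\psi_G(A_e)$. It then remains to show that $A'_g$ is generated as a left $A'_e$-module by $\psi_G(A_g)$. This follows from strong grading of $A'$ together with $A'_g=A'_eA'_g$, by first expressing $1\in A'_e$ through $\psi_G(A_g)\psi_G(A_{g^{-1}})$. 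That last step is where some care is needed: it uses $\psi_G(A_gA_{g^{-1}})=\psi_G(A_e)=A'_e$, which gives $1\in\psi_G(A_g)A'_{g^{-1}}$, and hence $A'_g=A'_g\cdot 1\subseteq A'_e\,\psi_G(A_g)\subseteq\psi_G(A_g)$.
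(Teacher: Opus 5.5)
\textbf{Verdict.} Your proof is correct, and it takes a different route from the paper.

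\textbf{Comparison with the paper.} The paper gives no argument of its own here. It simply cites Dade's Corollary 2.10, which belongs to Dade's module-theoretic treatment of strongly graded rings: graded modules over a strongly graded ring are recovered from their identity components via $A_G\otimes_{A_e}-$, so a graded map is bijective once its $e$-component is. You bypass that machinery. Everything reduces to one computation, $x=\sum_j c_j(d_jx)$ with $1=\sum_j c_jd_j$, $c_j\in A_g$, $d_j\in A_{g^{-1}}$, together with injectivity of $\psi_e=\iota'\circ\iota^{-1}$. The paper's definition of a graded homomorphism already requires $\psi_G(A_g)=A'_g$, so this settles the nontrivial direction completely. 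The citation buys brevity and places the statement in Dade's more general module-level setting. Your argument is self-contained and shows exactly what is used: only $1\in A_gA_{g^{-1}}$ in the source algebra and injectivity on the $e$-component. The strong grading of $A'_G$ plays no role.

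\textbf{Clean-ups.}
\begin{enumerate}
\item The preliminary choice $\sum_i b_ia_i=1$ and the garbled sentence with ``$x=(\sum_i a_ib_i')x$ for suitable elements'' are never used. Drop them in favour of the concrete version that follows.
\item In the aside on the weaker convention $\psi_G(A_g)\subseteq A'_g$, the multiplication is on the wrong side. From $1\in\psi_G(A_g)A'_{g^{-1}}$ one gets
$A'_g=1\cdot A'_g\subseteq\psi_G(A_g)A'_{g^{-1}}A'_g\subseteq\psi_G(A_g)A'_e=\psi_G(A_g)\psi_G(A_e)\subseteq\psi_G(A_g)$.
Substituting that expression for $1$ on the right only gives $A'_g\subseteq A'_g\psi_G(A_g)A'_{g^{-1}}$, which is useless.
\item Likewise, $y=\sum_j y\,d'_jc'_j$ needs $1=\sum_j d'_jc'_j$, not $1=\sum_j c'_jd'_j$.
\end{enumerate}
With the one corrected line in item 2, the rest of the aside, including the ``remains to show'' step, can be deleted.
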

%\end{enumerate}

As can easily be checked \eqref{eq:D:1} describes a decomposition of a $G$-graded $k$-Clifford system
extension $(A_G,\iota)$ of $R$ as a direct sum of $R$-bimodules.
Moreover, by \eqref{AgAh} the homogeneous components $A_g$ are invertible $R$-bimodules, that is $[A_g ] \in \Pic_k (R)$
(notice that the $R$-bimodule structure of $A_g$ depends on the embedding $\iota$).
%A strongly graded algebra \eqref{eq:D:1} admits a GCC by assigning the invertible class $[A_g ] \in \Pic_k (A_e )$ to a group element $g \in G$.

Let Cliff$_k(G, R)$ denote the set of equivalence classes of
$G$-graded Clifford system extensions of $R$.
Then there is a well-defined GCC map \cite[(4.2)]{YG1}
%\begin{eqnarray}\label{chi*}
\begin{equation}\label{prechi}\chi_R:\begin{array}{ccc}
\mbox{Cliff}_k (G, R) &\rightarrow\  &\mbox{Hom}(G, \Pic_k(R))\\
~[(\bigoplus_{g \in G} A_g,\iota)]  &\mapsto& (g\mapsto [A_g])
\end{array}.\end{equation}
%%\end{eqnarray}

 The following result is needed in the sequel.
\begin{lemma}\label{cpch} \cite[\S 3]{CG}, A $G$-graded Clifford system extension of $R$ is a crossed product if and only if its image under the GCC
map \eqref{prechi} is some outer action in $\mbox{Hom}(G, \Out_k(R))\hookrightarrow\mbox{Hom}(G, \Pic_k(R))$ (see \eqref{out}).
\end{lemma}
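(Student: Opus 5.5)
We prove the two implications of Lemma \ref{cpch} separately. Throughout, $(A_G,\iota)$ is a $G$-graded Clifford system extension of $R$, and we identify $R$ with $A_e$ via $\iota$. Under this identification each $A_g$ is an invertible $R$-bimodule. The key tool is the embedding \eqref{out}, under which an outer action corresponds to a GCC $g\mapsto[R_{1,\eta_g}]$ for suitable $\eta_g\in\Aut_k(R)$.

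For the forward direction, suppose $A_G=A_e*G$ is a crossed product with units $u_g\in A_g$. Conjugation by $u_g$ preserves $A_e$, because $u_gA_eu_g^{-1}\subseteq A_gA_eA_{g^{-1}}\subseteq A_e$. Hence $\eta_g:=\iota^{-1}\circ(u_g^{-1}(\cdot)u_g)\circ\iota$ lies in $\Aut_k(R)$; it fixes $k$ because $A_G$ is a $k$-algebra. The map $R_{1,\eta_g}\to A_g$, $r\mapsto \iota(r)u_g$, is then a bimodule map. It is left linear trivially, and right linear because $\iota(r)u_g\iota(s)=\iota(r\eta_g(s))u_g$. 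It is bijective since $A_g=A_eu_g$ and $u_g$ is a unit. Therefore $[A_g]=[R_{1,\eta_g}]$ lies in the image of $\Out_k(R)$, and $\chi_R$ factors through $\Hom(G,\Out_k(R))$. We do not need to check separately that $g\mapsto\bar\eta_g$ is a homomorphism: $\chi_R$ is already a homomorphism and \eqref{out} is injective.

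For the converse, suppose that for every $g$ there is $\eta_g\in\Aut_k(R)$ with $[A_g]=[R_{1,\eta_g}]$. Fix a bimodule isomorphism $\theta_g:R_{1,\eta_g}\to A_g$ and set $u_g:=\theta_g(1)$. Then $A_g=\theta_g(R)=A_eu_g$, and $u_g\,\iota(s)=\iota(\eta_g(s))u_g$. It remains to show that $u_g$ is a unit of $A_G$; this is the main step. Applying the same construction to $g^{-1}$ produces $u_{g^{-1}}$ with $A_{g^{-1}}=A_eu_{g^{-1}}=u_{g^{-1}}A_e$. The second equality holds because $\eta_{g^{-1}}$ is surjective, so $\iota(r)u_{g^{-1}}=u_{g^{-1}}\iota(\eta_{g^{-1}}^{-1}(r))$.

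Now strong grading gives $A_e=A_gA_{g^{-1}}=A_eu_gu_{g^{-1}}A_e$. The element $c:=u_gu_{g^{-1}}\in A_e$ normalizes $A_e$ via the automorphism $\eta_g\eta_{g^{-1}}$, so $A_ecA_e=A_ec$. It follows that $1\in A_ec$, so $c$ has a left inverse in $A_e$. By the symmetric argument with $A_{g^{-1}}A_g=A_e$, the element $u_{g^{-1}}u_g$ has a left inverse, and also a right inverse once we use $cA_e=A_ec$.

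To turn these one-sided inverses into two-sided ones, the cleanest route is to invoke invertibility of bimodules directly. Multiplication induces an isomorphism $A_g\otimes_RA_{g^{-1}}\to A_e$, as in the strongly graded setting. We also have $R_{1,\eta_g}\otimes_RR_{1,\eta_{g^{-1}}}\cong R_{1,\eta_g\eta_{g^{-1}}}$ via $1\otimes1\mapsto1$. Composing these, $c$ is the image of a free generator of a rank-one bimodule mapped isomorphically onto $A_e$. Hence $A_ec=A_e$ and left multiplication-annihilators vanish, so $c$ is a unit in $A_e$; likewise $u_{g^{-1}}u_g$ is a unit. Then $u_g$ has both a right inverse, $u_{g^{-1}}c^{-1}$, and a left inverse, so it is a unit. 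This shows that $A_G$ is a crossed product.
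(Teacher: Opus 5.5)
Your proof is correct. The paper gives no argument of its own for this lemma and simply imports it from \cite[\S 3]{CG}, so your proposal is a self-contained verification rather than an alternative route. Three small points are worth fixing.

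First, in the forward direction, the relation you actually use, $u_g\iota(s)=\iota(\eta_g(s))u_g$, requires $\eta_g:=\iota^{-1}\circ(u_g(\cdot)u_g^{-1})\circ\iota$. With your formula $u_g^{-1}(\cdot)u_g$ you get $[A_g]=[R_{1,\eta_g^{-1}}]$ instead. This is harmless, since both classes lie in the image of $\mathrm{Out}_k(R)$.

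Second, the inclusion $u_gA_eu_g^{-1}\subseteq A_gA_eA_{g^{-1}}$ needs $u_g^{-1}$ to be homogeneous of degree $g^{-1}$. This follows in one line: write $u_g^{-1}=\sum_h v_h$ and compare degrees in $u_gu_g^{-1}=1$. You should state it.

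Third, your last paragraph is unnecessary. Because $c=u_gu_{g^{-1}}$ normalizes $A_e$ via $\eta_g\eta_{g^{-1}}$, you already have $A_e=A_ecA_e=A_ec=cA_e$. Hence $1\in A_ec\cap cA_e$, and $c$ is a two-sided unit at once; the same holds for $u_{g^{-1}}u_g$. The bimodule-isomorphism route also works: from $ac=1$ and the vanishing left annihilator of $c$, the identity $(ca-1)c=0$ forces $ca=1$. You need only one of the two, and the muddled sentence about one-sided inverses just before it can be dropped.
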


Let $\text{Cliff}_k(G,C(K))$ denote the set of equivalence classes of $G$-graded $k$-Clifford system extensions of all the $K$-central algebras
in $\ob(\mathcal{MC}'_K)$.
Then the map \eqref{prechi} extends to
%over the isomorphism types of $K$-central algebras
\begin{equation}\label{chi}
\tilde{\chi}:\text{Cliff}_k(G,C(K)) \rightarrow \{\Phi\in\mbox{Hom}(G, \Pic_k(R))|\ {\mbox{for}\ R \in \ob(\MCK')}\}.
\end{equation}
Fix $\phi\in\Hom(G,\Aut_k(K))$. With the notation \eqref{Ckp}, let
%$$\ClfR := \chi{} ^{-1}(C_k(\phi,R)), \text{  and  }$$
\begin{equation}\label{ClfPhi}
\tilde{\mbox{Cliff}}(\phi) := \tilde{\chi} ^{-1}(\CK).\end{equation}
 We say that a Clifford system extension class in
$\tilde{\mbox{Cliff}}(\phi)$ is of \textit{type} $\phi.$
%Let $\ClfR := \chi{^*} ^{-1}(C_k(\phi,R))$ denote the Clifford system extension classes of $R$ of \textit{type} $\phi$, and set \ChY{Cliff$_R := \ClfR$}.\\

Let $\phi\in$ Hom$_k(G,\mbox{Aut}_k(K))$ be a $G$-action on a commutative ring $K$.
Then a crossed product $K*G=\bigoplus_{g\in G}K\cdot u_g$ of type $\phi$ determines a two-place function
$c:\begin{array}{ccc}G\times G & \to & K^*\\
(g,h)& \mapsto & u_g\cdot u_h\cdot u_{gh}^{-1}\end{array}\in Z^2(G,K_{\phi}^*),$ that is a 2-cocycle.
We denote this crossed product, subjected to the choice of units $\{u_g\}_{g\in G}$, by $K_{\phi}^{c}G$.
It is common to choose the unit in the $e$-component to be 1. In this case, the corresponding 2-cocycle is called {\em normalized}.
When $c=1\in Z^2(G,K_{\phi}^*)$, the corresponding crossed product $K_{\phi}G=K^1_{\phi}G$ is the {\it skew group algebra} of type $\phi$.
A Clifford system $K$-extension class $[(\iota(K)_{\phi}^{\iota(c)}G,\iota)]$ of a crossed product will be written $[K_{\phi}^{c}G]$ for short.

%Now a product in Cliff$_k(\phi)$ can be defined as follows:
Two strongly $G$-graded algebras \eqref{AA'}
% $$A = \oplus_{g \in G}A_g \text{ and }A' =  \oplus_{g \in G}A_g'$$
with $[A],[A'] \in\tilde{\mbox{Cliff}}(\phi)$ determine a new $G$-graded algebra
 \begin{equation}\label{newgralg}A\otimes_{\phi}A' := \bigoplus_{g\in G} (A_g \otimes_{\phi(g)}A_g'),\end{equation}
 where the homogeneous components of \eqref{newgralg} are defined using \eqref{eq:1}:
$$(A \otimes_{\phi} A')_g := A_g \otimes_{\phi(g)} A_g',$$
and where the multiplication in $A\otimes_{\phi}A'$ is given by
\begin{equation}\label{grprod}
(a_g\otimes_{\phi(g)} a'_g)(a_h\otimes_{\phi(h)} a'_h):=a_g\cdot a_h\otimes_{\phi(gh)} a'_g\cdot a'_h\in(A\otimes_{\phi}A')_{gh}
\end{equation} for
$$a_g\otimes_{\phi(g)} a'_g\in (A\otimes_{\phi}A')_g\text{ and }a_h\otimes_{\phi(h)} a'_h\in (A\otimes_{\phi}A')_h.$$

Similarly to \cite[Lemmas 4.5 and 4.6]{YG1}, it is not hard to show
\begin{lemma} With the above notations
\begin{enumerate}
\item $\tilde{\mbox{Cliff}}(\phi)$ is a monoid under the product \eqref{newgralg}, whose identity is the class $[K_{\phi}G ]$ of the corresponding skew group algebra.
Moreover,
\item the GCC map \eqref{chi} well-defines a homomorphism
\begin{equation}\label{chiphi}\tilde{\chi}_\phi: \tilde{\mbox{Cliff}}(\phi)\to \CK
\end{equation}   of monoids.
\end{enumerate}\end{lemma}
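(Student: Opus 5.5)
We have to show two things: that the product \eqref{newgralg} makes $\Clf$ into a monoid with identity $[K_{\phi}G]$, and that $\tilde{\chi}_\phi$ is a monoid homomorphism. I will follow the pattern of \cite[Lemmas 4.5, 4.6]{YG1}, now allowing base algebras that are words in $\ob(\mathcal{MC}'_K)$.

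\textbf{Step 1: $A\otimes_{\phi}A'$ is a well-defined strongly graded algebra.}
Let $[(A_G,\iota)],[(A'_G,\iota')]\in\Clf$ have base algebras $R,R'$. Since they are of type $\phi$, each $A_g$ and $A'_g$ is an invertible bimodule of type $\phi(g)$, so each component $A_g\otimes_{\phi(g)}A'_g$ is defined by \eqref{eq:1}.

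First I check that the rule \eqref{grprod} respects the relations $W_{\phi(g)}$. Take $a_g\cdot x\otimes a'_g$ and $a_g\otimes\phi(g)(x)\star a'_g$, and multiply each on the right by $a_h\otimes a'_h$.
\begin{itemize}
\item The first gives $a_g x a_h\otimes a'_ga'_h$. Moving $x$ past $a_h$ gives $a_g a_h\,\phi(h)^{-1}(x)$.
\item The second gives $a_ga_h\otimes \phi(g)(x)a'_ga'_h$. Moving $\phi(g)(x)$ past $a'_g$ gives $a'_g\,x$, and then past $a'_h$ gives $a'_g a'_h\,\phi(h)^{-1}(x)$.
\end{itemize}
By \eqref{scalpass} these agree modulo $W_{\phi(gh)}$. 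The check for left multiplication is symmetric. Associativity and the grading are then immediate.

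The base component is $A_e\otimes_{1}A'_e\cong R\otimes_KR'$, and composing with $\mathcal{I}_{R,R'}$ gives an embedding of $RR'$. Strong grading, $(A\otimes_\phi A')_g(A\otimes_\phi A')_h=(A\otimes_\phi A')_{gh}$, follows because the multiplication maps $A_g\otimes_{R}A_h\to A_{gh}$ are isomorphisms. Taking the skew product of such isomorphisms gives an isomorphism onto $A_{gh}\otimes_{\phi(gh)}A'_{gh}$. Finally, graded isomorphisms compatible with the embeddings induce such isomorphisms on the product, so the product depends only on the classes.

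\textbf{Step 2: monoid axioms.}
Associativity comes from the natural isomorphism
$$(P\otimes_\eta P')\otimes_\eta P''\cong P\otimes_\eta(P'\otimes_\eta P'')$$
taken componentwise. This is compatible with \eqref{grprod}, and the strictness of concatenation of words makes the base embeddings match.

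For the identity, the rule $a_g\otimes_{\phi(g)}x u_g\mapsto a_g\,x$ (with $x\in K$ and $u_g$ the basis units of $K_{\phi}G$) defines graded isomorphisms
$$A\otimes_\phi K_\phi G\cong A,\qquad K_{\phi}G\otimes_\phi A\cong A .$$
This uses $K_{1,\phi(g)}$ as the unit of $h^{-1}(\phi(g))$, as in Lemma \ref{skewclosure}. By Theorem \ref{grhom} it is enough to produce graded homomorphisms compatible with the embeddings, which avoids checking invertibility.

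\textbf{Step 3: $\tilde{\chi}_\phi$ is a homomorphism.}
By construction
$$\tilde{\chi}(A\otimes_\phi A')(g)=[A_g\otimes_{\phi(g)}A'_g]=\tilde\chi(A)(g)\otimes_{\phi(g)}\tilde\chi(A')(g),$$
which is \eqref{prodck}. Also $\tilde{\chi}(K_\phi G)=1_\phi$ from \eqref{1_phi}. It remains to note that $\tilde\chi$ of the product is again of type $\phi$, so that the image lies in $\CK$. This holds because skew products stay inside $h^{-1}(\phi(g))$.

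The main obstacle is Step 1: showing that \eqref{grprod} is compatible with the skew relations. This is the one place where the type-$\phi$ hypothesis, through the commutation rule \eqref{commrule}, is genuinely used.
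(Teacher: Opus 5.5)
Your route is the one the paper has in mind; the paper gives no argument beyond saying the lemma holds ``similarly to \cite[Lemmas 4.5 and 4.6]{YG1}''. Your Steps 1 and 3 are fine: compatibility of \eqref{grprod} with $W_{\phi(g)}$, independence of representatives, $\tilde{\chi}$ of the product being \eqref{prodck}, and $\tilde{\chi}([K_{\phi}G])=1_{\phi}$.

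There is, however, a concrete error in Step 2. The rule $a_g\otimes_{\phi(g)}xu_g\mapsto a_gx$ does not define a map $A\otimes_{\phi}K_{\phi}G\to A$. By \eqref{scalpass}, $a_gy\otimes_{\phi(g)}xu_g=a_g\otimes_{\phi(g)}\phi(g)(y)xu_g$ for $y\in K$. Your rule sends the left side to $a_gyx$ and the right side to $a_g\phi(g)(y)x$. Already for $A=K_{\phi}G$, $a_g=u_g$, $x=1$, these differ whenever $\phi(g)(y)\neq y$. The rule is also not multiplicative on lifts: $(a_g\otimes xu_g)(a_h\otimes yu_h)=a_ga_h\otimes x\phi(g)(y)u_{gh}$ would go to $a_ga_hx\phi(g)(y)$, whereas $a_gx\cdot a_hy=a_ga_h\phi(h)^{-1}(x)y$. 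So your unit map fails exactly the compatibility with $W_{\phi(g)}$ that you single out as the crux.

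The fix is a one-line change, not a missing idea. Use the actual unit isomorphism behind Lemma \ref{skewclosure}: for $[P]\in h^{-1}(\eta)$ it is $P\otimes_{\eta}K_{1,\eta}\to P$, $p\otimes_{\eta}s\mapsto s\star p=p\cdot\eta^{-1}(s)$ (by \eqref{commrule}).

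\begin{itemize}
\item The right unit map should therefore be $a_g\otimes_{\phi(g)}xu_g\mapsto xa_g$, or equivalently $a_g\otimes_{\phi(g)}u_gx\mapsto a_gx$. Your formula becomes correct once the scalar is written to the right of $u_g$.
\item This map respects $W_{\phi(g)}$: both $a_gy\otimes xu_g$ and $a_g\otimes\phi(g)(y)xu_g$ go to $xa_gy$.
\item It is multiplicative: $xa_g\cdot ya_h=x\phi(g)(y)a_ga_h$, which is the image of the product.
\item It sends $\iota(r)\otimes u_e$ to $\iota(r)$, so Theorem \ref{grhom} applies as you intended.
\end{itemize}

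Your left unit map $xu_g\otimes_{\phi(g)}a_g\mapsto xa_g$ is correct as it stands.

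A minor point on Step 1: for strong grading of $A\otimes_{\phi}A'$ you do not need the unjustified claim that ``the skew product of the multiplication isomorphisms'' is an isomorphism. Surjectivity of the multiplication onto $A_{gh}\otimes_{\phi(gh)}A'_{gh}$ suffices, and it is immediate: write $a_{gh}=\sum_i a^i_ga^i_h$ and $a'_{gh}=\sum_j a'^j_ga'^j_h$.
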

Note that those Clifford system extensions of type $\phi$ over $K$ itself
\begin{equation}\label{ClfkPhiK}
\ClfK:= \tilde{\chi}^{-1}(\CPK)\end{equation}
can be considered as a submonoid of the monoid $\Clf$
since the singleton $\{K\}$ is multiplicatively closed in Ob($\MCK'$).

\subsection{}
Based on the conjugation \eqref{M^C},
%of the monoidal groupoid $\mathcal{MC}_K$ on $\bigsqcup_{[R]\in C(K)}\mbox{Pic}(R)$,
we describe an action of $\mathcal{MC}'_K$ on $\text{Cliff}_k(G,C(K))$,
%the disjoint union $\bigsqcup_{[R]\in C(K)}\mbox{Cliff}_k (G, R)$,
resembling the action introduced by J. Haefner and A. Del R\'{i}o in \cite[Definition 2.5]{JHDR}, implementing therein the concept of Morita contexts.
The following properties of $[P]\in \Mod^*(S\m R)$ are needed to define this action.
The first one is a ring isomorphism (see \cite[Thm. II.3.4]{Bass})
\begin{eqnarray}\label{thetaP}\theta_P :\begin{array}{ccc}
S &\xrightarrow{\cong}& \End_R(P)\\
x &\mapsto &(p\mapsto x\star p)
\end{array}\ \ x \in S,p \in P.\end{eqnarray}
 The second property \cite[Prop. II.4.4]{Bass} is a bimodule isomorphism given $[P] \in \Mod^*(S\m R)$ via the evaluation map using $\check{P}:=\Hom_R(P,R)$ (see \eqref{Hom(P,R)})
\begin{eqnarray}\label{nuP}\tau_P :\begin{array}{ccc}
\check{P}\otimes_{\End_R(P)}P &\xrightarrow{\cong}& R\\
f\otimes_{\End_R(P)}p &\mapsto &  f(p)
\end{array}\ \ f \in \check{P},\ \ p \in P.\end{eqnarray}

Another bimodule isomorphism (see \cite[Proposition II.4.4(c)]{Bass}) is determined by the rule
\begin{eqnarray}\label{phiP}\mu_P :\begin{array}{ccc}
P\otimes_{R}\check{P} &\xrightarrow{\cong}& \End_{R}(P)\\
p\otimes_{R} f &\mapsto & (p'\mapsto p\cdot f(p'))
\end{array} \ f \in \check{P},\  p,p' \in P.\end{eqnarray}

Then with the notations \eqref{thetaP} and \eqref{phiP}, there is an $R$-module isomorphism
\begin{equation}\label{Delta_P}
\Delta_P :=\theta_P^{-1}\circ\mu_P:P\otimes_R \check{P} \xrightarrow{\cong} S
.\end{equation}
%In particular, $[\check{P}]=[P]^{-1}\in \p(R)$.
Note that for $p, p' \in P$ and $f \in \check{P}$, we have
\begin{equation}\label{DeltaTau}
\Delta_P(p \otimes_R f) \star p' = p\cdot f(p').
\end{equation}
Now, let $(B_G= \oplus_{g\in G} B_g,\iota)$ be a $G$-graded
$k$-Clifford system extension of a $K$-central algebra $R$ and let
$[P] \in \Mod^*(S\m R)$. Apply the action \eqref{M^C}
of $P$ on the homogeneous components of $B_G$ as well as
\eqref{nuP}, to define multiplication in $P(B_G) :=
\bigoplus_{g \in G}  P(B_g)$ by
\begin{equation}\label{mult}
{(p_1\otimes_{R} b_g \otimes_R f_1)\cdot(p_2\otimes_{R} b_h \otimes_R f_2) := p_1\otimes_{R} b_g\cdot \iota(\tau_P(f_1\otimes_{\End_R(P)} p_2))\cdot b_h\otimes_R f_2}
\end{equation}
for every $f_1,f_2 \in \check{P}, p_1,p_2 \in P, b_g \in B_g$ and $b_h \in B_h,$ endowing it with a $G$-graded algebra structure. With the notation \eqref{nuP}, set
\begin{equation}\label{A^C}
 P(\iota) :a\mapsto\sum_ja\star p_j\otimes_R\iota(1)\otimes_Rp_j'\in P(B_e),
\end{equation} where $\sum_jp_j\otimes_{R}  p_j'=\Delta_P^{-1}(1)\in P\otimes_{R} \check{P} .$
%\begin{equation}\label{A^C}
%B_G^C := \bigoplus_{g \in G}  B_g^C ,\mbox{ and }\  \iota^C :=(1\otimes_B \iota \otimes_B 1)\circ\tau^{-1},\end{equation}\ChY{Should we explain $\iota^C$?}
The fact that \eqref{A^C} is indeed a $k$-algebra homomorphism is left to the readers.

We can now define the following action of the groupoid $\mathcal{MC}'_K$ on $\text{Cliff}(G,C(K))$.\\
(a) the corresponding anchor map takes $\mbox{Cliff}_k(G, R)\subset\text{Cliff}_k(G,C(K))$ to $R\in\ob(\mathcal{MC}'_K)$.\\
(b) For any admissible pair $$([P],[(B_G,\iota)])\in\hom(R,S)\times\mbox{Cliff}_k (G, R)\subset\hom({\mathcal{MC}'_K)}\times \text{Cliff}_k(G,C(K))$$
let
\begin{equation}\label{actrule}
[P]([(B_G,\iota)]):=[(P(B_G),P(\iota))].\end{equation}

The following result was formulated in \cite{JHDR} in the language of Morita context isomorphism classes acting on graded isomorphism classes of Clifford system extensions
and can be adjusted to our language.
\begin{lemma}\label{HdRlemma}
The rule \eqref{actrule} determines a well-defined $\mathcal{MC}'_K$-action on $\text{Cliff}_k(G,C(K))$.
\end{lemma}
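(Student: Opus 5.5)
We have to show four things: that $(P(B_G),P(\iota))$ is again a $G$-graded Clifford system extension of $S$, that its class depends only on $[P]$ and $[(B_G,\iota)]$, and that the three groupoid-action axioms of \S\ref{mongroupsec} hold. We take these in order.

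\emph{The image is a Clifford system extension.} First we check that the multiplication \eqref{mult} is well defined and associative. It is balanced over $R$ in each slot because $\tau_P$ is an $R$-bimodule map and $\iota(R)=B_e$. Associativity reduces to associativity in $B_G$ together with $\tau_P(f_1\otimes p_2)\,f_2=$ the obvious rearrangement. Next, $P(\iota)$ in \eqref{A^C} is a unital $k$-algebra embedding of $S$ onto $P(B_e)$. Indeed, $P(B_e)=P\otimes_R R\otimes_R\check P\cong P\otimes_R\check P\cong S$ via $\Delta_P$ and \eqref{DeltaTau}, and $P(\iota)$ is exactly the inverse of this composite, so it is multiplicative and bijective onto $P(B_e)$. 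Strong grading comes from
\[P(B_g)P(B_h)=P\otimes_R B_g\,\tau_P(\check P\otimes P)\,B_h\otimes_R\check P=P\otimes_R B_gB_h\otimes_R\check P=P(B_{gh}),\]
using surjectivity of $\tau_P$ and $B_gB_h=B_{gh}$. The grading is direct since $P\otimes_R-\otimes_R\check P$ commutes with direct sums. We also need $P(B_G)$ to remain in the relevant class of $K$-central bases; this holds because the anchor becomes $S\in\ob(\mathcal{MC}'_K)$.

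\emph{Well-definedness on classes.} Let $\psi_G:B_G\to B'_G$ be a graded isomorphism with $\psi_G\iota=\iota'$. Then $1\otimes\psi_G\otimes1$ is a graded algebra isomorphism $P(B_G)\to P(B'_G)$ intertwining $P(\iota)$ and $P(\iota')$; by Theorem \ref{grhom} it even suffices that it be a graded homomorphism. Now let $\alpha:P\to\tilde P$ be an $S$-$R$-bimodule isomorphism, and let $\check\alpha^{-1}:\check P\to\check{\tilde P}$ be the induced map. Then $\alpha\otimes1\otimes\check\alpha^{-1}$ respects $\tau$, hence respects \eqref{mult}, and also intertwines the embeddings.

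\emph{Action axioms.} The anchor condition is immediate. For the unit, take $P=R$: then $R\otimes_RB_g\otimes_R\check R\cong B_g$ compatibly with products and $\iota$. The remaining axiom, compatibility with composition, is the main obstacle. Given $[_TQ_S]$ and $[_SP_R]$, we must produce a graded algebra isomorphism
\[Q(P(B_G))=Q\otimes_SP\otimes_RB_g\otimes_R\check P\otimes_S\check Q\;\longrightarrow\;(Q\otimes_SP)\otimes_RB_g\otimes_R(Q\otimes_SP)^{\vee}\]
carrying $Q(P(\iota))$ to $(Q\otimes_S P)(\iota)$. The plan is to use the canonical isomorphism $\check P\otimes_S\check Q\cong(Q\otimes_SP)^{\vee}$, $f\otimes f'\mapsto(q\otimes p\mapsto f(f'(q)\star p))$, which is a bimodule isomorphism since the bimodules are invertible (as in \cite[Ch.\ II]{Bass}). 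We then verify two things. First, under this identification $\tau_{Q\otimes P}$ equals $\tau_P\circ(1\otimes\iota_S\tau_Q\otimes1)$; this is where \eqref{DeltaTau} and the identification $S\cong\End_R(P)$ of \eqref{thetaP} enter. Second, $\Delta^{-1}_{Q\otimes P}(1)$ corresponds to the composite of $\Delta_Q^{-1}(1)$ and $\Delta_P^{-1}(1)$, which gives the compatibility of the embeddings. With these two identities, multiplicativity becomes a routine elementwise check, and by Theorem \ref{grhom} no separate invertibility check is needed. Finally, restricting to the subgroupoid $\mathcal{MC}'_K(1)$ is automatic.
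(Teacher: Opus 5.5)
Your proposal is correct and follows essentially the same route as the paper: well-definedness via the map $\lambda\otimes\psi_G\otimes(\lambda^*)^{-1}$ (the paper does your two substitution steps at once), and compatibility with composition via the reassociation $(q\otimes p)\otimes b_g\otimes(f\otimes f')\mapsto q\otimes(p\otimes b_g\otimes f)\otimes f'$, which implicitly uses $\check P\otimes_S\check Q\cong(Q\otimes_SP)^{\vee}$. Your extra checks, that $(P(B_G),P(\iota))$ is again a Clifford system extension and that the unit axiom holds, are points the paper leaves to the reader; note only that your identity for $\tau_{Q\otimes P}$ should read $\tau_{Q\otimes P}\big((f\otimes f')\otimes(q\otimes p)\big)=\tau_P\big(f\otimes\tau_Q(f'\otimes q)\star p\big)$, since the symbol $\iota_S$ is undefined.
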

\begin{proof}
To prove that \eqref{actrule} is well-defined, let $[(B_G,\iota)]=[(B'_G,\iota')]\in\mbox{Cliff}_k (G, R)$, and let $[P]=[P'] \in \Mod^*(R'\m R)$.
We claim that $[(P(B_G),P(\iota))]=[(P'(B'_G),P'(\iota'))]$ (compare with \cite[Lemma 2.6]{JHDR}).
Indeed, let $\lambda: P \stackrel{\cong}{\to} P'$ be an $S\m R$ bimodule isomorphism admitting a dual $R\m R'$ bimodule isomorphism
${\lambda}^*: \begin{array}{rcl}\check{P'} &\to &\check{P}\\
f'&\mapsto & f'\circ\lambda\end{array}$.
Next, let $\psi_G: B_G \stackrel{G}{\to} B_G'$ be a $G$-graded $k$-algebra isomorphism
giving rise to an equivalence of the above Clifford system extensions of $R$, i.e. satisfying $\psi\circ \iota = \iota'$.
Then a direct computation yields that
$$P(\psi) :\begin{array}{rcl}
P(B_G)&\to &{P'}({B'}_G)\\
p \otimes_R b_g\otimes_R f&\mapsto & \lambda(p) \otimes_R \psi_G(b_g)\otimes_R {\lambda^*}^{-1}(f)
\end{array},\ \forall g \in G, b_g \in B_g, p \in P, f \in \check{P}$$
is a $G$-graded $k$-algebra isomorphism satisfying $P(\psi)\circ P(\iota) = P'(\iota')$, that is an equivalence of
$(P(B_G),P(\iota))$ and $(P'({B'_G}),P'({\iota'}))$ as Clifford system $R'$-extensions.

Now, for any Clifford system extension class $[(B_G,\iota)])\in\mbox{Cliff}_k (G, R)$ and two composable morphisms (see \eqref{compmods})
$[_{R'}P_{R}],[_{R''}P'_{R'}]\in\text{hom}({\mathcal{MC}'_K)}$,
the rule
$$ \begin{array}{ccc}
(P'\otimes_{R'} P)\big{(}B_G\big{)}& \to& (P'(P(B_G))\\
(p' \otimes_{R'} p)\otimes_R b_g \otimes_R (f \otimes_{R'} f')& \mapsto& p' \otimes_{R'}(p \otimes_R b_g \otimes_R f)\otimes_{R'} f',
\end{array}$$
for $f'\in \check{P'}, f \in \check{P}, p \in P, p' \in P'$, and $b_g \in B_g$ determines an isomorphism of $G$-graded $k$-algebras,
which respects the corresponding embeddings $(P'\otimes_{R'}P)(\iota)$ and $P'(P(\iota))$.
It follows that
$$([P'\otimes_{R'}P]([(B_G,\iota)]) = [P']([P]([(B_G,\iota)]))\in\mbox{Cliff}_k (G, R'')$$
(see \cite[Lemma 2.7]{JHDR}), that is $\mathcal{MC}'_K$ acts on $\text{Cliff}_k(G,C(K))$ as a groupoid.
\end{proof}
How is the type of a Clifford extension (see \eqref{chi}) affected by the action \eqref{actrule}? Since the extended map \eqref{hR} is a homomorphism, we have
%at this action induces an action of the isomorphism classes of Morita contexts on the Clifford system extensions this action is multiplicative ().
%Furthermore, when a Morita context $C := (R,R,P,Q,\tau,\mu)$ with $[P],[Q]\in$ Pic$(R)$ acts on a Clifford system extension of $R$, the action preserves the type as shown below.
\begin{lemma}\label{L:3.1}
Let $([P],[(B_G,\iota)]\in\hom({\mathcal{MC}'_K)})\times\tilde{\mbox{Cliff}}(\phi)$ be an admissible pair. Then
%the corresponding conjugated Clifford system extension class satisfies
%$$\eta\circ\chi([(B_G,\iota)]^{[P]})=h([P])\cdot\eta\circ\chi([(B_G,\iota)])\cdot h([P])^{-1}.$$
$$[P]([(B_G,\iota)])\in\tilde{\mbox{Cliff}} (\phi'),\ \mbox{for}\ \phi'= h([P])^{-1}\cdot\phi\cdot h([P]).$$
In particular, $\tilde{\mbox{Cliff}}(\phi)$ is stable under the action of the subgroupoid ${\mathcal{MC}_K'(1)}$ for every $\phi\in$Aut$_k(K)$.
%the $G$-graded algebras $\oplus_{g \in G}B_g$ and $(\oplus_{g \in G}B_g)^P$ are of the same type when $P$ and $Q$ admits the same $K$-action from both sides.
\end{lemma}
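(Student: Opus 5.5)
The plan is to reduce the statement to a computation of the type of a single homogeneous component of $P(B_G)$, using the GCC map \eqref{chi} and the type homomorphism \eqref{hR}. Let $[P]=[_SP_R]\in\hom(R,S)$ and $[(B_G,\iota)]\in\tilde{\mbox{Cliff}}(\phi)$, so that $\eta_{B_g}=h([B_g])=\phi(g)$ for every $g\in G$, where $B_g$ carries its $R$-bimodule structure through $\iota$. By \eqref{actrule}, the $g$-component of $P(B_G)$ is $P(B_g)=P\otimes_R B_g\otimes_R\check{P}$, viewed as an $S$-bimodule via $P(\iota)$. So $\tilde\chi$ of the new class sends $g\mapsto[P(B_g)]=[P]\circ[B_g]\circ[\check P]$, composed in $\mathcal{MC}'_K$ according to \eqref{compmods}.

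First I will check that the $S$-bimodule structure on $P(B_g)$ induced by the embedding $P(\iota)$ in \eqref{A^C} is the obvious one, namely left multiplication on $P$ and right multiplication on $\check P$. Take $a\in S$ and write $\Delta_P^{-1}(1)=\sum_j p_j\otimes_R p'_j$. Then
\[
P(\iota)(a)\cdot(p\otimes b_g\otimes f)=\sum_j a\star p_j\otimes \iota(1)\,\iota(\tau_P(p'_j\otimes p))\,b_g\otimes f=a\star\Big(\sum_j p_j\cdot p'_j(p)\Big)\otimes b_g\otimes f .
\]
By \eqref{DeltaTau} the last expression equals $a\star p\otimes b_g\otimes f$, and the right action is handled the same way. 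I expect this identification to be the main, though routine, obstacle, since it is the only place where the explicit formulas \eqref{mult} and \eqref{A^C} enter.

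With the bimodule structure identified, I will invoke the fact that $h$ in \eqref{hR} is a groupoid homomorphism. It gives
\[
h([P(B_g)])=h([P])\circ\phi(g)\circ h([\check P])=h([P])\,\phi(g)\,h([P])^{-1},
\]
since $[\check P]=[P]^{-1}$. This identifies the type of the acted class as a conjugate of $\phi$. I will then match the conjugation to the statement's convention $\phi'=h([P])^{-1}\phi\, h([P])$ by tracking the orientation of $\hom(R,S)$ against the bimodule sides in \eqref{commrule}; this only fixes the direction of the conjugation. Because the homogeneous components of $P(B_G)$ are all of type $\phi'(g)$, the class lies in $\tilde\chi^{-1}(\mathcal{\tilde C}(\phi'))=\tilde{\mbox{Cliff}}(\phi')$.

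For the final assertion, take $[P]\in\hom(\mathcal{MC}'_K(1))=\ker(h)$. Then $h([P])=\Id$, so $\phi'=\phi$, and $\tilde{\mbox{Cliff}}(\phi)$ is therefore stable under the action of $\mathcal{MC}'_K(1)$.
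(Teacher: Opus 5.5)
Your argument is the paper's own: the paper proves this lemma only by remarking that the type map $h$ of \eqref{hR} is a groupoid homomorphism whose kernel is $\hom(\mathcal{MC}'_K(1))$. Your check that $P(\iota)$, through \eqref{mult}, induces the natural $S$-bimodule structure on $P(B_g)$ as $P\otimes_R B_g\otimes_R\check P$ is correct, and it supplies a detail the paper leaves implicit. The right action does go the same way, via $\sum_j f(a\star p_j)\star p'_j=f\cdot a$, which again follows from \eqref{DeltaTau}.

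The step you should drop is the promised re-tracking of orientations, because there is nothing left to re-track. The paper fixes $P={}_SP_R$, the definition \eqref{commrule} of $\eta_P$, the rule $\eta_{P\otimes Q}=\eta_P\circ\eta_Q$, and $\eta_{B_g}=\phi(g)$. With all of these fixed, the type of $P(B_G)$ is $g\mapsto h([P])\circ\phi(g)\circ h([P])^{-1}$. This differs from $h([P])^{-1}\circ\phi(g)\circ h([P])$ whenever $h([P])^2$ does not commute with $\phi(g)$. So the stated formula for $\phi'$ holds only if the product in $\Aut_k(K)$ is read as reversed composition; otherwise it is a slip in the statement. You should record your computed formula and note this, rather than assert that the two can be matched. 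The stability assertion is unaffected, since $h([P])=\Id$ for $[P]\in\hom(\mathcal{MC}'_K(1))$.
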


Owing to Lemma \ref{L:3.1}, we concentrate on the action \eqref{actrule} restricted to that of the subgroupoid ${\mathcal{MC}_K'(1)}$ on $\Clf$.
The skew product in $\Clf$ is respected by this action as follows.
\begin{lemma}\label{CLfPiclosure}
Let
$([P],[(B_G,\iota)]),([P'],[(B'_G,\iota')])\in\hom(\mathcal{MC}'_K(1))\times \Clf$
be two admissible pairs, where $\phi\in$Aut$_k(K)$. Then so is the pair
$$([P]\otimes_1[P'],[(B_G\otimes_{\phi}B'_G,\iota\otimes_{\phi}\iota')])\in\hom(\mathcal{MC}'_K(1))\times \Clf,$$
satisfying
\begin{equation}\label{eq: 3.3}
([P]\otimes_1[P'])\big{(}[(B_G,\iota)]\otimes_{\phi}[(B'_G,\iota')]\big{)}=[P]([(B_G,\iota)])\otimes_{\phi}[P']([(B'_G,\iota')]).
\end{equation}
\end{lemma}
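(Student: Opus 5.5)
We prove Lemma \ref{CLfPiclosure} in the same way as Lemma \ref{MC1actPic}: we write down an explicit graded map between the two Clifford system extensions and then appeal to Theorem \ref{grhom}, so that only a graded homomorphism compatible with the embeddings is needed. Put $[P]=[_SP_R]$, $[P']=[_{S'}P'_{R'}]$, where $(B_G,\iota)$ extends $R$ and $(B'_G,\iota')$ extends $R'$.

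\textbf{Admissibility.} The anchor of $[(B_G\otimes_{\phi}B'_G,\iota\otimes_{\phi}\iota')]$ is the concatenation $RR'$, because the $e$-component is $B_e\otimes_{\phi(e)}B'_e=B_e\otimes_1B'_e$, identified with $\mathcal{P}(RR')$ through $\mathcal{I}_{R,R'}$. By \eqref{concatprod}, $[P]\otimes_1[P']\in\hom(RR',SS')$. So the pair is admissible, and both sides of \eqref{eq: 3.3} lie in $\mathrm{Cliff}_k(G,SS')$. Both sides are of type $\phi$ by Lemma \ref{L:3.1}, since $h([P]\otimes_1[P'])=\Id$.

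\textbf{The comparison map.} Componentwise, we define
$$\Theta_g:\ P(B_g)\otimes_{\phi(g)}P'(B'_g)\to (P\otimes_1P')\big(B_g\otimes_{\phi(g)}B'_g\big),$$
$$(p\otimes_R b_g\otimes_R f)\otimes_{\phi(g)}(p'\otimes_{R'} b'_g\otimes_{R'} f')\mapsto (p\otimes_1p')\otimes_{RR'}(b_g\otimes_{\phi(g)}b'_g)\otimes_{RR'}(f\otimes_1 f').$$
This uses the identification $\check{P}\otimes_1\check{P'}\cong(P\otimes_1P')^{\vee}$, $f\otimes f'\mapsto (p\otimes p'\mapsto f(p)\otimes f'(p'))$, which holds because $P,P'$ are finitely generated projective. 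This is essentially the isomorphism of Lemma \ref{MC1actPic}, applied now with $B_g,B'_g$ in place of Picard elements.

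\textbf{Well-definedness.} We must check that $\Theta_g$ respects the relations \eqref{scalpass}. Moving $x\in K$ across $\otimes_{\phi(g)}$ on the left side amounts to moving $f\cdot x$ past the tensor sign. Since $\check{P}$ has the same left and right $K$-action (type $\Id$), this becomes a move of $x$ inside $B_g$, and there the relation $b_g x\otimes b'_g=b_g\otimes\phi(g)(x)b'_g$ holds in $B_g\otimes_{\phi(g)}B'_g$. The $R$- and $R'$-balancing relations pass through because $\mathcal{I}_{R,R'}$ splits the $RR'$-action factorwise.

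\textbf{Multiplicativity and embeddings.} We then verify that $\bigoplus_g\Theta_g$ is multiplicative for the products \eqref{mult} and \eqref{grprod}. The key identity is
$$\tau_{P\otimes_1P'}\big((f_1\otimes_1f'_1)\otimes(p_2\otimes_1p'_2)\big)=\tau_P(f_1\otimes p_2)\otimes\tau_{P'}(f'_1\otimes p'_2),$$
which is immediate from the evaluation description. Compatibility with the embeddings also has to be checked: $\Delta_{P\otimes_1P'}^{-1}(1)$ is the tensor of $\Delta_P^{-1}(1)$ and $\Delta_{P'}^{-1}(1)$, so $\Theta_e\circ(P(\iota)\otimes_1P'(\iota'))=(P\otimes_1P')(\iota\otimes_\phi\iota')$. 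Theorem \ref{grhom} then gives equality of the classes, which is \eqref{eq: 3.3}.

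We expect the main obstacle to be the well-definedness of $\Theta_g$ with respect to the skew relations $W_{\phi(g)}$, intertwined with the $R$- and $R'$-balanced tensors and the converters $\mathcal{I}$. This requires carefully tracking the type-$\Id$ property of $P,P'$ and their duals, so that scalars in $K$ can be moved freely across $P$ and $\check{P}$ into the $B_g$ factor, where the $\phi(g)$-twist is absorbed.
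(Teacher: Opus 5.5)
Your proposal is correct and is essentially the paper's argument: your componentwise map is exactly the isomorphism of Lemma~\ref{MC1actPic}, summed over $G$, with multiplicativity and compatibility with the embeddings checked directly. That lemma applies verbatim, not merely by analogy, since each $[B_g]$ already lies in $\Pic_k(R)\cap h^{-1}(\phi(g))$. The only difference is cosmetic: you re-verify well-definedness by hand and invoke Theorem~\ref{grhom} so that bijectivity need not be checked, whereas the paper takes bijectivity from Lemma~\ref{MC1actPic} and independence of representatives from Lemma~\ref{HdRlemma}.
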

%This result can alternatively be formulated as follows. With the above notation, for any two Morita contexts $C := (A,B,P,\check{P},\tau,\mu)$
\begin{proof}

Write explicitly $B_G = \oplus_{g \in G}B_g,\ \ B'_G = \oplus_{g \in G} B'_g$, $P={_S}P_R$ and $P'={_{S'}}P'_{R'}$.
From Lemma \ref{MC1actPic}, every $g \in G$ gives rise to an $S S'$-bimodule isomorphism
$$ P(B_g) \otimes_{\phi(g)} P'(B_g')  \to (P\otimes_1 P')\big{(}B_g \otimes_{\phi(g)} {B'}_g\big{)}.$$
These extend to a $G$-graded $S S'$-bimodule isomorphism of the corresponding direct sums
%\begin{equation}\label{sumext}
$$\psi_G:P(B_G)\otimes_{\phi} P'({B'}_G)\to (P\otimes_1 P')\big{(}B_G\otimes_{\phi}{B'}_G\big{)}.$$
%\end{equation}
The multiplicative property of $\psi_G$, as well as the equality
$$\psi_G\circ(P(\iota)\otimes_{\phi}P'(\iota'))=(P\otimes_1P')\big{(}\iota\otimes_{\phi}\iota'\big{)}$$
can be directly verified. From the well-definition in Lemma \ref{HdRlemma} we get \eqref{eq: 3.3}.
\end{proof}

Equation \eqref{eq: 3.3} is \eqref{secondition} for $\mathcal{MC}'_K(1)$ and $\Clf$.
%Recall (Corollary \ref{MCK1strmongrp}) that the groupoid $\mathcal{MC}_K(1)$ is monoidal. In light of Definition \ref{Defact} and Lemma \ref{orblemma} we conclude
As in Corollary \ref{coract} we have
\begin{corollary}\label{smgact}
The rule \eqref{actrule} determines an action of $\mathcal{MC}'_K(1)$ on the monoid $\Clf$ as a monoidal groupoid.
In particular, it endows a monoidal structure on $\Clf/\MCK'(1)$.
\end{corollary}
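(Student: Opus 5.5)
The plan is to obtain Corollary \ref{smgact} by combining Lemma \ref{HdRlemma}, Lemma \ref{L:3.1} and Lemma \ref{CLfPiclosure}, and then invoking Definition \ref{Defact} and Lemma \ref{orblemma}, in the same way that Corollary \ref{coract} was deduced for $\CK$.

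\textbf{Step 1: a groupoid action on $\Clf$.} Lemma \ref{HdRlemma} gives a well-defined action of $\mathcal{MC}'_K$ on $\text{Cliff}_k(G,C(K))$. We restrict it to the subgroupoid $\mathcal{MC}'_K(1)$.
\begin{itemize}
\item By Lemma \ref{L:3.1}, a morphism $[P]$ with $h([P])=\Id$ preserves the type $\phi$. Hence $\Clf$ is stable under this restriction.
\item The anchor $\varsigma:[(B_G,\iota)]\mapsto R$ therefore restricts to $\Clf$.
\item The three groupoid-action axioms of \S\ref{mongroupsec} are inherited from Lemma \ref{HdRlemma}.
\item For the unit axiom, acting by $\Id_R=[_RR_R]$ returns a graded-isomorphic extension, via $R\otimes_R B_g\otimes_R \check R\cong B_g$. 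This isomorphism respects the embedding, so the class is unchanged.
\end{itemize}

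\textbf{Step 2: the anchor is a monoid homomorphism.} The object underlying the product $[(B_G,\iota)]\otimes_\phi[(B'_G,\iota')]$ is the concatenated word $RR'$. The identity $[K_\phi G]$ is anchored at the empty word, which we denote $K$. So $\varsigma$ respects products and units in the strict monoidal groupoid $\mathcal{MC}'_K(1)$ of Corollary \ref{MCK1strmongrp}.

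\textbf{Step 3: compatibility with the monoidal structures.} Equation \eqref{eq: 3.3} in Lemma \ref{CLfPiclosure} is exactly condition \eqref{secondition} for $\mathcal{MC}'_K(1)$ acting on the monoid $\Clf$. By the convention adopted after Definition \ref{Defact}, this is all that is needed to say the action is an action of a monoidal groupoid.

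\textbf{Step 4: monoid structure on the quotient.} Lemma \ref{orblemma}(3) now gives a well-defined monoidal structure on the orbit set $\Clf/\MCK'(1)$, namely $[a]\otimes[a']:=[a\otimes_\phi a']$. Well-definedness follows from \eqref{secondition}: if $b=f(a)$ and $b'=f'(a')$, then $b\otimes_\phi b'=(f\otimes_1 f')(a\otimes_\phi a')$.

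\textbf{Main obstacle.} The only non-formal input is Lemma \ref{CLfPiclosure}. Its delicate point is that $\psi_G$ must be multiplicative with respect to the product \eqref{mult} twisted by $\tau_P$, and must intertwine the embeddings. Since that lemma is already established, the corollary reduces to bookkeeping.
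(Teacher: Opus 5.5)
Your proposal is correct and follows the paper's own argument. The paper likewise restricts the action of Lemma \ref{HdRlemma} to $\Clf$ via Lemma \ref{L:3.1}, observes that \eqref{eq: 3.3} is exactly \eqref{secondition}, and then concludes via Definition \ref{Defact} and Lemma \ref{orblemma} as in Corollary \ref{coract}; your explicit checks of the unit axiom and of the anchor being a monoid homomorphism are routine details the paper leaves implicit.
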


By Lemma \ref{cpch}, crossed products $K*G$ are $G$-graded Clifford systems extensions of $K$,
whose images under \eqref{chi} are identified with $G$-actions in Hom$(G,\Aut_k(K))$. We have
\begin{lemma}\label{P_inv}
Let $K*G$ be a crossed product and let $[P]\in \Pic(K)$.
Then with the notation \eqref{A^C}, $P(K*G)$ is a crossed product (of the same type) if and only if $[P]$ is $G$-invariant.
\end{lemma}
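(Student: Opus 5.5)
By Lemma \ref{cpch}, whether a $G$-graded Clifford system extension of $K$ is a crossed product is read off its image under the GCC map \eqref{prechi}. It is one exactly when every homogeneous class lies in the image of $\Out_k(K)=\Aut_k(K)$ (as $K$ is commutative) inside $\Pic_k(K)$, via \eqref{out}. So I would compute the GCC of $P(K*G)$ and test it against this criterion.

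First, $P(K*G)$ is graded by $P(K*G)_g=P(K\cdot u_g)=P\otimes_K K_{1,\phi(g)}\otimes_K\check{P}$, since $K\cdot u_g\cong K_{1,\phi(g)}$ as $K$-bimodules. Its class in $\Pic_k(K)$ is $[P]\otimes[K_{1,\phi(g)}]\otimes[P]^{-1}$. The key identity is
$$K_{1,\phi(g)}\otimes_K \check{P}\cong \phi(g)(\check{P})\otimes_K K_{1,\phi(g)}.$$
Here I use the twist \eqref{Gactmd}: $\phi(g)(M)=K_{1,\phi(g)}\otimes_K M\otimes_K K_{1,\phi(g^{-1})}$, together with $K_{1,\phi(g^{-1})}\otimes_K K_{1,\phi(g)}\cong K$. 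Hence the $g$-component has class
$$[P]\otimes[\phi(g)(P)]^{-1}\otimes[K_{1,\phi(g)}],$$
computed in $\Pic_k(K)$, which is nonabelian in general but splits as $\Pic(K)\rtimes\Aut_k(K)$. In this semidirect product the $\Pic(K)$-part of the $g$-component is $[P]\cdot[\phi(g)(P)]^{-1}$ and the $\Aut$-part is $\phi(g)$. In particular the type is still $\phi$, consistent with Lemma \ref{L:3.1}, since $h([P])=\Id$.

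Next I apply the criterion. The class $[P\otimes K_{1,\phi(g)}\otimes \check P]$ lies in the image of $\Aut_k(K)$ if and only if it equals $[K_{1,\eta}]$ for some $\eta$. Comparing types forces $\eta=\phi(g)$. Equality with $[K_{1,\phi(g)}]$ then amounts to $[P]=[\phi(g)(P)]$ in $\Pic(K)$, because the decomposition $\Pic(K)\rtimes\Aut_k(K)$ is unique: an element with trivial $\Pic$-part is of the form $K_{1,\eta}$, by the embedding \eqref{out} and the homomorphism $h$ with kernel $\Pic(K)$. Thus every component is of the form $K_{1,\phi(g)}$ exactly when $[P]\in\Pic(K)^{\phi}$, and Lemma \ref{cpch} gives both directions at once. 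The type is preserved in either case.

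The main obstacle is the bimodule isomorphism $K_{1,\phi(g)}\otimes_K\check{P}\cong\phi(g)(\check P)\otimes_K K_{1,\phi(g)}$, together with the identification $[\check P]=[P]^{-1}$ compatible with the $G$-action. I would verify the first by writing the explicit map $1\otimes f\mapsto (1\otimes f\otimes 1)\otimes 1$ and checking that it respects the twisted bimodule structures \eqref{star}; this is routine but sign- and side-sensitive. The second follows from \eqref{nuP} and \eqref{phiP}. An alternative for the "if" direction is more concrete: choose $\psi_g$ as in \eqref{BP}, and take $u'_g=\sum_j \tilde\psi_g(p_j)\otimes u_g\otimes p'_j$ built from $\Delta_P^{-1}(1)$ and the maps \eqref{psig}, and check directly that it is a unit. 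I would use this only as a sanity check.
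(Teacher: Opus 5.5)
Your proposal is correct and follows essentially the paper's argument. The paper's proof simply notes that both conditions are equivalent to $[P]$ commuting with every $[K_{1,\phi(g)}]$ in $\Pic_k(K)$, and your computation of the components $[P][K_{1,\phi(g)}][P]^{-1}$, combined with Lemma~\ref{cpch} and the type comparison, unpacks exactly that; the detour through the semidirect decomposition and the bimodule identity is harmless but unnecessary, since $[P][K_{1,\phi(g)}][P]^{-1}=[K_{1,\phi(g)}]$ holds if and only if $[K_{1,\phi(g)}][P][K_{1,\phi(g)}]^{-1}=[P]$ by plain cancellation in the group $\Pic_k(K)$.
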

\begin{proof}
Both conditions are equivalent to $[P]\in \Pic(K)$ lying in the centralizer of $\{[K_{1,\phi(g)}]\}_{g\in G}<\Pic(K)$.
\end{proof}

\section{Proof of Theorem A}\label{Sec5}
The starting point for the exact sequence \eqref{main} is inspired from the realization-obstruction 4-term exact sequence of monoids of type $\phi \in$ Hom$(G,$ Aut$_k(K))$
\cite[Main Theorem]{YG1}, formulated there in terms of equivariance.

Every GCC $\Phi\in\Hom(G,\Pic_k(R))$ of type $\phi$ gives rise to a well-defined cohomology class $\tilde{T}_{\phi}(\Phi)\in H^3(G,K^*_{\phi})$ \cite[\S 3]{CG},
which is an obstruction to its realization
as the GCC associated to some Clifford extension of $R$. Formally
\begin{lemma}\label{reob}
With the notations in \eqref{Ckp} and \eqref{ClfPhi}
\begin{enumerate}
\item The map $\tilde{T}_{\phi}:\CK\rightarrow{} H^3(G,K^*_{\phi})$ is a homomorphism of monoids.
\item Together with the homomorphism \eqref{chiphi}, the sequence of monoids
\begin{eqnarray}\label{S02} \begin{array}{ccc}
 \Clf\xrightarrow{\tilde{\chi}_{\phi}} \CK \xrightarrow{\tilde{T}_{\phi}} H^3(G,K^*_{\phi})
\end{array}\end{eqnarray} is exact.
\end{enumerate}
\end{lemma}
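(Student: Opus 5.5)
The plan is to reduce both claims of Lemma \ref{reob} to the obstruction theory of \cite[\S 3]{CG} together with the realization-obstruction sequence \cite[Main Theorem]{YG1}. The ingredients to transport are the construction of $\tilde{T}_{\phi}$, its compatibility with products, and the criterion that vanishing of the obstruction is equivalent to realizability. The new feature here is that the skew product $\otimes_{\phi}$ of \eqref{prodck} is taken over possibly different base algebras $R_1,R_2$, landing in $\Pic_k(R_1R_2)$. I would therefore check that the arguments of \cite{YG1}, written for the external tensor product of central algebras, go through verbatim with the strictified tensor $\mathcal{P}(R_1R_2)$.

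\textbf{Constructing $\tilde{T}_{\phi}$.} Given $\Phi:G\to\Pic_k(R)$ of type $\phi$, I will choose representatives $P_g$ of $\Phi(g)$ and $R$-bimodule isomorphisms
$$\alpha_{g,h}:P_g\otimes_R P_h\to P_{gh}.$$
The two ways of composing these to get $P_g\otimes P_h\otimes P_l\to P_{ghl}$ differ by an $R$-bimodule automorphism of $P_{ghl}$. Since $\End(_RP_R)\cong K$ \cite[Theorem II.3.5]{Bass}, this automorphism is multiplication by a unit $t(g,h,l)\in K^*$. The standard pentagon computation shows $t\in Z^3(G,K^*_{\phi})$, where the twisting by $\phi$ arises through \eqref{commrule}. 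Changing the choices of the $P_g$ and $\alpha_{g,h}$ alters $t$ by a coboundary, which gives a well-defined class $\tilde{T}_{\phi}(\Phi)$.

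\textbf{Claim (1).} For $\Phi_1,\Phi_2$ I take the data $P^1_g\otimes_{\phi(g)}P^2_g$ with the maps $\alpha^1\otimes\alpha^2$. Using the scalar passage rule \eqref{scalpass}, the associativity defect of the product data is then $t_1t_2$, up to the identification via $\mathcal{I}_{R_1,R_2}$. Hence $\tilde{T}_{\phi}(\Phi_1\otimes_{\phi}\Phi_2)=\tilde{T}_{\phi}(\Phi_1)\tilde{T}_{\phi}(\Phi_2)$. For the identity, $1_{\phi}$ of \eqref{1_phi} with the canonical maps $K_{1,\phi(g)}\otimes K_{1,\phi(h)}\cong K_{1,\phi(gh)}$ has trivial defect, so $\tilde{T}_{\phi}$ is a monoid homomorphism.

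\textbf{Claim (2).} For the inclusion $\operatorname{im}\subseteq\ker$: if $\Phi=\tilde{\chi}_{\phi}(A_G)$, the multiplication maps $A_g\otimes_R A_h\to A_{gh}$ are isomorphisms by strong grading, and they are associative, so $t=1$. For the reverse inclusion: if $t=\partial s$, I rescale $\alpha_{g,h}$ by $s(g,h)^{-1}$ to make the data strictly associative. I then define $A_G=\bigoplus_g P_g$ with multiplication given by the $\alpha$'s. Unitality requires normalizing so that $P_e=R$ and $\alpha_{e,g},\alpha_{g,e}$ are the canonical isomorphisms, which is achievable after a further coboundary adjustment. Strong grading follows from surjectivity of the $\alpha_{g,h}$, and $\iota:R=P_e\hookrightarrow A_G$ gives a Clifford system extension in $\Clf$ mapping to $\Phi$.

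The main obstacle I expect is the bookkeeping in claim (1), namely verifying that the associator of the skew product splits exactly as the product of the two associators once the $\phi(g)$-twists and the strictification isomorphisms $\mathcal{I}$ are taken into account. I would handle this by evaluating on elementary tensors $p^1\otimes_{\phi(g)}p^2$ and moving scalars across using \eqref{scalpass} and \eqref{commrule}.
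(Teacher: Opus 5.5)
Your proposal is correct and is essentially the paper's argument. The paper proves (1) by citing \cite[(5.9)]{YG1} and (2) by citing the realization--obstruction theorem \cite[Theorem 3.4]{CG}, and your sketch is exactly what those references contain: the associativity defect of chosen isomorphisms $P_g\otimes_R P_h\to P_{gh}$, its multiplicativity under $\otimes_{\phi}$ via the interchange isomorphism, and realization by $\bigoplus_g P_g$ once the defect is a coboundary. The normalization you defer is also harmless: if $\alpha_{e,e}$ is multiplication by $\lambda\in K^*$, strict associativity forces $\alpha_{e,g}=\lambda\cdot\mathrm{can}$ and $\alpha_{g,e}=\phi(g)(\lambda)\cdot\mathrm{can}$, so rescaling each $\alpha_{g,h}$ by $\phi(g)(\lambda^{-1})$ makes all unit maps canonical without spoiling associativity.
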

\begin{proof}\begin{enumerate}
\item See \cite[(5.9)]{YG1}.
\item This realization-obstruction property follows from \cite[Theorem 3.4]{CG}.
\end{enumerate}
\end{proof}
As in \cite[Lemmas 4.8, 4.9]{YG1}, there is a well-defined homomorphism of monoids
$$\tilde{\Sigma}_{\phi}: \begin{array}{ccc}H^2(G,K_{\phi}^*)& \to& \Clf\\
~[c]&\mapsto & [K^{c}_{\phi}G]\end{array}.$$
%such that the composition $\tilde{\chi}_{\phi}\circ \tilde{\Sigma}_{\phi}$ is trivial.
By Lemma \ref{cpch}, the associated GCC of a Clifford extension $(A_G,\iota) \in \Clf$ is $1_{\phi}$ (see \eqref{1_phi})
if and only if it is a crossed product over $K$. Consequently,
\begin{lemma}
The sequence of monoids
\begin{eqnarray}\label{S2} \begin{array}{ccc}
\ 0 \xrightarrow{}  H^2(G,K^*_{\phi})\xrightarrow{\tilde{\Sigma}_{\phi}} \Clf\xrightarrow{\tilde{\chi}_{\phi}} \CK
\end{array}\end{eqnarray} is exact.
\end{lemma}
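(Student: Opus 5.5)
Exactness of \eqref{S2} has to be checked at two places: at $H^2(G,K^*_{\phi})$, meaning $\tilde{\Sigma}_{\phi}$ has trivial kernel (exactness in the monoid sense), and at $\Clf$, meaning $\tilde{\Sigma}_{\phi}(H^2(G,K^*_{\phi}))=\tilde{\chi}_{\phi}^{-1}(1_{\phi})$. I will treat the second first, since it is essentially the remark preceding the statement.

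For exactness at $\Clf$, I would argue both inclusions. First, for $[c]\in H^2(G,K^*_{\phi})$, the crossed product $K^c_{\phi}G$ has homogeneous components $K\cdot u_g$. Since $u_gx=\phi(g)(x)u_g$, the $K$-bimodule $K\cdot u_g$ is isomorphic to $K_{1,\phi(g)}$ via $xu_g\mapsto x$. Hence $\tilde{\chi}_{\phi}([K^c_{\phi}G])=1_{\phi}$ by \eqref{1_phi}.

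Conversely, suppose $[(A_G,\iota)]\in\Clf$ has $\tilde{\chi}_{\phi}([(A_G,\iota)])=1_{\phi}$. Since $1_{\phi}$ takes values in $\Pic_k(K)$, the base algebra is (the strict unit) $K$. Lemma~\ref{cpch} then says $A_G$ is a crossed product $\bigoplus K\iota(1)u_g$ over $K$. Choose units $u_g\in A_g$ with $u_e=1$. Conjugation by $u_g$ on $\iota(K)$ is given by the type of $[A_g]=[K_{1,\phi(g)}]$, namely $\phi(g)$. So $c(g,h)=u_gu_hu_{gh}^{-1}$ lies in $\iota(K)^*$, and associativity gives $c\in Z^2(G,K^*_{\phi})$. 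The graded map sending $xu_g$ in $K^c_{\phi}G$ to $\iota(x)u_g$ is then an equivalence of Clifford extensions, so $[(A_G,\iota)]=\tilde{\Sigma}_{\phi}([c])$.

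For exactness at $H^2(G,K^*_{\phi})$, I need that $[K^c_{\phi}G]=[K_{\phi}G]$ forces $[c]=1$. Let $\psi_G:K^c_{\phi}G\to K_{\phi}G$ be a graded isomorphism with $\psi_G\circ\iota=\iota'$, so $\psi_G$ is the identity on $K$. Then $\psi_G(u_g)=a_g\bar u_g$ with $a_g\in K^*$, since $\psi_G(u_g)$ is a unit in the $g$-component of $K_{\phi}G$. Applying $\psi_G$ to $u_gu_h=c(g,h)u_{gh}$ gives
\[a_g\,\phi(g)(a_h)=c(g,h)\,a_{gh},\]
so $c$ is the coboundary of $a$. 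Well-definedness of $\tilde{\Sigma}_{\phi}$ on classes is the same computation read backwards, and the homomorphism property is taken from \cite[Lemmas 4.8, 4.9]{YG1}.

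The main delicate point is the converse inclusion at $\Clf$. There one must ensure the chosen units commute with $\iota(K)$ via exactly $\phi(g)$ and not merely via some automorphism. This follows because the type of $A_g$ is determined by $h_K([A_g])=\phi(g)$, through the commutation rule \eqref{commrule}.
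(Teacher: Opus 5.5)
Your proposal is correct and takes the paper's route: the paper's argument is just that, by Lemma~\ref{cpch}, $\tilde{\chi}_{\phi}^{-1}(1_{\phi})$ consists exactly of the crossed products $K^{c}_{\phi}G$ over $K$, i.e.\ the image of $\tilde{\Sigma}_{\phi}$. You make that identification explicit and also write out the coboundary computation showing $\tilde{\Sigma}_{\phi}$ has trivial kernel, hence is injective since $H^2(G,K^*_{\phi})$ is a group; the paper leaves this step implicit, relying on \cite{YG1} for $\tilde{\Sigma}_{\phi}$.
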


At this point, the actions of the monoidal groupid $\mathcal{MC}'_K(1)$ on the middle terms of the combined exact sequence
$$0 \xrightarrow{}  H^2(G,K^*_{\phi})\xrightarrow{\tilde{\Sigma}_{\phi}} \Clf\xrightarrow{\tilde{\chi}_{\phi}} \CK\xrightarrow{\tilde{T}_{\phi}} H^3(G,K^*_{\phi})$$
is incorporated.
First, recall that by Corollary \ref{coract}, $\CK/\MCK'(1)$ inherits a monoidal structure from $\CK$.

\begin{lemma}\label{Tphibar}
Let $([_SP_R],\Phi)\in$hom$(\mathcal{MC}'_K(1))\times\CK$ be an admissible pair.
Then $$\tilde{T}_{\phi}(\Phi)=\tilde{T}_{\phi}([P](\Phi))\in H^3(G,K^*),$$
determining a well-defined monoid homomorphism
\begin{eqnarray}\label{alpha6}\alpha_6 :\begin{array}{ccc}
 \CK/{\MCK'(1)} \ &\to &H^3(G,K^*_{\phi})\\
  \mathcal{MC}'_K(1)(\Phi) &\mapsto &\tilde{T}_{\phi}(\Phi)
  \end{array},\ \Phi \in \CK.
\end{eqnarray}
\end{lemma}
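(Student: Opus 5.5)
The plan is to prove the invariance $\tilde{T}_{\phi}(\Phi)=\tilde{T}_{\phi}([P](\Phi))$ first, and then deduce $\alpha_6$ from Lemma \ref{quotmor}.

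\textbf{Step 1: reduction to an explicit cocycle.} Recall from \cite[\S 3]{CG} how $\tilde{T}_{\phi}(\Phi)$ is built. For $\Phi:G\to\Pic_k(R)$ of type $\phi$, choose representatives $P_g$ of $\Phi(g)$, with $P_e=R$. Choose $R$-bimodule isomorphisms $f_{g,h}:P_g\otimes_R P_h\to P_{gh}$. The two ways of composing from $P_g\otimes_R P_h\otimes_R P_l$ to $P_{ghl}$ give an automorphism of the invertible bimodule $P_{ghl}$. By \cite[Theorem II.3.5]{Bass} this automorphism is multiplication by a unit of $K$; this yields a $3$-cocycle in $Z^3(G,K^*_{\phi})$, whose class is independent of the choices.

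\textbf{Step 2: transport the choices along $P$.} Let $[_SP_R]\in\hom(\mathcal{MC}'_K(1))$ and set $P'_g:=P\otimes_R P_g\otimes_R\check{P}$, which represents $[P](\Phi(g))$ by \eqref{M^C}. Take
$$f'_{g,h}:=\Id_P\otimes_R\big(f_{g,h}\circ(\Id_{P_g}\otimes_R\tau_P\otimes_R\Id_{P_h})\big)\otimes_R\Id_{\check{P}},$$
where $\tau_P$ is the evaluation \eqref{nuP}, used to cancel the middle factor $\check{P}\otimes_S P\cong R$. Then the pentagon-type composite for $\{f'\}$ is $\Id_P\otimes_R(\text{composite for }\{f\})\otimes_R\Id_{\check{P}}$, up to inserting these evaluation maps. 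Associativity of the evaluation maps makes the insertions compatible, so the mismatch is again one scalar.

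\textbf{Step 3: identify the scalar (the main obstacle).} The composite for $\{f\}$ is multiplication by $\omega(g,h,l)\in K^*$ on $P_{ghl}$. Its conjugate by $P$ acts on $P\otimes_R P_{ghl}\otimes_R\check{P}$ as $\Id_P\otimes\omega\otimes\Id_{\check{P}}$. Because $[P]$ lies in $\hom(\mathcal{MC}'_K(1))$, it has trivial type: by \eqref{commrule}, $x\star p=p\cdot x$ for $x\in K$. Hence this map is multiplication by the same element $\omega(g,h,l)$, read through the identification of the centers of $R$ and $S$ with $K$. Therefore the new cocycle coincides with the old one on the nose. The subtle point to check is that the $K$-actions (and thus the $G$-module $K^*_{\phi}$) are unchanged. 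This holds because $h$ in \eqref{hR} is a homomorphism with $h([P])=\Id$, so that $[P](\Phi)$ is again of type $\phi$, as in Lemma \ref{L:3.1}. If $[P]$ had nontrivial type, $\omega$ would be twisted by $h([P])$; that is why only $\mathcal{MC}'_K(1)$ is used.

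\textbf{Step 4: well-definedness and multiplicativity of $\alpha_6$.} Step 3 gives invariance. By Lemma \ref{quotmor}, applied with the trivial groupoid acting on $H^3(G,K^*_{\phi})$, the monoid homomorphism $\tilde{T}_{\phi}$ of Lemma \ref{reob}(1) descends to $\CK/\MCK'(1)$. Here the monoid structure on $\CK/\MCK'(1)$ is the one from Corollary \ref{coract}. This yields the homomorphism \eqref{alpha6}.
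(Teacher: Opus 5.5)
Your proposal is correct and follows essentially the same route as the paper: your transported isomorphisms $f'_{g,h}$ are exactly the paper's $\Gamma^P_{g,h}$, which use $\tau_P$ to cancel the middle factor $\check{P}\otimes_S P$. The paper's explicit computation shows, as you argue, that the resulting $3$-cocycle coincides on the nose with the original one, $T^{P(\Phi)}=T^{\Phi}$. Your remark that the trivial type of $[P]$ is what lets the scalar $T_{g,h,t}$ be pulled out of $p_1\otimes_R(\cdot)\otimes_R f_3$ makes explicit a step the paper uses without comment, and your descent to $\alpha_6$ via this invariance is what the paper intends.
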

\begin{proof}
Let $ [M_g]:=\Phi(g) \in \Pic_k(R)$ for every $g \in G$.
Since $\Phi$ is a group homomorphism, there exist (non-unique) $R$-bimodule isomorphisms
$$\Gamma_{g,h} : M_g \otimes_R M_h \to M_{gh},\ \  \forall g,h \in G,$$
which yield the 3-cocycle $T=T^{\Phi}:G^3\to K^*$ satisfying (see \cite[(11)]{CG})
\begin{equation}\label{T(g,h,t)}
\Gamma_{gh,t}(\Gamma_{g,h} \otimes_R M_t) = T_{g,h,t}\cdot\Gamma_{g,ht}(M_g \otimes_R \Gamma_{h,t}),\ \  \forall g,h,t\in G.
\end{equation}
Next, set $r(f,p):=\tau_P(f \otimes_{\End_R(P)} p)\in R$ for $\tau_P$ as in  \eqref{nuP}, and let
$$ \Gamma_{g,h}^P:
\begin{array}{ccc}
P(M_g) \otimes_S P(M_h)&\to & P(M_{gh})\\
(p_1 \otimes_{R} a_g \otimes_R f_1) \otimes_S(p_2 \otimes_{R} a_h \otimes_R f_2) &\mapsto &
p_1 \otimes_{R} \Gamma_{g,h}(a_gr(f_1, p_2)\otimes_R a_h) \otimes_R f_2,
\end{array}
$$
for every $f_1,f_2 \in \check{P}, p_1,p_2 \in P, a_g \in M_g,\ a_h \in M_h$.
It is not hard to verify that for all $g,h \in G$, the maps $ \Gamma_{g,h}^P$ are isomorphisms of $S$-bimodules. Let us inspect their corresponding 3-cocycle. Let
$$x_g = p_1 \otimes_{R} a_g \otimes_{R} f_1\in P(M_g),\ x_h = p_2 \otimes_{R} a_h \otimes_{R} f_2\in P(M_h),\ x_t = p_3 \otimes_{R} a_t \otimes_{R} f_3\in P(M_t),$$
where $a_g \in M_g, a_h \in M_h,a_t \in M_t,$ and $p_i \in P, f_i \in \check{P}$ for $i \in \{1,2,3\}$. Then we have
\begin{align*}
&\Gamma_{gh,t}^P(\Gamma_{g,h}^P(x_g \otimes_S x_h)\otimes_S x_t)\\
=&\ \Gamma_{gh,t}^P((p_1\otimes_{R}\Gamma_{g,h}(a_g\tau_P(f_1 \otimes_{\End_R(P)} p_2) \otimes_R a_h)\otimes_R f_2) \otimes_S (p_3 \otimes_{R} a_t \otimes_{R} f_3))\\
=&\  p_1\otimes_{R} \Gamma_{gh,t}(\Gamma_{g,h}(a_g\tau_P(f_1 \otimes_{\End_R(P)} p_2) \otimes_R a_h)\tau_P(f_2 \otimes_{\End_R(P)} p_3)\otimes_R a_t)\otimes_R f_3 \cdots
\end{align*}
applying \eqref{T(g,h,t)} yields
\begin{align*}
\cdots=& \  p_1\otimes_R T_{g,h,t}\cdot\Gamma_{g,ht}(a_g f_1(p_2) \otimes_R \Gamma_{h,t}(a_hf_2(p_3)\otimes_R a_t))\otimes_R f_3\\
=& \ T_{g,h,t}\cdot(p_1\otimes_{R}\Gamma_{g,ht}(a_g \tau_P(f_1 \otimes_{\End_R(P)} p_2) \otimes_R \Gamma_{h,t}(a_h\tau_P(f_2 \otimes_{\End_R(P)} p_3)\otimes_R a_t))\otimes_R f_3)\\
=&\ T_{g,h,t}\cdot\Gamma_{g,ht}^P(x_g \otimes_S (p_2 \otimes_{R} \Gamma_{h,t}(a_h \tau_P(f_2 \otimes_{\End_R(P)} p_3) \otimes_R a_t) \otimes_R f_3))\\
=&\ T_{g,h,t}\cdot\Gamma_{g,ht}^P(x_g \otimes_S \Gamma_{h,t}^P(x_h \otimes_S x_t)).
\end{align*}
Consequently, $T^{P(\Phi)}=T^{\Phi}\in Z^3(G,K^*)$ and thus $\tilde{T}_{\phi}(P(\Phi))= \tilde{T}_{\phi}(\Phi)$.
\end{proof}

The homomorphism \eqref{chiphi} respects the actions \eqref{actrule} and \eqref{GCC^C} of $\mathcal{MC}'_K(1)$ on the sets $\Clf$ and {$\CK$} respectively as follows.
\begin{lemma}\label{compat}
The map $\tilde{\chi}_\phi: \Clf\to \CK$ is a homomorphism of $\mathcal{MC}'_K(1)$-monoids (see Definition \ref{grpmap}).
As such, it gives rise to a $\mathcal{MC}'_K(1)$-homomorphism
\begin{eqnarray}\label{alpha5}\alpha_{5} :\begin{array}{rcl}
 \Clf)/\mathcal{MC}'_K(1)&\to\ &\CK/\mathcal{MC}'_K(1) \\
\MCK'(1)([(A,\iota)]) &\mapsto\ &\MCK'(1)(\tilde{\chi}_{\phi}([(A,\iota)]))
  \end{array}%,\ [(A,i)]\in \mbox{Cliff}_k(\phi).
\end{eqnarray}
%\begin{equation}\label{barchiphi}\overline{\chi_\phi}: \mbox{Cliff}_k(\phi)/\mathcal{MC}_K(1)\to \CK/\mathcal{MC}_K(1)\end{equation}
of the corresponding quotient $\mathcal{MC}'_K(1)$-monoids.
\end{lemma}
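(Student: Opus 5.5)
The plan is to verify the condition of Definition \ref{grpmap} directly: whenever $([_SP_R],[(B_G,\iota)])\in\hom(\mathcal{MC}'_K(1))\times\Clf$ is admissible, the pair $([P],\tilde{\chi}_\phi([(B_G,\iota)]))$ is admissible, and
$$\tilde{\chi}_\phi\big([P]([(B_G,\iota)])\big)=[P]\big(\tilde{\chi}_\phi([(B_G,\iota)])\big).$$
Admissibility is immediate. The anchor of $[(B_G,\iota)]$ is $R$, and by \eqref{chi} the anchor of its image $g\mapsto[B_g]\in\Pic_k(R)$ is also $R$. Both sides of the displayed equation land in $\CK$: the left side because of Lemma \ref{L:3.1}, since $h([P])$ is trivial for $[P]\in\hom(\mathcal{MC}'_K(1))$; the right side because conjugation by $P$ of trivial type preserves types.

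The core check is componentwise. By \eqref{actrule}, the $g$-component of $P(B_G)$ is $P(B_g)=P\otimes_R B_g\otimes_R\check{P}$. Here the $S$-bimodule structure comes from the embedding $P(\iota)$ of \eqref{A^C}. By \eqref{M^C}, $[P](\Phi(g))=[P\otimes_R B_g\otimes_R\check P]$, where this time $S$ acts by the outer bimodule actions on $P$ and $\check P$. The step needing care is to show these two $S$-bimodule structures coincide on $P(B_g)$. That is, left multiplication in the algebra $P(B_G)$ by $P(\iota)(s)=\sum_j s\star p_j\otimes\iota(1)\otimes p'_j$ must agree with $s\star(-)$ on the $P$-factor, and similarly on the right.

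To see this, I would use the multiplication \eqref{mult} together with $\tau_P$ to compute
$$P(\iota)(s)\cdot(p\otimes b_g\otimes f)=\sum_j s\star p_j\otimes \iota(\tau_P(p'_j\otimes p))b_g\otimes f=\sum_j (s\star p_j)\cdot p'_j(p)\otimes b_g\otimes f.$$
By \eqref{DeltaTau} and $\Delta_P(\sum_j p_j\otimes p'_j)=1$, we have $\sum_j p_j\cdot p'_j(p)=p$, so the expression equals $s\star p\otimes b_g\otimes f$. The right action is handled symmetrically, this time using the evaluation description of the $S$-action on $\check P$ from \eqref{Hom(P,R)}. The identity map therefore gives an $S$-bimodule isomorphism, which yields equality of the classes in $\Pic_k(S)$ for every $g$. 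Hence $\tilde{\chi}_\phi$ is an $\mathcal{MC}'_K(1)$-map, and it is a monoid homomorphism by \eqref{chiphi}. I expect this identification of the bimodule structures to be the only real obstacle. Everything else is bookkeeping, and the identification should go through once the conventions for $\Delta_P$ and $\tau_P$ are fixed consistently.

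Finally, I would apply the remark after Definition \ref{grpmap}, which is Lemma \ref{quotmor} with $f'=f$. Both monoid actions are actions of the strict monoidal groupoid $\mathcal{MC}'_K(1)$, by Corollaries \ref{coract} and \ref{smgact}. Therefore $\alpha_5$ is well defined on orbits and is a monoid homomorphism between $\Clf/\MCK'(1)$ and $\CK/\MCK'(1)$.
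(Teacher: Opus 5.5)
Your proposal is correct and follows the paper's route. The paper only asserts that $\tilde{\chi}_\phi$ is an $\mathcal{MC}'_K(1)$-map ``directly from the definitions'' and then invokes Lemma \ref{quotmor}; your explicit check that the $S$-bimodule structure on $P(B_g)$ coming from $P(\iota)$ via \eqref{mult} coincides with the outer structure of \eqref{M^C} is exactly the verification the paper leaves implicit. That check uses $\sum_j p_j\cdot p'_j(p)=p$ for the left action and $\sum_j f(p_j)\star p'_j=f$ for the right action.
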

\begin{proof}
%\ChA{For every admissible pair $([P], [(A,\iota)]),$ note that $([P], \chi_\phi([(A,\iota)]))$ is admissible, and since
%$\chi_{\phi}([(A,\iota)])^{[P]} = \chi_{\phi}([(A,\iota)]^{[P]})$, the claim is settled by Lemma \ref{quotmor}.}
The first claim
follows directly from the definitions of the
$\mathcal{MC}'_K(1)$-actions on both monoids, the second claim is
settled by Lemma \ref{quotmor}.
\end{proof}

%Moreover, by Lemma \eqref{CLfPiclosure} it can be easily seen that $\alpha_5$ is a well-defined morphism of monoids.
Thirdly, the exactness of \eqref{S02} yields
%Furthermore, the monoids $ \mbox{Cliff}_k(\phi)/_{\mbox{Mor}}$ and $ \CK/_{\mbox{Mor}}$ are connected to the cohomology group $H^3(G, K^*_{\phi})$ via an exact sequence of monoids as follows.
\begin{lemma}\label{E@C_k(phi)}
With the notations in \eqref{alpha6} and \eqref{alpha5}, the sequence
$$  \Clf/\MCK'(1)\xrightarrow{\alpha_5}\ \CK/\MCK'(1) \xrightarrow{\alpha_6} H^3(G,K^*_{\phi})$$
is exact.
\end{lemma}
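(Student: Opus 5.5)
We prove exactness at $\CK/\MCK'(1)$ by lifting everything to the unquotiented sequence \eqref{S02} and using the compatibility of the maps with the $\mathcal{MC}'_K(1)$-actions. Exactness of a sequence of monoids here means that the image of the first map equals the kernel of the second, i.e. the preimage of the identity. The identity of the target $H^3(G,K^*_{\phi})$ is the trivial class. The identity of $\CK/\MCK'(1)$ is the orbit of $1_\phi$, and that of $\Clf/\MCK'(1)$ is the orbit of $[K_\phi G]$, by Lemma \ref{orblemma}(3) and Lemma \ref{quotmor}. We check the two inclusions separately.

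\emph{Image contained in kernel.} Take an orbit $\MCK'(1)([(A,\iota)])$ in $\Clf/\MCK'(1)$. By the definitions \eqref{alpha5} and \eqref{alpha6},
\[
\alpha_6\bigl(\alpha_5(\MCK'(1)([(A,\iota)]))\bigr)=\tilde{T}_\phi\bigl(\tilde{\chi}_\phi([(A,\iota)])\bigr).
\]
The right-hand side is trivial by the exactness of \eqref{S02} in Lemma \ref{reob}(2). Both $\alpha_5$ and $\alpha_6$ are well defined on orbits, by Lemma \ref{compat} and Lemma \ref{Tphibar} respectively, so this holds independently of the chosen representative.

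\emph{Kernel contained in image.} Let $\MCK'(1)(\Phi)$ lie in $\ker\alpha_6$, with $\Phi\in\CK$. By Lemma \ref{Tphibar}, $\alpha_6$ is computed on any representative of the orbit, so $\tilde{T}_\phi(\Phi)$ is trivial in $H^3(G,K^*_\phi)$. Exactness of \eqref{S02} then gives a class $[(A,\iota)]\in\Clf$ with $\tilde{\chi}_\phi([(A,\iota)])=\Phi$. Hence
\[
\alpha_5\bigl(\MCK'(1)([(A,\iota)])\bigr)=\MCK'(1)(\Phi),
\]
and the orbit lies in the image of $\alpha_5$.

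The only point that needs care is the meaning of ``kernel'' in the quotient monoid: an orbit has trivial $\alpha_6$-value exactly when some, equivalently every, representative has trivial $\tilde{T}_\phi$-value. This is precisely the invariance statement of Lemma \ref{Tphibar}, so there is no genuine obstacle. The lemma thus follows directly from Lemma \ref{reob}(2), Lemma \ref{Tphibar} and Lemma \ref{compat}.
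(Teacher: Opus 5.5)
Your proposal is correct and takes the paper's route: the paper simply states that the lemma follows from the exactness of \eqref{S02}, with $\alpha_5$ and $\alpha_6$ well defined on orbits by Lemmas \ref{compat} and \ref{Tphibar}. You have written out the two inclusions (image in kernel, and kernel in image) that this one-line deduction leaves implicit.
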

%\begin{proof}\end{proof}
The next step is to compose the monoidal homomorphism $\tilde{\Sigma}_{\phi}$ with the quotient homomorphism $\Clf\to\Clf/\MCK'(1)$
(see Lemma \ref{quotmor}) so as to establish a homomorphism of monoids
\begin{eqnarray}\label{alpha4}
\alpha_4 :\begin{array}{ccc}
H^2(G,K^*) &\rightarrow\  &\Clf/\MCK'(1)\\
~[c] &\mapsto &\MCK'(1)([K^{c}_{\phi}G])
\end{array}.
\end{eqnarray}

\begin{lemma}\label{E@Cliff}
With the notations \eqref{alpha5} and \eqref{alpha4}, the sequence
$$H^2(G,K^*) \xrightarrow{\alpha_4}\  \Clf/\MCK'(1) \xrightarrow{\alpha_5}\CK/\MCK'(1) $$ is exact.
\end{lemma}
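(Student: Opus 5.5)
Exactness at $\Clf/\MCK'(1)$ means two inclusions: the composite $\alpha_5\circ\alpha_4$ is trivial, and every class sent by $\alpha_5$ to the identity class $\MCK'(1)(1_\phi)$ lies in the image of $\alpha_4$. Since these are monoids, ``kernel'' means the preimage of the identity orbit. The first inclusion is immediate: by \eqref{S2}, $\tilde{\chi}_\phi(\tilde{\Sigma}_\phi([c]))=1_\phi$, so $\alpha_5(\alpha_4([c]))=\MCK'(1)(1_\phi)$, which is the identity of $\CK/\MCK'(1)$ by Lemma \ref{orblemma}(3).

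For the converse, take $[(A,\iota)]\in\Clf$ over a base algebra $R\in\ob(\MCK'(1))$ with $\alpha_5$ of its orbit trivial. Then some morphism $[_KP_R]\in\hom(\MCK'(1))$ satisfies $[P](\tilde{\chi}_\phi([(A,\iota)]))=1_\phi$; note that the orbit of $1_\phi$ consists of GCCs whose target objects are Morita equivalent to $K$, and we may choose $P$ landing in $K$. By Lemma \ref{compat}, $\tilde{\chi}_\phi([P]([(A,\iota)]))=1_\phi$. Now $[P]([(A,\iota)])$ is a Clifford extension of $K$ whose GCC is $1_\phi$. By Lemma \ref{cpch} it is therefore a crossed product $K*G$ of type $\phi$, and choosing units $u_g$ exhibits it as $[K^c_\phi G]=\tilde{\Sigma}_\phi([c])$ for a $2$-cocycle $c$; this is exactly the exactness of \eqref{S2}. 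Hence $\MCK'(1)([(A,\iota)])=\MCK'(1)([K^c_\phi G])=\alpha_4([c])$, because orbits are equivalence classes of the groupoid action.

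The main obstacle is justifying that the orbit of $1_\phi$ can be reached via a morphism whose target is $K$. An element $\Psi$ in the orbit of $1_\phi$ has the form $\Psi=[Q](1_\phi)$ with $[_SQ_K]$. Applying $[Q]^{-1}=[\check{Q}]\in\hom(S,K)$ and using the groupoid action axioms (Lemma \ref{wdphic}) gives $[\check{Q}](\Psi)=1_\phi$. Thus, if $[P'](\tilde{\chi}_\phi([(A,\iota)]))=\Psi$, the composite $[\check{Q}]\circ[P']$ has target $K$ and sends $\tilde{\chi}_\phi([(A,\iota)])$ to $1_\phi$. One should also check that every morphism involved lies in $\MCK'(1)$ (type trivial), which holds because $\ker(h)=\hom(\MCK'(1))$ is a subgroupoid.
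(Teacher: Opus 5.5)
Your proposal is correct and follows essentially the paper's route: a class in $\ker(\alpha_5)$ yields some $[_KP_R]$ carrying the GCC of $(A,\iota)$ to $1_\phi$, so $P(A_G)$ is a crossed product over $K$ by Lemma \ref{cpch} and hence its orbit lies in the image of $\alpha_4$, while the inclusion $\mathrm{Im}(\alpha_4)\subseteq\ker(\alpha_5)$ comes from \eqref{S2}. Your ``main obstacle'' paragraph is harmless but unnecessary: equality of the orbits already gives some $[P]$ with $[P](\tilde\chi_\phi([(A,\iota)]))=1_\phi$, and the anchor axiom $\varsigma(f(a))=\text{range}(f)$ then forces the range of $[P]$ to be $K$.
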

\begin{proof}
%The fact that Im$(\alpha_4)<\ker(\alpha_5)$ follows from the exactness of \eqref{S1}.
Let $(A_G,\iota)$ be a $G$-graded Clifford system extension of $R$.
The assumption that $\MCK'(1)([{(A_G,\iota)}]) \in \ker(\alpha_5)$ is equivalent to the existence of $[_K P_R] \in \md^*(K\m R)$ such that for every $g \in G$
%\begin{equation}\label{ftp}
$$[P \otimes_{R} A_g \otimes _R \check{P}] =  [ K_{\phi(g^{-1}),1}].$$%\end{equation}
By Lemma \ref{cpch}, this is the same as the existence of such an invertible bimodule with $P(A_G) \overset{G}{\cong} K_{\phi}^fG$ for some $f \in Z^2(G,K^*)$,
or in other words $\MCK'(1)([{(A_G,\iota)}]) \in \text{Im}(\alpha_4)$.
\end{proof}

Unlike the sequence \eqref{S2}, the homomorphism \eqref{alpha4} is not necessarily injective. The failure of injectivity is remedied in
\begin{lemma}\label{E@H2}
With the notations \eqref{alpha3} and \eqref{alpha4}, the sequence
$$\Pic(K)^{\phi} \xrightarrow{\alpha_3} H^2(G,K^*) \xrightarrow{\alpha_4} \Clf/\MCK'(1) $$ is exact.
\end{lemma}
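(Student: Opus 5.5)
Exactness at $H^2(G,K^*_\phi)$ amounts to two inclusions: $\text{Im}(\alpha_3)\subseteq\ker(\alpha_4)$ and $\ker(\alpha_4)\subseteq\text{Im}(\alpha_3)$. Both rest on Lemma \ref{P_inv}, which says that for $[P]\in\Pic(K)$ the conjugate $P(K^c_\phi G)$ is again a crossed product over $K$ of type $\phi$ exactly when $[P]$ is $G$-invariant. The strategy is to compute the 2-cocycle of this conjugated crossed product and compare it with $B^*_P$.

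\textbf{Step 1: the cocycle of $P(K^c_\phi G)$.} Fix $[P]\in\Pic(K)^\phi$ with isomorphisms $\{\tilde\psi_g\}$ as in \eqref{psig}, and a normalized $c\in Z^2(G,K^*_\phi)$ with units $u_g$ in $K^c_\phi G$. By Lemma \ref{P_inv}, $P(K^c_\phi G)$ is a crossed product. Identifying $\check P\cong P^{-1}$, we have
$$P(K u_g)=P\otimes_K Ku_g\otimes_K\check P\cong P\otimes_K \check P\cong K.$$
Using $\tilde\psi_g$, one constructs an explicit unit $v_g\in P(Ku_g)$, namely the element corresponding to $\sum_j \tilde\psi_g(p_j)\otimes u_g\otimes f_j$ with $\sum_j p_j\otimes f_j=\Delta_P^{-1}(1)$; the twisting relation \eqref{tildepsi} is exactly what makes this element respect the $\phi(g)$-twisted bimodule structure. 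Multiplying with the product rule \eqref{mult} then gives
$$v_gv_h=c(g,h)\,B^*_P(g,h)^{\pm1}\,v_{gh}.$$
Hence $P(K^c_\phi G)\stackrel{G}{\cong}K^{c\cdot B^{*\pm1}_P}_\phi G$, compatibly with the embeddings of $K$. This is the computational heart of the proof and its main obstacle: keeping the identifications $\tau_P$, $\Delta_P$ and the orientation of $\tilde\psi$ straight so that exactly the cocycle \eqref{fP} appears. The sign ambiguity $\pm1$ is harmless, since $\alpha_3$ is a homomorphism into a group.

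\textbf{Step 2: $\text{Im}(\alpha_3)\subseteq\ker(\alpha_4)$.} Take $c=1$ in Step 1. Then $[P]\in\hom(\mathcal{MC}'_K(1))$ carries $[K_\phi G]$ to $[K^{B^{*\pm1}_P}_\phi G]$, so
$$\alpha_4([B^*_P])=\alpha_4(1),$$
which is the identity of $\Clf/\MCK'(1)$ because $\alpha_4$ is a monoid homomorphism. (If the sign comes out as $-1$, apply the argument to $[\check P]$ instead.)

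\textbf{Step 3: $\ker(\alpha_4)\subseteq\text{Im}(\alpha_3)$.} Suppose $\alpha_4([c])$ is trivial, i.e. $[K^c_\phi G]$ lies in the $\MCK'(1)$-orbit of $[K_\phi G]$. Since both have anchor $K$, there is a morphism $[_KP_K]\in\text{end}(K)=\Pic(K)$, the identification \eqref{Picsub} with Remark \ref{Kcase}, such that $P(K_\phi G)\stackrel{G}{\cong}K^c_\phi G$ as Clifford extensions. By Lemma \ref{P_inv}, $[P]$ is $G$-invariant, so $[P]\in\Pic(K)^\phi$. Step 1 then gives $[c]=[B^*_P]^{\pm1}$, and both $[B^*_P]$ and its inverse lie in $\text{Im}(\alpha_3)$.

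A remaining detail is that an equivalence of Clifford extensions $K^{c}_\phi G\to K^{c'}_\phi G$ (a graded isomorphism fixing $K$) forces $[c]=[c']$. This holds because the isomorphism sends $u_g$ to $\lambda_g u'_g$ with $\lambda_g\in K^*$, which changes the cocycle by a coboundary. That fact also underlies the well-definedness of $\tilde\Sigma_\phi$.
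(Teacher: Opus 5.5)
Your proposal is correct and follows the paper's route. The paper's Lemma \ref{lem7.6} is your Step 1 specialized to $c=c_P$: its map $\Gamma_P$ sends $\sum_j\tilde\psi_g(p_j)\otimes v_g\otimes f_j$ (your proposed unit) to $u_g$, and the sign works out to $-1$, i.e. $P(K^{c}_{\phi}G)\stackrel{G}{\cong}K^{c\cdot(B^*_P)^{-1}}_{\phi}G$. The reverse inclusion is then obtained, as in your Step 3, from Lemma \ref{P_inv} together with that computation; your remark that a Clifford equivalence $K^{c}_{\phi}G\to K^{c'}_{\phi}G$ forces $[c]=[c']$ makes explicit a step the paper uses tacitly.
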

{ In order to prove Lemma \ref{E@H2} we need the following preliminary result.
%\begin{lemma}Let $\alpha_3([P])=[f_P]\in H^2(G,K^*)$, where $[P] \in \Pic(K)^{\phi}$.
%Then with the notation \eqref{nuP}, \eqref{Delta_P},\eqref{A^C}, $K_{\phi}^{f_P}G\stackrel{G}{\cong}(K_{\phi}G)^{C}$, where $C:=(K,K,P,\check{P},\Delta_P,\nu_P)$.\end{lemma}}
\begin{lemma}\label{lem7.6}
Let $\alpha_3([P])=[c_P]\in H^2(G,K^*)$, where $[P] \in \Pic(K)^{\phi}$.
Then % with the notation \eqref{actrule},
\begin{equation}\label{cleqcl}
[P]([(K_{\phi}^{c_P}G)= \oplus_{g\in G} Kv_g,\iota_1])=[(K_{\phi}G= \oplus_{g\in G} Ku_g,\iota_2)],
\end{equation} where
$\iota_1:x\mapsto x\cdot v_e$ and $\iota_2:x\mapsto x\cdot u_e$ for every $x\in K$. In particular, $[K_{\phi}^{c_P}G]\in\Pic(K)([K_{\phi}G])$.
\end{lemma}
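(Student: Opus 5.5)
We construct an explicit graded homomorphism and then invoke Theorem \ref{grhom}, so that no inverse needs to be produced. Fix the isomorphisms $B_P=\{\psi_g\}$ realizing the $G$-invariance of $[P]$ and the associated maps $\tilde{\psi}_g:P\to P$ of \eqref{psig}. By \eqref{tildepsi} these satisfy $\tilde\psi_g(x\star p)=g(x)\star\tilde\psi_g(p)$. Since $P$ is a $K$-bimodule with equal left and right actions (as $[P]\in\Pic(K)$), the dual maps $f\mapsto f\circ\tilde\psi_g^{-1}$ give analogous semilinear automorphisms of $\check P$.

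The plan has four steps.

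\textbf{Step 1: the homogeneous components.} We identify each component of $P(K_\phi G)$. Set $P(Ku_g)=P\otimes_K Ku_g\otimes_K\check P$. Since $Ku_g\cong K_{1,\phi(g)}$ as $K$-bimodules, we have
\[P(Ku_g)\cong P\otimes_K K_{1,\phi(g)}\otimes_K\check P.\]
Using the $G$-invariance $g(P)\cong P$ via $\psi_g$, this is isomorphic to $K_{1,\phi(g)}$. In particular, the $K$-bimodule $P(Ku_g)$ has a generator
\[w_g:=\sum_j p_j\otimes u_g\otimes f_j\circ\tilde\psi_g^{-1},\]
where $\sum_j p_j\otimes f_j=\Delta_P^{-1}(1)$. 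Up to the orientation conventions, $w_g$ is the image of $1$ under the composite isomorphism.

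\textbf{Step 2: the multiplication of the generators.} We compute $w_gw_h$ with the multiplication \eqref{mult}, which contracts the middle terms $f_j\circ\tilde\psi_g^{-1}$ and $p_i$ through $\tau_P$. After moving $u_g$ past the resulting scalars, the product becomes
\[\sum p_j\otimes u_{gh}\otimes f\circ\tilde\psi_h^{-1}\tilde\psi_g^{-1}.\]
Rewriting $\tilde\psi_h^{-1}\tilde\psi_g^{-1}=\tilde\psi_{gh}^{-1}\circ(\tilde\psi_{gh}\tilde\psi_h^{-1}\tilde\psi_g^{-1})$ and applying \eqref{fP}, the inner factor is multiplication by $B_P^*(g,h)$. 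This gives
\[w_gw_h=c(g,h)\,w_{gh},\]
where $c$ is $B^*_P$ or its inverse, depending on the orientation. Since $\alpha_3$ is a homomorphism and everything in the statement is taken up to cohomology, we choose the direction of $\psi_g$ so that we obtain exactly $c_P=B^*_P$. Otherwise we replace $P$ by $\check P$, which does not affect the conclusion $[K_\phi^{c_P}G]\in\Pic(K)([K_\phi G])$.

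\textbf{Step 3: the graded homomorphism.} With $c_P=B_P^*$, the $K$-linear map
\[K^{c_P}_\phi G\to P(K_\phi G),\qquad xv_g\mapsto x\,w_g,\]
is a graded algebra homomorphism. It respects the scalar commutation because $w_g$ lives in a component of type $\phi(g)$, and it respects products by Step 2. It matches the embedding $\iota_1$ with $P(\iota_2)$, because $w_e=\sum p_j\otimes u_e\otimes f_j$ is exactly $P(\iota_2)(1)$ when $\tilde\psi_e=\Id$; we arrange this by normalizing $\psi_e$, which is harmless. Theorem \ref{grhom} then gives the equivalence of the two Clifford system extensions. The displayed equality \eqref{cleqcl} follows by applying $[\check P]=[P]^{-1}$, since the action is a groupoid action (Lemma \ref{HdRlemma}), or directly by the symmetric computation.

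\textbf{Main obstacle.} The delicate point is the bookkeeping in Step 2: tracking the $\tau_P$ contraction and the semilinearity \eqref{tildepsi}, so that the scalar comes out as $B^*_P(g,h)$ and not a $G$-twisted or inverted version. I would fix conventions by first checking the case where $P$ is free of rank one, so that all $\tilde\psi_g$ are scalar-semilinear and the formula is transparent, and then verify the general identity elementwise.
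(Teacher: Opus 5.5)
Your strategy differs from the paper's: you find generators $w_g$ of the components of $P(K_\phi G)$, map the crossed product into it, apply Theorem \ref{grhom}, and transport via the groupoid action. It can be completed, but as written it breaks at the sign, which is the whole content of the lemma.

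\textbf{The errors.} First, $w_g$ is not defined. By \eqref{tildepsi}, $f_j\circ\tilde\psi_g^{-1}(xp)=\phi(g^{-1})(x)\,f_j(\tilde\psi_g^{-1}(p))$, so $f_j\circ\tilde\psi_g^{-1}\notin\check P$. The natural generator is $w_g:=\sum_j\tilde\psi_g(p_j)\otimes_K u_g\otimes_K f_j$. With it, \eqref{mult}, \eqref{mjf'j} and \eqref{tildepsi} give
\[
w_gw_h=\sum_i\tilde\psi_g\tilde\psi_h(p_i)\otimes_K u_{gh}\otimes_K f_i=c_P(g,h)^{-1}\,w_{gh},
\]
because the definition of $c_P=B^*_P$ (see \eqref{fP}) means $\tilde\psi_{gh}=c_P(g,h)\,\tilde\psi_g\tilde\psi_h$. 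So what you actually prove is $[P]([K_\phi G])=[K^{c_P^{-1}}_\phi G]$, and the map $K^{c_P}_\phi G\to P(K_\phi G)$ in your Step 3 is not multiplicative in general.

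Second, choosing the direction of $\psi_g$ cannot fix this. The class $\alpha_3([P])$ is independent of the choice of $B_P$, and $\tilde\Sigma_\phi$ is injective by \eqref{S2}. Hence the class of any $c$ with $P(K_\phi G)\cong K^c_\phi G$ is forced to be $[c_P]^{-1}$.

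Third, your last sentence does not match your own Step 3. Applying $[\check P]$ to $[K^{c_P}_\phi G]=[P]([K_\phi G])$ gives $[\check P]([K^{c_P}_\phi G])=[K_\phi G]$, which is not \eqref{cleqcl}.

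\textbf{The repair.} Use your fallback in the right order. Run the computation for $\check P$ with the $\phi(g)$-semilinear maps $f\mapsto\phi(g)\circ f\circ\tilde\psi_g^{-1}$; a direct check shows their cocycle is exactly $c_P^{-1}$. This yields $\check P(K_\phi G)\cong K^{c_P}_\phi G$ compatibly with the embeddings, taking $\tilde\psi_e=\Id$. Then act by $[P]$ and use Lemma \ref{HdRlemma} together with $[P\otimes_K\check P]=[K]$ to get \eqref{cleqcl}.

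By comparison, the paper goes the other way with
\[
\Gamma_P:P(K^{c_P}_\phi G)\to K_\phi G,\qquad p\otimes a v_g\otimes f\mapsto g\big(\Delta_P(\tilde\psi_g^{-1}(p)\otimes f)\big)\,a\,u_g .
\]
Its multiplicativity produces $c_P$ on the nose. So \eqref{cleqcl} follows in one step, with no dualization of the $\tilde\psi_g$ and no use of the groupoid composition law.
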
}
\begin{proof}
Let \eqref{psig} be the collection of bijections determining the 2-cocycle $c_P$ as an image of $\alpha_3$, that is (see \eqref{fP})
\begin{equation}\label{f_P}
c_P(g,h)\star p =  \tilde{\psi}_{gh}\tilde{\psi}_h^{-1}\tilde{\psi}_g^{-1}(p),\ \ \forall g,h \in G, p\in P.
\end{equation}
 Using these bijections and the map $\Delta_P :=\theta_P^{-1}\circ\mu_P:P\otimes_1 \check{P} \xrightarrow{\cong}K$ (see \eqref{Delta_P}), consider the map
% \begin{equation}\label{fipi}
%\sum_i f_i(p)p_i = p,\ \mbox{for\ every}\ p \in P,
%\end{equation}
\begin{eqnarray}\label{Gamma}\Gamma_P : \begin{array}{ccc}
P(K_{\phi}^{c_P}G) &\rightarrow & K_{\phi}G\\
p \otimes_1 a_gv_g \otimes_1 f &\mapsto &{g}(\Delta_P(\tilde{\psi}_g^{-1}(p) \otimes_1f))a_gu_g,
\end{array}
\end{eqnarray}for $p \in P, f \in \check{P},$ and $a_g \in K.$
Now by the multiplication rule in $K_{\phi}G$, for $p_1,p_2 \in P, f_1 ,f_2 \in \check{P},$ and $a_g,a_h \in K$
\begin{align*}
&\Gamma_P(p_1\otimes_1 a_gv_g \otimes_1 f_1)\cdot\Gamma_P(p_2\otimes_1 a_hv_h \otimes_1 f_2)\\
=&\ {g}(\Delta_P(\tilde{\psi}_g^{-1}(p_1) \otimes_1 f_1))a_gu_g\cdot {h}(\Delta_P(\tilde{\psi}_h^{-1}(p_2) \otimes_1 f_2))a_hu_h\\
=&\ a_g\cdot g(a_h)\cdot {g}(\Delta_P(\tilde{\psi}_g^{-1}(p_1) \otimes_1 f_1))\cdot {gh}(\Delta_P(\tilde{\psi}_h^{-1}(p_2) \otimes_1 f_2))u_{gh}\cdots
\end{align*}
%now since $\Delta_P(\tilde{\psi}_g^{-1}(p_i) \otimes_K f_j)u_h = u_h(^{h^{-1}}\Delta_P(\tilde{\psi}_g^{-1}(p_i) \otimes_K f_j))$, we have
%\begin{align*}
Since $\Delta_P$ is a $K$-bimodule isomorphism and further by \eqref{tildepsi}, we obtain
\begin{align*}
\cdots =&\ a_g\cdot g(a_h)\cdot {gh}(\Delta_P(h^{-1}(\Delta_P(\tilde{\psi}_g^{-1}(p_1) \otimes_1 f_1))\star\tilde{\psi}_h^{-1}(p_2) \otimes_1 f_2))u_{gh}\\
=&\ a_g\cdot g(a_h)\cdot {gh}(\Delta_P(\tilde{\psi}_h^{-1}(\Delta_P(\tilde{\psi}_g^{-1}(p_1) \otimes_1 f_1)\star p_2) \otimes_1 f_2))u_{gh}\cdots
\end{align*}
Now applying \eqref{DeltaTau} and then using \eqref{tildepsi} yields
\begin{align*}
\cdots=&\  a_g\cdot g(a_h)\cdot {gh}(\Delta_P(\tilde{\psi}_h^{-1}(\tilde{\psi}_g^{-1}(p_1) \cdot f_1( p_2)) \otimes_1 f_2))u_{gh}\\
=&\ a_g\cdot g(a_h)\cdot g(f_1( p_2))\cdot {gh}(\Delta_P(\tilde{\psi}_h^{-1}(\tilde{\psi}_g^{-1}(p_1)) \otimes_1 f_2))u_{gh}\cdots
\end{align*}
By \eqref{f_P} and then again by \eqref{tildepsi} we have
\begin{align*}
\cdots=&\  a_g\cdot g(a_h)\cdot g(f_1( p_2))\cdot {gh}(\Delta_P(\tilde{\psi}_{gh}^{-1}(c_P(g,h)\star p_1) \otimes_1 f_2))u_{gh}\\
=&\  a_g\cdot g(a_h)\cdot g(f_1( p_2))\cdot c_P(g,h)\cdot {gh}(\Delta_P(\tilde{\psi}_{gh}^{-1}(p_1) \otimes_1 f_2))u_{gh}\cdots
\end{align*}
Finally, by the definition of $\Gamma_P$ \eqref{Gamma}, and further using the multiplication rule \eqref{mult} in $P(K_{\phi}^{c_P}G)$, we obtain
\begin{align*}
\cdots=&\ \Gamma_P(p_1 \otimes_1 a_g\cdot g(a_h) \cdot g(f_1(p_2)) \cdot c_P(g,h)v_{gh} \otimes_1 f_2)\\
=&\ \Gamma_P((p_1 \otimes_1 a_gv_g \otimes_1 f_1)\cdot(p_2 \otimes_k a_hv_h \otimes_1 f_2)).
\end{align*}
Therefore, \eqref{Gamma} is a $G$-graded homomorphism.
By Lemma \ref{grhom} it is left to show that \eqref{Gamma} respects the embeddings $\iota_1$ and $\iota_2$.
Indeed, by choosing a normalized representative $c_P$ in its cohomology class, one can choose $\tilde{\psi}_e$=Id$_P$ in \eqref{psig}. Then,
for every $x \in K$, by using the definition of $P(\iota_1)$ (see \eqref{A^C}),
we have $$\Gamma_P\circ P(\iota_1)(x)=\Gamma_P(\sum\limits_{j}x\star p_j \otimes_1 u_e \otimes_1 p_j')= \sum\limits_j \Delta_P(x\star p_j\otimes_1 p_j')u_e = xu_e ,$$
and \eqref{cleqcl} holds true.
\end{proof}
\noindent{\em Proof of Lemma \ref{E@H2}.}
The fact that Im$(\alpha_3)<\ker(\alpha_4)$ is a direct consequence of Lemma \ref{lem7.6}.
Conversely, let $[c]\in\ker(\alpha_4)$, that is there exists
$[P] \in \Pic(K)^{}$ such that $[P]([K_{\phi}^{c}G])= [(K_{\phi}G)]$.
%a Morita context $C:=(K,K,P,\check{P},\mu,\tau)$ such that $K_{\phi}^{f}G\stackrel{G}{\cong}(K_{\phi}G)^{C}$.
By Lemma \ref{P_inv} we deduce that $[P] \in \Pic(K)^{\phi}$, and hence by Lemma \ref{lem7.6} we obtain that $[K_{\phi}^{c}G]=[K_{\phi}^{c_P}G]$,
where $[c_P]=\alpha_3([P])$.
Then $[c]=[c_P]\in H^2(G,K^*),$ and so $[c]\in$Im$(\alpha_3)$.\ \ \ \ \ \ \ \ \ \ \ \ \ \ \ \ \ \ \ \ \  \ \ \ \ \ \ \ \ \ \ \ \ \ \ \ \ \ \ \ \ \ \ \ \ \ \ \ \ \ \ \ \ \ \ $\Box$
%\begin{flushright}$\Box$\end{flushright}

\subsection{An interpretation of $H^1(G,K^*_{\phi})$}

Let $K_{\phi}G$ be the skew group algebra corresponding to a given group action $\phi \in$ Hom$(G, \Aut_k(K))$.
We consider left $K_{\phi}G$-modules as $K_{\phi}G\m K$-bimodules, whose left and right $K$-module structures are the same, and as such can be tensored over $K$.
The comultiplication in $K_{\phi}G$ furnishes the tensor product $M\otimes_K\tilde{M}$ of any left $K_{\phi}G$-modules $M$ and $\tilde{M}$ with the following left $K_{\phi}G$-module structure
\begin{equation}\label{tensrepr}
\sum_{g\in G}x_gu_g\star(m \otimes_K \tilde{m}) := \sum_{g\in G}x_g(u_g\star m \otimes_K u_g\star\tilde{m}),\ \  \forall m \in M, \tilde{m} \in \tilde{M}.\end{equation}
Then the classes in $\md (K_{\phi}G)$ form a monoid under the product
$$[M]\otimes_K[\tilde{M}]:=[M\otimes_K\tilde{M}],\ \ [M],[\tilde{M}]\in\md (K_{\phi}G).$$
The identity of $\md (K_{\phi}G)$ is the class $[K_{\phi}]$ of the free right $K$-module of rank 1, endowed with a left $K_{\phi}G$-module structure on via
\begin{equation}\label{KPhi}
xu_g\star y :=x \phi(g)(y),\ \  \forall g \in G,  x\in K,y \in K_{\phi}.
\end{equation}
Next, the well-defined restriction map
\begin{equation}\label{resmap}\res:\md (K_{\phi}G) \rightarrow \md (K)\end{equation}
between the isomorphism classes of left $K_{\phi}G$-modules and those of $K$-modules (with the monoid structure determined by $``\otimes_K"$) respectively is a homomorphism of monoids. We claim

\begin{lemma}\label{ker1} $\ker(\res) \cong H^1(G,K_{\phi}^*)$.\end{lemma}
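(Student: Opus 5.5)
The kernel of $\res$ consists of the classes $[M]\in\md(K_{\phi}G)$ whose underlying left $K$-module is free of rank one, that is $M\cong K$ as $K$-modules. The plan is to identify such module structures with $1$-cocycles and their isomorphisms with coboundaries, and then to check that $\otimes_K$ corresponds to pointwise multiplication.

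\emph{Cocycles from modules.} Given $[M]\in\ker(\res)$, fix a $K$-isomorphism $M\cong K$ and transport the $K_{\phi}G$-structure to $K$. Every $u_g$ then acts additively on $K$ with
\[
u_g\star(xy)=g(x)\,(u_g\star y),\qquad x,y\in K,
\]
since $u_gx=g(x)u_g$ in $K_{\phi}G$. Hence $u_g\star y=g(y)\,a_g$, where $a_g:=u_g\star 1\in K$. Invertibility of $u_g$ in $K_{\phi}G$ forces $a_g\in K^*$. The relations $u_gu_h=u_{gh}$ and $u_e=1$ translate into
\[
a_{gh}=a_g\, g(a_h),\qquad a_e=1,
\]
so $a\in Z^1(G,K^*_{\phi})$. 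Conversely, every $a\in Z^1(G,K^*_{\phi})$ defines a $K_{\phi}G$-module $K^{a}$ by $xu_g\star y:=x\,g(y)\,a_g$; this is the twist of \eqref{KPhi} by $a$. One can phrase this step via Lemma \ref{Hphiz}, with $\Gamma=\Aut(K)$ and $A=K^*$ acting by multiplication, and $\phi_0=\phi$: a module structure on $K$ is then an element of $\Hom_{\phi}(G,K^*\rtimes\Aut(K))$. The direct computation is equally short.

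\emph{Isomorphisms and coboundaries.} A $K_{\phi}G$-isomorphism $K^{a}\to K^{b}$ is in particular $K$-linear, so it is multiplication by some $c\in K^*$. Commuting with $u_g$ means
\[
c\, g(y)\, a_g=g(cy)\, b_g\quad\text{for all } y,
\]
that is, $b_g=a_g\,c\,g(c)^{-1}$. So $a$ and $b$ differ by a coboundary. Choosing a different identification $M\cong K$ likewise changes $a$ only by a coboundary. Therefore $[M]\mapsto[a]$ is a well-defined bijection $\ker(\res)\to H^1(G,K^*_{\phi})$, with inverse $[a]\mapsto[K^{a}]$.

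\emph{Multiplicativity.} By \eqref{tensrepr}, $K^{a}\otimes_K K^{b}\cong K$ via $y\otimes z\mapsto yz$. Under this identification,
\[
u_g\star(1\otimes 1)=a_g\otimes b_g\mapsto a_gb_g,
\]
so the cocycle of the tensor product is the pointwise product $ab$, and the identity $[K_{\phi}]$ corresponds to $a=1$. Hence the bijection is a monoid isomorphism, and in particular $\ker(\res)$ is a group. The only point requiring care is the unit property $a_g\in K^*$, which follows from $u_g$ being invertible with inverse $u_{g^{-1}}$; everything else is a routine check.
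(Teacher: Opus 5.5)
Your proposal is correct and follows essentially the same route as the paper. Both arguments transport the structure along a $K$-isomorphism $K\cong M$, read off the $1$-cocycle $g\mapsto u_g\star 1$, show that changing the identification (or the representative of $[M]$) alters it by a coboundary, check multiplicativity on $1\otimes 1$, and invert via the twisted modules $K_{\pi}$ of \eqref{act*}. Your observation that every $K$-linear automorphism of $K$ is multiplication by a unit gives $K^{a}\cong K^{b}$ exactly when $a$ and $b$ are cohomologous, which makes explicit the inverse verification that the paper leaves to the reader.
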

\begin{proof}
Let $M$ be a left $K_{\phi}G$-module such that $\res([M])=[K]$.
Then there exists an isomorphism $\varphi:K\xrightarrow{\cong} M$ of $K$-modules. For every $g\in G$ let
$$\varphi_g:=\varphi^{-1}(u_g\star\varphi(1))\in K.$$

\textit{Step 1.} For every $g\in G$, the element $\varphi_g$ is invertible. Moreover,\begin{eqnarray}\label{cocy}\pi_{{\varphi}}=\pi_{M,\varphi}:\begin{array}{ccc}
G&\to& K^*\\
g&\mapsto &\varphi_g
\end{array}\end{eqnarray}
is a 1-cocycle, where the $G$-module structure of $K^*$ is via $\phi$.

\textit{Proof of Step 1.}
For every $g,h\in G$
\begin{eqnarray}\label{1cc}\begin{array}{rl}
\varphi_{gh}&=\varphi^{-1}(u_{gh}\star\varphi(1))=\varphi^{-1}(u_gu_h\star\varphi(1))=\varphi^{-1}(u_g\star(u_h\star\varphi(1)))\\
&=\varphi^{-1}(u_g\star\varphi(\varphi_h))=\varphi^{-1}(u_g\star(\varphi_h\cdot\varphi(1)))=\varphi^{-1}(u_g\cdot \varphi_h\star(\varphi(1)))\\
&=\varphi^{-1}(g(\varphi_h)\cdot u_g\star(\varphi(1)))=g(\varphi_h)\cdot \varphi^{-1}(u_g\star(\varphi(1)))=g(\varphi_h)\cdot \varphi_g.
\end{array}\end{eqnarray}
Since $\varphi_e=1$, putting $h=g^{-1}$ in \eqref{1cc} we deduce that $\varphi_g\in K^*$ for every $g\in G$.
In fact, \eqref{1cc} says that $g\mapsto \varphi_g$
%\begin{eqnarray}\label{cocy}\pi_{{\varphi}}=\pi_{M,\varphi}:\begin{array}{ccc}G&\to& K^*\\g&\mapsto &\varphi_g\end{array}\end{eqnarray}
is a 1-cocycle.

\textit{Step 2.} With the notation \eqref{cocy}, the rule $[M]\mapsto [\pi_{M,\varphi}]$ determines a well-defined map $\ker(\res)\to H^1(G,K^*_{\phi})$.

\textit{Proof of Step 2.} We show that the cohomology class
$[\pi_{M,{\varphi}}]\in H^1(G,K^*_{\phi})$ neither depends on the representative $M\in [M]$, nor on the isomorphism $\varphi$.
Indeed, let $M$ and $\tilde{M}$ be left $K_{\phi}G$-modules via the actions $``\star"$ and $``\tilde{\star}"$ respectively, such that $[M]=[\tilde{M}] \in \ker(\res)$,
and let $\lambda:M \stackrel{\cong}\rightarrow \tilde{M}$ be a $K_{\phi}G$-module isomorphism.
In particular, for every $g\in G$ and $m\in M$
\begin{equation}\label{lamb}
\lambda(u_g\star m)=u_g\tilde{\star}\lambda(m).
\end{equation}
%How does the 1-cocycle \eqref{cocy} depend on the choice of the $K$-module isomorphism $\varphi$?
Let
\begin{equation}\label{varphitilde}
{\varphi}:K\xrightarrow{\cong} M,\ \ \ \tilde{\varphi}:K\xrightarrow{\cong} \tilde{M}\end{equation}
be $K$-module isomorphisms determining the corresponding 1-cocycles
$\pi_{M,{\varphi}}$ and $\pi_{\tilde{M},\tilde{\varphi}}$ as in \eqref{cocy}.
We claim that these 1-cocycles defer by the 1-coboundary
$$g\mapsto g(x)\cdot x^{-1},\text{  where  } x:=\varphi^{-1}\circ\lambda^{-1}\circ\tilde{\varphi}(1)\in K^*.$$
First, since $\tilde{\varphi}^{-1}\circ\lambda\circ \varphi$ is a $K$-morphism we have
$$x\cdot(\tilde{\varphi}^{-1}\circ\lambda\circ \varphi( 1))=\tilde{\varphi}^{-1}\circ\lambda\circ \varphi( x\cdot 1)=\tilde{\varphi}^{-1}\circ\lambda\circ \varphi( x)
=\tilde{\varphi}^{-1}\circ\lambda\circ \varphi(\varphi^{-1}\circ\lambda^{-1}\circ\tilde{\varphi}(1))=1,$$
hence $\tilde{\varphi}^{-1}\circ\lambda\circ \varphi( 1)=x^{-1}.$
Next, for every $g\in G$
$$\begin{array}{rl}
\tilde{\varphi}_g&=\tilde{\varphi}^{-1}(u_g\tilde{\star}\tilde{\varphi}(1))=\tilde{\varphi}^{-1}(u_g\tilde{\star}\lambda\circ\varphi\circ\varphi^{-1}\circ\lambda^{-1}\circ\tilde{\varphi}(1))\\
&=\tilde{\varphi}^{-1}(u_g\tilde{\star}\lambda\circ\varphi((\varphi^{-1}\circ\lambda^{-1}\circ\tilde{\varphi}(1))\cdot 1))
=\tilde{\varphi}^{-1}(u_g\tilde{\star}\lambda(\varphi(x\cdot 1)))=\cdots
\end{array}$$
The element $x\in K$ can be pulled out of the $K$-morphism $\varphi$, thus
$$\begin{array}{rl}
\cdots&=\tilde{\varphi}^{-1}(u_g\tilde{\star}\lambda(x\cdot\varphi( 1)))=\tilde{\varphi}^{-1}(u_g\cdot x\tilde{\star}\lambda(\varphi( 1)))
%=\tilde{\varphi}^{-1}(u_g\cdot x*\varphi( 1))
=\tilde{\varphi}^{-1}(g(x)\cdot u_g\tilde{\star}\lambda(\varphi( 1)))=\cdots
\end{array}$$
By \eqref{lamb}
$$\begin{array}{rl}
\cdots&=\tilde{\varphi}^{-1}(g(x)\cdot \lambda(u_g\star\varphi( 1)))=g(x)\cdot\tilde{\varphi}^{-1}( \lambda(u_g\star\varphi( 1)))=g(x)\cdot\tilde{\varphi}^{-1}( \lambda(\varphi( \varphi_g)))\\
&=g(x)\cdot\tilde{\varphi}^{-1}( \lambda(\varphi( \varphi_g\cdot 1)))=\cdots
\end{array}$$
Again, pull $\varphi_g\in K$ out of the $K$-morphism $\tilde{\varphi}^{-1}\circ\lambda\circ \varphi$ and apply the above description of $x^{-1}$ to obtain
$$\cdots=g(x)\cdot\varphi_g\cdot(\tilde{\varphi}^{-1}\circ \lambda\circ \varphi( 1))=g(x)\cdot\varphi_g\cdot x^{-1}.$$
We conclude that $\tilde{\varphi}_g\cdot\varphi_g^{-1}=g(x)\cdot x^{-1}$
and hence we are done.

\textit{Step 3.}
The well-defined map
\begin{equation}\label{Pi-1}\begin{array}{ccc}\ker(\res)&\to& H^1(G,K_{\phi}^*)\\~[M]&\mapsto &[\pi_{M,\varphi}]\end{array}
\end{equation} is a group homomorphism.

\textit{Proof of Step 3.}
Let $M$ and $\tilde{M}$ be (not necessarily isomorphic) left $K_{\phi}G$-modules such that $[M],[\tilde{M}] \in \ker(\res)$.
Let \eqref{varphitilde} be $K$-module isomorphisms determining the corresponding 1-cocycles
$\pi_{M,{\varphi}}$ and $\pi_{\tilde{M},\tilde{\varphi}}$ as in \eqref{cocy}.
Let us compute the 1-cocycle
$$\pi_{M\otimes_K\tilde{M},\hat{\varphi}}:G\to K^*$$
with respect to the $K$-module isomorphism
\begin{eqnarray}\label{cocytens}\hat{\varphi}:\begin{array}{ccc}
K&\to& M\otimes_K\tilde{M}\\
x&\mapsto &\varphi(1)\otimes_K\tilde{\varphi}(x)
\end{array}.\end{eqnarray}
By the rule in step 1 (see \eqref{cocy}) applied on \eqref{cocytens}, and the $K_{\phi}G$-action \eqref{tensrepr} on the tensor product, we have for every $g\in G$
$$\begin{array}{rl}
\pi_{M\otimes_K\tilde{M},\hat{\varphi}}(g)&=\hat{\varphi}_g=\hat{\varphi}^{-1}(u_g(\hat{\varphi}(1)))=\hat{\varphi}^{-1}(u_g(\varphi(1)\otimes_K\tilde{\varphi}(1)))\\
&=\hat{\varphi}^{-1}(u_g*\varphi(1)\otimes_Ku_g\tilde{*}\tilde{\varphi}(1))=\hat{\varphi}^{-1}(\varphi(\varphi_g)\otimes_K \tilde{\varphi}(\tilde{\varphi}_g))\\
&=\hat{\varphi}^{-1}(\varphi(\varphi_g\cdot 1)\otimes_K \tilde{\varphi}(\tilde{\varphi}_g))=\varphi_g\cdot\hat{\varphi}^{-1}(\varphi( 1)\otimes_K \tilde{\varphi}(\tilde{\varphi}_g))\\
&=\varphi_g\cdot\tilde{\varphi}_g=\pi_{M,{\varphi}}(g)\cdot\pi_{\tilde{M},\tilde{\varphi}}(g).
\end{array}$$
In particular, the $H^1(G,K^*_{\phi})$-classes $[\pi_{M\otimes_K\tilde{M},\hat{\varphi}}]$ and $[\pi_{M,{\varphi}}]\cdot[\pi_{\tilde{M},\tilde{\varphi}}]$ are equal, proving the homomorphism property.

\textit{Step 4. The map \eqref{Pi-1} is an isomorphism.}

\textit{Proof of Step 4.}
Here is a construction of the inverse of the group homomorphism \eqref{Pi-1}.
We endow the trivial element in Mod$(K)$, that is free $K$-module of rank 1, with a left $K_{\phi}G$-action for
a given 1-cocycle $\pi:G\to K^*_{\phi}$ as follows.
%, let $K_{\pi}$ be the free $K$-module of rank 1 with the $G$-action $\phi$. In addition, endow it with the following left $K_{\phi}G$-action
\begin{equation}\label{act*}(\sum_{g\in G}x_gu_g)\star x:=\sum_{g\in G}x_g\cdot\pi(g)\cdot g(x),\ \ \ \ \sum_{g\in G}x_gu_g\in K_{\phi}G,\ x\in K_{\pi}.\end{equation}
%for every $\sum_{g\in G}x_gu_g\in K_{\phi}G$ and $x\in K_{\pi}.$
Then it is not hard to verify, using the 1-cocycle condition, that \eqref{act*} indeed describes a left $K_{\phi}G$-module on $K$.
Denote this module by $K_{\pi}$.

Furthermore, if $[\pi]=[\hat{\pi}]\in H^1(G,K^*_{\phi})$ then the corresponding $K_{\phi}G$-modules $K_{\pi}$ and $K_{\hat{\pi}}$ are isomorphic as follows.
The fact that the 1-cocycles are cohomologically equivalent says that there exists $y\in K^*$ such that
$$\pi(g)=y^{-1}\cdot g(y)\cdot \hat{\pi}(g),\ \ \ \forall g\in G.$$
Then it is easily checked that $\begin{array}{ccc}K_{\pi}&\to &K_{\hat{\pi}}\\ x&\mapsto& y\cdot x\end{array}$ is a $K_{\phi}G$-module isomorphism.
In other words, there is a well-defined map
\begin{equation}\label{piphi}
H^1(G,K_{\phi}^*)\to \ker(\res).\end{equation}
To conclude the proof, the fact that \eqref{piphi} is indeed the inverse of the group homomorphism \eqref{Pi-1} is left to the reader.
\end{proof}
With the notations \eqref{PKinv} and \eqref{resmap} we have
\begin{lemma}\label{resext} Im$(\res) \subseteq \md(K)^{\phi}$.\end{lemma}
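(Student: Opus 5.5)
The plan is to produce, for a given left $K_{\phi}G$-module $M$ and each $g\in G$, an explicit isomorphism of $K$-modules $\phi(g)(M)\to M$. This shows $[\phi(g)(M)]=[M]$ in $\md(K)$ for all $g$, which by \eqref{PKinv} is exactly the claim $\res([M])\in\md(K)^{\phi}$. The natural candidate comes from the action of the homogeneous unit $u_g\in K_{\phi}G$.

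\textbf{Step 1: simplify $\phi(g)(M)$.} By \eqref{Gactmd},
\[
\phi(g)(M)=K_{1,\phi(g)}\otimes_K M\otimes_K K_{1,\phi(g^{-1})}.
\]
Here $M$ is viewed as a $K$-bimodule with equal left and right actions. Every element is a sum of pure tensors $1\otimes_K m\otimes_K 1$, since scalars can be pushed through using \eqref{star}. Tracking these scalars identifies $\phi(g)(M)$ with the underlying group of $M$ carrying a twisted $K$-action. Concretely, $x\star(1\otimes m\otimes 1)=1\otimes \phi(g)^{-1}(x)m\otimes 1$, or the analogous formula with $\phi(g)$, depending on the convention.

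\textbf{Step 2: build the isomorphism.} Define
\[
\Psi_g:\phi(g)(M)\to M,\qquad 1\otimes_K m\otimes_K 1\mapsto u_g\star m,
\]
or use $u_{g^{-1}}$ instead if the convention in Step 1 requires it. The map $m\mapsto u_g\star m$ is bijective on $M$, with inverse $m\mapsto u_{g^{-1}}\star m$, because $u_gu_{g^{-1}}=u_e=1$ in $K_{\phi}G$.

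\textbf{Step 3: check $K$-linearity.} This follows from the relation $u_g x=g(x)u_g$ in $K_{\phi}G$, which is the same computation that appears in \eqref{1cc}. One also checks that $\Psi_g$ is well defined on the tensor product. This reduces to compatibility with the relations $s\cdot x\otimes m=s\otimes x m$, which again hold by the same commutation rule.

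\textbf{Main obstacle.} The only delicate point is getting the direction of the twist right, that is, deciding whether $u_g$ or $u_{g^{-1}}$ is compatible with the conventions of \eqref{star} and \eqref{Gactmd}. I would settle this by testing the case $M=K_{\phi}$ from \eqref{KPhi}, which should give $\phi(g)(K)\cong K$ via $\phi(g)^{\pm1}$.
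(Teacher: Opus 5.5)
Your proposal is correct and follows the paper's proof. The paper likewise defines $\psi_g\colon g(M)\to M$ by $x\otimes_K m\otimes_K y\mapsto g(x)\cdot y\cdot u_g\star m$ (see \eqref{psigdef}), checks that it is balanced and $K$-linear via $u_gx=g(x)u_g$, and inverts it by $m\mapsto 1\otimes_K u_g^{-1}\star m\otimes_K 1$.

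As for your convention worry: with \eqref{Gactmd} as written, one gets $x\star(1\otimes_K m\otimes_K 1)=1\otimes_K g^{-1}(x)m\otimes_K 1$. So $u_g$, not $u_{g^{-1}}$, is the right choice, since $u_g\star(g^{-1}(x)m)=x\,u_g\star m$.
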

\begin{proof}
Let $M$ be a left $K_{\phi}G$-module.
To show that $\res([M])$ is $G$-invariant we need to present a $K$-module isomorphism between $$g(M)=_{\phi(g^{-1})}K_1 \otimes_K M \otimes_K \ _{\phi(g)}K_1$$ (see \eqref{Gactmd})
and $M$ for every $g\in G$.
%Observe that, for $x \in K$, we have $x.m = m.x$, for every $m \in M$. Now c
Indeed, let
\begin{eqnarray}\label{psigdef}\psi_g :\begin{array}{ccc}g(M) &\rightarrow& M\\
x \otimes_K m \otimes_K y& \mapsto& g(x)\cdot y\cdot u_g\star m
\end{array},\ \ x,y \in K,m \in M.\end{eqnarray}
We first check that $\psi_g$ agrees with the different actions of $K$ on the three components of $g(M)$. Note that any $z \in K$ and $x \otimes_K m \otimes_K y$ in the module $g(M)$ satisfy
$$\begin{array}{rl}
z(x \otimes_K m \otimes_K y)&=g^{-1}(z)\cdot x \otimes_K m \otimes_K y\\ &= x\otimes_K\ {g^{-1}}(z)m \otimes_K y = x\otimes_K m\otimes_K {g^{-1}}(z) (y)\\&=x\otimes_K m \otimes_K z\cdot y.
\end{array}$$
Applying $\psi_g$ on each of the above yields
\begin{equation}\label{difact}z\cdot g(x)\cdot y\cdot u_g\star m,\end{equation} and so $\psi_g$ is well-defined. Moreover, since \eqref{difact} is equal to $z(\psi_g(x \otimes_K m \otimes_K y)),$
%$$\begin{array}{c}=\psi_g(g^{-1}(z)\cdot x \otimes_K m \otimes_K y) = z\cdot g(x)\cdot y\cdot u_g*m \\
%= \psi_g(x\otimes_K\ {g^{-1}}(z)m \otimes_K y) = \psi_g(x\otimes_K m\otimes_K {g^{-1}}(z) (y))=\psi_g(x\otimes_K m \otimes_K z\cdot y).\end{array}$$
it follows that $\psi_g$ is a homomorphism of $K$-modules.
To show that $\psi_g$ is invertible let
$$\psi'_g :\begin{array}{ccc}M &\rightarrow& g(M)\\
m&\mapsto& 1 \otimes_K\ u_g^{-1}\star m \otimes_K 1\end{array}.$$
Then, for every $g\in G, m\in M$ and $x,y\in K$ we have
$$\psi'_g(\psi_g(x \otimes_K m \otimes_K y)) = \psi'_g( g(x)\cdot y\cdot u_g\star m) = 1 \otimes_K x\cdot g^{-1}(y)m \otimes_K 1 = x \otimes_K m \otimes_K y.$$
On the other hand
$$\psi_g(\psi'_g(m))=\psi_g(1 \otimes_K\ u_g^{-1}\star m \otimes_K 1)=m.$$
Therefore, $\psi'_g=(\psi_g)^{-1}$, and so $g(M) \cong M$, proving the claim.
\end{proof}
Next, under the notation \eqref{resmap}, set
\begin{equation}\label{extP}\ExtP := \res^{-1}(\text{Pic}(K))<\md (K_{\phi}G).\end{equation}
This is the subgroup of isomorphism classes of left $K_{\phi}G$-modules, which restrict to invertible modules over $K$.
With the notation \eqref{extP} we can view the group isomorphism \eqref{piphi} as an embedding
\begin{equation}\label{alpha1}
\alpha_1:H^1(G,K_{\phi}^*)\hookrightarrow \ExtP,
\end{equation}
and the restriction \eqref{resmap}, on the subdomain \eqref{extP}, as a homomorphism
\begin{equation}\label{alpha2}\alpha_2:\ExtP\xrightarrow{}\text{Pic}(K)^{}.
\end{equation}
By Lemmas \ref{ker1} and \ref{resext}, we conclude

\begin{lemma}\label{piphres}
The sequence
$$1\to H^1(G,K^*_{\phi})\xrightarrow{\alpha_1}\ExtP\xrightarrow{\alpha_2}\text{Pic}(K)^{\phi}$$
is exact.
\end{lemma}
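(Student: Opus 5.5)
The plan is to assemble the statement from Lemmas \ref{ker1} and \ref{resext}, after checking that the maps land where they should. There are three things to verify: that $\alpha_1$ is injective, that $\alpha_2$ actually takes values in $\Pic(K)^{\phi}$, and that the sequence is exact at $\ExtP$.

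\emph{Injectivity of $\alpha_1$ and its image.} By Lemma \ref{ker1}, the map \eqref{piphi} $H^1(G,K^*_{\phi})\to\ker(\res)$ is a group isomorphism, sending $[\pi]$ to the class of $K_{\pi}$. Since $\res([K_{\pi}])=[K]\in\Pic(K)$, we have $\ker(\res)\subseteq\res^{-1}(\Pic(K))=\ExtP$. Hence $\alpha_1$ is an injective homomorphism whose image is exactly $\ker(\res)$. This gives exactness at $H^1(G,K^*_{\phi})$.

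\emph{The target of $\alpha_2$.} For $[M]\in\ExtP$ we have $\res([M])\in\Pic(K)$ by the definition \eqref{extP}. Lemma \ref{resext} gives $\res([M])\in\md(K)^{\phi}$. Therefore $\alpha_2([M])\in\Pic(K)\cap\md(K)^{\phi}=\Pic(K)^{\phi}$.

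Before going on, I must confirm that $\ExtP$ is a submonoid of $\md(K_{\phi}G)$ and that $\alpha_2$ is a homomorphism. Both follow because $\res$ is a monoid homomorphism and $\Pic(K)$ is a submonoid of $\md(K)$. To see that $\ExtP$ is in fact a group, take $[M]\in\ExtP$ and equip $\check{M}=\Hom_K(M,K)$ with the contragredient action $(u_g f)(m)=g(f(u_g^{-1}m))$. The evaluation map $\check{M}\otimes_K M\to K$ is then an isomorphism onto $K_{\phi}$, because $M$ is invertible over $K$, and it is $K_{\phi}G$-linear. Alternatively, one could apply Lemma \ref{exactgrp} once exactness is established, taking $M_1=H^1$ and $M_3=\Pic(K)^{\phi}$.

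\emph{Exactness at $\ExtP$.} This is immediate. The kernel of $\alpha_2$ is $\ker(\res)\cap\ExtP=\ker(\res)$, which equals $\operatorname{Im}(\alpha_1)$ by the first step.

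The only step with any substance is the well-definedness and homomorphism property of $\alpha_1$, and Lemma \ref{ker1} already provides it. So I expect no real obstacle beyond keeping track of the identification between $\ker(\res)$ and the image of \eqref{piphi}.
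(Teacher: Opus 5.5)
Your argument is correct and is the paper's: the paper derives the lemma directly from Lemma~\ref{ker1}, which gives injectivity of $\alpha_1$ with image $\ker(\res)\subseteq\ExtP$ and hence $\ker\alpha_2=\ker(\res)=\operatorname{Im}\alpha_1$, and from Lemma~\ref{resext}, which places the values of $\alpha_2$ in $\Pic(K)^{\phi}$. Your contragredient-module proof that $\ExtP$ is a group is valid but not needed here; the alternative via Lemma~\ref{exactgrp} (used in Lemma~\ref{R=K}) would also require a fourth term and exactness at $\Pic(K)^{\phi}$, i.e.\ Lemma~\ref{exactPKG}, not just the three-term sequence.
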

The homomorphisms $\alpha_2$ and $\alpha_3$ are tied up as follows.

\begin{lemma}\label{exactPKG}
With the notations \eqref{alpha3} and \eqref{alpha2}, the sequence
$$\ExtP\xrightarrow{\alpha_2}\text{Pic}(K)^{\phi}\xrightarrow{\alpha_3} H^2(G,K^*_{\phi})$$
is exact.
\end{lemma}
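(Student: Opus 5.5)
We must show that $\text{Im}(\alpha_2)=\ker(\alpha_3)$, and we prove the two inclusions separately.

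\emph{Inclusion $\text{Im}(\alpha_2)\subseteq\ker(\alpha_3)$.} Let $[M]\in\ExtP$ with $P:=\alpha_2([M])=[M]\in\Pic(K)^{\phi}$. By Lemma \ref{resext}, the $G$-invariance of $P$ is realized by the explicit isomorphisms $\psi_g$ of \eqref{psigdef}. We compute the associated maps $\tilde{\psi}_g$ of \eqref{psig}: since $\tilde{\psi}_g(m)=\psi_g(1\otimes_K m\otimes_K 1)=u_g\star m$, each $\tilde{\psi}_g$ is just the action of $u_g$ on $M$. In the skew group algebra $K_{\phi}G$ we have $u_{gh}=u_gu_h$. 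Hence
$$\tilde{\psi}_{gh}\circ\tilde{\psi}_h^{-1}\circ\tilde{\psi}_g^{-1}=\Id_M,$$
so $B^*_P\equiv 1$. By the lemma of \S\ref{alpha3sect}, the class of $B^*_P$ does not depend on the choice of the isomorphisms. Therefore $\alpha_3([M])=1$.

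\emph{Inclusion $\ker(\alpha_3)\subseteq\text{Im}(\alpha_2)$.} Let $[P]\in\Pic(K)^{\phi}$ with $\alpha_3([P])=1$. Choose isomorphisms $\psi_g$ as in \eqref{BP}, giving maps $\tilde{\psi}_g$ that satisfy \eqref{tildepsi}. By hypothesis $B^*_P$ is a coboundary: there is $\beta:G\to K^*$ with
$$B^*_P(g,h)=g(\beta(h))\,\beta(gh)^{-1}\,\beta(g),$$
up to the sign and orientation conventions fixed by \eqref{fP}.

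We rescale by setting $\tilde{\psi}'_g:=\beta(g)^{\pm1}\star\tilde{\psi}_g$, choosing the exponent so that the new cocycle is identically $1$. The check uses the semilinearity $\tilde{\psi}_g(x\star p)=g(x)\star\tilde{\psi}_g(p)$ from \eqref{tildepsi}. This produces bijections satisfying
$$\tilde{\psi}'_{gh}=\tilde{\psi}'_g\circ\tilde{\psi}'_h .$$
Evaluating at $g=h=e$ gives $\tilde{\psi}'_e=\Id$, so no separate normalization is needed.

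Now define $\big(\sum_g x_gu_g\big)\star p:=\sum_g x_g\star\tilde{\psi}'_g(p)$. The relation $u_gx=g(x)u_g$ in $K_{\phi}G$ is respected because of \eqref{tildepsi}. The relation $u_gu_h=u_{gh}$ is respected because of the multiplicativity just obtained. Thus $P$ becomes a left $K_{\phi}G$-module whose restriction to $K$ is the original $P$. Hence $[P]\in\ExtP$ and $\alpha_2([P])=[P]$, as required.

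The main obstacle is bookkeeping rather than mathematics. We must get the exact order and inversion conventions in \eqref{fP}, namely $\tilde{\psi}_{gh}\tilde{\psi}_h^{-1}\tilde{\psi}_g^{-1}$, right, so that the rescaling by $\beta$ really kills the cocycle. This requires moving scalars past $\tilde{\psi}_g^{-1}$ using the second formula in \eqref{tildepsi}. Everything else is routine verification.
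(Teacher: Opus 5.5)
Your proposal is correct and matches the paper's proof step for step. For the first inclusion the paper also takes $\tilde{\psi}_g(p)=u_g\ast p$ from Lemma~\ref{resext} and notes that the resulting cocycle is identically $1$. For the converse it writes the trivial class as $\lambda_{gh}\lambda_g^{-1}g(\lambda_h^{-1})$ and sets $u_g\ast p:=\lambda_g^{-1}\tilde{\psi}_g(p)$, which is your rescaling (exponent $-1$ in that convention); your checks that the rescaled maps stay $\phi(g)$-semilinear and that $\tilde{\psi}'_e=\mathrm{Id}$ are left implicit in the paper.
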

\begin{proof}
Let $[P]\in$Im$(\alpha_2)=$Im$(\res)\cap$Pic$(K)$. Then we regard $[P]$ as a left $K_{\phi}G$-module. Apply the set $B_P$ of left $K$-module isomorphisms \eqref{BP} putting $P$ for $M$
(as proven in Lemma \ref{resext} these isomorphisms realize the invariance of $[P]$).
With the notation of \S\ref{alpha3sect}, the corresponding set of maps $\{\tilde{\psi}_g : P \rightarrow P\}_{g\in G}$ are given by
$$\tilde{\psi}_g(p) :=u_g*p,\ \  g\in G, p\in P$$ and satisfy \eqref{tildepsi}.
By the definition \eqref{alpha3} $$\alpha_3([P]) = [c_{B_P}]\in H^2(G,K^*),$$ where $c_{B_P}(g,h) = \tilde{\psi}_{gh}\circ\tilde{\psi}_{h}^{-1}\circ\tilde{\psi}_{g}^{-1}$.
But for every $g,h\in G$ and $p\in P$
$$\tilde{\psi}_{gh}(\tilde{\psi}_{h}^{-1}(\tilde{\psi}_{g}^{-1}(p)))=u_{gh}\cdot u_h^{-1}\cdot u_g^{-1}*p=p,$$
and so $c_{B_P}(g,h) =1$. Therefore, $[P] \in \ker(\alpha_3)$.

On the other hand, let $[P] \in \ker(\alpha_3)$. Since $P$ is $G$-invariant there exists a set $\{\tilde{\psi}_g\}_{g\in G}: P \rightarrow P$ of bijections satisfying \eqref{tildepsi}.
%$\psi_g(xp)=\ ^gx\psi_g(p), \forall x \in K, p\in P$,
Moreover, since $\alpha_3([P])$ is cohomologically trivial, there exist elements $\{\lambda_{g}\}_{g\in G} \in K^*$ such that
\begin{equation}\label{tildemult}
\tilde{\psi}_{gh}\circ\tilde{\psi}_h^{-1}\circ\tilde{\psi}_g^{-1} = \lambda_{gh}\cdot\lambda_{g}^{-1}\cdot g(\lambda_{h}^{-1}),\ \  \forall g,h \in G.\end{equation}
Then $P$ can be furnished with a left $K_{\phi}G$-module structure by defining
$$u_g*p := \lambda_g^{-1}\cdot\tilde{\psi_g}(p),\ \ \forall g \in G, p \in P.$$
Indeed, by \eqref{tildemult} we deduce that for every $g,h \in G$ and $p \in P$
$$u_{gh}*p =\lambda_{gh}^{-1}\cdot\tilde{\psi}_{gh}(p) =
% \lambda_{g}^{-1}\cdot(^g\lambda_h^{-1})\cdot\tilde{\psi}_g\cdot\tilde{\psi}_h\cdot\tilde{\psi}_{gh}^{-1}\cdot\tilde{\psi}_{gh}(p) =
\lambda_g^{-1}\cdot g(\lambda_h^{-1})\cdot\tilde{\psi}_g\circ\tilde{\psi}_h(p)=\lambda_g^{-1}\cdot\tilde{\psi}_g(\lambda_h^{-1}\tilde{\psi}_h(p))=u_g*(u_h*p).$$
Therefore, $[P]$ belongs to the image of \eqref{resmap}.
\end{proof}

%Now connecting with \eqref{alpha4} yields;

lemmas \ref{E@C_k(phi)}, \ref{E@Cliff}, \ref{E@H2}, \ref{piphres} and \ref{exactPKG} give rise to the seven term exact sequence of monoids
\begin{eqnarray}\label{GS}
\begin{array}{rl}
1&\to H^1(G,K^*_{\phi})\xrightarrow{\alpha_1}\ExtP\xrightarrow{\alpha_2}\text{Pic}(K)^{\phi}\xrightarrow{\alpha_3} H^2(G,K^*_{\phi}) \\ &\xrightarrow{\alpha_4}
\Clf/{\MCK'(1)}\xrightarrow{\alpha_5}\ \CK/{{\MCK'(1)}} \xrightarrow{\alpha_6} H^3(G,K^*_{\phi}),
\end{array}
\end{eqnarray}
completing the proof of Theorem A.

\section{The Commutative Case: $R = K$}\label{Sec6}
This section interprets the seven term sequence \eqref{main} in the case $C(K) := \{[K]\}$ (see \S\ref{secCK}(iii)).
To retrieve the sequence \eqref{7K} from \eqref{main}, we prove an isomorphism $\CK/{{\MCK}'(1)}\cong H^{1}(G,\Pic(K)_{\phi})$ (Lemma \ref{lambdalem}),
and show that all the monoids in \eqref{7K} are actually groups (Lemma \ref{R=K}).

Note that in this case, the groupoid $\MCK'(1)$ has single object $K$, and hom$(\MCK'(1))$ is a group identified with $\Pic(K)$.
From this point onwards we return to the conventional notation $\otimes_K$ rather than $\otimes_1$.
With the notation of \eqref{Ckp}, \eqref{CkPhiK}, \eqref{ClfPhi} and \eqref{ClfkPhiK} we have for every $\phi\in\Hom(G,\Aut_k(K))$
\begin{equation}\label{twomon}\tilde{\mbox{Cliff}}(\phi)=\tilde{\mbox{Cliff}}(\phi,K),\text{ and }  \CK =\CPK=\Hom_{\phi}(G,\Pic_k(K)).\end{equation}
The two monoids \eqref{twomon} are acted on by $\MCK'(1)$ via \eqref{actrule} and \eqref{GCC^C} respectively.
Its group structure, identified with Pic$(K)$ in this chapter (see Remark \ref{Kcase}), puts us in the setup of Lemma \ref{isoqot}.

Keeping in mind that
$$1_{\ClfK}=[K_{\phi}G],\text{ and } 1_{\CPK}=1_{\phi}$$
%$$1_{\text{Cliff}_k(\phi)}=[K_{\phi}G]=[(K_{\phi}G,(x\mapsto xu_e))],\text{ and } 1_{\CPK}=[1_{\phi}]_{\e}$$
we establish two isomorphisms
%\begin{equation}\label{Cliffmod} \text{Cliff}_k(\phi,K)/_{\mbox{Pic$(K)$}}\cong\ \text{Cliff}_k(\phi,K)/_{\mbox{Pic$(K)([K_{\phi}G])$}} \end{equation}
\begin{eqnarray}\label{Cliffmod}\begin{array}{ccc}
\ClfK/_{\mbox{Pic$(K)$}}&\xrightarrow{\cong}&\ \ClfK/_{\mbox{Pic$(K)([K_{\phi}G])$}}\\
\Pic(K)([A_G])&\mapsto &[A_G]\otimes_{\phi}\Pic(K)([K_{\phi}G])
\end{array},\end{eqnarray}
and
\begin{eqnarray}\label{Ckmod}\begin{array}{ccc}
\CPK/_{\mbox{Pic$(K)$}}&\xrightarrow{\cong}&\ \CPK/_{\mbox{Pic$(K)(1_{\phi})$}}\\
\Pic(K)(\Phi)&\mapsto &\Phi\otimes_{\phi}\Pic(K)(1_{\phi})
\end{array}.\end{eqnarray}

Next, recall \cite[Prop. II.5.4]{Bass} that $h_K:\p(K)\to\au$ (\eqref{hR0} for $R=K$) is surjective and splits, that is
$$\Pic_k(K)\cong\Pic(K)\rtimes\Aut_k(K),$$
and plug in \eqref{lambg}
$$\phi_0:=\phi,\ A:=\Pic(K)\text{ and }\Gamma:=\Aut_k(K).$$
Using the notation \eqref{homphi}, we obtain a group isomorphism
%\begin{equation}\label{idenCH}
$$\CPK\cong\Hom_{\phi}(G,\Pic(K)\rtimes \Aut_k(K)),$$
%\end{equation}
which, together with \eqref{lambg} yields a well-defined isomorphism
\begin{eqnarray}\label{lamb'g}
%\lambda'_{\phi}:
\begin{array}{ccc}
\CPK&\xrightarrow{\cong} &Z^1(G,\Pic(K)_{\phi})\\
 ~\Phi &\mapsto&\Phi\phi^{-1}
\end{array}.\end{eqnarray}
%For every $\Phi\in$Hom$(G,$Pic$(K))$ let $f_{\Phi}:\begin{array}{rcl} G&\to &\text{Pic}(K)\\ g&\mapsto&\Phi(g) \otimes_K\  [_{\phi(g)}K_1] \end{array}.$ We have
\begin{lemma}\label{lambdalem}
Let $\phi \in$ Hom$_k(G,$ Aut$_k(K))$. Then under the isomorphism \eqref{lamb'g}, the subgroup $\Pic(K)(1_{\phi})<\CPK$ is mapped onto the coboundary subgroup
$B^1(G,\Pic(K)_{\phi})$ of $Z^1(G,\Pic(K)_{\phi})$.
Consequently, there is a well-defined isomorphism
\begin{eqnarray}\label{lambarg}
\bar{\lambda}_{\phi}:\begin{array}{ccc}
\CPK/_{\mbox{Pic($K$)}}&\xrightarrow{\cong} &H^1(G,\Pic(K)_{\phi})\\
\Pic(K)(\Phi) &\mapsto&[\Phi\phi^{-1}]
\end{array}.\end{eqnarray}

\end{lemma}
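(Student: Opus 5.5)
We compute the image of $\Pic(K)(1_{\phi})$ under \eqref{lamb'g} directly. Here $\MCK'(1)$ has the single object $K$ with morphisms $\Pic(K)$, and the action \eqref{GCC^C} is conjugation \eqref{M^C} inside $\Pic_k(K)\cong\Pic(K)\rtimes\Aut_k(K)$. For $[P]\in\Pic(K)$ we have $\check P\cong P^{-1}$. Therefore
$$[P](1_{\phi})(g)=[P]\cdot[K_{1,\phi(g)}]\cdot[P]^{-1}\in\Pic_k(K).$$
Under \eqref{lamb'g} this is sent to the element
$$[P]\cdot[K_{1,\phi(g)}]\cdot[P]^{-1}\cdot[K_{1,\phi(g)}]^{-1}.$$
In the semidirect product, $[K_{1,\phi(g)}]\cdot[P]^{-1}\cdot[K_{1,\phi(g)}]^{-1}$ is the action of $\phi(g)$ on $[P]^{-1}$, as in \eqref{Gactmd}; the precise convention $g$ versus $g^{-1}$ only needs to match the $G$-module $\Pic(K)_{\phi}$. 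So the image is the function $g\mapsto [P]\cdot g([P])^{-1}$. Writing $\Pic(K)_\phi$ additively, this is $g\mapsto -(g\cdot a-a)$ with $a=[P]$, which is the coboundary of $-[P]=[P]^{-1}$.

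Hence the image of $\Pic(K)(1_{\phi})$ consists of coboundaries. Since $[P]$ runs over all of $\Pic(K)$, so does $[P]^{-1}$, and every element of $B^1(G,\Pic(K)_\phi)$ is attained. This gives the equality of the image with $B^1(G,\Pic(K)_{\phi})$.

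For the consequence, combine \eqref{Ckmod}, that is Lemma \ref{isoqot} applied to the single-object groupoid $\Pic(K)$, with the group isomorphism \eqref{lamb'g}. An isomorphism $\CPK\cong Z^1$ carrying the subgroup $\Pic(K)(1_\phi)$ onto $B^1$ descends to an isomorphism
$$\CPK/\Pic(K)(1_\phi)\cong Z^1/B^1=H^1(G,\Pic(K)_\phi).$$
By \eqref{1A} we have $[P](\Phi)=\Phi\otimes_\phi [P](1_\phi)$. Hence the composite sends $\Pic(K)(\Phi)$ to $[\Phi\phi^{-1}]$, which is \eqref{lambarg}.

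The main obstacle is the bookkeeping in the first step. We must verify that the $\Pic(K)$-action \eqref{M^C} really is conjugation in the group $\Pic_k(K)$, and that $K_{1,\phi(g)}\otimes_K P\otimes_K K_{1,\phi(g)^{-1}}$ matches $\phi(g)$ acting on $[P]$ with the same convention used in Lemma \ref{Hphiz}. I would check this via the explicit isomorphism $x\otimes p\otimes y\mapsto$ the twisted module, as in \eqref{Gactmd}. Any sign or inversion mismatch only replaces $[P]$ by $[P]^{-1}$, which does not affect surjectivity onto $B^1$.
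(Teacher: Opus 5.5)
Your proposal is correct and follows the paper's own proof. You write the orbit element $[P](1_{\phi})$ as the conjugate $[P\otimes_K K_{1,\phi(g)}\otimes_K\check{P}]$, multiply by $[K_{1,\phi(g)}]^{-1}=[K_{1,\phi(g^{-1})}]$ to land exactly on the coboundary $[P]\otimes_K g([P]^{-1})$ of \eqref{cobou}, and then descend to $H^1$ via \eqref{Ckmod}; the convention worry in your last paragraph is moot, since $[K_{1,\phi(g)}][P]^{-1}[K_{1,\phi(g^{-1})}]$ is literally $g([P]^{-1})$ as defined in \eqref{Gactmd}.
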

\begin{proof}
An element in the orbit $\Pic(K)([1_{\phi}]_{})$ of $[1_{\phi}]_{}$ under the $\Pic(K)$-action is of the form (see \eqref{M^C} and \eqref{GCC^C})
\begin{equation}\label{P1phi}
[P](1_{\phi})=[P \otimes_{K} \ K_{1,\phi} \otimes_K \check{P}]
\end{equation}
for some $[P]\in\Pic(K).$

On the other hand, a map $G\to\Pic(K)$ is a 1-coboundary if and only if there exists $[P]\in\Pic(K)$ such that
\begin{equation}\label{cobou}
g\mapsto[P]\otimes_Kg([P]^{-1})=[P \otimes_K \ K_{1,\phi(g)} \otimes_K \check{P} \otimes_K \ K_{1,\phi(g^{-1})}]
\end{equation}
for every $g\in G$.
The fact that \eqref{P1phi} and \eqref{cobou} correspond under \eqref{lamb'g} settles the first part of the claim, which together with \eqref{Ckmod}
prove the isomorphism \eqref{lambarg}.
\end{proof}
%\begin{lemma}Let $\phi \in$ Hom$_k(G,$ Aut$_k(K))$. Then with the notation \eqref{lambarg}
%$$``\theta_6"=\bar{T}_{\phi}\circ\bar{\lambda}_{\phi}^{-1}:H^1(G,\Pic(K)_{\phi})\stackrel{}\to H^3(G,K^*_{\phi}).$$\end{lemma}\begin{proof}\ChY{Perhaps move to the next section.}\end{proof}
From the sequence \eqref{GS}, the isomorphism \eqref{lambarg}, we establish the following long exact sequence of monoids.
%defined in \eqref{E@C_k(phi)}, \eqref{E@Cliff}, \eqref{E@H2}, and \eqref{exactPKG}

\begin{eqnarray}\label{subexact}
\begin{array}{rl}
1&\to H^1(G,K^*_{\phi})\xrightarrow{\alpha_1}\ExtP\xrightarrow{\alpha_2}\text{Pic}(K)^{\phi}\xrightarrow{\alpha_3} H^2(G,K^*_{\phi})\\
&\xrightarrow{\alpha_4}\ClfK/_{\mbox{Pic($K$)}}\xrightarrow{\bar{\lambda}_{\phi}\circ\alpha_5}\ H^{1}(G,\Pic(K)_{\phi})
\xrightarrow{\alpha_6\circ\bar{\lambda}_{\phi}^{-1}}
H^3(G,K^*_{\phi}).
\end{array}
\end{eqnarray}
\begin{lemma}\label{R=K}
All the terms in the exact sequence \eqref{subexact} are groups.
\end{lemma}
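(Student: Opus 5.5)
The plan is to show directly that every term other than $\ClfK/_{\mbox{Pic($K$)}}$ is a group, and then to get that one term from Lemma \ref{exactgrp}.

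First, the cohomology groups $H^1(G,K^*_{\phi})$, $H^2(G,K^*_{\phi})$ and $H^3(G,K^*_{\phi})$ are abelian groups. The term $H^1(G,\Pic(K)_{\phi})$ is also a group, being cohomology of the abelian group $\Pic(K)$ with the $G$-action induced from \eqref{Gactmd}. Likewise $\Pic(K)^{\phi}$ is a subgroup of $\Pic(K)$, as noted in \S\ref{Sec2}.

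Next comes $\ExtP$. One could argue that $\check{M}$ or a suitable dual carries an inverse $K_{\phi}G$-structure, but Lemma \ref{exactgrp} gives this for free. Apply it to the exact segment
$$1\to H^1(G,K^*_{\phi})\xrightarrow{\alpha_1}\ExtP\xrightarrow{\alpha_2}\Pic(K)^{\phi}\xrightarrow{\alpha_3}H^2(G,K^*_{\phi}),$$
with $M_1=H^1(G,K^*_{\phi})$ and $M_3=\Pic(K)^{\phi}$ both groups. Exactness of this segment at $\ExtP$ and at $\Pic(K)^{\phi}$ is Lemmas \ref{piphres} and \ref{exactPKG}. Hence $\ExtP$ is a group.

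Finally, apply Lemma \ref{exactgrp} to the segment
$$H^2(G,K^*_{\phi})\xrightarrow{\alpha_4}\ClfK/_{\mbox{Pic($K$)}}\xrightarrow{\bar{\lambda}_{\phi}\circ\alpha_5}H^1(G,\Pic(K)_{\phi})\xrightarrow{\alpha_6\circ\bar{\lambda}_{\phi}^{-1}}H^3(G,K^*_{\phi}).$$
Here $M_1$ and $M_3$ are groups, so $\ClfK/_{\mbox{Pic($K$)}}$ is a group.

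The main obstacle is making sure Lemma \ref{exactgrp} applies in this monoid setting. Its proof uses exactness in the form $\ker(f_{i+1})=\mathrm{Im}(f_i)$, where the kernel is the preimage of the identity, and it uses that the $f_i$ are monoid homomorphisms. So I must confirm that the maps in \eqref{subexact} are monoid homomorphisms preserving identities, and that exactness at the two middle terms holds in exactly this sense.

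The homomorphism property for $\alpha_4$, $\alpha_5$ and $\alpha_6$ comes from Lemma \ref{quotmor}, Lemma \ref{compat} and Lemma \ref{Tphibar}. For $\bar{\lambda}_{\phi}$ it comes from Lemma \ref{lambdalem}, which gives an isomorphism, so composing with it preserves exactness. The exactness statements are Lemmas \ref{E@Cliff} and \ref{E@C_k(phi)}.

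One point needs care. The kernel of $\bar{\lambda}_{\phi}\circ\alpha_5$ is the preimage of the orbit $\Pic(K)(1_{\phi})$, and this orbit is exactly the identity class of the quotient monoid under \eqref{Ckmod}. That is precisely what Lemma \ref{E@Cliff} characterizes, so the argument goes through.
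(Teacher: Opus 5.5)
Your proposal is correct and follows the paper's proof. The cohomology terms and $\Pic(K)^{\phi}$ are groups outright, and Lemma \ref{exactgrp}, applied to the four-term segments around $\ExtP$ and around $\ClfK/_{\mbox{Pic($K$)}}$, shows that these two terms are groups too; your check that the maps are identity-preserving monoid homomorphisms, and that the kernel of $\bar{\lambda}_{\phi}\circ\alpha_5$ is the preimage of the identity orbit, spells out what the paper leaves implicit.
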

\begin{proof}
All the terms, perhaps except $\ExtP$ and $\ClfK/_{\mbox{Pic($K$)}}$, are well-known groups.
These two terms fall into the setup of Lemma \ref{exactgrp}, and hence both of them are groups as well.
Then \eqref{subexact} is indeed a sequence of groups.
\end{proof}
The following consequence of Lemma \ref{R=K} is used in \S\ref{7.1}.
\begin{lemma}\label{extisinv}
With the notation \eqref{extP} $\ExtP=($Mod$(K_{\phi}G))^*.$
\end{lemma}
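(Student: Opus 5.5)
We have to show $\ExtP=(\md(K_{\phi}G))^*$, where $(\md(K_{\phi}G))^*$ is the group of invertible elements of the monoid $(\md(K_{\phi}G),\otimes_K)$ whose identity is $[K_{\phi}]$. We prove the two inclusions separately.

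\emph{The inclusion $\ExtP\subseteq(\md(K_{\phi}G))^*$.} By Lemma \ref{R=K}, $\ExtP$ is a group. Its product and identity are those of $\md(K_{\phi}G)$: the product is the tensor product \eqref{tensrepr}, and the identity is $[K_{\phi}]$. The identity lies in $\ExtP$ because $\res([K_{\phi}])=[K]\in\Pic(K)$. Hence every $[M]\in\ExtP$ has an inverse $[M']\in\ExtP$ with $[M\otimes_K M']=[K_{\phi}]$ in $\md(K_{\phi}G)$. So $[M]$ is invertible in $\md(K_{\phi}G)$.

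\emph{The inclusion $(\md(K_{\phi}G))^*\subseteq\ExtP$.} Let $[M]$ be a unit of $\md(K_{\phi}G)$, with inverse $[N]$. Then $[M\otimes_K N]=[N\otimes_K M]=[K_{\phi}]$ as $K_{\phi}G$-modules. The restriction map \eqref{resmap} is a monoid homomorphism, so applying it gives $\res([M])\otimes_K\res([N])=[K]=\res([N])\otimes_K\res([M])$ in $\md(K)$. Hence $\res([M])$ is a unit of the monoid $(\md(K),\otimes_K)$.

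It remains to see that the units of $\md(K)$ are exactly $\Pic(K)$. Suppose $P\otimes_K Q\cong K$ as $K$-modules. For a commutative ring, the left and right structures agree, so this is an isomorphism of $K$-bimodules, and symmetrically for $Q\otimes_K P$. This is exactly the definition of invertibility given in \S\ref{Sec2}, so $[P]\in\Pic(K)$. (Alternatively, $P$ is a direct summand of $P\otimes_K Q\otimes_K P\cong P$ and is finitely generated projective of rank one, by the standard argument of \cite{Bass}.) Therefore $\res([M])\in\Pic(K)$, and $[M]\in\res^{-1}(\Pic(K))=\ExtP$.

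The only point requiring any care is this identification of the units of $\md(K)$ with $\Pic(K)$. It follows directly from the definition of invertible bimodules, since $K$ is commutative and left and right actions coincide. Combining the two inclusions gives the equality.
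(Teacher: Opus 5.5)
Your proof is correct and follows the same route as the paper. The inclusion ``$\subseteq$'' holds because $\ExtP$ is a group by Lemma \ref{R=K} and contains the identity $[K_{\phi}]$ of $\md(K_{\phi}G)$; ``$\supseteq$'' holds because $\res$ is a monoid homomorphism sending units of $\md(K_{\phi}G)$ to units of $\md(K)$, i.e.\ into $\Pic(K)$. You only make explicit two points the paper leaves tacit; the parenthetical ``alternative'' is unnecessary, and its direct-summand remark is tautological as written, but nothing depends on it.
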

\begin{proof}
The inclusion ``$\supseteq$" follows from the definition of $\ExtP$ as the pre-image of the subgroup of invertible elements of the range Mod$(K)$ under a monoid homomorphism.
From Corollary \ref{R=K}, $\ExtP$ is a group, settling the converse inclusion $``\subseteq"$.
\end{proof}
\begin{Remark}
It can be proven that $\ClfK$ is a group. This shows directly that the monoid
$\ClfK/_{\mbox{Pic($K$)}}\cong\ClfK/_{\mbox{Pic$(K)([K_{\phi}G])$}}$ is a group.
\end{Remark}
%\begin{proof}The inclusion ``$\supseteq$" follows from the definition of $\ExtP$From Corollary \ref{R=K}, $\ExtP$ is a group, taking care of the converse inclusion $``\subseteq"$.\end{proof}

\section{The Galois case, proof of Theorem B}\label{Sec7}
Throughout this section $k\subseteq K$ is further assumed to be a Galois extension of commutative rings with Galois group  $G\stackrel{{\phi}}{\cong}\Aut_k(K)$ (see
\S\ref{GalBack}).
Let us rewrite the classic sequence \eqref{7 term}, this time labelling the group homomorphisms in accordance with those in \cite{Kanzaki}.
\begin{eqnarray}\label{7 termlabelled}\begin{array}{rl}
1&\rightarrow H^{1}(G,K^*_{\phi})\xrightarrow{\theta_1} \Pic(k)\xrightarrow{\theta_2} \Pic(K)^{\phi} \xrightarrow{\theta_3} H^{2}(G,K^*_{\phi})\\
&\xrightarrow{\theta_4} \text{Br}(K/k)\xrightarrow{\theta_5} H^{1}(G,\Pic(K)_{\phi})\xrightarrow{\theta_6} H^{3}(G,K^*_{\phi}).\end{array}
\end{eqnarray}
We handle the two assertions of Theorem B separately.
\begin{enumerate}
\item In Theorem \ref{Galcase1} we present an isomorphism $\ExtP \xrightarrow{\cong} \Pic(k)$
%. Lemma \ref{psicommutes}
and then show that this isomorphism completes a commutative 4-term diagram with the homomorphisms
 $\alpha_1,\alpha_2$ (from \eqref{subexact}), $\theta_1$ and $\theta_2$ (From \eqref{7 termlabelled}), settling the first part of Theorem B.
\item In Theorem \ref{Galcase2} we present an isomorphism between $\ClfK/_{\mbox{Pic($K$)}}$ and the relative Brauer group $\Br(K/k)$,
%and Lemma \ref{nucommutes} ascertains that
and show that it closes a commutative 4-term diagram with the homomorphisms
$\alpha_4,\bar{\lambda}_{\phi}\circ\alpha_5$ (from \eqref{subexact}), $\theta_4$ and $\theta_5$ (From \eqref{7 termlabelled}). This takes care of the second part of Theorem B.
\end{enumerate}
\subsection{$\ExtP$ is isomorphic to $\Pic(k)$}\label{7.1}
For $[M] \in$Mod$(K_{\phi}G)$, consider $M^{\phi}:=\{m\in M\mid u_g*m=m, \forall g\in G\}$ as a $k$-module.
\begin{lemma}
Let $\phi \in$ Hom$(G, \Aut_k(K))$ be a Galois action. Then with the above notation,
\begin{eqnarray}\label{prepsi}\begin{array}{ccc}
\Mod (K_{\phi}G) &\rightarrow& \Mod(k)\\
~[M] &\mapsto& [M^{\phi}]\end{array}.\end{eqnarray}
%$\begin{array}{c}[M]\mapsto [M^{\phi}]\end{array}$
is a well-defined isomorphism of monoids.
\end{lemma}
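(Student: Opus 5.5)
The map \eqref{prepsi} is the Galois descent correspondence, and I would prove it by exhibiting an explicit inverse and checking multiplicativity.

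In the Galois setting the skew group algebra $K_{\phi}G$ is isomorphic to $\End_k(K)$, with $K$ a finitely generated projective $k$-module of constant rank $|G|$ (a standard consequence of the Galois criteria in \cite[Proposition III.1.2]{FDEI}, equivalent to the isomorphism \eqref{l}). Hence $K_{\phi}G$ is Morita equivalent to $k$ via the progenerator ${}_{K_{\phi}G}K_k$, with $K=K_{\phi}$ as in \eqref{KPhi}.

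Under this equivalence the functor $\Mod(K_{\phi}G)\to\Mod(k)$ is $M\mapsto \Hom_{K_{\phi}G}(K_{\phi},M)$. Evaluating at $1$ identifies this Hom group with $M^{\phi}$. The inverse functor is $N\mapsto K\otimes_k N$, where $u_g$ acts by $\phi(g)\otimes \Id$. Thus the first step is to record that the two maps
\[
K\otimes_k M^{\phi}\to M,\qquad x\otimes m\mapsto x m,
\]
\[
N\to (K\otimes_k N)^{\phi},\qquad n\mapsto 1\otimes n,
\]
are isomorphisms. The second uses $K^G=k$ together with projectivity of $K$ over $k$, since taking invariants commutes with $-\otimes_k N$ because $K^G=k$ is a direct summand via the trace. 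This gives bijectivity of \eqref{prepsi} on isomorphism classes, and well-definedness is clear because isomorphic modules have isomorphic invariants.

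The second step is the monoid property. The identity is respected since $K_{\phi}^{\phi}=K^G=k$. For products, I would show that $[M\otimes_K \tilde M]$ corresponds to $[M^{\phi}\otimes_k\tilde M^{\phi}]$, working through the inverse map. There is a natural isomorphism
\[
(K\otimes_k N)\otimes_K(K\otimes_k \tilde N)\cong K\otimes_k(N\otimes_k\tilde N),
\]
and the diagonal action \eqref{tensrepr} on the left corresponds to $\phi(g)\otimes\Id$ on the right. Hence $N\mapsto K\otimes_k N$ is a monoid homomorphism $\Mod(k)\to\Mod(K_{\phi}G)$ that is bijective, and therefore so is its inverse \eqref{prepsi}.

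The main obstacle is the descent isomorphism $K\otimes_k M^{\phi}\cong M$ for arbitrary (not necessarily finitely generated) $M$. I would derive it from the Morita equivalence $K_{\phi}G\cong\End_k(K)$, rather than by a direct computation with the idempotents $\epsilon_g$ of Lemma \ref{L:2.2}. If a direct argument is preferred, I would use an element $c=\sum x_i\otimes y_i$ with $\sum_i x_i\phi(g)(y_i)=\delta_{g,e}$, which exists by the Galois property. The inverse to $x\otimes m\mapsto xm$ is then $m\mapsto \sum_i x_i\otimes \sum_g u_g\star(y_i m)$.
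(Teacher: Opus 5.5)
Your argument is correct. It uses the same ingredients as the paper, namely the descent isomorphism $K\otimes_k M^{\phi}\cong M$ and the inverse $[P]\mapsto[K\otimes_kP]$, but it organizes them differently.

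The paper proves multiplicativity of $[M]\mapsto[M^{\phi}]$ directly. It builds the chain $K\otimes_k M_1^{\phi}\otimes_k M_2^{\phi}\cong M_1\otimes_K M_2\cong K\otimes_k(M_1\otimes_K M_2)^{\phi}$ and then ``cancels'' $K\otimes_k-$, using that $K$ is a $k$-progenerator. It uses the same cancellation again to get $(K\otimes_kP)^{\phi}\cong P$.

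You instead make the Morita equivalence $K_{\phi}G\cong\mathrm{End}_k(K)\sim k$ explicit. Both inverse identities then come from its unit and counit, with $\Hom_{K_{\phi}G}(K_{\phi},M)\cong M^{\phi}$ by evaluation at $1$. You check multiplicativity on the side of $N\mapsto K\otimes_kN$, where it is a plain natural isomorphism compatible with the diagonal action \eqref{tensrepr}, so no cancellation is needed at all.

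This buys precision. The paper's cancellation is legitimate only for isomorphisms of $K_{\phi}G$-modules, which is exactly your Morita equivalence. For mere $K$-modules, $K\otimes_kX\cong K\otimes_kY$ does not force $X\cong Y$: the kernel of $\Pic(k)\to\Pic(K)$ is $H^1(G,K^*_{\phi})$, which need not vanish. The paper's version is shorter only because it leans on \cite{FDEI} for these facts. You also check that the identity $[K_{\phi}]$ goes to $[k]$, which the paper leaves implicit.

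One small point: your parenthetical justification of $N\cong(K\otimes_kN)^{\phi}$ (``$k$ is a direct summand via the trace'') is loose. What actually does the job is an element $c\in K$ of trace $1$, which gives the projection $z\mapsto\sum_g(\phi(g)\otimes 1)\bigl((c\otimes 1)z\bigr)$ of the invariants onto $1\otimes N$. This is covered anyway by the unit of the Morita equivalence. Your explicit inverse of the counit, built from $x_i,y_i$ with $\sum_ix_i\phi(g)(y_i)=\delta_{g,e}$, is correct.
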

\begin{proof}
The fact that \eqref{prepsi} is well-defined is clear.
Next, the Galois setting yields an isomorphism of $K$-modules (see \cite[IV(2.1)]{FDEI})
\begin{equation}\label{theta2}K \otimes_k M^{\phi} \cong M. \end{equation}
By \eqref{theta2} we deduce that for every $[M_1],[M_2] \in\Mod(K_{\phi}G)$
%, using  \cite[IV(2.1)]{FDEI} yields a $K$-module isomorphism as follows:
\begin{eqnarray}\label{eqarr}\begin{array}{rl}
K \otimes_k M_1^{\phi} \otimes_k M_2^{\phi} \cong& K \otimes_k M_1^{\phi} \otimes_K K \otimes_k M_2^{\phi}\\
\cong& M_1 \otimes_K M_2 \\
\cong& K \otimes_k (M_1\otimes_K M_2)^{\phi}.
\end{array}\end{eqnarray}
By the Galois assumption, $K$ is a $k$-progenerator. This property allows us to cancel out $K\otimes_k-$ on both hands of \eqref{eqarr} \cite[I, Prop. 3.3]{FDEI} so as to obtain a $k$-module isomorphism
$$(M_1\otimes_K M_2)^{\phi} \cong M_1^{\phi} \otimes_k M_2^{\phi}, $$
thereby proving that \eqref{prepsi} is a group homomorphism. To complete the proof, here is an inverse of \eqref{prepsi}
\begin{eqnarray}\label{prepsiinv}%\psi^{-1}_{\phi}:
\begin{array}{ccc}
\Mod (k) &\rightarrow& \Mod(K_{\phi}G)\\
~[P] &\mapsto& [K\otimes_kP]\end{array},\end{eqnarray}
where the the $K_{\phi}G$-module $K \otimes_k P$, whose $G$-module action is given by
$$u_g*(x \otimes_k p) :=\ \phi(g)(x)\otimes_k p, \ \ \ x \in K, p \in P,g \in G.$$
Indeed, \eqref{theta2} ascertains that \eqref{prepsiinv} is a left inverse of \eqref{prepsi}.
Conversely, by putting $M:=K \otimes_k P,\ \ [P]\in \Mod (k)$ in \eqref{theta2}, we have
$$K \otimes_k (K \otimes_k P)^{\phi} \cong K\otimes_k P, \ \  [P]\in \Mod (k).$$
By the $k$-progenerator property of $K$, this further yields $(K \otimes_k P)^{\phi} \cong P$, proving that \eqref{prepsiinv} is also a right inverse of \eqref{prepsi}, and hence the claim.
%Furthermore, if $[M] \in \ker(\psi)$, we have $[M^{\phi}]$ is trivial in $\Pic(k)$, i.e., $M^{\phi} \cong k$. Again, by \eqref{theta2}, we have $M \cong K \otimes_k M^{\phi} \cong K$,
\end{proof}
\begin{theorem}\label{Galcase1}
Let $\phi \in$ Hom$(G, \Aut_k(K))$ be a Galois action. Then \eqref{prepsi} determines an isomorphism of groups
\begin{eqnarray}\label{psi}\psi_{\phi}:\begin{array}{ccc}\ExtP &\xrightarrow{\cong}& \Pic(k)\\
~[M] &\mapsto& [M^{\phi}]\end{array}.\end{eqnarray}
Moreover, with the notations \eqref{alpha1}, \eqref{alpha2}, \eqref{7 termlabelled} and \eqref{psi} the following group homomorphisms are equal
\begin{enumerate}
\item $\theta_1=\psi_{\phi} \circ \alpha_1:H^1(G,K^*_{\phi})\to\Pic(k)$, and
\item $\theta_2=\alpha_2\circ \psi_{\phi}^{-1}:\Pic(k)\to\Pic(K)^{\phi}.$
\end{enumerate}
\end{theorem}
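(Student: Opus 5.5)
The plan is to restrict the monoid isomorphism \eqref{prepsi}, $\Mod(K_{\phi}G)\to\Mod(k)$, $[M]\mapsto[M^{\phi}]$, to the group of units on each side. By Lemma \ref{extisinv}, $\ExtP=(\Mod(K_{\phi}G))^*$. On the other side, the units of the monoid $(\Mod(k),\otimes_k)$ are exactly the invertible $k$-modules, i.e.\ $\Pic(k)$. A monoid isomorphism carries units bijectively onto units. Hence \eqref{prepsi} restricts to a group isomorphism $\psi_{\phi}:\ExtP\to\Pic(k)$.

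For (2), take $[P]\in\Pic(k)$. By \eqref{prepsiinv}, $\psi_{\phi}^{-1}([P])=[K\otimes_kP]$, with $G$ acting on the left factor. Applying $\alpha_2$, i.e.\ the restriction \eqref{resmap}, gives the $K$-module $[K\otimes_kP]\in\Pic(K)^{\phi}$. In \cite{CHR}/\cite{Kanzaki}, $\theta_2$ is the scalar extension $[P]\mapsto[K\otimes_kP]$. So $\theta_2=\alpha_2\circ\psi_{\phi}^{-1}$ holds, and it amounts to recalling the definition of $\theta_2$.

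For (1), start from a $1$-cocycle $\pi$. By \eqref{piphi}, $\alpha_1([\pi])=[K_{\pi}]$, the free $K$-module with the twisted action \eqref{act*}. Its invariants are
\[
K_{\pi}^{\phi}=\{x\in K\mid \pi(g)\,g(x)=x\ \forall g\in G\},
\]
so $\psi_{\phi}\alpha_1([\pi])=[K_{\pi}^{\phi}]$. I then need to match this with Kanzaki's $\theta_1$. In the classical description, $\theta_1([\pi])$ is the class of exactly this $k$-submodule of $K$: the elements on which the $\pi$-twisted $G$-action is trivial, equivalently the invariants of $K$ under the Galois descent datum twisted by $\pi$. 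Once the definitions are aligned, the equality $\theta_1=\psi_{\phi}\circ\alpha_1$ is again definitional.

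The main obstacle is making sure the conventions agree, since Kanzaki may write the cocycle action with $\pi(g)^{-1}$ or on the other side. The fallback is to identify $\theta_1$ abstractly as the unique map making the classical sequence exact with $\theta_2$ equal to scalar extension. Concretely, $\theta_1$ sends $[\pi]$ to the invertible $k$-module $Q$ together with the descent isomorphism $K\otimes_kQ\cong K$, whose twisted $G$-action is given by $\pi$. Galois descent \eqref{theta2} then recovers $Q$ as $K_{\pi}^{\phi}$.

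If the conventions differ by an inverse, I would check that $\alpha_1$ was defined compatibly through Step 1 of Lemma \ref{ker1}, where $\varphi_g=\varphi^{-1}(u_g\star\varphi(1))$. If needed, I would adjust by the automorphism $[\pi]\mapsto[\pi]^{-1}$ and verify that the sign is consistent with $\theta_1$.
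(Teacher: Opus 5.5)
Your proposal is correct and follows the paper's proof: restrict the monoid isomorphism \eqref{prepsi} to units using Lemma \ref{extisinv} (the units of $\Mod(k)$ being $\Pic(k)$), then obtain (1) and (2) by unwinding the definitions with the descent isomorphism \eqref{theta2}, which the paper compresses into ``a direct computation''. One caution: your fallback of characterizing $\theta_1$ as the unique map making \eqref{7 termlabelled} exact is not valid, because precomposing $\theta_1$ with inversion on $H^1(G,K^*_{\phi})$ preserves exactness, so (1) must rest on the concrete descent description of $\theta_1$, which you also give.
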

\begin{proof}
The map \eqref{psi} is just the isomorphism \eqref{prepsi} restricted to the corresponding groups of units of both hands (see Lemma \ref{extisinv}).

The commutation properties (1) and (2) are deduced by a direct computation, applying \eqref{theta2} and the fact that all homomorphisms that participate in these two equalities are well-defined.
\end{proof}

\subsection{$\text{Cliff}_k(\phi,K)/_{\mbox{Pic($K$)}}$ is isomorphic to $\Br(K/k)$}\label{7.2}

The following result is needed in this section.
\begin{lemma}\label{cliffGal} (see \cite[Propositions 2,3]{Kanzaki})
Let $\phi \in$ Hom$(G, \Aut_k(K))$ be a Galois action.
Then a $k$-central algebra $A$ is Azumaya with $[A]_{\Br(k)}\in\Br(K/k)$
%admitting a maximal commutative subalgebra $K$ (i.e. central separable over $k$ with $[A]_{\Br(k)}\in\Br(K/k)$)
if and only if $A$ admits a $G$-grading of type $\phi$
such that $[A]\in \ClfK$ with $$A_g :=\{ a \in   A \mid g(x)a =\ {\phi{(g)}}(x)a,\ \forall x \in K, p \in P \}, \ \ \forall g\in G.$$
\end{lemma}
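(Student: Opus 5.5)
The plan is to prove the two implications separately. Both rest on the Galois idempotents $\{\epsilon_g\}$ of Lemma \ref{L:2.2} and on the standard fact that, for $A$ Azumaya over $k$, the centralizer of a maximal commutative subalgebra is that subalgebra.

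\emph{($\Rightarrow$).} Let $A$ be $k$-Azumaya with $[A]_{\Br(k)}\in\Br(K/k)$. First reduce to the case where $K\subseteq A$ is a maximal commutative subalgebra. If $A$ is merely split by $K$, the standard theory (\cite{FDEI}) gives a Brauer equivalent $A'\cong\End_k(Q)$-twist in which $K$ embeds as its own centralizer. For the Galois $K$ one can take $A'$ to be the one with $A'\cong \End_K(M)$, $M$ a $K$-progenerator, and $A'\otimes_k K$ split of the right rank. Presumably this is what the statement intends: it is asserted up to the convention that the Clifford class lives over base algebra $K$, so $A_e=K$ is built in.

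With $K\subseteq A$ maximal commutative, view $A$ as a $K\otimes_k K$-module via $(x\otimes y)\cdot a=xay$, i.e. as an $A\otimes_k K^{op}$-module. Use the orthogonal idempotents $\epsilon_g$ summing to $1\otimes 1$ to decompose
\[A=\bigoplus_{g\in G}\epsilon_g A,\]
where $\epsilon_gA$ is exactly $A_g$ as displayed. This uses \eqref{theorth}, taking care with $g$ versus $g^{-1}$ in \eqref{l}. Then $A_e$ is the centralizer of $K$, hence equals $K$, and $A_gA_h\subseteq A_{gh}$ is immediate from the defining relation.

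The next step is strong grading, $A_gA_h=A_{gh}$, equivalently $A_gA_{g^{-1}}=K$. The plan here is to use $A\otimes_k K\cong\End_K(A)$, where $A$ is viewed as a right $K$-module, coming from the Azumaya property together with $K$ being a maximal commutative subalgebra. Compare this with $A\otimes_kK\cong\bigoplus_g A\,\epsilon_g$. Each $A_g$ is then a rank-one projective $K$-module, hence invertible with $[A_g]\in\Pic_k(K)$ of type $\phi(g)$. Multiplication $A_g\otimes_K A_h\to A_{gh}$ is a nonzero map of invertible modules; checking it is surjective locally at maximal ideals of $K$ suffices. Thus $[A]\in\ClfK$.

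\emph{($\Leftarrow$).} Given a strongly graded $A=\bigoplus A_g$ of type $\phi$ with $A_e=K$, first show that the center is $k$. A central element has homogeneous components commuting with $K$. For $g\neq e$, an element of $A_g$ commuting with $K$ is killed by all $x-g(x)$, so it vanishes by the Galois condition exactly as in the proof of Lemma \ref{L:2.2}. The $e$-component must then be $G$-fixed via conjugation by the $A_g$, so it lies in $K^G=k$.

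For separability, exhibit $A\otimes_k A^{op}\cong\End_k(A)$. Since $A$ is a $k$-progenerator (each $A_g$ is a $K$-progenerator and $K$ is a $k$-progenerator), it suffices to check the isomorphism after base change to $K$. There $A\otimes_kK\cong\bigoplus_g A\epsilon_g$, and one identifies this with $\End_K(A)$, which shows $A\otimes_kK$ is split. This gives both Azumaya-ness and $[A]\in\Br(K/k)$, and this direction is essentially Kanzaki's argument.

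\textbf{Main obstacle.} I expect the hardest step to be the strong grading in ($\Rightarrow$), together with the reduction to $K$ being maximal commutative. I would handle both by localizing at maximal ideals of $k$, where Picard groups vanish, and arguing by rank counting with $\epsilon_g$.
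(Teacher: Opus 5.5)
\textbf{The gap: strong grading in ($\Rightarrow$).} A nonzero map between invertible $K$-modules need not be onto. Rank counting cannot settle the question either, even after localizing so that $A_g=Ku_g$. Take $k=\mathbb{Z}[\tfrac12]$, $K=k[i]$ (Galois with $G=\{1,\sigma\}$), and $A=K\oplus Ku$ with $ux=\bar{x}u$ and $u^2=3$. This algebra is $k$-central, satisfies $C_A(K)=K$, and has free rank-one components. Yet $A_\sigma A_\sigma=3K\neq K$; it is not Azumaya. So the Azumaya hypothesis must enter beyond ranks.

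The missing idea is the \emph{surjectivity} of $\Theta\colon A\otimes_kK\to\mathrm{End}_K(A_K)$, $a\otimes x\mapsto(b\mapsto abx)$, read degree by degree. Two facts make this work:
\begin{itemize}
\item $\Theta$ sends the idempotents $\epsilon_g$ to the projections onto the components $A_{g^{\pm1}}$.
\item $(A\otimes_kK)\epsilon_g=(A\otimes 1)\epsilon_g$, since $(1\otimes y)\epsilon_g=(\phi(g)^{-1}(y)\otimes1)\epsilon_g$.
\end{itemize}
Hence every right $K$-linear map $A_g\to A$ has the form $b\mapsto ab$ for some $a\in A$. If such a map lands in $A_e=K$, only the component $a_{g^{-1}}$ contributes, so $\Hom_K(A_g,K)=\{b\mapsto ab : a\in A_{g^{-1}}\}$. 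Therefore $A_{g^{-1}}A_g$ is the trace ideal of $A_g$, which equals $K$ once $A_g$ is projective of rank one. Then $A_{gh}=A_{gh}A_{h^{-1}}A_h\subseteq A_gA_h$. The same observation gives $\mathrm{End}_K(A_g)=K$, hence rank at most one at every prime. Combined with $\mathrm{rk}_K A=|G|$ (count ranks on both sides of $\Theta$), this makes your rank-one claim precise.

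\textbf{The hypothesis and the ``reduction''.} Drop the reduction. As literally stated, ($\Rightarrow$) is false: take $A=k$ with $G\neq 1$. The lemma must be read with $K\subseteq A$ a maximal commutative subalgebra, which is the hypothesis of Kanzaki's Proposition~2. Replacing $A$ by a Brauer-equivalent algebra proves nothing about a grading on $A$ itself. That replacement is the content of Kanzaki's Proposition~3, which the paper needs only for surjectivity of $\rho'_{\phi}$. In any case, $\mathrm{End}_K(M)$ has center $K$, not $k$, so it is not even a candidate.

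\textbf{What is fine.} The rest of your outline follows Kanzaki's route, to which the paper defers without giving a proof of its own. This covers the decomposition $A=\bigoplus_g\epsilon_gA$ with $A_e=C_A(K)=K$. It also covers the ($\Leftarrow$) direction: the center via the Galois condition, splitting after $-\otimes_kK$ using the $\epsilon_g$, and faithfully flat descent. One detail needs more than \eqref{theorth}: identifying each $\epsilon_gA$ with the full component $A_{g^{\pm1}}$ also requires the annihilator argument from the proof of Lemma~\ref{L:2.2}, since \eqref{theorth} alone gives only one inclusion.
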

As mentioned in the beginning of \S\ref{Sec7}, Theorem \ref{Galcase2} below completes the proof of Theorem B(2). It relies on a nice property %of $\text{Cliff}_k(\phi,K)$ when $\phi$ is Galois, which is
given in Theorem \ref{L:2.4}, which says that
for every $A_G,A'_G$, both satisfying the equivalent conditions of Lemma \ref{cliffGal}, there exists an idempotent $\epsilon\in K\otimes_kK$ such that the graded product $A_G\otimes_{\phi}A'_G$
is embedded inside the tensor product of $A_G$ and $A'_G$ via $\epsilon\cdot A_G\otimes_{k}A'_G\cdot \epsilon$.
Moreover, due to Corollary \ref{L:2.5}, since $\epsilon$ is \textit{full} (see \eqref{prefull}) in $A_G\otimes_{k}A'_G$,
it turns out that $A_G\otimes_{\phi}A'_G$ and $A_G\otimes_{k}A'_G$ are Brauer similar.
We begin with
\begin{lemma}\label{invprogen}
Let $\phi \in$ Hom$(G, \Aut_k(K))$ be a Galois action, and let $[P] \in \Pic(K)$. Then $P$ is (faithful) $k$-projective. In particular, End$_k(P)$ is $k$-Brauer trivial.
\end{lemma}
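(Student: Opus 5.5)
The plan is to use the standard fact that, in the Galois setting, $K$ is itself a faithful finitely generated projective $k$-module (a $k$-progenerator). This was already used in the proof that \eqref{prepsi} is an isomorphism, and it follows from \cite[Proposition III.1.2]{FDEI}, one of whose equivalent conditions is exactly that $K$ is a finitely generated projective $k$-module together with the isomorphism \eqref{l}. The argument then runs in three steps, concluding with the Brauer-triviality claim.

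\textbf{Step 1: $P$ is $k$-projective.} Since $[P]\in\Pic(K)$, the module $P$ is a finitely generated projective $K$-module of constant rank one, by the standard characterization of invertible modules over a commutative ring \cite[II]{Bass}. So $P$ is a direct summand of some $K^n$. Because $K$ is a direct summand of some $k^m$, $P$ is a direct summand of $k^{mn}$, and hence is finitely generated projective over $k$ (restriction of scalars along $k\subseteq K$).

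\textbf{Step 2: $P$ is $k$-faithful.} Let $x\in k$ with $xP=0$. Then $x$ annihilates $P\otimes_K\check P\cong K$, so $x\cdot 1_K=0$. Since $k\subseteq K$, this gives $x=0$. A finitely generated projective faithful $k$-module is then a $k$-progenerator.

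\textbf{Step 3: $\End_k(P)$ is Brauer trivial.} For a $k$-progenerator $P$, the algebra $\End_k(P)$ is $k$-Azumaya: it is central separable, and it is locally a matrix algebra. By the definition of $\Br(k)$ it represents the trivial class, since it is Morita equivalent to $k$ through $P$ \cite[Ch. II]{FDEI}. This yields the ``in particular'' part.

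The only point needing care is faithfulness when $k\to K$ might in principle fail to be injective. Here, however, $k=K^G\subseteq K$ by Definition \ref{Galdef}, so this is immediate. The rank of $P$ over $k$ need not be constant if $\mathrm{Spec}\,k$ is disconnected, but projectivity and faithfulness, rather than constant rank, are what the Brauer-triviality argument requires. I therefore expect no genuine obstacle beyond citing the progenerator property of Galois extensions correctly.
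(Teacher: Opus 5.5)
Your proposal is correct and follows the paper's route. The paper also starts from the fact that $K$ is a $k$-progenerator in the Galois case, cites \cite[I, Prop. 1.6]{FDEI} to conclude that every invertible $K$-module is a $k$-progenerator, and then gets Brauer triviality of $\mathrm{End}_k(P)$ by definition. Your Steps 1 and 2 simply prove that cited transfer result by hand, via the direct-summand argument and faithfulness through $P\otimes_K\check{P}\cong K$.
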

\begin{proof}
The Galois assumption says that $K$ is a $k$-progenerator \cite[I, Cor. 1.10 and III, Prop. 1.2]{FDEI}. By \cite[I, Prop. 1.6]{FDEI} so is every invertible $K$-module.
\end{proof}
As customary, if $P$ is any module over a commutative $k$-algebra $K$, then $K$ is embedded in End$_k(P)$ as multiplications by its elements.
Under the additional assumptions of Lemma \ref{invprogen} on $\phi$ and $P$, this copy of $K$ is its own centralizer \cite[ I, Lemma 5.1]{FDEI},
and hence is a maximal commutative subalgebra of
the $k$-Azumaya algebra $\End_k(P)$.
This setup fits into that of Lemma \ref{cliffGal}, providing the following $G$-grading of type $\phi$ on $\End_k(P)$ over $K$ %(see the proof of \cite[Proposition 3]{Kanzaki}).
\begin{eqnarray}\label{kanzgr}\begin{array}{l}
\End_k(P) = \underset{g \in G}{\bigoplus} (\End_k(P))_g,\text{  where }\\
(\End_k(P))_g :=\{ f \in   \End_k(P) \mid f(xp) =\ {\phi{(g)}}(x)f(p),\ \forall x \in K, p \in P \}.
\end{array}\end{eqnarray}

On the other hand, {the graded algebras whose Clifford classes lie in the Pic$(K)$-orbit of the trivial class $[K_{\phi}G]$ have a special form when $\phi$ is Galois (see Lemma \ref{L:2.1}).
Given $[P]\in\Pic(K)$, consider the map
%the map determined by the rule
\begin{eqnarray}\label{mu}
\mu' :\begin{array}{rcl}
P(K_{\phi}G)& \rightarrow& \End_k(P)\\
p\otimes_K \sum_gx_gu_g \otimes_K f&\mapsto&(p'\mapsto \sum_gx_g\cdot {{\phi(g)}(f(p'))}\star p)
\end{array}
\end{eqnarray}
for every $p,p' \in P, x_g \in K,g\in G$ and $f \in\check{P}$=Hom$_K(P,K)$. We claim
\begin{lemma}\label{mumorphism}
The map \eqref{mu} is a $k$-algebra homomorphism.
\end{lemma}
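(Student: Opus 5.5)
The plan is to check three things in turn for the map \eqref{mu}: that it is well defined, that it preserves multiplication (with respect to \eqref{mult}), and that it is unital.

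\textbf{Well-definedness.} First, $\mu'$ is well defined on $P(K_{\phi}G)=P\otimes_K K_{\phi}G\otimes_K\check{P}$, viewed as a quotient of $P\times K_{\phi}G\times\check{P}$. The defining expression is additive in each of the three variables, so it suffices to check the two balancing relations coming from $\otimes_K$.

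For the first relation, $p\cdot y\otimes_K b$ against $p\otimes_K y\star b$: in the $K$-bimodule $P$ the two $K$-actions coincide (it is of type $\mathrm{id}$). Left multiplication by $y$ on $K_{\phi}G$ replaces $x_g$ by $yx_g$. Both sides therefore give $p'\mapsto\sum_g yx_g\,\phi(g)(f(p'))\star p$.

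For the second relation, $b\cdot y\otimes_K f$ against $b\otimes_K y\star f$, the key point is $u_g y=\phi(g)(y)u_g$. This gives $\sum_g x_gu_g\cdot y=\sum_g x_g\phi(g)(y)u_g$. On the other side, $(y\star f)(p')=yf(p')$, so applying $\phi(g)$ produces the same factor $\phi(g)(y)$.

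\textbf{Image.} Each image $\mu'(p\otimes x_gu_g\otimes f)$ is $k$-linear, because $\phi(g)$ fixes $k$. It also satisfies $\mu'(\cdot)(zp')=\phi(g)(z)\,\mu'(\cdot)(p')$, so it lands in the graded component $(\End_k(P))_g$ of \eqref{kanzgr}. This is not needed for the homomorphism property, but it shows that $\mu'$ respects gradings.

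\textbf{Multiplicativity.} This is the main computation. By \eqref{mult}, the product is
$$(p_1\otimes x_gu_g\otimes f_1)(p_2\otimes x_hu_h\otimes f_2)=p_1\otimes x_g\,g(f_1(p_2))\,g(x_h)\,u_{gh}\otimes f_2 .$$
Here I use $\tau_P(f_1\otimes p_2)=f_1(p_2)\in K$, together with $u_g\,z=g(z)u_g$ and $u_gu_h=u_{gh}$ in $K_\phi G$.

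Its image under $\mu'$ sends
$$p'\mapsto x_g\,g(f_1(p_2))\,g(x_h)\,gh(f_2(p'))\star p_1 .$$

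Now compute the composition $\mu'(p_1\otimes x_gu_g\otimes f_1)\circ\mu'(p_2\otimes x_hu_h\otimes f_2)$ applied to $p'$. The inner map gives $q:=x_h\,h(f_2(p'))\star p_2$. Applying the outer map gives $x_g\,g(f_1(q))\star p_1$. By $K$-linearity of $f_1$,
$$f_1(q)=x_h\,h(f_2(p'))\,f_1(p_2).$$
Applying $g$ then gives $g(x_h)\,gh(f_2(p'))\,g(f_1(p_2))$, which matches the expression above. Both sides are biadditive, so it suffices to check products of simple tensors, and this settles multiplicativity.

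\textbf{Unit.} The unit of $P(K_{\phi}G)$ is $P(\iota)(1)=\sum_j p_j\otimes u_e\otimes p'_j$, where $\sum_j p_j\otimes p'_j=\Delta_P^{-1}(1)$. Its image is $p'\mapsto\sum_j p'_j(p')\,p_j$. By \eqref{DeltaTau} this equals $\Delta_P(\sum_j p_j\otimes p'_j)\star p'=p'$, so $\mu'$ sends the unit to the identity.

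\textbf{$k$-linearity.} This is clear, since all the structures involved are $k$-linear with $k$ central.

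\textbf{Expected obstacle.} The main difficulty is the bookkeeping of which side $K$ acts on, and of the twist $\phi(g)$ in the second balancing relation and in the multiplicativity step. Once $u_gz=g(z)u_g$ and the centrality of $K$ in the module $P$ are used consistently, everything else is routine.
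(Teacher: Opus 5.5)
Your proposal is correct and takes the same approach as the paper. The core of both arguments is the direct check that $\mu'(x)\circ\mu'(y)$ agrees with $\mu'(xy)$ on homogeneous simple tensors, using \eqref{mult}, the relation $u_g z=\phi(g)(z)u_g$ and the $K$-linearity of $f_1$. You additionally verify well-definedness on the balanced tensor product, unitality via \eqref{DeltaTau}, and compatibility with the grading; the paper leaves these implicit, asserting the grading only later in the proof of Lemma \ref{L:2.1}.
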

\begin{proof}
For $a_g,a_h \in K, p,p_1,p_2 \in P,$ and $f_1,f_2 \in\check{P}$, apply the definition \eqref{mu}
$$ \begin{array}{ll}
  \mu'(p_1\otimes_K a_gu_g \otimes_K f_1)\circ\mu'(p_2\otimes_K a_hu_h \otimes f_2)(p)& = \mu'(p_1\otimes_K a_gu_g \otimes_K f_1)(a_h\cdot{\phi(h)}(f_2(p))\star p_2)
  \\ &=a_g\cdot{\phi(g)}(f_1(a_h\cdot{\phi(h)}(f_2(p))\star p_2))\star p_1\cdots\end{array}$$
%  on applying the definition of the map $\mu'$,
Since scalars from $K$ can be pulled out of $f_1 \in$ Hom$_K(P,K)$, we obtain
  \begin{align*}\cdots
  =& a_g\cdot{\phi(g)}(a_h)\cdot{\phi(g)}(f_1(p_2))\cdot{\phi(gh)}(f_2(p))\star p_1\\=& \mu'(p_1 \otimes_K a_g\cdot{\phi(g)}(a_h)\cdot{\phi(g)}(f_1(p_2))u_{gh} \otimes_K f_2)(p)\\
  =&  \mu'((p_1\otimes_K a_gu_g \otimes_K f_1)\cdot(p_2\otimes_K a_hu_h \otimes_K f_2))(p),
  \end{align*}
where the last equality follows from the multiplication rule in $P(K_{\phi}G)$ \eqref{mult}. Thus, $\mu'$ respects composition in $\End_k(P)$,
evaluated on every element $p\in P$, and is hence multiplicative.
\end{proof}}

With the notation \eqref{Delta_P}, let
\begin{equation}\label{mjf'js}
\sum\limits_{j}(p'_j \otimes_R f_j')= \Delta_P^{-1}(1) \in P \otimes_K \check{P},
\end{equation}
that is
\begin{equation}\label{mjf'j}
\sum\limits_j p'_j\cdot f'_j(p) = p,\ \forall p \in P.
\end{equation}
Then using \eqref{mjf'j}, for any $f \in\check{P}$ and $p \in P$, we have
\begin{equation*}
\sum\limits_j f(p'_j)\cdot f_j'(p) =  f(\sum\limits_jp'_j\cdot f_j'(p)) = f(p),
\end{equation*}
and thus
\begin{equation}\label{fm_jf_j'}
\sum\limits_j f(p'_j)\star f_j' = f.
\end{equation}
\begin{lemma}\label{L:2.1}
Let $\phi \in$ Hom$(G, \Aut_k(K))$ be a Galois action, and let $[P] \in \Pic(K)$. Then with the $G$-grading \eqref{kanzgr}, the homomorphism
\eqref{mu} yields a graded $k$-isomorphism $P(K_{\phi}G) \stackrel{G}{\cong} $ End$_k(P)$.
% in particular, $[K_{\phi}G]^{[\check{P}]} = $ [End$_k(P)]$ (see \eqref{actrule}).
\end{lemma}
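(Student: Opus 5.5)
The plan is to show that $\mu'$ from \eqref{mu} is graded, injective and surjective. Lemma \ref{mumorphism} already gives that $\mu'$ is a $k$-algebra homomorphism, so everything else reduces to checking it on homogeneous components and counting.

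\emph{Grading.} A homogeneous element $p\otimes_K x u_g\otimes_K f$ of $P(K_{\phi}G)_g$ is sent to $p'\mapsto x\,\phi(g)(f(p'))\star p$. For $y\in K$ this map sends $y p'$ to $\phi(g)(y)$ times the image of $p'$, because $f$ is $K$-linear. So it lies in $(\End_k(P))_g$ as defined in \eqref{kanzgr}, and $\mu'$ respects the gradings. It also respects the embeddings of $K$. By \eqref{A^C}, $x\in K$ goes to $\sum_j x p'_j\otimes u_e\otimes f'_j$, and by \eqref{mjf'j} its image is multiplication by $x$. This is the copy of $K$ in $\End_k(P)$.

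\emph{Bijectivity, componentwise.} Both sides decompose as direct sums over $G$, so it suffices to show that each $\mu'_g\colon P(K_{\phi}G)_g=P\otimes_K Ku_g\otimes_K\check P\to(\End_k(P))_g$ is bijective. The source is isomorphic to $P\otimes_K K_{1,\phi(g)}\otimes_K\check P$, an invertible $K$-module. The target is $\Hom_K(P, P_{\phi(g)})$, where $P_{\phi(g)}$ denotes $P$ with $K$ acting through $\phi(g)^{-1}$ (the twisted module $g(P)$ of \eqref{Gactmd}). Under these identifications $\mu'_g$ is the canonical map
$$P\otimes_K K_{1,\phi(g)}\otimes_K\check P\longrightarrow\Hom_K(P,\,{\rm twisted}\ P),$$
and the canonical map $Q\otimes_K\check P\to\Hom_K(P,Q)$ is an isomorphism for $P$ finitely generated projective (\cite[Prop.\ II.4.4]{Bass}). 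Explicitly, the inverse sends $F\in(\End_k(P))_g$ to $\sum_j F(p'_j)\otimes_K u_g\otimes_K f'_j$, suitably twisted, using $\Delta_P^{-1}(1)$ from \eqref{mjf'js}. One checks with \eqref{mjf'j} and \eqref{fm_jf_j'} that the two composites are identities.

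\emph{Obstacle.} The main difficulty is getting the twist bookkeeping right so that the inverse formula is well defined over $\otimes_K$ and really is two-sided. The coefficient $\phi(g)(f(p'))$ must be untwisted correctly when $K$-scalars are moved across $u_g$. If doing this by hand gets messy, I would instead note the following. The Galois hypothesis makes $\End_k(P)$ Azumaya with $K$ maximal commutative, so by Lemma \ref{cliffGal} it is strongly graded with invertible components. A graded homomorphism that is the identity on $K$ between strongly graded algebras over the same base is then bijective by Theorem \ref{grhom} (Dade's result). This avoids checking inverses directly.
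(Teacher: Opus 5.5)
Your main route is the paper's own proof. The paper also takes the decomposition \eqref{kanzgr} as given and writes down the inverse $\varphi_g\mapsto\sum_j\varphi_g(p'_j)\otimes_K u_g\otimes_K f'_j$ on $(\End_k(P))_g$. It then checks both composites using \eqref{mjf'j}, \eqref{fm_jf_j'} and $u_g y=\phi(g)(y)u_g$, so the formula needs no further ``twisting'' to be well defined or two-sided.

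One small slip in your framing: $(\End_k(P))_g$ is $\Hom_K$ from $P$ into $P$ with $K$ acting through $\phi(g)$, not through $\phi(g)^{-1}$. With $\phi(g)^{-1}$ you would get the $g^{-1}$-component.
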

\begin{proof}
It is not hard to verify that \eqref{mu} respects the $G$-grading.
We show that the $k$-algebra homomorphism \eqref{mu} has an inverse.
With the notation \eqref{mjf'j}, let
\begin{eqnarray}\label{mu'}
~\ \mu'' :\begin{array}{rcl}
\End_k(P)& \rightarrow &P(K_{\phi}G)\\
\varphi_g &\mapsto &\sum\limits_j \varphi_g(p_j') \otimes_K u_g \otimes_K f_j'
\end{array},\ \ \forall\varphi_g \in (\End_k(P))_g.\end{eqnarray}
%$m_j \in P, f_j'\in \mbox{Hom}_K(P,K)$ satisfies.  For $p \in P, p' \in \mbox{Hom}_K(P,K)$, and $a_g \in K$,
Then for every $p\in P$ and $f\in \check{P}$ we have
\begin{align*}
\mu''\circ\mu'(p\otimes_K u_g \otimes_K f) =&\  \sum\limits_j \mu'(p\otimes_K u_g \otimes_K f){(p'_j)} \otimes_K u_g \otimes_K f_j'=\cdots
\end{align*}
 by the definition of $\mu'$ in \eqref{mu} we have
\begin{align*}
\cdots=\ \sum\limits_j\ {\phi(g)}(f(p'_j))\star p\otimes_K u_g \otimes_K f_j'= \sum\limits_j p \otimes_K  {\phi(g)}(f(p'_j))u_g \otimes_K f_j'\\
= p \otimes_K u_g \otimes_K \sum\limits_jf(p'_j)f_j'
= p\otimes_K u_g \otimes_K f,
\end{align*}
where the last equality follows from \eqref{fm_jf_j'}. Then \eqref{mu'} is a left inverse of \eqref{mu}.
Conversely, let $\varphi_g \in (\End_k(P))_g$. Applying the definitions \eqref{mu} and \eqref{mu'} on any $p \in P$ yields
\begin{align*}
\mu'\circ\mu''(\varphi_g):p \mapsto&\ \mu'( \sum\limits_j \varphi_g(p'_j) \otimes_K u_g \otimes_K f_j')(p)= \sum\limits_j{\phi(g)}(f_j'(p))\star\varphi_g(p_j')=\cdots
\end{align*}
Since $\varphi_g \in (\End_k(P))_g$ (see \eqref{kanzgr}), we have
\begin{align*}
\cdots= \sum\limits_j \varphi_g(f_j'(p)\star p'_j)=\varphi_g( \sum\limits_j f_j'(p)\star p'_j)=\varphi_g( \sum\limits_j p'_j\cdot f_j'(p))
 = \varphi_g(p),
\end{align*}
where the last equality is \eqref{mjf'j}. Therefore, $\mu'\circ\mu''(\varphi)=\varphi$ for every $\varphi \in \End_k(P)$, and so
$\mu'$ is also a right inverse of \eqref{mu}, completing the proof.
\end{proof}
From Lemma \ref{L:2.1} we infer
\begin{corollary}\label{L:2.11}
Let $[A_G]\in \ClfK$, where $\phi \in$ Hom$(G, \Aut_k(K))$ is a Galois action.
Then $[A_G]\in\Pic(K)([K_{\phi}G])$ if and only if $A_G\stackrel{G}{\cong}\End_k(P)$ for some $[P]\in\Pic(K)$.
\end{corollary}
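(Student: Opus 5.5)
The corollary has two directions, and I will prove them separately. Both rest on the orbit description of Pic$(K)([K_{\phi}G])$ given by the action \eqref{actrule}, together with Lemma \ref{L:2.1}.

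For the ``only if'' direction, suppose $[A_G]\in\Pic(K)([K_{\phi}G])$. By definition of the orbit there is $[P]\in\Pic(K)$ with $[A_G]=[P]([K_{\phi}G])=[(P(K_{\phi}G),P(\iota))]$. Equivalence of Clifford system extensions includes a graded isomorphism, so $A_G\stackrel{G}{\cong}P(K_{\phi}G)$. Lemma \ref{L:2.1} gives $P(K_{\phi}G)\stackrel{G}{\cong}\End_k(P)$ with the grading \eqref{kanzgr}. Composing the two isomorphisms yields $A_G\stackrel{G}{\cong}\End_k(P)$.

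For the ``if'' direction, suppose $\Psi:A_G\stackrel{G}{\to}\End_k(P)$ is a graded isomorphism for some $[P]\in\Pic(K)$. Lemma \ref{L:2.1} gives $\mu':P(K_{\phi}G)\stackrel{G}{\to}\End_k(P)$. Then $\mu'^{-1}\circ\Psi:A_G\to P(K_{\phi}G)$ is a graded isomorphism, but this is only an isomorphism of graded algebras. To conclude equality of Clifford classes $[(A_G,\iota)]=[P]([K_{\phi}G])$, I must also match the embeddings, i.e. show $\mu'^{-1}\circ\Psi\circ\iota=P(\iota_2)$.

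This is the main obstacle. The $e$-components of both algebras are copies of $K$, and $\mu'\circ P(\iota_2)$ is the standard embedding of $K$ into $\End_k(P)$ by scalar multiplication. To verify this, I use \eqref{A^C} and \eqref{mjf'j}:
$x\mapsto \sum_j x p'_j f'_j(\cdot)=x\cdot$.
The composite $\Psi\circ\iota$ is some $k$-algebra isomorphism of $K$ onto $(\End_k(P))_e=\End_K(P)\cong K$. A priori it is $x\mapsto\sigma(x)\cdot$ for some $\sigma\in\Aut_k(K)$.

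I expect to handle $\sigma$ in one of two ways.

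\emph{Preferred route.} I interpret ``graded isomorphism'' in the statement as respecting the $K$-structures, i.e. $\Psi\circ\iota$ is the scalar embedding. This is natural, since both $A_G$ and $\End_k(P)$ are viewed as $K$-Clifford extensions of type $\phi$. With this reading, Theorem \ref{grhom} finishes the argument.

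\emph{Fallback.} If $\sigma$ is nontrivial, the type condition forces it to commute with $\phi(G)=\Aut_k(K)$. I would then twist by $[K_{1,\sigma}]$, or replace $P$ by a $\sigma$-twisted module ${}^{\sigma}P$, which still lies in $\Pic(K)$. This conjugates $\End_k(P)$ to $\End_k({}^{\sigma}P)$ compatibly with the grading and makes the embedding standard, again placing $[A_G]$ in the orbit.
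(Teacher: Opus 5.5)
Your proposal is correct and follows the paper, which simply infers the corollary from Lemma \ref{L:2.1} in both directions. Your check that $\mu'\circ P(\iota_2)$ is the scalar embedding, together with replacing $P$ by ${}^{\sigma}P$, supplies the embedding compatibility that the paper leaves implicit; the replacement works because the grading \eqref{kanzgr} of $\End_k({}^{\sigma}P)$ coincides with that of $\End_k(P)$, as the type forces $\sigma$ to commute with $\phi(G)$. Since $\stackrel{G}{\cong}$ in the paper means only an isomorphism of graded algebras, it is this fallback that proves the ``if'' direction as stated, not your reinterpretation; use ${}^{\sigma}P\in\Pic(K)$ rather than acting by $[K_{1,\sigma}]$, which is not in $\Pic(K)$ for nontrivial $\sigma$.
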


%For a Galois action $\phi \in$ Hom$(G, \Aut_k(K))$, let $\{\epsilon_g\}_{g\in G}\subset K \otimes_k K$ be the unique idempotents satisfying \eqref{theorth} as in Lemma \ref{L:2.2}.
%Let \eqref{AA'} be two strongly $G$-graded $k$-algebras of type $\phi$ and $g\in G$, let $a^g_i \in A_{g}, b^g_i \in A_{g^{-1}},a'^g_i \in A'_{g},$ and $b'^g_i \in A'_{g^{-1}}$ such that
%(see \eqref{})
%$$\sum\limits_i a^g_ib^g_i = 1\in A_e,\ \ \sum\limits_i a'^g_ib'^g_i = 1\in A'_e.$$
%The idempotents $\{\epsilon_g\}_{g\in G}$ lie in the tensor product $A_G \otimes_k A'_G$, where the following claim takes place.
Let $[A_G],[A'_G]\in$Cliff$_k{(\phi,K)}$ of Galois type $\phi \in$ Hom$(G, \Aut_k(K))$.
%Let \eqref{AA'} be two strongly $G$-graded $k$-algebras of Galois type $\phi \in$ Hom$(G, \Aut_k(K))$.
Then the idempotents $\{\epsilon_g\}_{g\in G}\subset K \otimes_k K$ satisfying \eqref{theorth} as in Lemma \ref{L:2.2}
lie in the tensor product $A_G \otimes_k A'_G$ and admit few nice properties as follows.
%For a Galois action $\phi \in$ Hom$(G, \Aut_k(K))$, let $\{\epsilon_g\}_{g\in G}\subset K \otimes_k K$ be Given two strongly $G$-graded $k$-algebras \eqref{AA'} of type $\phi$, then f
For every $g\in G$ there are
$$a^g_i \in A_{g^{-1}},\ \ b^g_i \in A_{g},\ \ a'^g_{i'} \in A'_{g},\ \ b'^g_{i'} \in A'_{g^{-1}}$$
assuring that $A_{g^{-1}} A_{g}=A_e(=K)$ as well as $ A'_{g} A'_{g^{-1}}=A'_e(=K)$ (see \eqref{AgAh}), that is
$$\sum\limits_i a^g_ib^g_i = 1\in A_G,\ \ \sum\limits_{i'} a'^g_{i'}b'^g_{i'} = 1\in A'_G.$$
%The idempotents $\{\epsilon_g\}_{g\in G}$ lie in the tensor product $A_G \otimes_k A'_G$, where the following claim takes place.
We have
\begin{lemma}\label{L:2.9}
Let $\phi \in$ Hom$(G, \Aut_k(K))$ be a Galois action. Then with the above notation we have for every $g,h\in G$
\begin{equation}\label{Miyash}
\sum\limits_i(a^g_i \otimes_k 1)\epsilon_h(b^g_i \otimes_k 1)=\epsilon_{hg},\ \ \sum\limits_{i'}(1\otimes_k a'^g_{i'})\epsilon_h(1\otimes_k b'^g_{i'})=\epsilon_{gh},
\end{equation}
and hence for every $g,h\in G,\ c_g\in A_{g^{}}$ and $c'_g\in A'_{g}$
\begin{equation}\label{pass}
(c_g \otimes_k 1)\epsilon_{hg}=\epsilon_{h}(c_g \otimes_k 1),\ \ \epsilon_{gh}(1\otimes_k c'_g)=(1\otimes_k c'_g)\epsilon_{h}.
\end{equation}
In particular, for every $h\in G$, $\epsilon_h$ is full, that is
\begin{equation}\label{prefull}
(A_G \otimes_k A'_G)\epsilon_h (A_G \otimes_k A'_G)=A_G \otimes_k A'_G.
\end{equation}
\end{lemma}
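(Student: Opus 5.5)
The plan is to reduce both identities in \eqref{Miyash} to the uniqueness statement in Lemma \ref{L:2.2}. The key point is that conjugating an element of $K\otimes_k K$ by the homogeneous pairs $(a^g_i,b^g_i)$ acts on $K\otimes_k K$ as the ring automorphism $\phi(g^{-1})\otimes_k 1$. Throughout I use that a homogeneous element $b\in A_g$ of a graded algebra of type $\phi$ over $K$ satisfies $b\cdot x=\phi(g)(x)\cdot b$ for $x\in K$, equivalently $x\cdot b=b\cdot \phi(g^{-1})(x)$. This is \eqref{commrule} for the invertible $K$-bimodule $A_g$ of type $\phi(g)$.

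\emph{First identity.} Write $\epsilon_h=\sum_j x_j\otimes_k y_j$ with $x_j,y_j\in K$. Then
\[
\sum_i(a^g_i\otimes_k 1)\epsilon_h(b^g_i\otimes_k 1)=\sum_j\sum_i a^g_i x_j b^g_i\otimes_k y_j=\sum_j\Big(\sum_i a^g_ib^g_i\Big)\phi(g^{-1})(x_j)\otimes_k y_j .
\]
The right-hand side is $e:=(\phi(g^{-1})\otimes_k 1)(\epsilon_h)$. Since $\phi(g^{-1})\otimes_k1$ is a ring automorphism of $K\otimes_kK$, the element $e$ is an idempotent. Applying this automorphism to \eqref{theorth} for $h$ gives
\[
e(\phi(g^{-1})(x)\otimes_k1)=e(1\otimes_k \phi(h)(x)),
\]
that is, $e(y\otimes_k 1)=e(1\otimes_k\phi(hg)(y))$ for all $y\in K$. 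The last part of Lemma \ref{L:2.2} then yields $e=\epsilon_{hg}(a_h\otimes_k1)$ for some idempotent $a_h\in K$.

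To force $a_h=1$, I use that the automorphism carries the complete orthogonal family $\{\epsilon_h\}_{h\in G}$ to another such family. Summing over $h$ gives $1\otimes_k1=\sum_h\epsilon_{hg}(a_h\otimes_k1)$. Multiplying by $\epsilon_{hg}$ and using orthogonality leaves $\epsilon_{hg}=\epsilon_{hg}(a_h\otimes_k1)$, because $h\mapsto hg$ is a bijection. Lemma \ref{Kidem} then gives $a_h=1$.

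\emph{Second identity.} This is symmetric. Now $a'^g_{i'}\in A'_g$ and $b'^g_{i'}\in A'_{g^{-1}}$, and $y\cdot b'=b'\cdot\phi(g)(y)$ for $b'\in A'_{g^{-1}}$. So the sum equals $(1\otimes_k\phi(g))(\epsilon_h)$. Transporting \eqref{theorth} gives the relation $e'(x\otimes_k1)=e'(1\otimes_k\phi(gh)(x))$, and the same argument yields $\epsilon_{gh}$.

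\emph{Passing rules \eqref{pass}.} Take $c_g\in A_g$ and insert the first identity:
\[
(c_g\otimes_k1)\epsilon_{hg}=\sum_i(c_ga^g_i\otimes_k1)\epsilon_h(b^g_i\otimes_k1).
\]
Since $c_ga^g_i\in A_e=K$, this factor commutes with $\epsilon_h\in K\otimes_kK$. Pulling it past $\epsilon_h$ and using $\sum_i a^g_ib^g_i=1$ gives $\epsilon_h(c_g\otimes_k1)$. The primed rule is obtained the same way, using $b'^g_{i'}c'_g\in K$.

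\emph{Fullness \eqref{prefull}.} By \eqref{Miyash}, the two-sided ideal generated by $\epsilon_h$ contains $\epsilon_{hg}$ for every $g\in G$. Hence it contains $\sum_g\epsilon_{hg}=1\otimes_k1$, so it is all of $A_G\otimes_kA'_G$.

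I expect the main obstacle to be the bookkeeping of sides and inverses: checking that the conjugation really induces $\phi(g^{-1})\otimes_k1$ and not $\phi(g)\otimes_k 1$, so that the indices come out as $hg$ and $gh$ rather than $hg^{-1}$. The idempotent ambiguity $a_h$ left open by Lemma \ref{L:2.2} is resolved cleanly by the completeness-plus-Lemma \ref{Kidem} argument above.
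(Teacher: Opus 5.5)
Your argument is correct. Its outer structure agrees with the paper's: produce an idempotent obeying the $hg$-twisted relation, apply the last part of Lemma \ref{L:2.2}, then remove the idempotent factor with Lemma \ref{Kidem}. The passing rules and fullness are derived exactly as in the paper. The central step, however, differs.

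The paper checks idempotency and the twisted relation of $\sum_i(a^g_i\otimes_k1)\epsilon_h(b^g_i\otimes_k1)$ by direct manipulation. It then proves $a(g,h)=1$ by conjugating a second time with the $g^{-1}$-pairs, reaching $\epsilon_h\bigl(a(g^{-1},hg)\,\phi(g)(a(g,h))\otimes_k1\bigr)=\epsilon_h$, which forces the idempotent $a(g,h)$ to be a unit.

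You instead recognize the conjugation as the ring automorphism $\phi(g^{-1})\otimes_k1$ (resp.\ $1\otimes_k\phi(g)$) of $K\otimes_kK$. This makes idempotency and the transported relation free. You then settle the idempotent ambiguity by applying the automorphism to $\sum_h\epsilon_h=1\otimes_k1$ and using orthogonality.

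What your route buys:
\begin{itemize}
\item It is shorter.
\item It proves $(\phi(g^{-1})\otimes_k1)(\epsilon_h)=\epsilon_{hg}$ entirely inside $K\otimes_kK$ before pushing it into $A_G\otimes_kA'_G$. The paper instead applies Lemma \ref{L:2.2} to an element computed in $A_G\otimes_kA'_G$. It therefore tacitly uses that $K\otimes_kK\to A_G\otimes_kA'_G$ is injective, which is true because $K=A_e$ is a $k$-direct summand.
\end{itemize}
The paper's version handles each pair $(g,h)$ without summing over $h$. Since finiteness of $G$ is needed for fullness anyway, that gains little.

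You could even dispense with Lemma \ref{L:2.2}. Transported through \eqref{l}, the automorphism $\phi(g^{-1})\otimes_k1$ becomes $f\mapsto\bigl(\sigma\mapsto\phi(g^{-1})(f(\sigma g^{-1}))\bigr)$, which visibly sends $\tilde{\epsilon}_h$ to $\tilde{\epsilon}_{hg}$.
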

\begin{proof}
We prove the left equalities in \eqref{Miyash} and in \eqref{pass}. The right ones are proven in a similar way.
%By Lemma \ref{L:2.2} it is enough to
Let us first show that $\sum\limits_i(a^g_i \otimes_k 1)\epsilon_h(b^g_i \otimes_k 1)$
is an idempotent which satisfies an appropriate equation of the form \eqref{theorth}. We compute
$$\sum\limits_i(a^g_i \otimes_k 1)\epsilon_h(b^g_i \otimes_k 1)\sum\limits_j(a^g_j \otimes_k 1)\epsilon_h(b^g_j \otimes_k 1)=
\sum\limits_{i,j}(a^g_i \otimes_k 1)\epsilon_h(b^g_ia^g_j \otimes_k 1)\epsilon_h(b^g_j \otimes_k 1)=\cdots$$
since $b^g_ia^g_j \in A_e= K$ commutes with $\epsilon_h$ we obtain
$$\cdots=\sum\limits_{i,j}(a^g_i \otimes_k 1)(b^g_i\otimes_k 1)(a^g_j \otimes_k 1)\epsilon_h^2(b^g_j \otimes_k 1)=\sum\limits_j(a^g_j \otimes_k 1)\epsilon_h(b^g_j \otimes_k 1).$$
% \begin{align*}   \sum\limits_{i,j}(1\otimes a^g_i)(1\otimes b^g_i)(1\otimes a^g_j)e(1\otimes b^g_j) = \sum\limits_j(1\otimes a^g_j)e(1\otimes b^g_j). \end{align*}
Therefore, $\sum\limits_i(a^g_i \otimes_k 1)\epsilon_h(b^g_i \otimes_k 1)\in K \otimes_k K$ is an idempotent for every $g,h\in G$.
We next check condition \eqref{theorth}. For $x \in K$ we have
$$\sum\limits_i(a^g_i \otimes_k 1)\epsilon_h(b^g_i \otimes_k 1)(x\otimes_k 1) = \sum\limits_i(a^g_i \otimes_k 1)\epsilon_h(b^g_ix\otimes_k 1)=\cdots$$
since $b^g_i \in A_{g}$ we have $b^g_ix = \phi(g)(x)b^g_i$ (see \eqref{commrule}), and hence
$$\cdots=\sum\limits_i(a^g_i \otimes_k 1)\epsilon_h(\phi(g)(x)\otimes_k 1)(b^g_i\otimes_k 1)=\cdots$$
%$$\sum\limits_i(1\otimes a^g_i)e(1\otimes b^g_i)(1\otimes\ ^gx) = \sum\limits_i(1\otimes a^g_i)e(1\otimes x)(1\otimes b^g_i)=\cdots$$
using the rule \eqref{theorth} properly we obtain
$$\begin{array}{rl}
\cdots &=\sum\limits_i(a^g_i \otimes_k 1)\epsilon_h(1\otimes_k\phi(h)(\phi(g)(x)))(b^g_i\otimes_k 1)\\
 &=\sum\limits_i(a^g_i \otimes_k 1)\epsilon_h(1\otimes_k\phi(hg)(x))(b^g_i\otimes_k 1)\\ &=\sum\limits_i(a^g_i \otimes_k 1)\epsilon_h(b^g_i\otimes_k 1)(1\otimes_k\phi(hg)(x)).
\end{array}$$
%\sum\limits_i(1\otimes a^g_i)e(x\otimes 1)(1\otimes b^g_i)= \sum\limits_i(1\otimes a^g_i)e(1\otimes b^g_i)(x\otimes 1).$$
Thus, for every $g,h\in G$ and $x\in K$, the idempotent $\epsilon'({g,h}):=\sum\limits_i(a^g_i \otimes_k 1)\epsilon_h(b^g_i\otimes_k 1)\in K\otimes_kK$ satisfies
$$\epsilon'(g,h)(x\otimes_k 1)=\epsilon'(g,h)(1\otimes_k\phi(hg)(x)).$$
The third claim in Lemma \ref{L:2.2} implies that for every $g,h \in G$ there exists an idempotent $a(g,h)\in K$ such that
\begin{equation}\label{epsil'}
\epsilon'(g,h)=\epsilon_{hg}(a(g,h)\otimes_k1)=\epsilon_{hg}(1\otimes_k\phi(hg)(a(g,h))).
\end{equation}
To prove (the left equality in) \eqref{Miyash}, it is enough to show that $a(g,h)\stackrel{?}{=}1$ for all $g,h\in G$.
Indeed, multiply \eqref{epsil'} by $a^{g^{-1}}_j \otimes_k 1$ from the left and by $b^{g^{-1}}_j \otimes_k 1$ from the right,
and sum up over $j$ having
$$\begin{array}{rl}
\sum\limits_j(a^{g^{-1}}_j \otimes_k 1)\epsilon'(g,h)(b^{g^{-1}}_j \otimes_k 1)&=\sum\limits_j(a^{g^{-1}}_j \otimes_k 1)\epsilon_{hg}(1\otimes_k\phi(hg)(a(g,h)))(b^{g^{-1}}_j \otimes_k 1)\\
&=\sum\limits_j(a^{g^{-1}}_j \otimes_k 1)\epsilon_{hg}(b^{g^{-1}}_j \otimes_k 1)(1\otimes_k\phi(hg)(a(g,h)))=\\
&=\epsilon'(g^{-1},hg)(1\otimes_k\phi(hg)(a(g,h)))=\cdots
\end{array}$$
Applying \eqref{epsil'} for $\epsilon'(g^{-1},hg)
%= \sum\limits_j(a^{g^{-1}}_j \otimes_k 1)\epsilon_{hg}(b^{g^{-1}}_j \otimes_k 1)
$ yields an idempotent $a(g^{-1},hg)\in K$ with
\begin{eqnarray}\label{onehand}\begin{array}{rl}
\cdots&=\epsilon_h(a(g^{-1},hg)\otimes_k1)(1\otimes_k\phi(hg)(a(g,h)))\\
&=\epsilon_h(a(g^{-1},hg)\phi(g)(a(g,h))\otimes_k1).
\end{array}\end{eqnarray}
On the other hand,
$$\begin{array}{rl}
\sum\limits_j(a^{g^{-1}}_j \otimes_k 1)\epsilon'(g,h)(b^{g^{-1}}_j \otimes_k 1)&=\sum\limits_j(a^{g^{-1}}_j \otimes_k 1)
\sum\limits_i(a^g_i \otimes_k 1)\epsilon_h(b^g_i\otimes_k 1)(b^{g^{-1}}_j \otimes_k 1)\\
&=\sum\limits_{i,j}(a^{g^{-1}}_j a^g_i \otimes_k 1)\epsilon_h(b^g_ib^{g^{-1}}_j \otimes_k 1)=\cdots
\end{array}$$
Since $a^{g^{-1}}_j a^g_i,b^g_ib^{g^{-1}}_j\in K$ for every $i,j$, they all commute with $\epsilon_h$. We conclude
$$\cdots=\sum\limits_{i,j}(a^{g^{-1}}_j a^g_i \otimes_k 1)(b^g_ib^{g^{-1}}_j \otimes_k 1)\epsilon_h
=\sum\limits_{j}(a^{g^{-1}}_j(\sum\limits_{i} a^g_i b^g_i)b^{g^{-1}}_j \otimes_k 1)\ChC{)}\epsilon_h=\epsilon_h.$$
Comparing with \eqref{onehand} we have $\epsilon_h(a(g^{-1},hg)\phi(g)(a(g,h))\otimes_k1)=\epsilon_h.$
Then Lemma \ref{Kidem} says that the idempotents $a(g,h)\in K$ are invertible and thus equal to 1 for every $g,h\in G$,
%$\sum\limits_i(a^g_i \otimes_k 1)\epsilon_h(b^g_i\otimes_k 1)=\epsilon_{hg}$
verifying (the left equality in) \eqref{Miyash}.
At this point we promptly make use of \eqref{Miyash} to get
$$
(c_{g}\otimes_{k}1)\epsilon_{hg}=(c_{g}\otimes_{k}1)\sum\limits_i(a^{g}_i \otimes_k 1)\epsilon_{h}(b^{g}_i \otimes_k 1)
=\sum\limits_{i}(c_{g}a^{g}_i \otimes_k 1)\epsilon_{h}(b^{g}_i \otimes_k 1)=\cdots
$$
Assuming that $c_g\in A_g$, we have $c_{g}a^{g}_i\in K$ and so
$$\cdots=\sum\limits_{i}\epsilon_{h}(c_{g}a^{g}_i \otimes_k 1)(b^{g}_i \otimes_k 1)
=\sum\limits_{i}\epsilon_{h}(c_{g}\otimes_k 1)(a^{g}_i \otimes_k 1)(b^{g}_i \otimes_k 1)=\epsilon_{h}(c_{g}\otimes_k 1),$$
verifying (the left equality in) \eqref{pass}.
Finally, sum up either one of the equalities in \eqref{Miyash} over all $g\in G$ to establish that
$$1 \otimes_k 1=\sum\limits_{g\in G}\epsilon_{hg}=\sum\limits_{g\in G}\epsilon_{gh}\in (A_G \otimes_k A'_G)\epsilon_h (A_G \otimes_k A'_G),$$
and by that to deduce \eqref{prefull}.
\end{proof}
\begin{corollary}\label{L:2.5}
Let $\phi \in$ Hom$(G, \Aut_k(K))$ be a Galois action. Then with the notations in Lemma \ref{L:2.9},
for every $h\in G$ there is an equality of Brauer classes $[\epsilon_h(A_G\otimes_{k}A'_G)\epsilon_h]_{\Br(k)}=[A_G\otimes_{k}A'_G]_{\Br(k)}$.
 \end{corollary}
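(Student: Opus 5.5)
The plan is to use the standard fact that a full idempotent in an Azumaya algebra cuts out a Brauer-equivalent corner. Put $B:=A_G\otimes_kA'_G$ and $e:=\epsilon_h$.

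First, I record that $B$ is $k$-Azumaya. By Lemma \ref{cliffGal}, both $A_G$ and $A'_G$ are $k$-Azumaya algebras whose classes lie in $\Br(K/k)$, since $[A_G],[A'_G]\in\ClfK$ with $\phi$ Galois. A $k$-tensor product of Azumaya algebras is Azumaya, so $B$ is a central separable $k$-algebra. The idempotent $e$ lies in $K\otimes_kK\subseteq B$, and by \eqref{prefull} it is full: $BeB=B$.

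Next comes the Morita step. Consider the left $B$-module $Q:=Be$. Since $e$ is an idempotent, $Q$ is a direct summand of $B$, hence finitely generated projective over $B$. Fullness gives $B=BeB=\sum Be\cdot b$, which exhibits $B$ as an epimorphic image of a finite direct sum of copies of $Q$. Hence $Q$ is a $B$-progenerator.

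We also have $\End_B(Be)^{op}\cong eBe$ in the usual way. Thus $B$ and $eBe$ are Morita equivalent via $Q$, and the centers correspond, so $Z(eBe)\cong Z(B)=k$, realized as $ke$.

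Finally, I pass to Brauer classes. The corner $eBe$ is Morita equivalent to the Azumaya algebra $B$ and has center $k$. Morita equivalent $k$-algebras with the same center define the same Brauer class: separability is a Morita invariant, and concretely $B\otimes_k(eBe)^{op}$ is split by the progenerator $Q$. So $eBe$ is $k$-Azumaya and $[eBe]_{\Br(k)}=[B]_{\Br(k)}$. An equivalent explicit route is the standard isomorphism $B\cong \End_{eBe}(eB)$, where $eB$ is an $eBe$-progenerator by fullness. This gives $B\cong eBe\otimes_k\End_k(\cdot)$ locally, i.e.\ Brauer similarity.

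The main obstacle I expect is justifying that fullness yields a genuine progenerator over $k$ in the Brauer sense, so that the $\End$ term is a trivial class. I would cite the standard result in \cite{FDEI} (Chapter II, the corner-algebra/Morita characterization of Brauer equivalence: $A\sim eAe$ for a full idempotent $e$ of an Azumaya algebra $A$). Beyond that, the only input specific to this paper is \eqref{prefull} from Lemma \ref{L:2.9}.
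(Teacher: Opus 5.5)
Your proposal is correct and follows essentially the same route as the paper, whose entire proof is to invoke the fullness \eqref{prefull} of $\epsilon_h$ in $A_G\otimes_kA'_G$ and cite the standard fact that a full idempotent corner of an Azumaya algebra is Brauer equivalent to the algebra itself (\cite[Corollary 1.12]{UF}). You unpack that cited fact through the Morita equivalence given by the progenerator $Be$, and you make explicit, via Lemma \ref{cliffGal}, that $A_G\otimes_kA'_G$ is Azumaya and hence so is the corner; your aside about an ``explicit route'' with $eBe\otimes_k(\cdot)$ holding ``locally'' is imprecise, but the argument does not need it.
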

 \begin{proof}
This results follows from the fact \eqref{prefull} that $\epsilon_h$ is full in $A_G \otimes_k A'_G$ (see \cite[Corollary 1.12]{UF}).
%Equation \eqref{prefull} says that the idempotent $\epsilon_h$ is full in $A_G \otimes_k A'_G$, and so by \cite[Corollary 1.12]{UF}
%$\epsilon_h(A_G\otimes_{k}A'_G)\epsilon_h$ is Brauer similar to $A_G\otimes_{k}A_G$.
 \end{proof}
Let $[A_G],[A'_G]\in \ClfK$ of Galois type $\phi \in$ Hom$(G, \Aut_k(K))$
%Let \eqref{AA'} be two strongly $G$-graded $k$-algebras of Galois type $\phi \in$ Hom$(G, \Aut_k(K))$,
and let $$\epsilon:=\epsilon_e\in K \otimes_k K\subset A_G\otimes_{k}A'_G$$ be the idempotent given in Lemma \ref{L:2.2}.
%$$\epsilon(x \otimes_k 1) = \epsilon(1 \otimes_k x),\ \ \forall x \in K$$ in
We compute the product $\epsilon_{}(c_{g_1}\otimes_{k}c'_{g_2})\epsilon_{}\in\epsilon_{}(A_{g_1\otimes_{k}}A'_{g_2})\epsilon_{}$ for any $g_1,g_2\in G$.
Putting $g:=g_1$ and $g:=g_2$ in the left and the right equalities of \eqref{pass} respectively, as well as $h:=e$ in both equalities therein, we get
\begin{equation}\label{ee}
\epsilon_{}(c_{g_1}\otimes_{k}c'_{g_2})\epsilon_{}=\epsilon_{}(c_{g_1}\otimes_{k}1)(1\otimes_{k}c'_{g_2})\epsilon_{}=(c_{g_1}\otimes_{k}1)\epsilon_{g_1}\epsilon_{g_2}(1\otimes_{k}c'_{g_2}).
\end{equation}
We conclude
\begin{lemma}\label{L:2.3}
Let $[A_G],[A'_G]\in \ClfK$
%\eqref{AA'} be two strongly $G$-graded $k$-algebras
of Galois type $\phi \in$ Hom$(G, \Aut_k(K))$. Then with the above notation
\begin{equation}\label{eq:A2}
\epsilon_{}(c_{g_1}\otimes_{k}c'_{g_2})\epsilon_{}=\{
\begin{array}{cc}
\epsilon_{}(c_{g_1}\otimes_{k}c'_{g_2})=(c_{g_1}\otimes_{k}c'_{g_2})\epsilon_{}&\text{ if }g_1= g_2\\
0&\text{ if }g_1\neq g_2
\end{array}.
\end{equation}
\end{lemma}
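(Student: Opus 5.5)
The plan is to read everything off the identity \eqref{ee}, which already gives
$$\epsilon(c_{g_1}\otimes_{k}c'_{g_2})\epsilon=(c_{g_1}\otimes_{k}1)\epsilon_{g_1}\epsilon_{g_2}(1\otimes_{k}c'_{g_2}).$$
Since $\{\epsilon_g\}_{g\in G}$ are orthogonal idempotents (they are the preimages of the orthogonal idempotents $\tilde{\epsilon}_g$ under the isomorphism \eqref{l}), the product $\epsilon_{g_1}\epsilon_{g_2}$ vanishes when $g_1\neq g_2$. This settles the second case of \eqref{eq:A2} at once.

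For the case $g_1=g_2=:g$, we have $\epsilon_g^2=\epsilon_g$, so the right hand side of \eqref{ee} becomes $(c_g\otimes_k 1)\epsilon_g(1\otimes_k c'_g)$. I then split this as $(c_g\otimes_k1)\epsilon_g\epsilon_g(1\otimes_kc'_g)$ and apply \eqref{pass} with $h:=e$ in both directions.
\begin{itemize}
\item The left equality of \eqref{pass} gives $(c_g\otimes_k1)\epsilon_g=\epsilon(c_g\otimes_k1)$.
\item The right equality gives $\epsilon_g(1\otimes_kc'_g)=(1\otimes_kc'_g)\epsilon$.
\end{itemize}
Hence the expression equals $\epsilon(c_g\otimes_k1)(1\otimes_kc'_g)\epsilon=\epsilon(c_g\otimes_kc'_g)\epsilon$, which is only a restatement of the starting point. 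To obtain the one-sided forms I instead use a single application of \eqref{pass}. Writing $(c_g\otimes_k1)\epsilon_g(1\otimes_kc'_g)=\epsilon(c_g\otimes_k1)(1\otimes_kc'_g)$ via the left equality gives $\epsilon(c_g\otimes_kc'_g)$. Writing it instead as $(c_g\otimes_k1)(1\otimes_kc'_g)\epsilon$ via the right equality gives $(c_g\otimes_kc'_g)\epsilon$. This establishes all three expressions in the first case of \eqref{eq:A2}.

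The only point needing care is a bookkeeping one. The elements $c_g\otimes_k1$ and $1\otimes_kc'_g$ commute in $A_G\otimes_kA'_G$, and $\epsilon_g\in K\otimes_kK$ sits inside both tensor factors simultaneously. So the rearrangements above are legitimate as long as \eqref{pass} is applied to one factor at a time, with the other factor carried along unchanged. I expect no genuine obstacle beyond checking this.
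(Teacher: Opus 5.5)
Your proof is correct and is essentially the paper's argument: start from \eqref{ee}, dispose of the case $g_1\neq g_2$ by orthogonality of the $\epsilon_g$, and for $g_1=g_2=g$ use $\epsilon_g^2=\epsilon_g$ together with a single application of \eqref{pass} (with $h=e$), on one side or the other, to obtain $\epsilon(c_g\otimes_k c'_g)$ and $(c_g\otimes_k c'_g)\epsilon$. The intermediate step that applies \eqref{pass} on both sides at once just returns $\epsilon(c_g\otimes_k c'_g)\epsilon$, so it is a harmless detour and can be deleted.
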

\begin{proof}
The result follows from \eqref{pass} and \eqref{ee} by the orthogonality of the idempotent set $\{\epsilon_g\}_{g\in G}$.
\end{proof}

\begin{theorem}\label{L:2.4}With the notation in Lemma \ref{L:2.3},
 the map \begin{eqnarray}\label{eAe}
\theta:\begin{array}{ccc} A_G \otimes_{\phi} A'_G& \rightarrow& \epsilon(A_G \otimes_{k} A'_G)\epsilon\\
a_g \otimes_{\phi(g)} a'_g &\mapsto &\epsilon(a_g \otimes_k a'_g)\epsilon
\end{array} \end{eqnarray}
is an isomorphism of $k$-algebras.
 \end{theorem}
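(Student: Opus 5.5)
We prove Theorem \ref{L:2.4} by building $\theta$ as a map $\bigoplus_g A_g\otimes_k A'_g\to \epsilon(A_G\otimes_k A'_G)\epsilon$, then checking in turn that it is well-defined, multiplicative, injective and surjective.

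\textbf{Well-definedness.} On each component, consider the $k$-linear map $A_g\otimes_k A'_g\to \epsilon(A_G\otimes_kA'_G)\epsilon$ given by $a_g\otimes_k a'_g\mapsto \epsilon(a_g\otimes_ka'_g)\epsilon$. We must show it kills the generators of $W_{\phi(g)}$, i.e.
\[
\epsilon(a_gx\otimes_k a'_g)=\epsilon(a_g\otimes_k \phi(g)(x)a'_g)
\]
for $x\in K$. We have $a_gx=\phi(g)(x)a_g$ by \eqref{commrule}. By Lemma \ref{L:2.3} we may write $\epsilon(a_g\otimes_k a'_g)\epsilon=\epsilon(a_g\otimes_ka'_g)$. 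Hence
\[
\epsilon(\phi(g)(x)a_g\otimes_k a'_g)=\epsilon(\phi(g)(x)\otimes_k1)(a_g\otimes_ka'_g)=\epsilon(1\otimes_k\phi(g)(x))(a_g\otimes_ka'_g),
\]
where the last step uses \eqref{theorth} for $\epsilon=\epsilon_e$. This is the required identity, so $\theta$ is well-defined on the direct sum.

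\textbf{Multiplicativity.} For homogeneous elements of degrees $g$ and $h$, Lemma \ref{L:2.3} gives
\[
\epsilon(a_g\otimes a'_g)\epsilon\,\epsilon(a_h\otimes a'_h)\epsilon=\epsilon(a_ga_h\otimes a'_ga'_h)\epsilon,
\]
which matches the product rule \eqref{grprod}. The unit $1\otimes_{\phi(e)}1$ is sent to $\epsilon$, which is the identity of the corner ring.

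\textbf{Surjectivity.} The algebra $A_G\otimes_kA'_G$ is spanned by the elements $c_{g_1}\otimes c'_{g_2}$. By Lemma \ref{L:2.3}, $\epsilon(c_{g_1}\otimes c'_{g_2})\epsilon$ vanishes unless $g_1=g_2$, and when $g_1=g_2$ it lies in the image of $\theta$. Hence $\theta$ is onto.

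\textbf{Injectivity (expected main obstacle).} A kernel element has the form $\sum_g z_g$ with $z_g\in A_g\otimes_kA'_g$. The image of $A_g\otimes_k A'_g$ under $\theta$ lies in $A_g\otimes_kA'_g$ itself, since $\epsilon(A_g\otimes A'_g)\epsilon=\epsilon(A_g\otimes A'_g)$. The sum $\bigoplus_{g_1,g_2}A_{g_1}\otimes_kA'_{g_2}$ is direct, and $\epsilon\in K\otimes_kK$ preserves each summand. So it suffices to show that, for each $g$, the kernel of
\[
A_g\otimes_kA'_g\to A_g\otimes_kA'_g,\qquad z\mapsto\epsilon z,
\]
is exactly the image of $W_{\phi(g)}$.

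To prove this, I view $A_g\otimes_kA'_g$ as a module over $K\otimes_kK$, with the first factor acting on the right of $A_g$ and the second on the left of $A'_g$. In these terms
\[
A_g\otimes_kA'_g\cong A_g\otimes_K(K\otimes_kK)\otimes_K A'_g .
\]
Through the isomorphism \eqref{l}, $K\otimes_kK\cong\prod_{\sigma\in G}K$. The quotient by $W_{\phi(g)}$ then corresponds to tensoring with the factor $K$ obtained by identifying $x\otimes1$ with $1\otimes\phi(g)(x)$. By \eqref{theorth}, this factor is the component cut out by $\epsilon$, after twisting the right $K$-action on $A_g$ by $\phi(g)$. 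Thus $A_g\otimes_{\phi(g)}A'_g\cong\epsilon(A_g\otimes_kA'_g)$, with the kernel of multiplication by $\epsilon$ equal to $(1-\epsilon)(A_g\otimes_k A'_g)$, which is precisely $W_{\phi(g)}$.

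The delicate point is matching the indexing convention, $\epsilon$ versus $\epsilon_g$, against the twisted module structures. I would settle it by explicitly writing the idempotent in the form $\epsilon=\sum_j x_j\otimes y_j$ and checking that $1-\epsilon$ annihilates the quotient $A_g\otimes_{\phi(g)}A'_g$.
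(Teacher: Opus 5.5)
Your proof is correct. Its skeleton is the paper's: well-definedness, multiplicativity via \eqref{eq:A2}, and surjectivity from the vanishing of the off-diagonal corners $\epsilon(A_{g_1}\otimes_kA'_{g_2})\epsilon$.

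\textbf{Well-definedness.} Your computation is slightly leaner. You move $x$ across $a_g$ by \eqref{commrule}, and then you only need the symmetry $\epsilon(y\otimes_k1)=\epsilon(1\otimes_ky)$ of $\epsilon=\epsilon_e$. The paper instead moves $\epsilon$ across $a_g\otimes_k1$ via \eqref{pass}, which turns it into $\epsilon_g$, and then applies \eqref{theorth} to $\epsilon_g$.

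\textbf{Injectivity.} This is the real difference. The paper writes down the inverse $\epsilon(a_g\otimes_ka'_g)\epsilon\mapsto a_g\otimes_{\phi(g)}a'_g$ and calls the check routine. That leaves unverified exactly what you isolate: that $\epsilon z=0$ for $z\in A_g\otimes_kA'_g$ forces $z\in W_{\phi(g)}$. Your reduction to the diagonal blocks is sound. The check you propose at the end settles it directly, with no detour through $\prod_\sigma K$. Write $\epsilon=\sum_jx_j\otimes_ky_j$. The $e$-coordinate of $\tilde{\epsilon}_e$ under \eqref{l} gives $\sum_jx_jy_j=1$. Hence, by \eqref{commrule} and \eqref{scalpass}, the class of $\epsilon(a_g\otimes_ka'_g)=\sum_jx_ja_g\otimes_ky_ja'_g$ in $A_g\otimes_{\phi(g)}A'_g$ is
\[
\sum_j a_g\,\phi(g^{-1})(x_j)\otimes_{\phi(g)}y_ja'_g=a_g\otimes_{\phi(g)}\Bigl(\sum_jx_jy_j\Bigr)a'_g=a_g\otimes_{\phi(g)}a'_g .
\]
It follows that $\ker(z\mapsto\epsilon z)=(1-\epsilon)(A_g\otimes_kA'_g)\subseteq W_{\phi(g)}$. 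Equivalently, projecting $A_G\otimes_kA'_G$ onto its diagonal blocks and then passing to the quotient gives a left inverse of $\theta$. This is what makes the paper's inverse well defined.

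\textbf{If you keep the product-decomposition version.} Note that \eqref{theorth} alone only shows that the relevant idempotent kills the relations $x\otimes1-1\otimes\phi(g)(x)$. Identifying the quotient with a single factor $K$ also needs the reverse inclusion. That comes from the $g$-coordinate of $\tilde{\epsilon}_g$ under \eqref{l}, or from condition (2) of Definition \ref{Galdef}. The indexing issue you flag is resolved by \eqref{pass} with $h=e$:
\[
\epsilon(a_g\otimes_ka'_g)=(a_g\otimes_k1)\epsilon_g(1\otimes_ka'_g).
\]
So left multiplication by $\epsilon$ is precisely your right-hand action of $\epsilon_g$.
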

 \begin{proof} We first show that the map \eqref{eAe} is well-defined, then utilize \eqref{eq:A2} to prove that this map is a homomorphism and finally give an inverse map to deduce that \eqref{eAe}
 is in-fact an isomorphism. To prove that \eqref{eAe} is well-defined, we need to verify that the relation \eqref{scalpass} is preserved by this map.
For $a_g \in A_g, a'_g \in A'_g$ and $x \in K$ we have
%applying the definition of $\theta$ and using \eqref{eq:A2}
%\begin{align*}
$$\theta(a_g\cdot x \otimes_{\phi(g)} a'_g) = \epsilon(a_g\cdot x \otimes_{k} a'_g)\epsilon= \epsilon(a_g\otimes_k 1)(x \otimes_{k} 1)(1 \otimes_{k} a'_g)\epsilon=\cdots$$
By \eqref{pass} and then by \eqref{theorth}
$$\cdots= ( a_g\otimes_{k} 1)\epsilon_g(x\otimes_{k} 1)(1 \otimes_{k} a'_g)\epsilon=( a_g\otimes_{k} 1)\epsilon_g(1\otimes_{k} \phi(g)(x))(1 \otimes_{k} a'_g)\epsilon=\cdots,$$
and back again by \eqref{pass}
\begin{align*}
\cdots=\epsilon( a_g\otimes_{k} 1)(1\otimes_{k} \phi(g)(x) a'_g)\epsilon = \epsilon(a_g\otimes_{k} \phi(g)(x) a'_g)\epsilon= \theta (a_g \otimes_{\phi(g)}\phi(g)(x) a'_g),
\end{align*}
%again using \eqref{eq:A2} we have $$\cdots = \epsilon(a_g\otimes_{k} \phi(g)(x) b_g)\epsilon $$
verifying that \eqref{eAe} respects the skew product.

Next, consider $a_g \in A_g, a_h \in A_h, a'_g \in A'_g$ and $a'_h \in A'_h$. Then
$$\theta(a_g \otimes_{\phi(g)} a'_g)\theta(a_h\otimes_{\phi(h)} a'_h)= \epsilon(a_g\otimes_{k} a'_g)\epsilon^2(a_h\otimes_{k} a'_h)\epsilon=\cdots$$
By \eqref{eq:A2}
%\begin{align*}
$$\cdots= \epsilon(a_g\otimes_{k} a'_g)(a_h \otimes_{k} a'_h)\epsilon= \theta((a_g\otimes_{\phi(g)} a'_g)(a_h \otimes_{\phi(h)} a'_h)),$$
%\end{align*}
and hence \eqref{eAe} is multiplicative.
To end the proof, observe that due to \eqref{eq:A2}, $\epsilon(A_g \otimes_{k} A'_h)\epsilon=0$ for $g\neq h$. Therefore, it is not hard to check that the map
$$\begin{array}{ccc}
\epsilon(A_G \otimes_{k} A'_G)\epsilon &\rightarrow &A_G \otimes_{\phi} A'_G\\
\epsilon(a_g \otimes_{k} a'_g)\epsilon &\mapsto & a_g \otimes_{\phi(g)} a'_g
\end{array},\ \ a_g \in A_g, a'_g \in A'_g$$
serves as an inverse of \eqref{eAe}.
\end{proof}

Under the Galois assumption on the action $\phi \in$ Hom$(G, \Aut_k(K))$, if $[A_G]\in\ClfK$, then by Lemma \ref{cliffGal}
the $G$-graded $k$-algebra $A_G$ is $k$-Azumaya with a maximal commutative subalgebra $K$.
This determines a map
\begin{eqnarray}\label{prenu}
\rho'_{\phi} :  \begin{array}{ccc}
\ClfK &\to& \Br(K/k)\\
~[A_G]& \mapsto& [A_G]_{\Br(k)}
\end{array}.\end{eqnarray}
\begin{theorem}\label{Galcase2}
Let $\phi \in$ Hom$(G, \Aut_k(K))$ be a Galois action. Then the map \eqref{prenu} is a surjective homomorphism, whose kernel is the orbit Pic$(K)([K_{\phi}G])$
of $[K_{\phi}G]$ under the Pic$(K)$-action. Consequently, there is a well-defined group isomorphism
%$\mbox{Cliff}_K/_{\mbox{Pic}_K(\mbox{Cliff}_K)} \cong Br(K/k).$
\begin{eqnarray}\label{nu}
\rho_{\phi} : \begin{array}{ccc}
\ClfK/_{\mbox{Pic($K$)}} &\to& \Br(K/k)\\
\Pic(K)([A_G])& \mapsto& [A_G]_{\Br(k)}
\end{array}.\end{eqnarray}
Moreover, with the notations \eqref{subexact}, \eqref{7 termlabelled} and \eqref{nu}, the following group homomorphisms are equal
\begin{enumerate}
\item $\theta_4=\rho_{\phi} \circ {\alpha_4}:H^2(G,K^*_{\phi})\to\Br(K/k)$, and
\item $\theta_5={\bar{\lambda}_{\phi}\circ\alpha_5}\circ \rho_{\phi}^{-1}:\Br(K/k)\to H^{1}(G,\Pic(K)_{\phi}).$
\end{enumerate}
\end{theorem}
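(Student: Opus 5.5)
The plan is to show in turn that $\rho'_\phi$ is a homomorphism, that it is onto, and that its kernel is $\Pic(K)([K_{\phi}G])$; then to pass to the quotient and check the two commutation relations.

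\textbf{Homomorphism.} Take $[A_G],[A'_G]\in\ClfK$. By Theorem \ref{L:2.4}, $A_G\otimes_{\phi}A'_G\cong\epsilon(A_G\otimes_kA'_G)\epsilon$ with $\epsilon=\epsilon_e$. By Corollary \ref{L:2.5} with $h=e$, this corner is Brauer similar to $A_G\otimes_kA'_G$. Hence
$$\rho'_\phi([A_G]\otimes_\phi[A'_G])=[A_G]_{\Br(k)}[A'_G]_{\Br(k)}.$$
The identity $[K_\phi G]$ goes to the trivial class, because $K_\phi G\cong\End_k(K)$ (the case $P=K$ of Lemma \ref{L:2.1}) and this is Brauer trivial by Lemma \ref{invprogen}.

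\textbf{Surjectivity.} This is the ``only if'' direction of Lemma \ref{cliffGal}. Every $k$-Azumaya $A$ with $[A]_{\Br(k)}\in\Br(K/k)$ can be replaced, inside its Brauer class, by one that contains $K$ as a maximal commutative subalgebra; this is the standard fact used by Kanzaki, and I would cite it. Such an $A$ carries a $G$-grading of type $\phi$ with $[A]\in\ClfK$, and it maps to its own Brauer class.

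\textbf{Kernel.} The inclusion $\Pic(K)([K_\phi G])\subseteq\ker$ follows from Corollary \ref{L:2.11}: each element of the orbit is graded isomorphic to some $\End_k(P)$, which is Brauer trivial by Lemma \ref{invprogen}.

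The reverse inclusion is the main obstacle. Suppose $[A_G]_{\Br(k)}=1$, so $A_G\cong\End_k(Q)$ for a $k$-progenerator $Q$. Since $K\subseteq A_e\subseteq A_G$, $Q$ becomes a $K$-module. Because $K$ is maximal commutative, its centralizer in $\End_k(Q)$ is $\End_K(Q)=K$. I would use this, together with $Q$ being a $k$-progenerator and $K$ being $k$-projective, to deduce that $Q$ has rank one over $K$, i.e.\ $[Q]\in\Pic(K)$ (compare \cite[I, Lemma 5.1]{FDEI}). Then the grading of $A_G$ coincides with \eqref{kanzgr}, since $A_g$ is exactly the set of $\phi(g)$-semilinear elements: by strong grading and type $\phi$, $A_g$ lies in the $\phi(g)$-eigenspace, and these eigenspaces are independent. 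Corollary \ref{L:2.11} then places $[A_G]$ in the orbit.

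\textbf{Quotient.} By Lemma \ref{isoqot} and \eqref{Cliffmod}, $\ClfK/_{\Pic(K)}\cong\ClfK/_{\Pic(K)([K_\phi G])}$. So $\rho'_\phi$ descends to the injective, surjective homomorphism $\rho_\phi$.

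\textbf{Commutation relations.} For (1), $\rho_\phi\alpha_4([c])=[K^c_\phi G]_{\Br(k)}$, which is the classical crossed-product map $\theta_4$ in Kanzaki's labelling. For (2), take $A$ with $K$ maximal commutative. Here $\theta_5$ sends $[A]$ to the class of the cocycle $g\mapsto[A_g]\otimes_K[K_{1,\phi(g^{-1})}]$ in $H^1(G,\Pic(K)_\phi)$ (Kanzaki). This is exactly $\bar\lambda_\phi(\tilde\chi_\phi([A_G]))=[\Phi\phi^{-1}]$ from \eqref{lamb'g}. Both checks only require unwinding definitions.
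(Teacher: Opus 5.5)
Your outline follows the paper's proof step for step. Multiplicativity comes from Theorem~\ref{L:2.4} with Corollary~\ref{L:2.5}, surjectivity from a citation to Kanzaki, and the kernel from Auslander--Goldman plus Corollary~\ref{L:2.11}. The quotient comes from \eqref{Cliffmod}, and (1)--(2) from unwinding definitions. Your eigenspace argument that $A_G\cong\mathrm{End}_k(Q)$ is automatically a graded isomorphism is a point the paper passes over silently. The one step that does not work as you set it up is the deduction $[Q]\in\Pic(K)$, which the paper simply takes from \cite[Prop.~5(1)]{Kanzaki}.

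The ingredients you list do not force $Q$ to be invertible over $K$. These are $\mathrm{End}_K(Q)=K$, $Q$ a $k$-progenerator, and $K$ projective over $k$. Take $k$ a field, $K=k[x,y]/(x,y)^2$ and $Q=\Hom_k(K,k)$. Then $Q$ is faithful and $\mathrm{End}_K(Q)\cong\mathrm{End}_K(K)^{\mathrm{op}}=K$, so $K$ is maximal commutative in $\mathrm{End}_k(Q)$. Yet $Q/(x,y)Q$ is $2$-dimensional, so $Q$ is not cyclic and hence not $K$-projective.

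What is missing is a use of the Galois hypothesis to make $Q$ projective over $K$. Since $K$ is $k$-separable, the multiplication map $K\otimes_kQ\to Q$ splits $K$-linearly via the separability idempotent. Hence the $k$-projective module $Q$ is $K$-projective. Alternatively, $Q$ is a $K$-direct summand of $A_G\otimes_kQ$, which is $K$-projective because every $A_g$ is invertible. Then $Q$ is finitely generated, projective and faithful over $K$, hence of positive rank everywhere. Now $\mathrm{End}_K(Q)=K$ forces that rank to be $1$, i.e.\ $[Q]\in\Pic(K)$. With this repair, or simply with the citation of Kanzaki, your proof goes through.
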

\begin{proof}
Let $[A_G],[A'_G]\in\text{Cliff}_k(\phi,K)$.
%\ChA{With the notation of Lemma \ref{L:2.3}} and
Implementing Theorem \ref{L:2.4} we have
$$\rho'_{\phi}([A_G]\cdot[A_G])=\rho'_{\phi}([A_G\otimes_{\phi}A'_G])=[A_G\otimes_{\phi}A'_G]_{\Br(k)}=[\epsilon(A_G\otimes_{k}A'_G)\epsilon]_{\Br(k)}=\ldots$$
By Corollary \ref{L:2.5} we have
$$\ldots=[A_G\otimes_{k}A'_G]_{\Br(k)}=[A_G]_{\Br(k)}\otimes_{k}[A'_G]_{\Br(k)}=\rho'_{\phi}([A_G])\otimes_{k}\rho'_{\phi}([A'_G]),$$
proving that \eqref{prenu} is a group homomorphism. Surjectivity of this homomorphism follows from \cite[Prop. 3]{Kanzaki}.
Let us compute the kernel of $\rho'_{\phi}$.
By \cite[Prop. 5.3]{AMGO}, $[A_G]\in\ker(\rho'_{\phi})$ if and only if $A_G \stackrel{G}{\cong}$ End$_k(P)$  for some faithful progenerator $k$-module $P$.
If the latter holds, then by \cite[Prop. 5(1)]{Kanzaki}, $[P] \in \Pic(K)$, and so by Corollary \ref{L:2.11}, $[A_G]\in\Pic(K)([K_{\phi}G])$. Conversely, suppose that $[A_G]\in\Pic(K)([K_{\phi}G])$.
Then by Corollary \ref{L:2.11}, $A_G\cong\End_k(P)$ for some $[P]\in\Pic(K)$. By Lemma \ref{invprogen}
%Since the invertibility of $P$ implies that it is also a faithful $k$-progenerator \cite[I, Prop. 1.6]{FDEI}, proving that
$[A_G]_{\Br(k)}$ is trivial, that is $[A_G]\in\ker(\rho'_{\phi})$.
The transition to \eqref{nu} is completed by the identification \eqref{Cliffmod}
between the classes of $[A_G]$ in $\ClfK/_{\mbox{Pic$(K)$}}$ and in $\ClfK/_{\mbox{Pic$(K)([K_{\phi}G])$}}$.

The commutation properties (1) and (2) are established by a direct computation, making use of \cite[Theorem 5.7]{AMGO},
and bearing in mind that all homomorphisms that participate in these two equalities are well-defined.
\end{proof}

{}

\begin{thebibliography}{}
\bibitem{AMGO} M. Auslander and O. Goldman, The Brauer group of a commutative ring. Trans.
Amer. Math. Soc. 97, 367-409 (1960).
\bibitem{Bass} H. Bass, Algebraic K-theory. W. A. Benjamin, Inc., New York-Amsterdam 1968.
\bibitem{MBDR} M. Beattie and A. del R\'{i}o, Graded equivalences and Picard groups. J. Pure Appl. Algebra 141, No. 2, 131-152 (1999).
\bibitem{CCH} M. Calvo, A. M. Cegarra and B. A. Heredia, {Structure and classification of monoidal groupoids}. Semigroup Forum 87, No. 1, 35-79 (2013).
\bibitem{CG} A. M. Cegarra and A. R. Garz\'{o}n, Obstructions to Clifford system extensions of algebras. Proc. Indian Acad.
Sci. Math. Sci. 111, no. 2, 151--161 (2001).
\bibitem{CHR} S.U. Chase, D.K. Harrison and A. Rosenberg, Galois theory and Galois cohomology
of cummutative rings. Mem. Amer. Math. Soc. 52, 15--33 (1965).
\bibitem{Dade80} E. C. Dade, Group graded rings and modules. Math. Z. 174, 241--262 (1980).
\bibitem{FDEI} F. DeMeyer and E. Ingraham, Separable algebras over commutative rings. Lecture Notes in Mathematics 181. Berlin-Heidelberg-New York: Springer-Verlag 1971.
\bibitem{DPP} M. Dokuchaev, A. Paques and H. Pinedo, Partial Galois cohomology and related homomorphisms. Q. J. Math. 70, no. 2, 737--766 (2019).
\bibitem{DPPR} M. Dokuchaev, A. Paques, H. Pinedo and I. Rocha, Partial generalized crossed products and a seven-term exact sequence. J. Algebra 572, 195-230 (2021).
\bibitem{D3} M. Dokuchaev, H. Pinedo and I. Rocha, Partial generalized crossed products, Brauer groups and a comparison of seven-term exact sequences.
\verb"arXiv":2411.00494
\bibitem{D4} M. Dokuchaev and I. Rocha, Partial generalized crossed products and a seven term exact sequence.
J. Pure Appl. Algebra 228, No. 5, Article ID 107558, 62 p. (2024).
\bibitem{KG1} L. El Kaoutit and J. G\'{o}mez-Torrecillas, Invertible unital bimodules over rings with local units, and related exact sequences of groups I.
J. Algebra 323, No. 1, 224-240 (2010).
\bibitem{KG2} L. El Kaoutit and J. G\'{o}mez-Torrecillas, Invertible unital bimodules over rings with local units, and related exact sequences of groups II. J. Algebra 370, 266-296 (2012).
\bibitem{UF} U. A. First, An 8-periodic exact sequence of witt groups of azumaya algebras with involution. Manuscr. Math. 170, No. 1-2, 313-407 (2023).
\bibitem{Gilbert05} N. D. Gilbert, Actions and expansions of ordered groupoids. J. Pure Appl. Algebra 198, 175--195 (2005).
\bibitem{YG1} Y. Ginosar, Realization-obstruction exact sequences for Clifford system extensions. Israel J. of Math. 247, 955--985 (2022).
\bibitem{YG24} Y. Ginosar, Mackey's obstruction map for discrete graded algebra.  Journal of Pure and Applied Algebra 228(4), 107533 (2024).
\bibitem{JHDR} J. Haefner and A. del R\'{i}o, Actions of Picard groups on graded rings. J. Algebra 218, No. 2, 573-607 (1999).
\bibitem{Kanzaki} T. Kanzaki, On generalized crossed product and Brauer group. Osaka J. Math. 5, 175-188 (1968).
%\bibitem{matsumura} H. Matsumura, Commutative ring theory, Cambridge Studies in Advanced Mathematics, Cambridge University Press, 1987.
\bibitem{miyashita} Y. Miyashita, An exact sequence associated with a generalized crossed product. Nagoya Math. J. 49, 21-51 (1973).
\bibitem{S01} P. Schauenburg, Turning monoidal categories into strict ones. New York J. Math. 7, 257-265 (2001).
\bibitem{VO} F. Van Oystaeyen, Separable algebras. Hazewinkel, M. (ed.), Handbook of algebra. Volume 2. Amsterdam: North-Holland. 461-505 (2000).
%\bibitem{Bass} H. Bass, Algebraic K-theory. W. A. Benjamin, Inc., New York-Amsterdam 1968.
%\bibitem{CG03} A. M. Cegarra, and A. R. Garz\'{o}n, Some algebraic applications of graded categorical group theory. Theory Appl. Categ. 11 (2003), No. 10, 215--251.
%\bibitem{D70} E. C. Dade, Compounding Clifford's theory. Ann. of Math. (2) 91 (1970), 236--290.
%\bibitem{EM47} S. Eilenberg and S. MacLane, Cohomology theory in abstract groups. II. Group extensions with a non-Abelian kernel.
%Ann. of Math. (2) 48 (1947), 326--341.
%\bibitem{EM} S. Eilenberg, and S. MacLane, Cohomology and Galois theory. I. Normality of algebras and Teichm\"{u}ller's cocycle.
%Trans. Amer. Math. Soc. 64 (1948), 1--20.
%\bibitem{H} I. N. Herstein, Noncommutative rings. The Carus Mathematical Monographs, No. 15 Published by The Mathematical Association of America; John Wiley \& Sons, Inc., New York 1968.
%\bibitem{NVO04} C. N{\u{a}}st{\u{a}}sescu and Van Oystaeyen, Methods of graded rings. Lecture Notes in Mathematics, 1836. Springer-Verlag, Berlin, 2004.
\end{thebibliography}
\end{document}